\documentclass[11pt,reqno]{amsart}
\usepackage[a4paper]{geometry}
\usepackage[utf8]{inputenc} \usepackage[T1]{fontenc} \usepackage{lmodern}
\usepackage{amssymb} \usepackage[all,2cell]{xy} \UseAllTwocells
\usepackage{nicefrac,mathtools,enumitem} \usepackage{microtype}
\usepackage{amsxtra} \usepackage{MnSymbol}

% ---To show labels------- remove from the final version%
%
% \usepackage[notcite,notref]{showkeys}

\usepackage{tikz} \usetikzlibrary{matrix} \tikzset{cd/.style=matrix of math
  nodes,row sep=2em,column sep=2em, text height=1.5ex, text depth=0.5ex}
\tikzset{cdar/.style=->,auto}

\setcounter{tocdepth}{1}

\usepackage{xcolor}

\usepackage{marginnote}
\newcommand{\corr}[2]{{#2}}

\usepackage[pdftitle=The\ maximal\ quantum\ group-twisted\ tensor\ products\ of\
C*-algebras, pdfauthor={Sutanu Roy and Thomas Timmermann},
pdfsubject={Mathematics; MSC } ]{hyperref} \usepackage[lite]{amsrefs}
\newcommand*{\MRref}[2]{\ MR
  #1}%{ \href{http://www.ams.org/mathscinet-getitem?mr=#1}{MR #1}}

\renewcommand{\PrintDOI}[1]{\href{http://dx.doi.org/#1}{DOI #1}
  \IfEmptyBibField{volume}{, (to appear in print)}{}}

\numberwithin{equation}{section}

\theoremstyle{plain}
\newtheorem{theorem}{Theorem}[section]
\newtheorem{lemma}[theorem]{Lemma}
\newtheorem{proposition}[theorem]{Proposition}

\newtheorem{corollary}[theorem]{Corollary}

\theoremstyle{definition} \newtheorem{definition}[theorem]{Definition}

\theoremstyle{remark} \newtheorem{remark}[theorem]{Remark}

\newtheorem{example}[theorem]{Example}

% ------------Standard operators-------%
\DeclareMathOperator{\Ad}{Ad}
% ------------Standard Sets---------------%
\newcommand*{\C}{\mathbb{C }}% Complex numbers
\newcommand*{\Z}{\mathbb{Z}}% Integers
% Real numbers
% Naturan numbers
% Rational numbers
\newcommand*{\T}{\mathbb{T}}% Rational numbers

% ------------General symbols---------%
%isomorphism to ...
%for arguments in functors
%Fourier transform
%special unitary group
%unitary group
%used for dx in integrals etc.
%imaginary unit
\newcommand*{\univ}{\mathrm{u}}%universal
\newcommand*{\red}{\mathrm{r}}%reduced
\newcommand{\normal}{\mathrm{n}} %normal
%opposite
%co-opposite
\newcommand*{\transpose}{\mathsf T}%transpose
\newcommand*{\defeq}{\mathrel{\vcentcolon=}}%defining condition
%absolute value
%norm
\newcommand*{\conj}[1]{\overline{#1}}%conjugation
% domain of an unbounded linear map
\newcommand*{\nb}{\nobreakdash} \newcommand*{\Star}{*-}

% --------------C*-algebras and Hilbert modules----------%
\newcommand*{\Hils}[1][H]{\mathcal{#1}}%Hilbert space
%Hilbert module
\newcommand*{\Bound}{\mathbb{B}}%adjointable operators on a Hilbert module
\newcommand*{\Comp}{\mathbb{K}}%compact operators on a Hilbert module

\newcommand*{\Contvin}{\mathrm{C}_0}%continuous functions vanishing at infinity
\newcommand*{\Contb}{\mathrm{C}_\mathrm{b}}%bounded continuous functions
\newcommand*{\Cst}{\mathrm{C}^*}%C*-algebra
\newcommand*{\Cred}{\mathrm{C}^*_\mathrm{r}}%reduced group C*-algebra
\newcommand*{\CLS}{\mathrm{CLS}}%closed linear span
\newcommand*{\Mult}{\mathcal M}%multiplier algebra
\newcommand*{\U}{\mathcal U}%unitary group

\newcommand*{\Mor}{\mathrm{Mor}}%nondegenerate *-homomorphisms of C*-algebras
\newcommand*{\Id}{\mathrm{id}}%identity map
\newcommand*{\Cstcat}{\mathfrak{C^*\!alg}}%category of C*-algebras
%category of Hilbert spaces

%forgetful functor

%braided tensor product
\newcommand*{\Flip}{\Sigma}% flip operator on Hilbert space
\newcommand*{\flip}{\sigma}% flip map on the multiplier algebra

% ------------More function algebras-----%
%Schwartz space
%projective bornological tensor product
%smooth functions

% -----------Twisted_tensor_product----%
\newcommand*{\maxtensor}{\mathrm{max}} \newcommand*{\mintensor}{\mathrm{min}}
\newcommand*{\maxotimes}{\otimes_{\maxtensor}}

% ----------------------------------------------Quantum
% groups------------------------------------%
% -------------------------Basic definition------------------------------%
\newcommand*{\Bialg}[1]{(#1,\Comult[#1])}%C*-bialgebra
\newcommand*{\DuBialg}[1]{(\hat {#1},\DuComult[#1])}%dual C*-bialgebra

\newcommand*{\G}[1][G]{\mathbb{#1}}% quantum group
\newcommand*{\DuG}[1][G]{\widehat{\mathbb{#1}}}%dual quantum group

\newcommand*{\Qgrp}[2]{\mathbb{#1}=\Bialg{#2}}%quantum group as pair
\newcommand*{\DuQgrp}[2]{\widehat{\mathbb{#1}}=\DuBialg{#2}}%dual quantum group as pair

%Universal quantum group as pair
\newcommand*{\UDuQgrp}[2]{\widehat{\mathbb{#1}}^\univ=(\hat
  {#2}^\univ,\DuComult[#2]^\univ)}%Univer dual of a quantum group as pair

\newcommand*{\UG}[1][G]{\mathbb{#1}^\univ}%Universal quantum group
\newcommand*{\UDG}[1][G]{\widehat{\mathbb{#1}}^\univ}%Universal dual of aquantum group

% ----------------Maps on quantum groups---------------------------%
\newcommand*{\Comult}[1][]{\Delta_{#1}}%comultiplication
\newcommand*{\DuComult}[1][]{\hat {\Delta}_{#1}}%dualcomultiplication

\newcommand*{\Coinv}{R}%unitary antipode
% \newcommand*{\Coinv}{\textup R}%unitary antipode

% ---------------Bicharacters on quantum groups----------------------%
\DeclareRobustCommand{\rchi}{{\mathpalette\irchi\relax}}
\newcommand*{\irchi}[2]{\raisebox{0pt}{$#1\chi$}} % inner command, used by \rchi

\newcommand*{\bichar}{\rchi}%bicharacter viewed as an element of the unitary multiplier
\newcommand*{\Dubichar}{\hat {\bichar}}%Dual bicharacter
%Linking unitary

% \DeclareRobustCommand{\Rchi}{{\mathpalette\iRchi\relax}}
% \newcommand*{\iRchi}[2]{\raisebox{\depth}{$#1\boldsymbol\chi$}} % inner command, used by \rchi

%Bicharacter viewed as unitary operator onHilbert space

% ----------Reduced bicharacter--------------%
\newcommand*{\Multunit}{\mathbb{W}}%muliplicative unitary acting on a hilbert space
\newcommand*{\multunit}[1][]{{W}^{#1}}%multiplicative unitary as a bicharacter of the multiplier algebra
\newcommand*{\DuMultunit}{\widehat{\mathbb
    W}}%muliplicative unitary on action on a hilbert space
\newcommand*{\Dumultunit}[1][]{\widehat{
    W}{}^{#1}}%multiplicative unitary as a bicharacter of the multiplier algebra

% ----------Universal bichacharter--------------%
\newcommand*{\unibich}{\mathcal W}%universal bicharacter

%Category of C*-quantum groups

\newcommand*{\GenDrinfdouble}[3]{\mathfrak{D}_{#3}(#1,#2)}%generalised drinfeld double
\newcommand*{\GenDrinfdoubleU}[3]{\mathfrak{D}^{\univ}_{#3}(#1,#2)}%generalised drinfeld double
%generalised quantum codouble

% ----------R-matrix-------------%
%R-matrix as text
\newcommand*{\Rmat}{\mathcal{R}}% R-matrix
% universal R-matrix
%rmatrix from braiding

%NEW braiding unitary
%
% \newcommand*{\Braiding}[2]{\begin{tikzpicture} \draw[-] (0,0) --
%   (1.8ex,1.8ex); \draw[-,draw=white,line width=2.4pt] (0,1.8ex) -- (1.8ex,0);
%   \draw[-] (0,1.8ex) -- (1.8ex,0);
% \end{tikzpicture}^{({#1},{#2})}}%OLD braiding unitary

\newcommand*{\YDcat}{\mathcal{YD}\text{-}\Cstcat}%Category of Yetter-Drinfeld C*-algebras

% Heisenberg pair of a quantum group
%Anti Heisenberg pair of a quantum group

% Heisenberg pair (\alpha,\beta) for a bicharacter
%Anti-Heisenberg pair (\alpha,\beta) for a bicharacter

%Canonical \alpha
%Canonical \beta
%Canonical \alpha bar
%Canonical \beta bar

%Canonical Heisenberg pair for a bicharacter
%Canonical Anti-Heisenberg pair for a bicharacter

% ---------------------Corepresentations on quantum
% groups----------------------------%
% \newcommand*{\corep}[1]{\mathrm{#1}} %Corepresentation of a quantum group
\newcommand*{\corep}[1]{#1}
 %Corepresentation as operator on Hilbert space
 %Corepresentation viewed as dual (bicharacter sense)
 %Dual corepresentation viewed as operator on Hilbert space

\newcommand*{\maxcorep}[1][]{\mathcal{V}^{#1}}%maximal corepresentation of G-hat
\newcommand*{\dumaxcorep}[1][]{\tilde{\mathcal{V}}^{#1}}%maximal corepresentation of G
 
%tensor product of corepresentations

%Category of corepresentations

% ---------------Some special quantum groups-----------%
%quantum SU

%Drinfeld double
% Dual of Drinfeld double
\newcommand*{\DrinAlg}{\mathcal{D}}%Underlying C*-algebra of the Drinfeld double
\newcommand*{\CodoubAlg}{\widehat{\mathcal{D}}}%Underlying C*-algebra of the codouble

% --------Morphism on maximal twisted tensor product induced by chi-commuting
% reps-----------%
\newcommand{\chiprod}{\square}

\begin{document}
\title[The maximal quantum group-twisted $\Cst$-tensor product]{The maximal
  quantum group--twisted tensor product of $\Cst$--algebras}

\author{Sutanu Roy} \thanks{First author was partially supported by DFG 1493,
  Fields-Ontario postdoctoral fellowship and INSPIRE faculty award given by
  D.S.T., Government of India} \email{sutanu@niser.ac.in}
\address{School of Mathematical Sciences\\
  National Institute of Science Education and Research,  Bhubaneswar, HBNI\\
  Jatni, 752050\\
  India}

\author{Thomas Timmermann} \thanks{Second author supported by SFB 878 of the
  DFG} \email{timmermt@uni-muenster.de} \address{Fachbereich Mathematik und
  Informatik\\ Westfälische Wilhelms-Universität Münster\\ Einsteinstraße 62 \\
  48149 Münster \\ Germany}

\begin{abstract}
  We construct a maximal counterpart to the minimal quantum group-twisted tensor
  product of $\Cst$-algebras studied by Meyer, Roy and
  Woronowicz~\cites{Meyer-Roy-Woronowicz:Twisted_tensor,
    Meyer-Roy-Woronowicz:Twisted_tensor_2}, which is universal with respect to
  representations satisfying certain braided commutation relations. Much like
  the minimal one, this product yields a monoidal structure on the coactions of
  a quasi-triangular $\Cst$-quantum group, the horizontal composition in a
  bicategory of Yetter-Drinfeld $\Cst$-algebras, and coincides with a Rieffel
  deformation of the non-twisted tensor product in the case of group coactions.
\end{abstract}

\subjclass[2010]{81R50, 46L05, 46L55} \keywords{$\Cst$-algebra, tensor product,
  crossed product, quantum group}

\maketitle

%\tableofcontents

\section{Introduction}
\label{sec:introduction}
Let~$G$ be a locally compact group acting on~$\Cst$\nb-algebras~$C$ and~$D$.
Then the minimal and the maximal tensor products $C \otimes_{\mintensor} D$
\corr{and$C \otimes_{\maxtensor} D$}{and $C \otimes_{\maxtensor} D$} carry
canonical \emph{diagonal} actions of $G$. However, this is no longer true
when~$G$ is replaced by a quantum group~$\G$. This problem appears already on
the purely algebraic level, where it can be solved if the quantum group~$\G$ is
quasi-triangular and the multiplication of the tensor product is twisted
accordingly \cite{Majid:Quantum_grp}*{Corollary 9.2.14}. In the setting
of~$\Cst$\nb-algebras, building on the work of
Vaes~\cite{Vaes:Induction_Imprimitivity}*{Proposition 8.3}, such a twisted
tensor product was first constructed by Nest and Voigt in the case where $\G$ is
the quantum double or the Drinfeld double of some regular locally compact
quantum group~$\G[H]$ or, equivalently, when $C$ and $D$ are Yetter-Drinfeld
$\Cst$\nb-algebras over $\G[H]$ \cite{Nest-Voigt:Poincare}*{Proposition 3.2}. A
systematic study of quantum group\nb-twisted tensor products, in the general
framework of manageable multiplicative unitaries, was taken up by Meyer, Roy and
Woronowicz in \cite{Meyer-Roy-Woronowicz:Twisted_tensor}, and carried on in
\cite{Meyer-Roy-Woronowicz:Twisted_tensor_2}. These constructions are
\emph{reduced} or \emph{minimal} in the sense that they use canonical Hilbert
space representations and reduce to the minimal tensor product $C\otimes D$ if
$\G$ is trivial.\newline In this article, we introduce a \emph{universal} or
\emph{maximal} counterpart to the constructions in
\cites{Meyer-Roy-Woronowicz:Twisted_tensor, Nest-Voigt:Poincare}. As in
\cite{Meyer-Roy-Woronowicz:Twisted_tensor}, we start with two~$\Cst$\nb-quantum
groups $\G$ and $\G[H]$ (in the sense
of~\cite{Soltan-Woronowicz:Multiplicative_unitaries}), a bicharacter
$\chi\in\U(\hat {A}\otimes\hat {B})$ and two $\Cst$\nb-algebras~$C$ and~$D$
equipped with coactions of~$\G$ and~$\G[H]$, respectively. We then consider
representations of $C$ and $D$ on the same $\Cst$-algebra that commute in a
braided fashion with respect to $\chi$, and construct
a~$\Cst$\nb-algebra~$C\boxtimes^{\chi}_{\maxtensor} D$ with nondegenerate
\Star{}homomorphisms~$j_{C}^{\univ}\colon
C\to\Mult(C\boxtimes^{\chi}_{\maxtensor} D)$ and $j^{\univ}_D\colon
D\to\Mult(C\boxtimes^{\chi}_{\maxtensor} D)$ such that
$(j^{\univ}_{C},j^{\univ}_{D})$ is a universal pair of braided-commuting
representations.

For example, this construction subsumes the following special cases.
\begin{enumerate}
\item If $\chi$ is trivial, it reduces to the maximal tensor product
  $C\maxotimes D$.
\item If $\G=\Z/2\Z$, so that $C$ and $D$ are $\Z/2\Z$-graded
  $\Cst$\nb-algebras, we obtain the universal $\Cst$\nb-algebra generated by a
  copy of $C$ and a copy of $D$ satisfying $dc = (-1)^{|c||d|}cd$ for
  homogeneous elements $c \in C$ and $d\in D$ of degrees $|c|,|d|\in \{0,1\}$.
\item If $\G[H]=\DuG$ and $D$ is the universal dual $\Cst$-algebra of $\G$, we
  obtain the universal crossed product $C\rtimes_{\univ} \UG$.
\end{enumerate}

Our maximal twisted tensor product shares many of the properties of the minimal
twisted tensor product, which we denote by~ $\boxtimes^{\chi}_{\min}$, established
in \cite{Meyer-Roy-Woronowicz:Twisted_tensor} and
\cite{Meyer-Roy-Woronowicz:Twisted_tensor_2}. We show that it carries a canonical
coaction of the generalised Drinfeld double $\GenDrinfdouble{\G}{\G[H]}{\chi}$
(see \cite{Roy:Codoubles}), yields a monoidal structure on the category of
$\G$-$\Cst$-algebras in the case where $\G$ is quasi-triangular, and is
functorial with respect to $\G$ and $\G[H]$ in a natural sense.

We also show that the maximal and the minimal twisted and non-twisted tensor
products are related by a commutative diagram of the form
\begin{align} \label{eq:cd} \xymatrix@R=15pt{
  (C \boxtimes^{\chi}_{\max} D) \rtimes \GenDrinfdouble{\G}{\G[H]}{\chi}  \ar[r]^{\cong} \ar@{->>}[d] & (C \maxotimes D) \rtimes (\G \times \G[H]) \ar@{->>}[d] \\
  (C \boxtimes^{\chi}_{\min} D) \rtimes \GenDrinfdouble{\G}{\G[H]}{\chi}
  \ar[r]^{\cong} & (C \otimes D) \rtimes (\G \times \G[H]), }
\end{align}
where the lower isomorphism extends
\cite{Meyer-Roy-Woronowicz:Twisted_tensor}*{Theorem 6.5}. The upper isomorphism
yields a quick proof of the following result, which extends our list of
examples:
\begin{enumerate} \setcounter{enumi}{3}
\item If $\G$ and $\G[H]$ are duals of locally compact abelian groups, then
  $C\boxtimes^{\chi}_{\max} D$ is a Rieffel deformation of the maximal tensor
  product $C\maxotimes D$ in the sense of Kasprzak
  \cite{Kasprzak:Rieffel_deformation}.
\end{enumerate}
The corresponding assertion for the minimal twisted tensor product is contained
in \cite{Meyer-Roy-Woronowicz:Twisted_tensor}*{Theorem 6.2}. If $C$ or $D$ are
nuclear, then \eqref{eq:cd} implies that the images of $C\boxtimes^{\chi}_{\max}
D$ and $C\boxtimes^{\chi}_{\min} D$ in the respective crossed products on the left
hand side, but not necessarily the algebras themselves, are isomorphic. In
particular, the canonical map from the maximal to the minimal twisted tensor
product is an isomorphism if (i) $C$ or $D$ is nuclear and additionally (ii) the
coaction of $\GenDrinfdouble{\G}{\G[H]}{\chi}$ on $C\boxtimes^{\chi}_{\max} D$ is
injective.

Finally, we consider the case where $C$ and $D$ are generalized Yetter-Drinfeld
$\Cst$-algebras, show that the maximal twisted tensor product is a generalized
Yetter-Drinfeld $\Cst$-algebra again, and obtain a bicategory whose objects are
$\Cst$-quantum groups and 1-morphisms are generalized Yetter-Drinfeld
$\Cst$-algebras. Here, we need to work with coactions of universal
$\Cst$-quantum groups.

A recurring issue that arises here is to verify that certain pairs of
representations of the $\Cst$-algebras $C$ and $D$ or of the $\Cst$-algebras $A$
and $B$ underlying the $\Cst$-quantum groups $\G$ and $\G[H]$ satisfy braided
commutation relations, and to check how such relations transform if various
representations are put together. Instead of case-by-case calculations, we
present a categorical approach where braided-commuting representations are
interpreted as 2-morphisms in cubical tricategory, and the horizontal and
vertical compositions account for all constructions that we need to consider.

\medskip

This article is organized as follows. In Section \ref{sec:prelim}, we first
recall notation and preliminaries concerning $\Cst$-quantum groups and their
morphisms. In Sections \ref{sec:commutation-universal} and
\ref{sec:commutation-reduced}, we introduce the notion of braided commutation
relations for representations of $\Cst$-quantum groups, first on the universal
and then on the reduced level. After these preparations, we define the maximal
twisted tensor product in Section \ref{sec:max}, establish several of its
properties in Section \ref{sec:prop}, and construct the isomorphisms in the
fundamental diagram \eqref{eq:cd} in Section \ref{sec:iso}. In Section
\ref{sec:universal}, we pass to coactions of universal $\Cst$-quantum groups,
and in Section \ref{sec:yd}, we consider the maximal twisted tensor product of
generalized Yetter-Drinfeld $\Cst$-algebras. In the appendix, we summarize the
relation between coactions of $\Cst$-quantum groups and their universal
counterparts, and consider the push-forward of non-injective coactions along
morphisms of $\Cst$-quantum groups.

\section{Preliminaries}
\label{sec:prelim}
Throughout we use the symbol ``:='' to abbreviate the phrase ``defined by''.

All Hilbert spaces and $\Cst$\nb-algebras are assumed to be separable.
     
For two norm\nb-closed subsets~$X$ and~$Y$ of a~$\Cst$\nb-algebra, let
\[
  X\cdot Y\defeq\{xy : x\in X, y\in Y\}^{\mathrm{CLS}},
\]
where CLS stands for the~\emph{closed linear span}.
   
For a~$\Cst$\nb-algebra~$A$, let~$\Mult(A)$ be its multiplier algebra and
$\U(A)$ be the group of unitary multipliers of~$A$. The unit of~$\Mult(A)$ is
denoted by~$1_{A}$. Next we recall some standard facts about multipliers and
morphisms of~$\Cst$\nb-algebras
from~\cite{Masuda-Nakagami-Woronowicz:C_star_alg_qgrp}*{Appendix A}. Let~$A$
and~$B$ be~$\Cst$\nb-algebras. A \Star{}homomorphism~$\varphi\colon
A\to\Mult(B)$ is called \emph{nondegenerate} if~$\varphi(A)\cdot B=B$. Each
nondegenerate \Star{}homomorphism $\varphi\colon A\to\Mult(B)$ extends uniquely
to a unital \Star{}homomorphism $\widetilde{\varphi}$ from~$\Mult(A)$ to
$\Mult(B)$. Let $\Cstcat$ be the category of $\Cst$\nb-algebras with
nondegenerate \Star{}homomorphisms $A\to\Mult(B)$ as morphisms $A\to B$; let
Mor(A,B) denote this set of morphisms. We use the same symbol for an element
of~$\Mor(A,B)$ and its unique extenstion from~$\Mult(A)$ to~$\Mult(B)$.
  
Let~$\conj{\Hils}$ be the conjugate Hilbert space to the Hilbert space~$\Hils$.
The \emph{transpose} of an operator $x\in\Bound(\Hils)$ is the operator
$x^\transpose\in\Bound(\conj{\Hils})$ defined by $x^\transpose(\conj{\xi}) \defeq
\conj{x^*\xi}$ for all $\xi\in\Hils$. The transposition is a linear, involutive
anti- isomorphism $\Bound(\Hils)\to\Bound(\conj{\Hils})$.

A \emph{representation} of a $\Cst$\nb-algebra~$A$ on a Hilbert space~$\Hils$ is
a nondegenerate \Star{}homomorphism $\pi\colon A\to\Bound(\Hils)$. Since
$\Bound(\Hils)=\Mult(\Comp(\Hils))$, the nondegeneracy conditions
$\pi(A)\cdot\Comp(\Hils)=\Comp(\Hils)$ is equivalent to $\pi(A)(\Hils)$ being
norm dense in~$\Hils$, and hence this \corr{is same}{is the same} as having a
morphism from~$A$ to~$\Comp(\Hils)$. The identity representation
of~$\Comp(\Hils)$ on $\Hils$ is denoted by~$\Id_{\Hils}$. The group of unitary
operators on a Hilbert space~$\Hils$ is denoted by $\U(\Hils)$. The identity
element in $\U(\Hils)$ is denoted by~$1_{\Hils}$.

We use~$\otimes$ both for the tensor product of Hilbert spaces and minimal
tensor product of $\Cst$\nb-algebras, which is well understood from the context.
We write~$\Flip$ for the tensor flip $\Hils\otimes\Hils[K]\to
\Hils[K]\otimes\Hils$, $x\otimes y\mapsto y\otimes x$, for two Hilbert spaces
$\Hils$ and~$\Hils[K]$. We write~$\flip$ for the tensor flip isomorphism
$A\otimes B\to B\otimes A$ for two $\Cst$\nb-algebras $A$ and~$B$.
  
We use the leg numbering on the level of $\Cst$-algebras as follows. Let~$A_{1}$,
$A_{2}$, $A_{3}$ be $\Cst$\nb-algebras. For~$t\in\Mult(A_{1}\otimes A_{2})$, we
write
\begin{gather*}
  t_{12}\defeq t\otimes 1_{A_{3}} \in\Mult(A_{1}\otimes A_{2}\otimes A_{3}), \quad
  t_{23}\defeq\corr{$1_{A_{3}}\otimes t_{12}$}{1_{A_{3}}\otimes t}\in\Mult(A_{3} \otimes A_{1}\otimes A_{2}), \\
  \text{and} \quad t_{13}\defeq\flip_{12}(t_{23})=\flip_{23}(t_{12})\in\Mult(A_{1}
  \otimes A_{3}\otimes A_{2}).
\end{gather*}
In particular, we apply this notation in case $A_{i}=\Bound(\Hils_{i})$ for some
Hilbert spaces~$\Hils_{i}$, where $i=1,2,3$, and then $\flip$ amounts to
conjugation by ~$\Flip$.

\subsection{$\Cst$-bialgebras}
\label{sec:bialgebras}

A \emph{$\Cst$\nb-bialgebra} is a $\Cst$\nb-algebra $A$ with a comultiplication
$\Comult[A] \in\Mor(A,A\otimes A)$ that is coassociative in the sense that
\begin{align*}
  (\Comult[A] \otimes \Id_{A})\circ \Comult[A] = (\Id_{A} \otimes \Comult[A]) \circ \Comult[A].
\end{align*}
It satisfies the \emph{cancellation conditions} if
\begin{equation}
  \label{eq:Cancellations}
  \Comult[A](A)\cdot(1_A\otimes A)       = A\otimes A       = (A\otimes 1_A)\cdot\Comult[A](A).
\end{equation}

A \emph{morphism} of $\Cst$\nb-bialgebras $(A,\Comult[A])$ and $(B,\Comult[B])$
is a morphism $f\in \Mor(A,B)$ satisfying
\begin{align*}
  \Comult[B] \circ f &= (f\otimes f)\circ \Comult[A].
\end{align*}

A \emph{left corepresentation} of a $\Cst$-bialgebra $(A,\Comult[A])$ on a
$\Cst$-algebra $C$ is a unitary $U \in \U(A \otimes C)$ satisfying
\begin{align}
  \label{eq:corep-left}
  (\Delta_{A} \otimes \Id_{C})(U) &=U_{23}U_{13}.
\end{align}
A \emph{right corepresentation} of $(A,\Comult[A])$ on $C$ is a unitary $U \in
\U(C \otimes A)$ satisfying
\begin{align} \label{eq:corep-right} (\Id_{C} \otimes \Delta_{A})(U) &=
  U_{12}U_{13}.
\end{align}
If $U$ is a left or right corepresentation, then $\hat U:=\sigma(U)^{*}$ is a
right or left corepresentation, called the \emph{dual} of $U$. A
corepresentation on a Hilbert space $\mathcal{H}$ is just a corepresentation on
the $\Cst$-algebra $C=\Comp(\mathcal{H})$.

A \emph{bicharacter} between $\Cst$-bialgebras $(A,\Comult[A])$ and
$(B,\Comult[B])$ is a unitary $\chi \in \U(A\otimes B)$ that is a left
corepresentation of $(A,\Delta_{A})$ and a right corepresentation of
$(B,\Delta_{B})$. Every such bicharacter has a \emph{dual bicharacter}
\begin{align}
  \label{eq:dual_bicharacter}
  \hat  \chi = \sigma(\chi^{*}) \in \U(B\otimes A).
\end{align}

\subsection{$\Cst$-quantum groups
  \cites{Baaj-Skandalis:Unitaires,Soltan-Woronowicz:Remark_manageable,
    Soltan-Woronowicz:Multiplicative_unitaries, Woronowicz:Mult_unit_to_Qgrp}}
\label{sec:multunit_quantum_groups}

We follow the approach of Woronowicz, where a $\Cst$\nb-bialgebra is regarded as
a $\Cst$-quantum group if it arises from a well-behaved multiplicative unitary,
and which includes the locally compact quantum groups or, more precisely, the
reduced $\Cst$-algebraic quantum groups of Kustermans and Vaes
\cite{Kustermans-Vaes:LCQG}.
\begin{definition}[\cite{Baaj-Skandalis:Unitaires}*{Definition 1.1}]
  \label{def:multunit}
  Let~$\Hils$ be a Hilbert space. A unitary $\Multunit\in\U(\Hils\otimes\Hils)$
  is \emph{multiplicative} if it satisfies the \emph{pentagon equation}
  \begin{equation}
    \label{eq:pentagon}
    \Multunit_{23}\Multunit_{12}     = \Multunit_{12}\Multunit_{13}\Multunit_{23}
    \qquad
    \text{in $\U(\Hils\otimes\Hils\otimes\Hils).$}
  \end{equation}
\end{definition}
Technical assumptions such as manageability (\cite{Woronowicz:Mult_unit_to_Qgrp})
or, more generally, modularity (\cite{Soltan-Woronowicz:Remark_manageable}) are
needed in order to construct $\Cst$\nb-bialgebras out of a multiplicative
unitary.

\begin{theorem}[\cites{Soltan-Woronowicz:Remark_manageable,
    Soltan-Woronowicz:Multiplicative_unitaries, Woronowicz:Mult_unit_to_Qgrp}]
  \label{the:Cst_quantum_grp_and_mult_unit}
  Let~$\Hils$ be a separable Hilbert space and
  $\Multunit\in\U(\Hils\otimes\Hils)$ a modular multiplicative unitary.
  \begin{enumerate}
  \item The spaces
    \begin{alignat}{2}
      \label{eq:first_leg_slice}
      A &\defeq \{(\omega\otimes\Id_{\Hils})\Multunit :
      \omega\in\Bound(\Hils)_*\}^\CLS,\\
      \label{eq:second_leg_slice}
      \hat {A} &\defeq \{(\Id_{\Hils}\otimes\omega)\Multunit :
      \omega\in\Bound(\Hils)_*\}^\CLS
    \end{alignat}
    are separable, nondegenerate $\Cst$\nb-subalgebras of~$\Bound(\Hils)$.
  \item We have $\Multunit\in\U(\hat {A}\otimes
    A)\subseteq\U(\Hils\otimes\Hils)$. We write~$\multunit[A]$ for~$\Multunit$
    viewed as a unitary multiplier of $\hat {A}\otimes A$.
  \item There exist unique comultiplications
    \begin{align*}
      \Comult[A]\in\Mor(A,A\otimes A) \quad\text{and} \quad \DuComult[A] \in \Mor(\hat  A,\hat  A\otimes \hat  A)
    \end{align*}
    such that $\multunit[A]$ is a bicharacter for the $\Cst$\nb-bialgebras
    $(A,\Comult[A])$ and $(\hat A,\DuComult[A])$. Explicitly, for all $a\in A$
    and $\hat a\in \hat A$,
    \begin{align} \label{eq:comult} \Delta_{A}(a) &= \corr{$W_{12}(a\otimes
        1)W_{12}^{*}$}{\Multunit_{12}(a\otimes 1)\Multunit_{12}^{*}}, & \hat
      \Delta_{A}(\hat a) &= \corr{\\ $\sigma(W_{12}^{*}(1 \otimes \hat
        a)W_{12})$}{\sigma(\Multunit_{12}^{*}(1 \otimes \hat a)\Multunit_{12})}.
    \end{align}
    These two comultiplications satisfy the cancellation
    condition~\eqref{eq:Cancellations}.
  \item There exists a unique ultraweakly continuous, linear
    anti-automorphism~$\Coinv_A$ of~$A$ with
    \begin{align}
      \label{eq:opp_comult_via_antipode}
      \Comult[A]\circ \Coinv_A &=       \flip\circ(\Coinv_A\otimes \Coinv_A)\circ\Comult[A],
    \end{align}
    where $\flip(x\otimes y)=y\otimes x$. It satisfies $\Coinv_A^2=\Id_A$.
  \end{enumerate}
\end{theorem}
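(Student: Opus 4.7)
The plan is to extract all structure from the pentagon equation~\eqref{eq:pentagon} together with the modularity assumption, as in the Baaj--Skandalis--Woronowicz--So\l{}tan program. First I would prove~(1). Writing $a_\omega \defeq (\omega \otimes \Id_\Hils)\Multunit$ and using the pentagon equation $\Multunit_{12}\Multunit_{13}\Multunit_{23} = \Multunit_{23}\Multunit_{12}$, I would slice on the first leg with two functionals $\omega_1,\omega_2$ and interpret the resulting identity as the product $a_{\omega_1} a_{\omega_2}$ lying in the slice space; this gives multiplicativity, hence $A$ is a norm-closed subalgebra of $\Bound(\Hils)$. The analogous slicing on the third leg yields multiplicativity of $\hat A$. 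Self-adjointness is the subtle point and is where modularity enters: it provides an antiunitary $\widetilde{\Multunit}$ implementing an involution on the set of slices, and combined with the pentagon this forces $A = A^*$, and similarly $\hat A = \hat A^*$. Nondegeneracy $A\cdot \Hils = \Hils$ follows from the density of the slices and the unitarity of $\Multunit$; separability is inherited from~$\Hils$. For~(2) I would observe that each slice $a_\omega$ factorises through $\Multunit$, so that $\Multunit(1\otimes a_\omega) \in \hat A\otimes A$ pointwise, and upgrade this to $\Multunit \in \U(\hat A \otimes A) \subseteq \U(\Hils\otimes \Hils)$ using the cancellation-type density statements of~(1).

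Next I would prove~(3) by \emph{defining} $\Comult[A]$ and $\DuComult[A]$ via the formulas in~\eqref{eq:comult}, namely $\Comult[A](a) = \Multunit_{12}(a\otimes 1)\Multunit_{12}^*$ and $\DuComult[A](\hat a) = \flip(\Multunit_{12}^*(1\otimes \hat a)\Multunit_{12})$. The key verification is that these land in $\Mult(A\otimes A)$ and $\Mult(\hat A\otimes \hat A)$ respectively; this is a direct application of the pentagon equation, slicing in the appropriate legs. Coassociativity of both $\Comult[A]$ and $\DuComult[A]$ is a one-line consequence of the pentagon equation applied twice. The bicharacter identities $(\DuComult[A]\otimes \Id)(\multunit[A]) = \multunit[A]_{13}\multunit[A]_{23}$ and $(\Id\otimes \Comult[A])(\multunit[A]) = \multunit[A]_{12}\multunit[A]_{13}$ are just~\eqref{eq:pentagon} rewritten after conjugating by the appropriate leg. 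Uniqueness of $\Comult[A]$ and $\DuComult[A]$ follows because the slicing formulas determine their values on a norm-dense set. The cancellation conditions~\eqref{eq:Cancellations} for both bialgebras are proved via the standard rearrangement argument: for instance $\Comult[A](A)\cdot(1\otimes A)$ can be computed by slicing three-fold tensor expressions of $\Multunit$ and the pentagon identifies this with $A\otimes A$.

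Finally I would prove~(4), which is the hardest part and the one that genuinely uses modularity rather than just the pentagon. The idea is to introduce a densely defined, closable operator $Q$ on $\Hils$ coming from the manageability/modularity data, and to construct the antipode $\Coinv_A$ via the polar decomposition of a linear map defined on the dense subalgebra of slices $a_\omega$ by $\Coinv_A(a_\omega) \defeq (\omega\otimes \Id)(\Multunit^*)$, suitably rephrased so that it makes sense as an \emph{ultraweakly continuous anti-automorphism}. Anti-multiplicativity is obtained again from the pentagon: it expresses $\Multunit^*$ as a right corepresentation of the opposite bialgebra, so inverting and slicing produces the identity $\Coinv_A(a b) = \Coinv_A(b)\Coinv_A(a)$. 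The intertwining relation~\eqref{eq:opp_comult_via_antipode} is then read off from conjugating the defining formula for $\Comult[A]$ by $\Coinv_A$. The involutivity $\Coinv_A^2 = \Id_A$ and the fact that $\Coinv_A$ is a bijection are consequences of the symmetry between $\Multunit$ and $\Multunit^*$ built into the modularity condition. I expect this last step---controlling the unbounded operator $Q$ well enough to make the polar decomposition argument rigorous and to confirm that $\Coinv_A$ preserves $A$---to be the main technical obstacle; everything before it is essentially algebra driven by the pentagon equation.
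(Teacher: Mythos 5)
First, a point of order: the paper does not prove this theorem. It is stated in the preliminaries as a black-box import from the literature (Woronowicz's \emph{From multiplicative unitaries to quantum groups} and So\l{}tan--Woronowicz), so there is no in-paper argument to compare yours against; any assessment has to be made against those published proofs.

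Measured against them, your sketch has the right architecture --- the pentagon equation \eqref{eq:pentagon} drives all the purely algebraic structure (multiplicativity of the slice spaces, coassociativity, the bicharacter identities), while modularity is what produces adjoints --- but two steps are materially understated. For (2), the membership $\Multunit\in\U(\hat{A}\otimes A)$ is \emph{not} an upgrade of the density statements in (1): in Woronowicz's treatment it is a separate and genuinely hard theorem, proved by showing that $\Multunit$ multiplies $\hat{A}\otimes A$ into itself from both sides, and this again uses the adjoint unitary $\widetilde{\Multunit}$ on $\conj{\Hils}\otimes\Hils$ (which, note, is a \emph{unitary}, not an antiunitary) together with the operators $Q,\hat{Q}$ from the manageability/modularity data; the pentagon and density alone do not suffice. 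For (4), the map $(\omega\otimes\Id_{\Hils})\Multunit\mapsto(\omega\otimes\Id_{\Hils})(\Multunit^{*})$ defines the (generally unbounded, densely defined, only closable) antipode $\kappa$, not the unitary antipode; $\Coinv_A$ is the isometric factor in the polar decomposition $\kappa=\Coinv_A\circ\tau_{i/2}$, where $\tau$ is the scaling group, and neither the anti-multiplicativity of $\Coinv_A$ nor the relation \eqref{eq:opp_comult_via_antipode} is ``read off from conjugating the defining formula for $\Comult[A]$'' or obtained from the pentagon alone --- both are established through the manageability identity linking matrix coefficients of $\Multunit$ and $\widetilde{\Multunit}$. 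You correctly flag the polar decomposition as the main technical obstacle, but the dependence of steps (2) and (4) on the modularity data is heavier, and located in more places, than your sketch suggests. Since the intended reading of this theorem in the paper is as a citation, the honest ``proof'' here is a pointer to Theorem 1.5 and Section 5 of Woronowicz's paper and its modular generalization by So\l{}tan--Woronowicz rather than a from-scratch argument.
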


A \emph{$\Cst$\nb-quantum group} is a $\Cst$\nb-bialgebra $\Qgrp{G}{A}$
constructed from a modular multiplicative unitary as above.

If $(A,\Delta_{A})$ is a reduced $\Cst$-algebraic quantum group in the sense of
Kustermans and Vaes \cite{Kustermans-Vaes:LCQG}, that is, if it satisfies
certain density conditions and carries an analogue of a left and of a right Haar
measure, then one can associate to it a right regular representation
$\Multunit$, which is a modular multiplicative unitary, and identify
$(A,\Delta_{A})$ with the $\Cst$-bialgebra constructed from $\Multunit$ as
above. Thus, $(A,\Delta_{A})$ is a $\Cst$-quantum group.

The \emph{dual} multiplicative unitary is $\DuMultunit\defeq
\Flip\Multunit^*\Flip\in\U(\Hils\otimes\Hils)$, where $\Flip(x\otimes
y)=y\otimes x$. It is modular or manageable if~$\Multunit$ is. The
$\Cst$\nb-quantum group generated by~$\DuMultunit$ is the \emph{dual}
$\DuQgrp{G}{A}$ of $\G$.

Let $\G=(A,\Delta_{A})$ be a $\Cst$-quantum group constructed from modular
multiplicative unitary $\Multunit$ as above and let $C$ be a $\Cst$-algebra. By
\eqref{eq:comult}, a unitary $U \in \U(C\otimes A)$ is a right corepresentation
of $(A,\Delta_{A})$ and a unitary $V \in \U(\hat A \otimes C)$ is a left
corepresentation of $(\hat A,\hat \Delta_{A})$ if and only if
\begin{align} \label{eq:corep-explicit}
  \begin{aligned}
    U_{12}U_{13}\multunit[A]_{23}&= \multunit[A]_{23}U_{12}\quad \text{and} \quad
    \multunit[A]_{12}V_{13}V_{23} = V_{23}\multunit[A]_{12}
  \end{aligned}
\end{align}
in $\U(C \otimes \Comp(\Hils) \otimes A)$ or $\U(\hat A \otimes \Comp(\Hils)
\otimes C)$, respectively.

If $\Qgrp{G}{A}$ and $\Qgrp{H}{B}$ are $\Cst$\nb-quantum groups and $\chi\in
\U(A\otimes B)$ is a bicharacter, then by
\cite{Meyer-Roy-Woronowicz:Homomorphisms}*{Proposition 3.15},
\begin{align}
  \label{eq:antipode-bicharacter}
  (\Coinv_{A} \otimes\Coinv_{B})(\chi)=\chi.
\end{align}

\subsection{Universal quantum groups
  \cites{Soltan-Woronowicz:Multiplicative_unitaries,Meyer-Roy-Woronowicz:Homomorphisms}}
\label{sec:univ_qgr}

The \emph{universal dual quantum group} $\UDuQgrp{G}{A}$ associated to
$\DuQgrp{G}{A}$, introduced in~\cite{Kustermans:LCQG_universal} in the presence
of Haar weights and in~\cite{Soltan-Woronowicz:Multiplicative_unitaries} in the
general framework of modular multiplicative unitaries, is a $\Cst$-bialgebra
that satisfies the cancellation conditions and comes with a universal
bicharacter
\begin{align*}
  \dumaxcorep[A]\in\U(\hat  A^{\univ} \otimes A)
\end{align*}
such that
\begin{align*}
  \hat{A}^{\univ} =  \{ (\Id \otimes \omega)(\dumaxcorep[A]) : \omega \in A'\}^{\mathrm{CLS}}
\end{align*}
and the following universal property holds. For every right corepresentation
$\corep{U}$ of $(A,\Delta_{A})$ on a $\Cst$-algebra $C$, there exists a unique
morphism $\rho \in \Mor(\hat A^{\univ},C)$ such that
\begin{equation}
  \label{eq:univ_prop_dumaxcorep}
  (\rho\otimes\Id_A)\dumaxcorep[A]=\corep{U}  \qquad\text{in~$\U(C \otimes A)$.}
\end{equation}

Taking $\corep{U}=\multunit[A]$, we obtain a \emph{reducing map} $\hat
\Lambda_{A}\in \Mor(\hat A^\univ, \hat A)$ such that $(\hat
\Lambda_{A}\otimes\Id_{A})\dumaxcorep[A]=\multunit[A]$.

Taking $\corep{U}=1 \in \U(\C \otimes A)$, we obtain the \emph{counit} $\hat
\varepsilon^{\univ}_{A}\colon \hat A^{\univ} \to \C$. By
\cite{Soltan-Woronowicz:Multiplicative_unitaries}*{Proposition 31}, it satisfies
\begin{align}
  \label{eq:counit}
  (\hat  \varepsilon^{\univ}_{A} \otimes \Id_{\hat  A^{\univ}}) \circ \hat  \Delta^{\univ}_{A} = \Id_{\hat  A^{\univ}} = 
  ( \Id_{\hat  A^{\univ}} \otimes \hat  \varepsilon^{\univ}_{A}) \circ \hat  \Delta^{\univ}_{A}.
\end{align}

Taking $U=(j \otimes R_{A})(\dumaxcorep[A])$, where $j$ denotes the canonical
$*$-anti-isomorphism from $\hat A^{\univ}$ to the opposite $\Cst$-algebra, we
obtain the \emph{unitary antipode} $\hat \Coinv^{\univ}_{A}$, which we can regard
as a $*$-anti-isomorphism of $\hat A^{\univ}$. By
\cite{Soltan-Woronowicz:Multiplicative_unitaries}*{Proposition 42}, it satisfies
$(\hat\Coinv_{A}^{\univ})^{2} =\Id_{\hat A^{\univ}}$ and \begin{align}
                                                           \label{eq:antipode-universal}
                                                           \hat \Lambda_{A} \circ \hat \Coinv_{A}^{\univ} &= \hat\Coinv_{A}\circ \hat  \Lambda_{A}, &
                                                                                                                                                 \hat \Delta_{A}^{\univ} \circ \hat \Coinv_{A}^{\univ} &= \sigma \circ (\hat \Coinv_{A}^{\univ} \otimes \hat \Coinv_{A}^{\univ})\circ \hat \Delta_{A}^{\univ}.
                                                         \end{align} 

                                                         Similarly, there exist
                                                         unique bicharacters
                                                         \begin{align*}
                                                           \maxcorep[A] \in \U(\hat  A \otimes A^{\univ}) \quad\text{and}\quad \unibich^{A}\in \U(\hat {A}^\univ\otimes A^\univ)
                                                         \end{align*}
                                                         that lift
                                                         $\multunit[A]\in
                                                         \U(\hat {A}\otimes A)$.
                                                         The latter is
                                                         constructed
                                                         in~\cite{Kustermans:LCQG_universal}
                                                         in presence of Haar
                                                         weights and
                                                         in~\cite{Meyer-Roy-Woronowicz:Homomorphisms}
                                                         in the general
                                                         framework of modular
                                                         multiplicative
                                                         unitaries.

                                                         \subsection{Morphisms
                                                           of quantum groups
                                                           \cite{Meyer-Roy-Woronowicz:Homomorphisms}}
                                                         \label{sec:bicharacters_morphisms}
                                                         Let $\Qgrp{G}{A}$ and
                                                         $\Qgrp{H}{B}$ be
                                                         $\Cst$\nb-quantum
                                                         groups with duals
                                                         $\DuQgrp{G}{A}$ and
                                                         $\DuQgrp{H}{B}$,
                                                         respectively. According
                                                         to
                                                         \cite{Meyer-Roy-Woronowicz:Homomorphisms},
                                                         morphisms from $\G$ to
                                                         $\DuG[H]$ can be
                                                         described in terms of
                                                         bicharacters $\chi \in
                                                         \U(\hat A \otimes \hat
                                                         B)$, of morphisms from
                                                         $\UG$ to $\UDG[H]$, and
                                                         in terms of right or
                                                         left quantum group
                                                         homomorphisms.
                                                         \begin{definition}
                                                           \label{def:right_quantum_morphism}
                                                           An
                                                           element~$\Delta_{R}\in\Mor(A,A\otimes\hat
                                                           {B})$ is a
                                                           \emph{right quantum
                                                             group homomorphism}
                                                           from~$\G$
                                                           to~$\DuG[H]$ if it
                                                           satifies
                                                           \begin{align}
                                                             \label{eq:right_homomorphism}
                                                             \begin{aligned}
                                                               (\Comult[A]\otimes\Id_{\hat
                                                                 {B}})\circ\Delta_{R}
                                                               =
                                                               (\Id_{A}\otimes\Delta_{R})\circ\Comult[A], \\
                                                               (\Id_{A}\otimes\DuComult[B])\circ\Delta_{R}
                                                               =
                                                               (\Delta_{R}\otimes\Id_{\hat
                                                                 {B}})\circ\Delta_{R}.
                                                             \end{aligned}
                                                           \end{align}           
                                                           A \emph{left quantum
                                                             group homomorphism}
                                                           from~$\G$
                                                           to~$\DuG[H]$ is an
                                                           element
                                                           $\Delta_L\in\Mor(A,\hat
                                                           {B}\otimes A)$ that
                                                           satisfies
                                                           \begin{align}
                                                             \label{eq:left_homomorphism}
                                                             \begin{aligned}
                                                               (\Id_{\hat
                                                                 {B}}\otimes\Comult[A])\circ\Delta_{L}
                                                               =
                                                               (\Delta_{L}\otimes\Id_{A})\circ\Comult[A], \\
                                                               (\DuComult[B]\otimes\Id_{A})\circ\Delta_{L}
                                                               = (\Id_{\hat
                                                                 {B}}\otimes\Delta_{L})\circ\Delta_{L}.
                                                             \end{aligned}
                                                           \end{align}
                                                           A morphism $f\in
                                                           \Mor(A^{\univ},\hat{B}^{\univ})$
                                                           of $\Cst$-bialgebras
                                                           is a \emph{morphism}
                                                           from $\G^{\univ}$ to
                                                           $\DuG[H]^{\univ}$.
                                                         \end{definition}
                                                         The following theorem
                                                         summarises some of the
                                                         main results
                                                         of~\cite{Meyer-Roy-Woronowicz:Homomorphisms}.
                                                         \begin{theorem}
                                                           \label{the:equivalent_notion_of_homomorphisms}
                                                           There are natural
                                                           bijections between
                                                           the following sets:
                                                           \begin{enumerate}
                                                           \item bicharacters
                                                             $\bichar\in\U(\hat
                                                             {A}\otimes\hat
                                                             {B})$ from~$\G$
                                                             to~$\DuG[H]$;
                                                           \item bicharacters
                                                             $\Dubichar\in\U(\hat
                                                             {B}\otimes\hat
                                                             {A})$ from~$\G[H]$
                                                             to~$\DuG$;
                                                           \item right quantum
                                                             group homomorphisms
                                                             $\Delta_R\in
                                                             \Mor(A, A\otimes
                                                             \hat {B})$;
                                                           \item left quantum
                                                             group homomorphisms
                                                             $\Delta_L \in
                                                             \Mor(A, \hat
                                                             {B}\otimes A)$;
                                                           \item morphisms $f
                                                             \in \Mor(A^{\univ},
                                                             \hat B^{\univ})$
                                                             from $\G^{\univ}$
                                                             to
                                                             $\DuG[H]^{\univ}$;
                                                           \item morphisms $\hat
                                                             f\in
                                                             \Mor(B^{\univ},
                                                             \hat A^{\univ})$
                                                             from
                                                             $\G[H]^{\univ}$ to
                                                             $\DuG^{\univ}$;
                                                           \item bicharacters
                                                             $\chi^{\univ} \in
                                                             \U(\hat A^{\univ}
                                                             \otimes \hat
                                                             B^{\univ})$.
                                                           \end{enumerate}
                                                           The mutually
                                                           corresponding objects
                                                           are related by the
                                                           following equations:
                                                           \begin{gather} \label{eq:bicharacters}
                                                             \begin{aligned}
                                                               \hat \chi &=
                                                               \sigma(\chi)^{*},
                                                               & \chi &= (\hat
                                                               \Lambda_{A}
                                                               \otimes \hat
                                                               \Lambda_{B})\chi^{\univ},
                                                             \end{aligned} \\
                                                             \label{eq:morphism-bicharacter}
                                                             \chi^{\univ} =  (\Id_{\hat  A^{\univ}} \otimes f)(\unibich^{A}) =  (\hat  f \otimes \Id_{\hat  B^{\univ}})(\hat  \unibich^{B}), \\
                                                             \label{eq:left-right-homomorphisms}
                                                             \begin{aligned}
                                                               (\Id_{\hat {A}}
                                                               \otimes
                                                               \Delta_R)\multunit[A]
                                                               &=
                                                               \multunit[A]_{12}\bichar_{13},
                                                               & (\Id_{\hat {A}}
                                                               \otimes
                                                               \Delta_L)\multunit[A]&=
                                                               \bichar_{12}\multunit[A]_{13}.
                                                             \end{aligned}
                                                           \end{gather}
                                                           % The first bijection
                                                           % maps a
                                                           % bicharacter~$\bichar$
                                                           % to its dual
                                                           % $\Dubichar\in\U(\hat
                                                           % {B}\otimes\hat
                                                           % {A})$.
                                                           % A
                                                           % bicharacter~$\bichar$
                                                           % and a right quantum
                                                           % group
                                                           % homomorphism~$\Delta_R$
                                                           % determine each
                                                           % other
                                                           % uniquely via
                                                           % \begin{equation}
                                                           %   \label{eq:def_V_via_right_homomorphism}
                                                           %   (\Id_{\hat {A}}
                                                           %   \otimes
                                                           %   \Delta_R)\multunit[A]=
                                                           %   \multunit[A]_{12}\bichar_{13}
                                                           %   \qquad\text{in~$\U(\hat
                                                           %   {A}\otimes A
                                                           %   \otimes\hat
                                                           %   {B})$.}
                                                           % \end{equation}
                                                           % Similarly, a
                                                           % bicharacter~$\bichar\in\U(\hat
                                                           % {A}\otimes\hat
                                                           % {B})$ and a left
                                                           % quantum group
                                                           % homomorphisms~$\Delta_L$
                                                           % is also uniquely
                                                           % determined via
                                                           % \begin{equation}
                                                           %   \label{eq:def_V_via_left_homomorphism}
                                                           %   (\Id_{\hat {A}}
                                                           %   \otimes
                                                           %   \Delta_L)\multunit[A]
                                                           %   =
                                                           %   \bichar_{12}\multunit[A]_{13}
                                                           %   \qquad\text{in~$\U(\hat
                                                           %   {A}\otimes\hat
                                                           %   {B}\otimes A)$.}
                                                           % \end{equation}
                                                           % The universal
                                                           % bicharacter
                                                           % $\chi^{\univ}$
                                                           % corresponding to
                                                           % $\chi$ is the
                                                           % unique lift
                                                           % satisfying $(\hat
                                                           % \Lambda_{A} \otimes
                                                           % \hat
                                                           % \Lambda_{B})\chi^{\univ}=\chi$.
                                                           % The corresponding
                                                           % morphisms
                                                           % $\Cst$\nb-bialgebras
                                                           % $f\colon A^{\univ}
                                                           % \to \hat B^{\univ}$
                                                           % and $\hat f \colon
                                                           % B^{\univ} \to \hat
                                                           % A^{\univ}$ satisfy
                                                           % \begin{align}
                                                           %   \label{eq:morphism-bicharacter}
                                                           %   (\Id_{\hat
%    A^{\univ}} \otimes f)(\unibich^{A}) = \chi^{\univ} = (\hat f \otimes \Id_{\hat B^{\univ}})(\hat \unibich^{B}).
% \end{align}
 \end{theorem}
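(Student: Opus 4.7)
The plan is to take bicharacters $\chi \in \U(\hat A \otimes \hat B)$ as a central hub and build a web of bijections around it, exploiting that the multiplicative unitary $\multunit[A] \in \U(\hat A \otimes A)$ simultaneously encodes both $A$ and $\hat A$ via its leg slices \eqref{eq:first_leg_slice} and \eqref{eq:second_leg_slice}.

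The bijection (1)$\leftrightarrow$(2) is immediate: setting $\hat\chi \defeq \sigma(\chi)^{*}$ as in \eqref{eq:dual_bicharacter} and using that the pentagon-like corepresentation equations \eqref{eq:corep-left} and \eqref{eq:corep-right} swap under $\sigma$ and taking adjoints, one checks directly that $\hat\chi$ is again a bicharacter, and the operation is involutive. For (1)$\leftrightarrow$(3) I would use the defining formula $(\Id_{\hat A} \otimes \Delta_R)\multunit[A] = \multunit[A]_{12}\chi_{13}$ from \eqref{eq:left-right-homomorphisms}: given $\chi$, since the second-leg slices of $\multunit[A]$ are dense in $A$, this formula uniquely determines a map on $A$; showing it lands in $\Mult(A\otimes \hat B)$ and satisfies \eqref{eq:right_homomorphism} reduces, after slicing, to the bicharacter identities for $\chi$ combined with the pentagon equation for $\multunit[A]$. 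Conversely, given $\Delta_R$, the element $\multunit[A]_{12}^{*}(\Id \otimes \Delta_R)\multunit[A]$ lies in a triple tensor product, and one verifies that the middle leg is trivial, yielding $\chi \in \U(\hat A \otimes \hat B)$; the two corepresentation identities for $\chi$ correspond to the two conditions in \eqref{eq:right_homomorphism}. The bijection (1)$\leftrightarrow$(4) is symmetric, using the other formula in \eqref{eq:left-right-homomorphisms}.

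For the universal level, I would use the defining universal property of $\unibich^{A} \in \U(\hat A^\univ \otimes A^\univ)$ and its counterpart $\hat\unibich^{B}$. Given a bialgebra morphism $f \in \Mor(A^\univ, \hat B^\univ)$, set $\chi^\univ \defeq (\Id \otimes f)(\unibich^A)$; since $\unibich^A$ is a bicharacter and $f$ intertwines the comultiplications, a direct computation shows $\chi^\univ$ is a bicharacter. Conversely, a bicharacter $\chi^\univ$ viewed as a left corepresentation of $(\hat A^\univ, \hat\Delta_A^\univ)$ factors uniquely as $(\Id \otimes f)(\unibich^A)$ by universality, and checking that the right-corepresentation leg of $\chi^\univ$ forces $f$ to intertwine comultiplications gives (5)$\leftrightarrow$(7). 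The equivalence (6)$\leftrightarrow$(7) follows symmetrically from the universal property of $\hat\unibich^{B}$, and the passage (7)$\to$(1) is obtained by applying the reducing maps as in \eqref{eq:bicharacters}.

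The main obstacle is the lift (1)$\to$(7), that is, showing every reduced bicharacter lifts to a universal one. Uniqueness is easy since $\hat\Lambda_A$ and $\hat\Lambda_B$ have dense range, but existence requires genuine input from the theory of modular multiplicative unitaries: one must promote the corepresentation of $\hat A$ on $\hat B$ encoded by $\chi$ to a corepresentation of the universal completion, which is where the universal property of $\hat A^\univ$ as constructed in \cite{Soltan-Woronowicz:Multiplicative_unitaries} enters crucially. Once this lift is in place, the remaining compatibilities follow from uniqueness statements and the density of slices. The remaining content is essentially careful bookkeeping with leg-numbered identities for $\multunit[A]$, $\chi$, and their universal companions.
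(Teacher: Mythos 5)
First, a caveat: the paper gives no proof of this theorem. It is stated explicitly as a summary of the main results of \cite{Meyer-Roy-Woronowicz:Homomorphisms}, so there is no in-text argument to compare yours against; the comparison below is with the strategy of that reference, which is also the one your sketch follows.

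Your architecture --- using the reduced bicharacter $\chi$ as the hub, obtaining (2) by $\hat\chi=\sigma(\chi)^{*}$, characterizing (3) and (4) through \eqref{eq:left-right-homomorphisms}, handling (5)--(7) via the universal properties of $\unibich^{A}$ and $\hat\unibich^{B}$, and isolating the lift $(1)\to(7)$ as the crux --- matches the cited reference. However, two existence steps are asserted rather than proved, and they are where the real work lies. For $(1)\to(3)$: the relation $(\Id_{\hat A}\otimes\Delta_R)\multunit[A]=\multunit[A]_{12}\chi_{13}$ determines $\Delta_R$ on the dense set of second-leg slices of $\multunit[A]$, which settles uniqueness, but it does not by itself produce a well-defined, multiplicative, $*$-preserving, nondegenerate map $A\to\Mult(A\otimes\hat B)$; ``reducing after slicing to the bicharacter identities'' only yields identities of linear functionals. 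The reference instead constructs $\Delta_R$ by conjugation with $\chi$ in a Heisenberg representation, $(\pi\otimes\Id_{\hat B})\Delta_R(a)=\chi_{\hat\pi 2}(\pi(a)\otimes 1)\chi_{\hat\pi 2}^{*}$ (the formula generalized in Lemma \ref{lemma:delta-implementation} of the present paper), which is manifestly a $*$-homomorphism but then requires a separate argument that its range lies in $\Mult(A\otimes\hat B)$ rather than merely in $\Bound(\Hils)\otimes\hat B$; your converse extraction of $\chi$ from $\Delta_R$ by ``verifying the middle leg is trivial'' is likewise a nontrivial commutation argument, not bookkeeping. Second, the lift $(1)\to(7)$ is exactly the point you defer to ``genuine input from the theory'' without supplying it; in the reference this is a leg-by-leg lifting argument of the kind reproduced in the proof of Lemma \ref{lemma:pairs-universal-equivalence} here. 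As written, your text is a correct road map of the theorem and of where its difficulties sit, rather than a proof of it.
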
 
 % The dual bicharacter~$\Dubichar\in\U(\hat {B}\otimes\hat {A})$, describes the dual right quantum group 
 % homomorphism  $\hat {\Delta}_{R}\in\Mor(B, B\otimes\hat {A})$ from  $\G[H]$ to~$\DuG$. Thus~$\Delta_R$ 
 % and~$\hat {\Delta}_{R}$ are in bijection as~$\bichar$ and~$\Dubichar$ are. A similar statement holds 
 % for~$\Delta_{L}$ and, its dual, ~$\hat {\Delta}_{L}\in\Mor(B,\hat {A}\otimes B)$.
We denote the bicharactes $\chi$ and $\chi^{\univ}$ associated to a morphism $f \colon \UG \to \UDG[H]$ by $W^{f}$ and $\unibich^{f}$, respectively, so that $W^{A} = W^{\Id_{A}}$ and $\unibich^{A} = \unibich^{\Id_{A}}$.

 By \cite{Meyer-Roy-Woronowicz:Homomorphisms}*{Proposition 3.15}, every bicharacter $\chi \in \U(\hat A \otimes \hat B)$ satisfies the relation $ (\hat R_{A} \otimes \hat R_{B})(\chi) = \chi$.  Using \eqref{eq:antipode-universal} and uniqueness of the lift of a bicharacter, we get:
\begin{align}
  \label{eq:bicharacter-antipode-universal}
   (\hat  R^{\univ}_{A} \otimes \hat  R^{\univ}_{B})(\chi^{\univ}) &= \chi^{\univ}.
\end{align}
The bicharacter relations \eqref{eq:corep-left} and \eqref{eq:corep-right} together with \eqref{eq:counit} imply
\begin{align} \label{eq:bicharacter-counit}
   (\hat \varepsilon^{\univ}_{A} \otimes \Id_{\hat 
    B^{\univ}})(\chi^{\univ}) &= 1_{\hat  B^{\univ}}, & (\Id_{\hat 
    A^{\univ}} \otimes \hat  \varepsilon^{\univ}_{B})(\chi^{\univ}) &=
  1_{\hat  A^{\univ}}.  
\end{align}
Combining these relations with \eqref{eq:morphism-bicharacter}, we conclude that the morphism $f$ corresponding to $\chi$ and $\chi^{\univ}$ intertwines the unitary antipodes and counits,
\begin{align}
  \label{eq:morphism-antipode-counit}
  f \circ \hat \Coinv^{\univ}_{A}  &=  \hat  \Coinv^{\univ}_{B} \circ f, &
\hat \varepsilon^{\univ}_{B} \circ f &= \hat  \varepsilon^{\univ}_{A}.
\end{align}
The corresponding left and the right quantum group homomorphisms make the following diagram commute,
\begin{align} \label{eq:morphism-deltal-deltar-universal}
  \xymatrix@C=65pt@R=20pt{
    \hat  B^{\univ}\otimes A^{\univ} \ar[d]_{\hat \Lambda_{B} \otimes \Lambda_{A}}  &
\ar[l]_(0.45){(f\otimes \Id_{A^{\univ}})\Delta_{A}^{\univ}}    A^{\univ}\ar[r]^(0.45){(\Id_{A^{\univ}}\otimes f)\Delta_{A}^{\univ}} \ar[d]_{\Lambda_{A}} &
 A^{\univ} \otimes     \hat  B^{\univ} \ar[d]^{\Lambda_{A} \otimes \hat \Lambda_{B}}\\
 \hat  B\otimes A & \ar[l]_(0.45){\Delta_{L}}
    A\ar[r]^(0.45){\Delta_{R}} &  A \otimes A.
  }
\end{align}
In particular, this diagram and the relations \eqref{eq:antipode-universal}  and \eqref{eq:morphism-antipode-counit}  imply
\begin{align}
  \label{eq:antipode-morphisms}
 \Delta_{R} \circ\Coinv_{A} = \sigma\circ (\Coinv_{\hat  B} \otimes\Coinv_{A})\circ\Delta_{L}.
\end{align}

\subsection{Coactions  of $\mathrm{C}^{*}$-quantum groups}
\label{sec:action}

A \emph{(right) coaction} of a $\Cst$-bialgebra $(A,\Delta_{A})$ on a $\Cst$\nb-algebra~$C$ is a morphism $\gamma\in\Mor(C, C\otimes A)$ satisfying
  \begin{align}
    \label{eq:coaction}
    (\Id_C\otimes\Comult[A])\circ\gamma = (\gamma\otimes\Id_A)\circ\gamma.
  \end{align}
  Note that we do not assume injectivity of $\gamma$.  A morphism $\pi$ between $\Cst$-algebras $C$ and $D$ with coactions $\gamma$ and $\delta$ of $(A,\Delta_{A})$ is \emph{equivariant} if $\delta \circ \pi= (\pi\otimes \Id_{A}) \circ \gamma$.

Following \cite{Baaj-Skandalis-Vaes:Non-semi-regular}, we call  a coaction $(C,\gamma)$ \emph{(strongly) continuous} if it satisfies the  the \emph{Podle\'s condition}
    \begin{equation}
     \label{eq:Podles_cond}
     \gamma(C)\cdot(1_C\otimes A)=C\otimes A.
   \end{equation}
Note that every such coaction is \emph{weakly continuous} in the sense that
 \begin{align}
   \label{eq:action-slices}
    \{(\Id_{C} \otimes
        \omega)(\gamma(C)) : \omega \in A'\}^{\text{CLS}}= C.
 \end{align}
 The following straighforward result is well known:
 \begin{lemma}
   Let $\Qgrp{G}{A}$ be a $\Cst$-quantum group with universal $\Cst$-bialgebra
   $(A^{\univ},\Delta^{\univ}_{A})$.  Then for every coaction $(C,\gamma)$ of
   $(A^{\univ},\Delta^{\univ}_{A})$, the following conditions are equivalent:
   \begin{enumerate}
   \item  $\gamma$ is injective;
   \item  $\gamma$ is weakly continuous;
   \item  $(\Id_{C} \otimes \varepsilon^{\univ}_{A})\gamma = \corr{$\Id_{C})$}{\Id_C}$.
   \end{enumerate}
If $\gamma$ is continuous, then (1)--(3) hold.
\end{lemma}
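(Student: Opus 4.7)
The plan is to prove $(3) \Rightarrow (1)$, $(3)\Rightarrow(2)$, $(1)\Rightarrow(3)$, and $(2)\Rightarrow(3)$, and then deduce the last statement from continuity implying weak continuity.

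The implications $(3)\Rightarrow(1)$ and $(3)\Rightarrow(2)$ are essentially immediate: if $(\Id_{C}\otimes \varepsilon^{\univ}_{A})\gamma=\Id_{C}$, then $\Id_{C}\otimes\varepsilon^{\univ}_{A}$ is a left inverse to $\gamma$ (giving injectivity), and since $\varepsilon^{\univ}_{A}\in (A^{\univ})'$, every $c\in C$ is already a slice of $\gamma(c)$ by this functional, so the slice space from \eqref{eq:action-slices} (taken with respect to $A^{\univ}$) exhausts $C$.

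For $(1)\Rightarrow (3)$, I would use the standard counit trick. By coassociativity \eqref{eq:coaction} and the counit relation \eqref{eq:counit},
\begin{align*}
  \gamma\circ (\Id_{C}\otimes \varepsilon^{\univ}_{A})\circ \gamma
  &= (\Id_{C}\otimes \Id_{A^{\univ}}\otimes \varepsilon^{\univ}_{A})\circ(\gamma\otimes \Id_{A^{\univ}})\circ\gamma\\
  &= (\Id_{C}\otimes \Id_{A^{\univ}}\otimes \varepsilon^{\univ}_{A})\circ(\Id_{C}\otimes \Delta^{\univ}_{A})\circ\gamma \\
  &= \bigl(\Id_{C}\otimes (\Id_{A^{\univ}}\otimes \varepsilon^{\univ}_{A})\Delta^{\univ}_{A}\bigr)\circ \gamma = \gamma.
\end{align*}
Injectivity of $\gamma$ then forces $(\Id_{C}\otimes \varepsilon^{\univ}_{A})\circ\gamma = \Id_{C}$.

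For $(2)\Rightarrow (3)$, I would argue by approximation. Given $c\in C$, weak continuity provides nets $c_{i}\in C$ and $\omega_{i}\in (A^{\univ})'$ with $c = \lim_{i}\sum (\Id_{C}\otimes \omega_{i})\gamma(c_{i})$. Applying $\gamma$ and using coassociativity to rewrite $\gamma\circ(\Id_{C}\otimes \omega_{i}) = (\Id_{C}\otimes \Id_{A^{\univ}}\otimes\omega_{i})\circ(\Id_{C}\otimes\Delta^{\univ}_{A})\circ \gamma$, followed by $\Id_{C}\otimes\varepsilon^{\univ}_{A}$ and the counit identity $(\varepsilon^{\univ}_{A}\otimes\Id_{A^{\univ}})\Delta^{\univ}_{A}=\Id_{A^{\univ}}$, collapses the expression back to $\lim_{i}\sum (\Id_{C}\otimes \omega_{i})\gamma(c_{i}) = c$.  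Hence $(\Id_{C}\otimes \varepsilon^{\univ}_{A})\gamma(c)=c$.

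Finally, continuity $\Rightarrow$ weak continuity follows directly from the Podle\'s condition $\gamma(C)\cdot (1_{C}\otimes A^{\univ})=C\otimes A^{\univ}$: slicing both sides by any $\omega\in (A^{\univ})'$ and absorbing multipliers from $A^{\univ}$ into $\omega$ exhibits every $c\in C$ as a limit of elements $(\Id_{C}\otimes \omega')\gamma(c')$. There is no real obstacle here; the only point requiring care is to work with $\omega\in (A^{\univ})'$ rather than $A'$ throughout, since the coaction is of $(A^{\univ},\Delta^{\univ}_{A})$.
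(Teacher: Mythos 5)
Your proposal is correct and follows essentially the same route as the paper: the paper also treats $(1)\Leftrightarrow(3)$ as the straightforward counit/injectivity argument, notes $(3)\Rightarrow(2)$ because $\varepsilon^{\univ}_{A}$ is itself an admissible slicing functional, and proves $(2)\Rightarrow(3)$ via exactly your identity $(\Id_{C}\otimes\varepsilon^{\univ}_{A})\gamma(\Id_{C}\otimes\omega)\gamma=(\Id_{C}\otimes\omega)\gamma$ for $\omega\in(A^{\univ})'$, so that $(\Id_{C}\otimes\varepsilon^{\univ}_{A})\gamma$ is the identity on a dense set of slices. Your write-up merely spells out the steps the paper leaves implicit.
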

\begin{proof}
  The equivalence of (1) and (3) is straightforward, and clearly, (3) implies (2). For the converse, observe that $(\Id_{C} \otimes \varepsilon_{A}^{\univ})\gamma(\Id_{C}\otimes \omega)\gamma =(\Id_{C}\otimes \omega)\gamma$ if $\omega\in (A^{\univ})'$.
\end{proof}

 Suppose now that $\G=(A,\Delta_{A})$ is a $\Cst$-quantum group.
 \begin{definition}
   \label{def:cont_coaction}
   We call a $\Cst$-algebra with a  continuous coaction of $(A,\Delta_{A})$ or $(A^{\univ},\Delta^{\univ}_{A})$  a
   \emph{$\G$\nb-$\Cst$-algebra} or  \emph{$\G^{\univ}$\nb-$\Cst$-algebra}, respectively, and denote by $\Cstcat(\G)$ and $\Cstcat(\G^{\univ})$, respectively, the categories formed by all such coactions and equivariant morphisms. 
 \end{definition}
Note that in case of $(A,\Delta_{A})$, we do not assume injectivity here.

\section{Braided commutation relations}
\label{sec:commutation-universal}

Let $\Qgrp{G}{A}$ and $\Qgrp{H}{B}$ be $\Cst$\nb-quantum groups.  To define the twisted maximal tensor product of a $\G$-$\Cst$-algebra and an $\G[H]$-$\Cst$-algebra with respect to a morphism from $\G$ to $\DuG[H]$, we need to consider certain braided commutation relations for representations of $A$ and $B$ which generalize the Heisenberg and anti-Heisenberg commutation relations considered in \cite{Meyer-Roy-Woronowicz:Twisted_tensor}.  We begin with pairs of representations that lift to the universal $\Cst$-algebras $A^{\univ}$ and $B^{\univ}$ and interprete them as 2-cells in a tricategory, where the vertical and horizontal compositions account for various constructions that will come up later.

\subsection{Braided-commuting representations}
 Denote by  $\DuQgrp{G}{A}$ and $\DuQgrp{H}{B}$ the duals of $\G$ and~$\G[H]$, and
let $f,g$ be morphisms from $\UG$ to $\UDG[H]$.  Denote by
\begin{align*}
  \unibich^{A} &\in \U(\hat{A}^{\univ} \otimes A^{\univ}), & \unibich^{B} &\in \U(\hat{B}^{\univ} \otimes B^{\univ}), &\unibich^{f},\unibich^{g} \in \U(\hat{A}^{\univ} \otimes \hat{B} ^{\univ})
\end{align*}
the universal bicharacters associated to $A$, $B$, $f$ and $g$, respectively, see  Subsection \ref{sec:bicharacters_morphisms},  by $W^{A},W^{B},W^{f},W^{g}$ their reduced counterparts, and by $\sigma$ the flip on a minimal tensor product of $\Cst$-algebras.
\begin{lemma} \label{lemma:pairs-universal-equivalence} 
Let $\alpha$ and $\beta$ be representations of $A^{\univ}$ and $B^{\univ}$, respectively, on the same Hilbert space $\Hils$. Then the following relations are equivalent:
  \begin{align} \label{eq:pairs-universal}
     \unibich^{f}_{12}\unibich^{A}_{1\alpha}\unibich^{B}_{2\beta} &=
    \unibich^{B}_{2\beta}\unibich^{A}_{1\alpha}\unibich^{g}_{12}  
&&\text{in }\U(\hat  A^{\univ} \otimes \hat  B^{\univ} \otimes \Comp(\Hils)), \\ \label{eq:pairs-universal-weak}
  W^{f}_{12}\maxcorep[A]_{1\alpha}\maxcorep[B]_{2\beta} &=
    \maxcorep[B]_{2\beta}\maxcorep[A]_{1\alpha}W^{g}_{12}  
    &&\text{in } \U(\hat  A \otimes \hat  B \otimes \Comp(\Hils)), \\
 \label{eq:pairs-universal-deltal-deltar}
(f \otimes \alpha)  \Delta^{\univ}_{A}(a) &= \unibich^{B}_{1\beta}(g \otimes \alpha)\sigma\Delta^{\univ}_{A}(a)(\unibich^{B}_{1\beta})^{*} &&\text{for all } a\in A^{\univ},
\end{align}
where $\unibich^{A}_{1\alpha} = ((\Id_{\hat{A}^{\univ}}\otimes \alpha)\unibich^{A})_{13}$, $\unibich^{B}_{2\beta} = ((\Id_{\hat{B}^{\univ}}\otimes \beta)\unibich^{B})_{23}$ et cetera.
\end{lemma}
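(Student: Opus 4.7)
My strategy is to establish the chain $(3) \Leftrightarrow (1) \Leftrightarrow (2)$ by a common rewriting-and-slicing argument, applied first at the universal level and then at the reduced level, with a lifting step to complete $(2) \Rightarrow (1)$.

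The key algebraic input is that $\unibich^{A}$ is a right corepresentation of $A^{\univ}$ in its second leg, $(\Id \otimes \Delta^{\univ}_A)\unibich^{A} = \unibich^{A}_{12}\unibich^{A}_{13}$, together with $\unibich^{f} = (\Id_{\hat{A}^{\univ}} \otimes f)\unibich^{A}$ from \eqref{eq:morphism-bicharacter}. Applying $\Id \otimes f \otimes \alpha$ to this identity yields $\unibich^{f}_{12}\unibich^{A}_{1\alpha} = (\Id_{\hat{A}^{\univ}} \otimes (f\otimes\alpha)\Delta^{\univ}_A)(\unibich^{A})$, while applying $\Id \otimes g \otimes \alpha$ to its flip $\unibich^{A}_{13}\unibich^{A}_{12} = (\Id \otimes \sigma\Delta^{\univ}_A)\unibich^{A}$ yields $\unibich^{A}_{1\alpha}\unibich^{g}_{12} = (\Id_{\hat{A}^{\univ}} \otimes (g\otimes\alpha)\sigma\Delta^{\univ}_A)(\unibich^{A})$. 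Since $\unibich^{B}_{2\beta}$ has trivial first leg, (1) simplifies to
\[
(\Id_{\hat{A}^{\univ}} \otimes \Phi_L)(\unibich^{A}) = (\Id_{\hat{A}^{\univ}} \otimes \Phi_R)(\unibich^{A}),
\]
where $\Phi_L, \Phi_R \colon A^{\univ} \to \Mult(\hat{B}^{\univ} \otimes \Comp(\Hils))$ are the two nondegenerate $*$-homomorphisms
\[
\Phi_L(a) = (f \otimes \alpha)\Delta^{\univ}_A(a), \qquad \Phi_R(a) = \unibich^{B}_{1\beta}(g \otimes \alpha)\sigma\Delta^{\univ}_A(a)(\unibich^{B}_{1\beta})^{*}.
\]
Slicing the first leg of $\unibich^{A}$ by functionals on $\hat{A}^{\univ}$ gives evaluations spanning $A^{\univ}$ densely (the second-leg analogue of \eqref{eq:second_leg_slice}), so the displayed equality is equivalent to $\Phi_L(a) = \Phi_R(a)$ for every $a \in A^{\univ}$, which is (3). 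The slicing is reversible, establishing $(1) \Leftrightarrow (3)$.

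The implication $(1) \Rightarrow (2)$ is then immediate: applying $\hat{\Lambda}_A \otimes \hat{\Lambda}_B \otimes \Id_{\Comp(\Hils)}$ and using $(\hat{\Lambda}_A \otimes \Id)\unibich^{A} = \maxcorep[A]$, $(\hat{\Lambda}_B \otimes \Id)\unibich^{B} = \maxcorep[B]$, and $(\hat{\Lambda}_A \otimes \hat{\Lambda}_B)\unibich^{f} = W^{f}$ (and likewise for $g$) from \eqref{eq:bicharacters} converts (1) into (2). For the converse $(2) \Rightarrow (1)$, I would repeat the same rewriting at the reduced level, using the right-corepresentation identity $(\Id \otimes \Delta^{\univ}_A)\maxcorep[A] = \maxcorep[A]_{12}\maxcorep[A]_{13}$ and $W^{f} = (\Id \otimes (\hat{\Lambda}_B \circ f))\maxcorep[A]$, then slice the first leg of $\maxcorep[A]$ (whose slices also span $A^{\univ}$) to obtain $(\hat{\Lambda}_B \otimes \Id)\Phi_L(a) = (\hat{\Lambda}_B \otimes \Id)\Phi_R(a)$ for every $a \in A^{\univ}$. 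I would then lift this to $\Phi_L = \Phi_R$, and so to (3) and (1), using Theorem~\ref{the:equivalent_notion_of_homomorphisms}: the morphisms $f, g$ are in bijection with their reductions $\hat{\Lambda}_B \circ f, \hat{\Lambda}_B \circ g$, and $\Ad(\unibich^{B}_{1\beta})$ is the unique universal counterpart of $\Ad(\maxcorep[B]_{1\beta})$.

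The main obstacle is this final lifting step: verifying that the reduced-level equality of $\Phi_L$ and $\Phi_R$ forces their universal-level equality despite $\hat{\Lambda}_B$ not being injective in general. This is not an automatic injectivity statement about $\hat{\Lambda}_B \otimes \Id_{\Comp(\Hils)}$ but a structural fact about $\Phi_L$ and $\Phi_R$ being assembled from universal ingredients $(f, g, \alpha, \beta, \unibich^{B})$ whose reduced counterparts determine them uniquely via the bijective correspondence of Theorem~\ref{the:equivalent_notion_of_homomorphisms}.
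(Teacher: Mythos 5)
Your arguments for \eqref{eq:pairs-universal}$\Leftrightarrow$\eqref{eq:pairs-universal-deltal-deltar} (rewrite both sides of \eqref{eq:pairs-universal} via the corepresentation identity for $\unibich^{A}$ and slice the first leg) and for \eqref{eq:pairs-universal}$\Rightarrow$\eqref{eq:pairs-universal-weak} (apply $\hat\Lambda_{A}\otimes\hat\Lambda_{B}\otimes\Id$) are correct and essentially identical to the paper's. The gap is in \eqref{eq:pairs-universal-weak}$\Rightarrow$\eqref{eq:pairs-universal}, exactly at the point you flag as the main obstacle, and your proposed resolution does not close it. Knowing that $f$ is determined by $\hat\Lambda_{B}\circ f$ and that $\unibich^{B}$ is the unique lift of $\maxcorep[B]$ tells you that each \emph{ingredient} of $\Phi_L$ and $\Phi_R$ has a canonical universal counterpart; it does not tell you that an identity between the reduced composites forces the identity between the universal composites. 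The two sides of \eqref{eq:pairs-universal-deltal-deltar} are merely nondegenerate $*$-homomorphisms $A^{\univ}\to\Mult(\hat B^{\univ}\otimes\Comp(\Hils))$, the map $\hat\Lambda_{B}\otimes\Id$ has a kernel in general, and the bijections of Theorem \ref{the:equivalent_notion_of_homomorphisms} concern morphisms of quantum groups (equivalently, bicharacters); they provide no uniqueness-of-lift principle for such composite maps.

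What actually closes the gap --- and what the paper does --- is to keep the relation in unitary form and observe that both sides are \emph{corepresentations} in the legs being lifted, so that uniqueness of lifts of corepresentations (\cite{Meyer-Roy-Woronowicz:Homomorphisms}*{Lemma 4.13}) applies, one leg at a time. Concretely, setting $\dumaxcorep[f]=(\Id_{\hat A^{\univ}}\otimes\hat\Lambda_{B})(\unibich^{f})$ and $\dumaxcorep[g]=(\Id_{\hat A^{\univ}}\otimes\hat\Lambda_{B})(\unibich^{g})$, the unitaries $\dumaxcorep[f]_{12}\unibich^{A}_{1\alpha}$ and $(\Id_{\hat A^{\univ}}\otimes\Ad(\maxcorep[B]_{1\beta}))(\unibich^{A}_{1\alpha}\dumaxcorep[g]_{12})$ are both left corepresentations of $\hat A^{\univ}$ in the first leg whose images under $\hat\Lambda_{A}\otimes\Id\otimes\Id$ agree by \eqref{eq:pairs-universal-weak}; hence they are equal, which lifts the first leg from $\hat A$ to $\hat A^{\univ}$. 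A second application of the same principle in the remaining leg (as in \cite{Meyer-Roy-Woronowicz:Homomorphisms}*{Proposition 4.14}) lifts $\hat B$ to $\hat B^{\univ}$. The verification that the relevant products are corepresentations in the appropriate legs is the missing idea; without it, the ``structural fact'' you appeal to has no precise content, and the step from $(\hat\Lambda_{B}\otimes\Id)\Phi_L=(\hat\Lambda_{B}\otimes\Id)\Phi_R$ to $\Phi_L=\Phi_R$ remains unjustified.
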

\begin{proof}
  If \eqref{eq:pairs-universal} holds, then an application of $\hat \Lambda_{A} \otimes \hat \Lambda_{B} \otimes \Id$ yields \eqref{eq:pairs-universal-weak}. 
Conversely,
 suppose \eqref{eq:pairs-universal-weak} holds. Let $\dumaxcorep[f] =(\Id_{\hat A^{\univ}} \otimes \hat \Lambda_{B})(\unibich^{f})$ and $\dumaxcorep[g]=(\Id_{\hat A^{\univ}} \otimes \hat \Lambda_{B})(\unibich^{g})$.  Then
\begin{align*}
  (\hat  \Lambda_{A} \otimes \Id_{\hat  B} \otimes \Id_{\Comp(\Hils)})(\dumaxcorep[f]_{12}\unibich^{A}_{1\alpha}) &= 
    W^{f}_{12}\maxcorep[A]_{1\alpha} =
    \maxcorep[B]_{2\beta}\maxcorep[A]_{1\alpha}W^{g}_{12}  (\maxcorep[B]_{2\beta})^{*} =
(\hat  \Lambda_{A} \otimes \Ad(\maxcorep[B]_{1\beta}))(\unibich^{A}_{1\alpha}\dumaxcorep[g]_{12}).
\end{align*}
Since $\dumaxcorep[f]_{12}\unibich^{A}_{1\alpha}$ and
$\unibich^{A}_{1\alpha}\dumaxcorep[g]_{12}$ are left corepresentations,  \cite{Meyer-Roy-Woronowicz:Homomorphisms}*{Lemma 4.13} implies
\begin{align*}
  \dumaxcorep[f]_{12}\unibich^{A}_{1\alpha}  =(\Id_{\hat  A^{\univ}} \otimes \Ad(\maxcorep[B]_{1\beta}))(\unibich^{A}_{1\alpha}\dumaxcorep[g]_{12}) = \maxcorep[B]_{2\beta}\unibich^{A}_{1\alpha}\dumaxcorep[g]_{12}(\maxcorep[B]_{2\beta})^{*}.
\end{align*}
Thus,  our initial relation lifts from $\hat{A} \otimes \hat{B} \otimes \Comp(\Hils)$ to $\hat{A}^{\univ} \otimes \hat{B} \otimes \Comp(\Hils)$. 
Repeating the argument similarly  as in \cite{Meyer-Roy-Woronowicz:Homomorphisms}*{Proposition 4.14}, we can conclude that the relation lifts to $\hat{A}^{\univ} \otimes \hat{B}^{\univ} \otimes \Comp(\Hils)$ as well so that \eqref{eq:pairs-universal} holds.

Finally, \eqref{eq:pairs-universal} is equivalent to \eqref{eq:pairs-universal-deltal-deltar}
because
\begin{align*}
(\Id_{\hat  A^{\univ}} \otimes (f \otimes \alpha)\Delta^{\univ}_{A})(\unibich^{A}) =
   (\Id_{\hat  A^{\univ}} \otimes f \otimes \alpha)(\unibich^{A}_{12}\unibich^{A}_{13}) = 
\unibich^{f}_{12}\unibich^{A}_{1\alpha}
\end{align*}
and similarly $(\Id_{\hat  A^{\univ}} \otimes (g \otimes \alpha)\sigma\Delta^{\univ}_{A})(\unibich^{A})=\unibich^{A}_{1\alpha}\unibich^{g}_{12}  $.
\end{proof}

\begin{definition} 
  \label{definition:pairs-universal}
  An \emph{$(f,g)$\nb-pair} consists of non-degenerate representations $\alpha$ of $A^{\univ}$ and $\beta$ of $B^{\univ}$ on the same Hilbert space $\Hils$ satisfying \eqref{eq:pairs-universal}--\eqref{eq:pairs-universal-deltal-deltar}.
\end{definition}
We are primarily interested in the four combinations that arise \corr{when we
  one}{when one} of the morphisms is the trivial morphism $\tau \in \Mor(A^{\univ},\hat{B}^{\univ})$, given by $\tau(a)b=\varepsilon^{\univ}_{A}(a)b$ for all $a\in A$, $b\in B$, or when  $\G[H]=\DuG$ and one of the morphisms is the identity on $A^{\univ}=\hat{B}^{\univ}$. Note that the associated bicharacters are just $W^{\tau}=1_{A} \otimes 1_{\hat{B}}$ and $W^{\Id}=W^{A}$, respectively.
 \begin{definition}[{\cite{Meyer-Roy-Woronowicz:Twisted_tensor}, \cite{Roy:Codoubles}}]
\label{definition:universal-heisenberg}  
A \emph{Heisenberg pair for $f$} is a $(\tau,f)$\nb-pair, an \emph{anti-Heisenberg pair for $f$} is an $(f,\tau)$\nb-pair, and a \emph{Drinfeld pair for $f$} is an $(f,f)$\nb-pair of representations.
  \end{definition}
\begin{example} \label{example:pairs-trivial}
   A $(\tau,\tau)$-pair of representations is a commuting pair of representations. 
\end{example}  
\begin{example} \label{example:pairs-unit}
 The counits $\varepsilon^{\univ}_{A}$ and ${\varepsilon}^{\univ}_{B}$  on $A^{\univ}$ and ${B}^{\univ}$, respectively, form an $(f,f)$-pair for every  $f$ because $(\Id_{\hat{A}^{\univ}} \otimes\varepsilon^{\univ}_{A})(\unibich^{A}) = 1 \in M({\hat{A}^{\univ}})$  and $(\Id_{\hat{B}^{\univ}} \otimes \varepsilon^{\univ}_{B})(\unibich^{B})=1\in M(\hat{B}^{\univ})$.
  \end{example}
\begin{example}
\label{example:anti-hb-discrete}
Let $\Gamma$ be a discrete group and  consider the $\Cst$\nb-bialgebras $A=\Contvin(\Gamma)$ and $\hat  A=\Cred(\Gamma)$ that arise from the multiplicative unitary $\Multunit\defeq\sum_{g} \rho_{g} \otimes \delta_{g}$ acting on $l^{2}(G)\otimes l^{2}(G)$, where $\delta_{g}$ and $\rho_{g}$ denote the canonical projection  and  right translation operators on $l^{2}(G)$.  Denote by $U_{g} \in \hat{A}^{\univ}=\Cst(\Gamma)$, where $g\in \Gamma$, the canonical generators, so that $\unibich^{A} = \sum_{g} U_{g} \otimes \delta_{g}$.
Then a pair of representations $(\alpha,\beta)$ of $A^{\univ}$ and $\hat  A^{\univ}$  is a Heisenberg pair, anti-Heisenberg pair or Drinfeld pair for   $f=\Id_{A^{\univ}}$  if and only  if for all $g,h\in G$, the product $\alpha(\delta_{h})\beta(U_{g})$ is equal to
\begin{align*}
  \beta(U_{g})\alpha(\delta_{hg}), \qquad \beta(U_{g})\alpha(\delta_{g^{-1}h}) \qquad\text{or}\qquad \beta(U_{g})\alpha(\delta_{g^{-1}hg}),
\end{align*}
respectively.
 \end{example}
Let us collect a few useful formulas for Heisenberg pairs and anti-Heisenberg pairs.
 \begin{remark}
Taking $f$ or $g$ equal to $\tau$  in  \eqref{eq:pairs-universal-deltal-deltar}, we find that a pair of representations
 $(\alpha,\beta)$ is a Heisenberg pair  for a morphism $g$ from $\UG$ to $\UDG[H]$ if and only if
 \begin{align}
   \label{eq:hb-universal}
    \alpha(a) \otimes 1_{\hat B^{\univ}} =     (\hat\unibich^{B}_{\beta 1})^{*}(\alpha \otimes g) \Delta^{\univ}_{A}(a) (\hat\unibich^{B}_{\beta 1}) \quad\text{for all } a\in A^{\univ},
 \end{align}
and an anti-Heisenberg pair for a morphism $f$ from $\UG$ to $\UDG[H]$ if and only if
\begin{align}
  \label{eq:anti-hb-universal}
  (\unibich^{B}_{1\beta})^{*}(f\otimes \alpha)\Delta^{\univ}_{A}(a) \unibich^{B}_{1\beta} &= 1_{\hat B^{\univ}} \otimes \alpha(a) \quad\text{for all } a\in A^{\univ}.
\end{align}
In particular, if $\DuG[H] = \G$ so that $ B^{\univ}=\hat{A}^{\univ}$, then $(\alpha,\beta)$ is a Heisenberg pair for the identity on $\G$ if and only if
\begin{align}
  \label{eq:id-hb-universal}
  \unibich^{A}_{\beta 2}(\alpha(a) \otimes 1)(\unibich^{A}_{\beta 2})^{*} = (\alpha \otimes \Id_{A^{\univ}})\Delta^{\univ}_{A}(a)  \quad\text{for all } a\in A^{\univ},
\end{align}
and an anti-Heisenberg pair for the identity on $\G$ if and only if
\begin{align}
  \label{eq:id-anti-hb-universal}
  \hat \unibich^{A}_{1\beta}(1 \otimes \alpha(a))(\corr{$\unibich^{A}_{1\beta}$}{\hat \unibich^{A}_{1\beta}})^{*} = (\Id_{A^{\univ}} \otimes \alpha) \Delta^{\univ}_{A}(a)  \quad\text{for all } a\in A^{\univ}.    
\end{align}
 \end{remark}

Intertwiners of $(f,g)$-pairs are defined in a natural way.
 \begin{definition}
An  \emph{intertwiner} from an $(f,g)$-pair $(\alpha,\beta)$ on some Hilbert space $\Hils$ to an $(f,g)$-pair $(\alpha',\beta')$ on some Hilbert space $\Hils'$ is an operator $T \in \Bound(\Hils,\Hils')$ satisfying
 $T\alpha(a)=\alpha'(a)T$ and $T\beta(b)=\beta'(b)T$ for all $a\in A^{\univ}$ and $b\in B^{\univ}$.
We call two such $(f,g)$-pairs \emph{isomorphic} and write $(\alpha,\beta) \cong (\alpha',\beta')$ if they admit a unitary intertwiner,
   \end{definition}
Evidently, all $(f,g)$-pairs with intertwiners form a category.  We denote it by $\mathfrak{R}(f,g)$.

We shall also need a weaker notion of equivalence.
\begin{lemma}
  Let  $(\alpha,\beta)$  be an $(f,g)$\nb-pair. Then $\alpha(A^{\univ}) \cdot \beta(B^{\univ})$ is a $\Cst$-algebra.
\end{lemma}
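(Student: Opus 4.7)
The plan is to reduce the claim to the symmetric identity
\begin{align*}
\alpha(A^{\univ}) \cdot \beta(B^{\univ}) = \beta(B^{\univ}) \cdot \alpha(A^{\univ})
\end{align*}
of subsets of $\Bound(\Hils)$. Writing $E \defeq \alpha(A^{\univ}) \cdot \beta(B^{\univ})$, this identity immediately yields $E^{*} = \beta(B^{\univ})\alpha(A^{\univ}) = E$, and it allows one to rewrite $E \cdot E$ by commuting the middle two factors so that $E \cdot E \subseteq \alpha(A^{\univ})\alpha(A^{\univ})\beta(B^{\univ})\beta(B^{\univ}) \subseteq E$. Hence $E$ is a $\Cst$-algebra.

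To prove the symmetric identity, I would translate each side into a closed linear span of slices of the universal bicharacters. The analogue for $A^{\univ}$ of the slice description $\hat A^{\univ} = \{(\Id \otimes \omega)(\dumaxcorep[A]) : \omega \in A'\}^{\CLS}$ recalled in Section~\ref{sec:univ_qgr} is $A^{\univ} = \{(\omega \otimes \Id)(\maxcorep[A]) : \omega \in \hat A'\}^{\CLS}$, and likewise for $B^{\univ}$. Since the slice of the product $\maxcorep[A]_{1\alpha}\maxcorep[B]_{2\beta} \in \U(\hat A \otimes \hat B \otimes \Comp(\Hils))$ by a product functional $\omega_{1}\otimes\omega_{2}$ factors as $\alpha(a)\cdot\beta(b)$ with $a\in A^{\univ}$ and $b\in B^{\univ}$ determined by $\omega_1$ and $\omega_2$, I obtain
\begin{align*}
\alpha(A^{\univ}) \cdot \beta(B^{\univ}) = \{(\omega_{1}\otimes\omega_{2}\otimes\Id)(\maxcorep[A]_{1\alpha}\maxcorep[B]_{2\beta}) : \omega_{1}\in\hat{A}',\,\omega_{2}\in\hat{B}'\}^{\CLS}
\end{align*}
and the symmetric description of $\beta(B^{\univ})\cdot\alpha(A^{\univ})$ with the two bicharacters in reversed order. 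Then I would apply the reduced braided commutation relation \eqref{eq:pairs-universal-weak} in the equivalent form $\maxcorep[A]_{1\alpha}\maxcorep[B]_{2\beta} = (W^{f}_{12})^{*}\maxcorep[B]_{2\beta}\maxcorep[A]_{1\alpha}W^{g}_{12}$, and observe that left- or right-multiplication by a unitary $u\in\U(\hat{A}\otimes\hat{B})$ in the first two legs leaves the closed linear span of slices there unchanged: one has $(\omega\otimes\Id)(uZ) = (\omega_{u}\otimes\Id)(Z)$ for the translated functional $\omega_{u}(x)\defeq\omega(ux)$, and the map $\omega\mapsto\omega_{u}$ is a bijection of $(\hat{A}\otimes\hat{B})'$ since $u$ is invertible.

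The one technical subtlety is that the above slice descriptions run over \emph{product} functionals $\omega_{1}\otimes\omega_{2}$, while the translation-by-$u$ argument most naturally handles arbitrary functionals on the minimal tensor product $\hat{A}\otimes\hat{B}$. Bridging this gap requires the standard Fubini-type identification of the closed linear span of product-functional slices with that of arbitrary-functional slices, a fact belonging to the usual slice map technology in the multiplicative-unitary framework. Once this is granted, the two slice descriptions are manifestly equal, which establishes the symmetric identity and thereby the lemma.
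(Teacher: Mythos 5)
Your proof is correct and follows essentially the same route as the paper: the paper's (one-line) argument likewise applies slice maps of the form $\omega\otimes\omega'\otimes\Id$ to the reduced relation \eqref{eq:pairs-universal-weak} to deduce $\beta(B^{\univ})\cdot\alpha(A^{\univ})=\alpha(A^{\univ})\cdot\beta(B^{\univ})$, from which the $\Cst$-algebra property follows exactly as you explain. Your write-up merely makes explicit the steps (the slice description of $A^{\univ}$ and $B^{\univ}$, the translation of functionals by the unitaries $W^{f},W^{g}$, and the standard product-versus-general-functional identification) that the paper leaves implicit.
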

\begin{proof}
  Apply slice maps of the form $\omega \otimes \omega' \otimes \Id$ to \eqref{eq:pairs-universal-weak} to see that
  $\beta(B^{\univ}) \cdot \alpha(A^{\univ}) = \alpha(A^{\univ})\cdot \beta(B^{\univ})$.
\end{proof}
\begin{definition}
We call two $(f,g)$-pairs $(\alpha,\beta)$ and $(\alpha',\beta')$  \emph{equivalent} and write $(\alpha,\beta) \sim (\alpha',\beta')$ if there exists an isomorphism of $\Cst$\nb-algebras $\Phi$ from $\alpha(A^{\univ})  \beta(B^{\univ})$ to $\alpha'(A^{\univ}) \beta'(B^{\univ})$ such that $\Phi\circ \alpha =\alpha'$ and $\Phi \circ \beta = \beta'$.
  \end{definition}

The unitary antipode yields a bijective correspondence between $(f,g)$-pairs and $(g,f)$-pairs as follows.
Given representations $\alpha$ and $\beta$ of $A^{\univ}$ and $B^{\univ}$ on some Hilbert  space $\Hils$, we define representations $\bar \alpha$ and $\bar \beta$ of $A^{\univ}$ and $B^{\univ}$ on the conjugate Hilbert space $\conj{\Hils}$  as in \cite{Meyer-Roy-Woronowicz:Twisted_tensor}*{Section 3} by
\begin{align} 
 \label{eq:pairs-bar}
  \bar \alpha (a) &:=\alpha(\Coinv^{\univ}_{A}(a))^{\transpose}, &
  \bar \beta(b) &:=\beta(\Coinv_{B}^{\univ}(b))^{\transpose},
\end{align}
where $\Coinv_{A}^{\univ}$ and $\Coinv_{B}^{\univ}$ denote the unitary antipodes and  $T^{\transpose}$  the transpose of an operator  $T\in\Bound(\Hils)$. 

\begin{lemma} 
 \label{lemma:pairs-symmetry}
Let $(\alpha,\beta)$ be a pair of non-degenerate representations of $A^{\univ}$ and $B^{\univ}$ on the same Hilbert space. Then the following assertions are equivalent:\newline
\begin{tabular}[h]{llll}
  (1)  & $(\alpha,\beta)$ is an $(f,g)$\nb-pair, &
  (2)  & $(\bar \alpha,\bar \beta)$ is a $(g,f)$\nb-pair, \\
  (3) & $(\beta,\alpha)$ is an $(\hat  f,\hat  g)$\nb-pair, &
  (4) & $(\bar \beta,\bar \alpha)$ is a $(\hat  g,\hat  f)$\nb-pair.
\end{tabular}
\end{lemma}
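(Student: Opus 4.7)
The plan is to establish \textup{(1)} $\Leftrightarrow$ \textup{(2)} and \textup{(1)} $\Leftrightarrow$ \textup{(3)} by direct manipulation of the defining relation \eqref{eq:pairs-universal}. The remaining equivalence with \textup{(4)} then follows by applying one of these arguments to the output of the other; for instance, the \textup{(1)} $\Leftrightarrow$ \textup{(3)} argument applied to \textup{(2)} in place of \textup{(1)} yields \textup{(2)} $\Leftrightarrow$ \textup{(4)}.

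For \textup{(1)} $\Leftrightarrow$ \textup{(2)}, the idea is to apply the map $\Psi \defeq \hat \Coinv^\univ_A \otimes \hat \Coinv^\univ_B \otimes (\cdot)^\transpose$, which is an anti-homomorphism from $\hat A^\univ \otimes \hat B^\univ \otimes \Bound(\Hils)$ to $\hat A^\univ \otimes \hat B^\univ \otimes \Bound(\conj \Hils)$, since it is the tensor product of three anti-homomorphisms. The antipode invariance \eqref{eq:bicharacter-antipode-universal} applied to $\unibich^f$ and $\unibich^g$ gives $\Psi(\unibich^f_{12}) = \unibich^f_{12}$ and $\Psi(\unibich^g_{12}) = \unibich^g_{12}$. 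Applied to $\unibich^A$, together with the involutivity of $\Coinv^\univ_A$, it yields $(\Id \otimes \Coinv^\univ_A)\unibich^A = (\hat \Coinv^\univ_A \otimes \Id)\unibich^A$, which combined with $\bar\alpha = (\cdot)^\transpose \circ \alpha \circ \Coinv^\univ_A$ produces $\Psi(\unibich^A_{1\alpha}) = \unibich^A_{1\bar\alpha}$, and analogously $\Psi(\unibich^B_{2\beta}) = \unibich^B_{2\bar\beta}$. Since $\Psi$ is an anti-homomorphism, applying it to \eqref{eq:pairs-universal} reverses the order of the three factors and produces exactly the relation defining $(\bar\alpha, \bar\beta)$ as a $(g,f)$-pair.

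For \textup{(1)} $\Leftrightarrow$ \textup{(3)}, the plan is to apply the leg-flip $\sigma_{12}$ of the first two tensor factors to \eqref{eq:pairs-universal}. Using the duality $\unibich^{\hat f} = \sigma(\unibich^f)^*$ from \eqref{eq:bicharacters}, one has $\sigma_{12}(\unibich^f_{12}) = (\unibich^{\hat f})^*_{12}$ and analogously for $g$, while $\sigma_{12}(\unibich^A_{1\alpha}) = \unibich^A_{2\alpha}$ and $\sigma_{12}(\unibich^B_{2\beta}) = \unibich^B_{1\beta}$ in $\U(\hat B^\univ \otimes \hat A^\univ \otimes \Bound(\Hils))$. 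Left-multiplying the resulting equation by $\unibich^{\hat f}_{12}$ and right-multiplying by $\unibich^{\hat g}_{12}$ then yields precisely the defining relation for $(\beta, \alpha)$ as a $(\hat f, \hat g)$-pair.

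The main obstacle I anticipate is purely bookkeeping: one must carefully identify the legs after each swap or conjugation, and track how the unitary antipodes, the transposition, and the duality of bicharacters interact with the placement of $\alpha$ and $\beta$ in the defining relation. Beyond the antipode invariance \eqref{eq:bicharacter-antipode-universal} and the duality $\unibich^{\hat f} = \sigma(\unibich^f)^*$, no further ingredient is required.
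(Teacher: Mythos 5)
Your argument is correct: applying the anti-isomorphism $\hat\Coinv^{\univ}_{A}\otimes\hat\Coinv^{\univ}_{B}\otimes(\cdot)^{\transpose}$ together with \eqref{eq:bicharacter-antipode-universal} gives (1)$\Leftrightarrow$(2), the leg flip $\sigma_{12}$ together with $\unibich^{\hat f}=\sigma(\unibich^{f})^{*}$ gives (1)$\Leftrightarrow$(3), and composing these yields (4). This is essentially the same route as the paper, whose proof simply transplants the arguments of \cite{Meyer-Roy-Woronowicz:Twisted_tensor}*{Lemmas 3.6, 3.7} based on exactly these two manipulations.
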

\begin{proof}
Copy the proof of  \cite{Meyer-Roy-Woronowicz:Twisted_tensor}*{Lemma 3.6, 3.7}.
\end{proof}
Since
 the  assignment $T \mapsto  T^{\transpose}$ is anti-multiplicative, the assignments
 \begin{align*}
   (\alpha,\beta) \mapsto \overline{(\alpha,\beta)}:= (\bar\alpha,\bar\beta) \quad \text{and} \quad T \mapsto T^{\transpose}
 \end{align*}
form a contravariant functor $\mathfrak{R}(f,g) \to \mathfrak{R}(g,f)$.  Moreover, clearly
\begin{align} 
 \label{eq:pairs-bar-equivalence}
  (\alpha,\beta)\sim(\alpha',\beta') \quad \Leftrightarrow \quad \overline{(  \alpha,  \beta)} \sim \overline{( \alpha', \beta')}.
\end{align}

\subsection{The tensor product}
Next, we assemble the categories $\mathfrak{R}(f,g)$ associated to  morphisms $f,g$ from $\UG$ to $\UDG[H]$  into a bicategory.
\begin{lemma}
  Let $f,g,h\colon \UG \to \UDG[H]$ be morphisms of universal $\Cst$-quantum groups.
  \begin{enumerate}
  \item If $(\alpha,\beta)$ is an $(f,g)$-pair on some Hilbert space $\Hils$ and $(\alpha',\beta')$ is a $(g,h)$-pair on some Hilbert space $\Hils'$, then
    \begin{align}
      \label{eq:vertical-composition}
      (\alpha,\beta) \otimes (\alpha',\beta') &:= ((\alpha\otimes \alpha')\circ \Comult[A]^{\univ}, (\beta\otimes \beta') \circ \Comult[B]^{\univ})
    \end{align}
    is an $(f,h)$-pair on $\Hils \otimes \Hils'$. Moreover,  the flips  $\Hils \otimes \Hils' \rightleftarrows \Hils'\otimes \Hils$ are isomorphisms
    \begin{align} \label{eq:antipode-tensor}
      \overline{(\alpha,\beta)} \otimes \overline{(\alpha',\beta')} \cong \overline{(\alpha',\beta') \otimes(\alpha,\beta)}.
    \end{align}
    \item The assignments
    \begin{align} \label{eq:pairs-vertical-functor} 
((\alpha',\beta'),(\alpha,\beta)) \mapsto (\alpha,\beta) \otimes (\alpha',\beta') \quad \text{and} \quad (T,S) \mapsto S\otimes T
    \end{align}
    define a functor $\mathfrak{R}(g,h) \times \mathfrak{R}(f,g) \to \mathfrak{R}(f,h)$.
  \end{enumerate}
\end{lemma}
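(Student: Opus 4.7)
Proof plan. I would verify claim (1) by a direct hexagon-style manipulation of \eqref{eq:pairs-universal} and reduce claim (2) to componentwise intertwining. For the first assertion in (1), label the four tensor legs of $\U(\hat{A}^{\univ}\otimes \hat{B}^{\univ}\otimes \Comp(\Hils)\otimes \Comp(\Hils'))$ by $1,2,3,4$. Because $\unibich^{A}$ is a right corepresentation of $(A^{\univ},\Delta^{\univ}_{A})$ in its second leg, the bicharacter identity $(\Id\otimes \Delta^{\univ}_{A})(\unibich^{A})=\unibich^{A}_{12}\unibich^{A}_{13}$ gives $\unibich^{A}_{1(\alpha\otimes \alpha')\Delta^{\univ}_{A}}=\unibich^{A}_{1\alpha}\unibich^{A}_{1\alpha'}$ on legs $1,3,4$, and analogously $\unibich^{B}_{2(\beta\otimes \beta')\Delta^{\univ}_{B}}=\unibich^{B}_{2\beta}\unibich^{B}_{2\beta'}$ on legs $2,3,4$. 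The desired $(f,h)$-relation therefore reads
\[
\unibich^{f}_{12}\unibich^{A}_{1\alpha}\unibich^{A}_{1\alpha'}\unibich^{B}_{2\beta}\unibich^{B}_{2\beta'}=\unibich^{B}_{2\beta}\unibich^{B}_{2\beta'}\unibich^{A}_{1\alpha}\unibich^{A}_{1\alpha'}\unibich^{h}_{12},
\]
which I would establish in four elementary steps: commute $\unibich^{A}_{1\alpha'}$ (legs $1,4$) past $\unibich^{B}_{2\beta}$ (legs $2,3$); apply the hypothesized $(f,g)$-pair relation to the subword $\unibich^{f}_{12}\unibich^{A}_{1\alpha}\unibich^{B}_{2\beta}$, producing $\unibich^{B}_{2\beta}\unibich^{A}_{1\alpha}\unibich^{g}_{12}$; apply the $(g,h)$-pair relation to the resulting subword $\unibich^{g}_{12}\unibich^{A}_{1\alpha'}\unibich^{B}_{2\beta'}$; then commute $\unibich^{A}_{1\alpha}$ (legs $1,3$) past $\unibich^{B}_{2\beta'}$ (legs $2,4$).

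For the flip isomorphism $\overline{(\alpha,\beta)}\otimes \overline{(\alpha',\beta')}\cong \overline{(\alpha',\beta')\otimes (\alpha,\beta)}$ (reading the statement with the evident correction), I would combine anti-multiplicativity of the transpose with the antipode identity $\Delta^{\univ}_{A}\circ \Coinv^{\univ}_{A}=\sigma\circ (\Coinv^{\univ}_{A}\otimes \Coinv^{\univ}_{A})\circ \Delta^{\univ}_{A}$, the $A^{\univ}$-analog of \eqref{eq:antipode-universal}. In Sweedler notation $\Delta^{\univ}_{A}(a)=a_{(1)}\otimes a_{(2)}$, this yields $\overline{(\alpha\otimes \alpha')\Delta^{\univ}_{A}}(a)=\bar \alpha(a_{(2)})\otimes \bar \alpha'(a_{(1)})$ on $\conj{\Hils}\otimes \conj{\Hils'}$, whereas $(\bar \alpha'\otimes \bar \alpha)\Delta^{\univ}_{A}(a)=\bar \alpha'(a_{(1)})\otimes \bar \alpha(a_{(2)})$ on $\conj{\Hils'}\otimes \conj{\Hils}$. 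The flip of conjugate tensor factors intertwines these two expressions, and the parallel identity for $\beta$ upgrades it to an isomorphism of $(h,f)$-pairs.

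Part (2) is then routine. If $T\colon \Hils'\to \Hils'_{1}$ intertwines $(\alpha',\beta')\to (\alpha'_{1},\beta'_{1})$ and $S\colon \Hils\to \Hils_{1}$ intertwines $(\alpha,\beta)\to (\alpha_{1},\beta_{1})$, then separate intertwining on each tensor factor shows that $S\otimes T$ intertwines the tensor pairs, while preservation of identities and composition is immediate from $(S_{1}\otimes T_{1})\circ (S\otimes T)=(S_{1}S)\otimes (T_{1}T)$. The only real obstacle in the whole argument is careful leg-bookkeeping in the hexagon calculation; once the labels are set up as above, every step is either a commutation of bicharacters supported on disjoint legs or a direct application of one of the two hypothesized braided relations, and no deeper input is required.
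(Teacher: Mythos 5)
Your proposal is correct and follows essentially the same route as the paper: the same four-step hexagon manipulation (commute disjoint-leg factors, apply the $(f,g)$ relation, apply the $(g,h)$ relation, commute again), the same use of the corepresentation identity to decompose $\unibich^{A}_{1\alpha''}$ and $\unibich^{B}_{2\beta''}$, and the same observation that the flip intertwines the conjugate pairs because the unitary antipodes reverse the comultiplications. You also correctly read \eqref{eq:antipode-tensor} with the intended right-hand side $\overline{(\alpha',\beta')\otimes(\alpha,\beta)}$.
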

\begin{proof}
  (1) Denote the pair on the right hand side in \eqref{eq:vertical-composition} by $(\alpha'',\beta'')$. Then \eqref{eq:corep-right}, applied to $\unibich^{A}$ and $\unibich^{B}$, implies
\begin{align}
  \label{eq:pairs-vertical-legs}
  \unibich^{A}_{1\alpha''} &= \unibich^{A}_{1\alpha}\unibich^{A}_{1\alpha'} \quad\text{and} \quad \unibich^{B}_{2\beta''}=\unibich^{B}_{2\beta}\unibich^{B}_{2\beta'},
\end{align}  
where $\unibich^{A}_{1\alpha} = (\Id_{\hat{A}^{\univ}} \otimes \alpha)(\unibich^{A})_{12}$ and $\unibich^{A}_{1\alpha'} = (\Id_{\hat{A}^{\univ}} \otimes \alpha')(\unibich^{A})_{13}$ in $\Mult(\hat{A}^{\univ} \otimes \Comp(\Hils) \otimes \Comp(\Hils'))$ and $\unibich^{B}_{2\beta}$ and $\unibich^{B}_{2\beta'}$ are defined similarly. Now two applications of \eqref{eq:pairs-universal} show that
\begin{align*}
\unibich^{f}_{12}\unibich^{A}_{1\alpha}\unibich^{A}_{1\alpha'}\unibich^{B}_{2\beta}\unibich^{B}_{2\beta'} 
  &= \unibich^{f}_{12}\unibich^{A}_{1\alpha}\unibich^{B}_{2\beta}\unibich^{A}_{1\alpha'}\unibich^{B}_{2\beta'} \\
  &= \unibich^{B}_{2\beta} \unibich^{A}_{1\alpha}\unibich^{g}_{12}\unibich^{A}_{1\alpha'}\unibich^{B}_{2\beta'} \\
  &= \unibich^{B}_{2\beta}\unibich^{A}_{1\alpha}\unibich^{B}_{2\beta'}\unibich^{A}_{1\alpha'} \unibich^{h}_{12} 
    = \unibich^{B}_{2\beta}\unibich^{B}_{2\beta'}\unibich^{A}_{1\alpha}\unibich^{A}_{1\alpha'} \unibich^{h}_{12}
 \end{align*}
 and hence $\unibich^{f}_{12} \unibich^{A}_{1\alpha''}\unibich^{B}_{2\beta''} = \unibich^{B}_{2\beta''}\unibich^{A}_{1\alpha''}\unibich^{h}_{12}$. 

The flip provides  isomorphisms in \eqref{eq:antipode-tensor} because the unitary antipodes reverse the comultiplications.

(2) Straightforward.
\end{proof}
Recall that a {\em bicategory} $\mathfrak{B}$ consists of a class of \emph{objects} $\mathrm{ob}\, \mathfrak{B}$, a category $\mathfrak{B}(f,g)$ for each $f,g \in \mathrm{ob}\, \mathfrak{B}$ whose objects and morphisms are called {\em 1-cells} and {\em 2-cells}, respectively, a functor $c_{f,g,h} \colon \mathfrak{B}(g,h) \times \mathfrak{B}(f,g) \to \mathfrak{B}(f,h)$ (``composition'') for each $f,g,h \in \mathrm{ob}\, \mathfrak{B}$, an object $1_{f} \in \mathfrak{B}(f,f)$ (``identity'') for each $f \in \mathrm{ob}\, \mathfrak{B}$, an isomorphism $a_{f,g,h,j}(\alpha,\beta,\gamma)$ from $c_{f,g,j}(c_{g,h,j}(\gamma,\beta),\alpha)$ to $ c_{f,h,j}(\gamma,c_{f,g,h}(\beta,\alpha))$ in $\mathfrak{B}(f,j)$ (``associativity'') for each triple of 1-cells $f \xrightarrow{\alpha} g \xrightarrow{\beta} h \xrightarrow{\gamma} j$ in $\mathfrak{B}$, and isomorphisms $l_{f}(\alpha) \colon c_{f,f,g}(\alpha,1_{f}) \to \alpha$ and $r_{g}(\alpha) \colon c_{f,g,g}(1_{g},\alpha) \to \alpha$ in $\mathfrak{B}(f,g)$ for each 1-cell $f \xrightarrow{\alpha} g$ in $\mathfrak{B}$, subject to several axioms \cite{Leinster:Higher-Operads}.
\begin{proposition}
Let $\G$ and $\G[H]$ be $\Cst$-quantum groups. 
There exists  a bicategory $\mathfrak{B}$, where the objects are all morphisms $f\colon\UG \to\UDG[H]$, 
the category   $\mathfrak{B}(f,g)$  is the category of $(f,g)$-pairs with intertwiners as morphisms, 
 the composition functors $\mathfrak{B}(g,h) \times \mathfrak{B}(f,g) \to \mathfrak{B}(f,h)$ are given by \eqref{eq:pairs-vertical-functor},
 the unit object $1_{f} \in \mathfrak{B}(f,f)$  is $(\varepsilon^{\univ}_{A},{\varepsilon}^{\univ}_{B})$, 
and  the  isomorphisms
  $a_{f,g,h,j}((\alpha,\beta),(\alpha',\beta'),(\alpha'',\beta''))$,
  $l_{f}((\alpha,\beta))$ and $r_{f}((\alpha,\beta))$  associated to pairs of representations
  on Hilbert spaces   $\Hils,\Hils'$ and $\Hils''$   are  the canonical isomorphisms
  \begin{align} \label{eq:hils-monoidal}
    \Hils \otimes (\Hils' \otimes \Hils'') \to (\Hils \otimes \Hils') \otimes\Hils'', \quad
    \C \otimes \Hils \to \Hils \quad \text{and} \quad \Hils \otimes \C \to \Hils.
  \end{align}
  \end{proposition}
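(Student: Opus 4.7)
The plan is to verify each ingredient of the bicategory data in turn and then to reduce the coherence axioms to known coherence in the monoidal category of Hilbert spaces.

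First I would check that the proposed unit 1-cell $1_{f} = (\varepsilon^{\univ}_{A},\varepsilon^{\univ}_{B})$ really is an $(f,f)$-pair. This is exactly Example \ref{example:pairs-unit}, where the characterisation \eqref{eq:pairs-universal} holds trivially because $(\Id_{\hat A^{\univ}} \otimes \varepsilon^{\univ}_{A})\unibich^{A} = 1$ and $(\Id_{\hat B^{\univ}} \otimes \varepsilon^{\univ}_{B})\unibich^{B} = 1$, so both sides of \eqref{eq:pairs-universal} reduce to $\unibich^{f}_{12} = \unibich^{g}_{12}$ when both morphisms coincide.

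Next I would verify that the canonical Hilbert space isomorphisms in \eqref{eq:hils-monoidal} intertwine the appropriate pairs. For associativity, given three composable pairs on $\Hils,\Hils',\Hils''$, the two bracketings yield representations on $\Hils\otimes(\Hils'\otimes\Hils'')$ and $(\Hils\otimes\Hils')\otimes\Hils''$ whose $A^{\univ}$-legs are, respectively,
\[
(\alpha\otimes(\alpha'\otimes\alpha''))\circ(\Id_{A^{\univ}}\otimes \Delta^{\univ}_{A})\circ \Delta^{\univ}_{A}
\]
and
\[
((\alpha\otimes\alpha')\otimes\alpha'')\circ(\Delta^{\univ}_{A}\otimes \Id_{A^{\univ}})\circ \Delta^{\univ}_{A}.
\]
Coassociativity of $\Delta^{\univ}_{A}$ and strict associativity of the tensor product of representations force these two to agree modulo the canonical flip of Hilbert spaces; likewise for $B^{\univ}$. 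For the left and right unit laws, the pairs $(\alpha,\beta)\otimes 1_{g}$ and $1_{f}\otimes(\alpha,\beta)$ have $A^{\univ}$-leg $(\alpha\otimes\varepsilon^{\univ}_{A})\circ\Delta^{\univ}_{A}$ and $(\varepsilon^{\univ}_{A}\otimes\alpha)\circ\Delta^{\univ}_{A}$, which by the counit identity \eqref{eq:counit} coincide with $\alpha$ under the canonical isomorphisms $\Hils\otimes\C\to\Hils$ and $\C\otimes\Hils\to\Hils$; the same holds for the $B^{\univ}$-leg.

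The naturality of $a$, $l$ and $r$ is immediate because the canonical Hilbert space isomorphisms in \eqref{eq:hils-monoidal} are natural with respect to arbitrary bounded intertwiners, and the composition functors \eqref{eq:pairs-vertical-functor} act on morphisms by tensoring. The remaining task is to check the pentagon and triangle axioms for $a,l,r$. Here I expect no real obstacle: once associativity and unitality of the comultiplication have been used to produce the structure 2-cells, the axioms become identities between canonical flips on four- and three-fold Hilbert space tensor products, which hold already in the monoidal category $\mathfrak{Hilbert}$ of Hilbert spaces. The main subtlety, and the only step that needs genuine content from the theory of $\Cst$-quantum groups, is showing that composition is well defined on 2-cells, i.e.\ that the vertical concatenation of an $(f,g)$-pair with a $(g,h)$-pair is an $(f,h)$-pair; this was already done in the preceding lemma using \eqref{eq:pairs-vertical-legs} and two successive applications of \eqref{eq:pairs-universal}.
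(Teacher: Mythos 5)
Your proposal is correct and follows essentially the same route as the paper's proof: the structure isomorphisms are shown to be intertwiners using coassociativity of $\Delta^{\univ}$ and the counit identities \eqref{eq:counit}, and the pentagon and triangle axioms are reduced to the coherence of the monoidal category of Hilbert spaces, with well-definedness of the composition on 2-cells already supplied by the preceding lemma. The only cosmetic slip is calling the associator a ``canonical flip'' of Hilbert spaces; it is the canonical associativity isomorphism, not a flip.
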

\begin{proof}
  The isomorphisms in \eqref{eq:hils-monoidal} intertwine the representations of $A^{\univ}$ involved because 
  $(\Id_{A^{\univ}} \otimes \Delta^{\univ}_{A})  \Delta^{\univ}_{A} = (\Delta^{\univ}_{A} \otimes \Id_{A^{\univ}}) \Delta^{\univ}_{A}$, $
(\varepsilon^{\univ}_{A} \otimes \Id_{A^{\univ}})  \Delta^{\univ}_{A} = \Id_{A^{\univ}}$ and $
( \Id_{A^{\univ}} \otimes \varepsilon^{\univ}_{A}) \Delta^{\univ}_{A} = \Id_{A^{\univ}}$,
and likewise they intertwine the representations of $B^{\univ}$ involved. The coherence conditions that  these isomorphisms have to satisfy in order to obtain a bicategory reduce to the corresponding coherence conditions for the monoidal category of Hilbert spaces.
\end{proof}
From now on, we  suppress the isomorphisms in \eqref{eq:hils-monoidal} and  pretend the monoidal category of Hilbert spaces to be strict. Then the bicategory constructed above becomes a strict 2-category. We denote this 2-category by $\mathfrak{C}(\UG,\UDG[H])$.

\subsection{The cubical tricategory}
 We  now vary $\G$ and $\G[H]$ and assemble the associated 2-categories  $\mathfrak{C}(\UG,\UDG[H])$ into a   tricategory that is rather strict, namely, cubical \cite{Gordon-Power-Street:Tricategory}, \cite{gurski:dissertation}, or equivalently, into a category enriched over  2-categories, where the latter are equipped with the monoidal structure due to Gray \cite{Gray:lns}.

Let  $\G[F]=(A,\Delta_{A})$, $\G=(B,\Delta_{B})$ and $\G[H]=(C,\Delta_{C})$ be $\Cst$-quantum groups.
 \begin{lemma} \label{lemma:pairs-push-pull}
 Let $\phi\colon \UG[F] \to \UG$ and $\psi\colon \UG \to \UDG[H]$ be morphisms. Then there exist 
strict 2-functors 
$\phi^{*} \colon \mathfrak{C}(\UG,\UDG[H]) \to \mathfrak{C}(\UG[F],\UDG[H])$ and
$\psi_{*}\colon \mathfrak{C}(\G[F],\G) \to \mathfrak{C}(\G[F],\DuG[H])$
  such that for each morphism $f$, each pair of representations $(\alpha,\beta)$ and each intertwiner $T$,
\begin{align*}
  \phi^{*}  f&=f\circ \phi, &  \phi^{*}(\alpha,\beta) &=(\alpha\circ  \phi,\beta), & \phi^{*}T &=  T,\\
  \psi_{*}f &= \psi\circ f, & \psi_{*}(\alpha,\beta) &= (\alpha,\beta\circ \hat{\psi}), & \psi_{*}T&=T.
\end{align*}
\end{lemma}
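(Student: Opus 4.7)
My plan is a level-by-level verification that $\phi^{*}$ and $\psi_{*}$ are well defined on objects, 1-cells and 2-cells, and then compatible with the vertical composition \eqref{eq:vertical-composition} and the unit pairs; no associator or unitor data need to be matched, since we have passed to strict 2-categories. On objects the claim is automatic, as a composition of $\Cst$-bialgebra morphisms is again such.

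The heart of the argument is the 1-cell part, for which I would use the characterisation \eqref{eq:pairs-universal-deltal-deltar}. For $\phi^{*}$, given an $(f,g)$-pair $(\alpha,\beta)$, the identities
\begin{align*}
(f\circ\phi\otimes \alpha\circ\phi)\Delta^{\univ}_{A}(a) &= (f\otimes \alpha)\Delta^{\univ}_{B}(\phi(a)), \\
(g\circ\phi\otimes \alpha\circ\phi)\sigma\Delta^{\univ}_{A}(a) &= (g\otimes \alpha)\sigma\Delta^{\univ}_{B}(\phi(a))
\end{align*}
hold because $\phi$ is a coalgebra morphism, so evaluating the assumed $(f,g)$-relation for $(\alpha,\beta)$ at $\phi(a)$ yields the $(f\circ\phi,g\circ\phi)$-relation for $\phi^{*}(\alpha,\beta)$. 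For $\psi_{*}$ the modification takes place on the second leg and on the codomain of $f$ and $g$; the analogous calculation must additionally invoke \eqref{eq:morphism-bicharacter} to convert between $\unibich^{f}$ and $\unibich^{\psi\circ f}$, and use that $\hat\psi$ is a bialgebra morphism so as to rewrite $(\Id\otimes \beta\circ\hat\psi)\unibich^{C}$ in terms of $(\Id\otimes\beta)\hat{\unibich}^{B}$.

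On 2-cells both assertions are immediate: if $T\alpha(b)=\alpha'(b)T$ then $T(\alpha\circ\phi)(a)=(\alpha'\circ\phi)(a)T$, and $T\beta=\beta'T$ is unaffected by $\phi^{*}$, symmetrically for $\psi_{*}$. Compatibility with the vertical composition follows from $(\alpha\otimes\alpha')\circ\Delta^{\univ}_{B}\circ\phi =(\alpha\circ\phi\otimes\alpha'\circ\phi)\circ\Delta^{\univ}_{A}$ and its analogue for $\hat\psi$, while compatibility with the unit pair follows from $\varepsilon^{\univ}_{B}\circ\phi=\varepsilon^{\univ}_{A}$ together with \eqref{eq:morphism-antipode-counit}. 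I expect the main technical obstacle to be the $\psi_{*}$-verification at the 1-cell level, where one must correctly interpret the generalised braided commutation relation for morphisms with codomain $\UG$ and keep track of which legs of the various universal bicharacters are being acted on; the identities \eqref{eq:morphism-bicharacter} and \eqref{eq:morphism-deltal-deltar-universal} are precisely what mediates this.
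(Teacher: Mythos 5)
Your proposal is correct and follows essentially the same route as the paper: a direct check that the transported pairs satisfy the transported braided commutation relation, using the compatibility of universal bicharacters with composition of morphisms (essentially \eqref{eq:morphism-bicharacter}), with the 2-cell, tensor-product and unit compatibilities being immediate. The only cosmetic difference is that for $\phi^{*}$ you verify the relation in the conjugation form \eqref{eq:pairs-universal-deltal-deltar} by evaluating at $\phi(a)$, whereas the paper works with the three-leg form \eqref{eq:pairs-universal} and moves $\hat\phi$ onto the bicharacter leg; these are equivalent by Lemma~\ref{lemma:pairs-universal-equivalence}.
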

 \begin{proof}
The \corr{verifications}{verification} is straightforward. For example, if $f,g \colon \UG\to \UDG[H]$ are morphisms and $(\alpha,\beta)$ is an $(f,g)$-pair, then $(\alpha \circ \phi,\beta)$ is an $(f\circ \phi,g\circ \phi)$-pair because
\begin{align*}
  \unibich^{f\phi}_{12}\unibich^{C}_{1(\alpha \phi)}\unibich^{B}_{2\beta} &= 
   W^{f}_{\hat  \phi 2}\unibich^{A}_{\hat  \phi \alpha}\unibich^{B}_{2\beta} 
= 
   \unibich^{B}_{2\beta} \unibich^{A}_{\hat  \phi \alpha}    W^{g}_{\hat  \phi 2} =
   \unibich^{B}_{2\beta} \unibich^{C}_{1 (\alpha \phi)}    \unibich^{g\phi}_{12},
    \end{align*}
where we used the relation $(\Id_{\hat  C^{\univ}} \otimes \phi)(\unibich^{C}) = \unibich^{\phi} =(\hat  \phi \otimes \Id_{A^{\univ}})( \unibich^{A})$.  \end{proof}
 \begin{lemma} \label{lemma:push-pull-intertwine}
Given morphisms $f,g \colon \UG[F]\to \UG$ and $f',g'\colon \UG\to \UDG[H]$, and an $(f,g)$-pair $(\alpha,\beta)$ and an $(f',g')$-pair $(\alpha',\beta')$, 
there exists an isomorphism
\begin{align*}
 U^{(\alpha',\beta')}_{(\alpha,\beta)} :=\hat{\unibich}^{B}_{\alpha'\beta}\Sigma  \colon f'_{*}(\alpha,\beta) \otimes g^{*}(\alpha',\beta') \to f^{*}(\alpha',\beta') \otimes g'_{*}(\alpha,\beta).
\end{align*}
 \end{lemma}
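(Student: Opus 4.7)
The plan is to verify the statement directly by checking that $U = \hat{\unibich}^{B}_{\alpha'\beta}\Sigma$ is unitary and intertwines the first- and second-leg representations of the source pair $f'_{*}(\alpha,\beta) \otimes g^{*}(\alpha',\beta')$ with those of the target pair $f^{*}(\alpha',\beta') \otimes g'_{*}(\alpha,\beta)$. Unitarity is immediate: $\hat{\unibich}^{B}$ is a unitary in $\U(B^{\univ} \otimes \hat{B}^{\univ})$ and $\alpha'$, $\beta$ are representations, so $\hat{\unibich}^{B}_{\alpha'\beta} = (\alpha' \otimes \beta)(\hat{\unibich}^{B})$ is unitary in $\Bound(\Hils' \otimes \Hils)$, and composing with the flip $\Sigma$ yields a unitary $U\colon \Hils \otimes \Hils' \to \Hils' \otimes \Hils$.

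Unwinding the pull-back $g^{*}$, the push-forward $f'_{*}$ of Lemma~\ref{lemma:pairs-push-pull} and the vertical composition~\eqref{eq:vertical-composition}, the first-leg representations of the source and target are $(\alpha \otimes (\alpha' \circ g))\Delta_{A}^{\univ}$ on $\Hils \otimes \Hils'$ and $((\alpha' \circ f) \otimes \alpha)\Delta_{A}^{\univ}$ on $\Hils' \otimes \Hils$, respectively. Absorbing $\Sigma$ into the comultiplication via $\Sigma X \Sigma^{-1} = \sigma(X)$, the required intertwining reduces to
\begin{align*}
  \bigl((\alpha' \circ f) \otimes \alpha\bigr)\Delta_{A}^{\univ}(a) &= \hat{\unibich}^{B}_{\alpha'\beta}\cdot \bigl((\alpha' \circ g)\otimes \alpha\bigr)\sigma\Delta_{A}^{\univ}(a) \cdot (\hat{\unibich}^{B}_{\alpha'\beta})^{*}
\end{align*}
for all $a \in A^{\univ}$. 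This identity is obtained by applying the slice map $\alpha' \otimes \Id$ to the defining relation \eqref{eq:pairs-universal-deltal-deltar} for the $(f,g)$\nb-pair $(\alpha,\beta)$, in the version appropriate to morphisms $\UG[F] \to \UG$; the slice of the $\hat B^{\univ}$-leg of $\hat{\unibich}^{B}$ by $\beta$ together with the slice of the $B^{\univ}$-leg by $\alpha'$ produces precisely $\hat{\unibich}^{B}_{\alpha'\beta}$.

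For the second leg of the composed pairs, we invoke Lemma~\ref{lemma:pairs-symmetry} to reinterpret $(\alpha',\beta')$ as the $(\hat f',\hat g')$\nb-pair $(\beta',\alpha')$. The defining relation~\eqref{eq:pairs-universal-deltal-deltar} for this reinterpreted pair, after slicing by $\beta \otimes \Id$ (which converts the $B^{\univ}$\nb-leg $\alpha'$-slice into a factor of $\hat{\unibich}^{B}_{\alpha'\beta}$, up to the identification $\hat{\unibich}^{B} = \sigma(\unibich^{B*})$) and conjugating by $\Sigma$ to land on $\Bound(\Hils' \otimes \Hils)$, yields the required intertwining of the $C^{\univ}$\nb-representations. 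The main obstacle is purely bookkeeping: our two pairs involve representations of four different algebras ($A^{\univ}, B^{\univ}, \hat{B}^{\univ}, C^{\univ}$), and one must track carefully which leg of $\hat{\unibich}^{B}$ gets sliced by which representation once the pull-backs, push-forwards, and flips are unwound. Once the conventions are aligned, each intertwining relation is a direct slice of a defining pair relation.
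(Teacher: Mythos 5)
Your proof is correct and follows essentially the same route as the paper's: both are direct verifications that $\hat{\unibich}^{B}_{\alpha'\beta}\Sigma$ is unitary and intertwines the two legs of the composed pairs, using the defining braided-commutation relations of $(\alpha,\beta)$ and $(\alpha',\beta')$ sliced by $\alpha'$ and $\beta$, respectively. The only cosmetic difference is that you work with the conjugation form \eqref{eq:pairs-universal-deltal-deltar} (invoking Lemma~\ref{lemma:pairs-symmetry} for the second leg), whereas the paper slices the equivalent unitary form \eqref{eq:pairs-universal} and checks the intertwining on the lifted corepresentations $\unibich^{A}_{1\gamma}$ and $\unibich^{C}_{1\delta}$; these formulations are interchangeable by Lemma~\ref{lemma:pairs-universal-equivalence}.
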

    \begin{proof}
Denote the underlying Hilbert spaces of $(\alpha,\beta)$ and $(\alpha',\beta')$ by $\Hils$ and $\Hils'$, and let
 \begin{align*}
       (\gamma,\delta)&= f'_{*}(\alpha,\beta) \otimes g^{*}(\alpha',\beta') = ((\alpha \otimes \alpha' g) \Delta^{\univ}_{A}, (\beta \hat{f'} \otimes \beta')\Delta^{\univ}_{B}), \\
                                                                                                                                      (\gamma',\delta')&= f^{*}(\alpha',\beta') \otimes g'_{*}(\alpha,\beta) =((\alpha'f \otimes \alpha)\Delta^{\univ}_{A}, (\beta' \otimes \beta\hat{g'})\Delta^{\univ}_{B}).
     \end{align*}
Then
by   \eqref{eq:corep-left}, \eqref{eq:corep-right} and
   \eqref{eq:morphism-bicharacter},
     \begin{align*}
       \unibich^{A}_{1\gamma} &= \unibich^{A}_{1\alpha} \unibich^{g}_{1\alpha'} \in \U(\hat{A}^{\univ} \otimes \Comp(\Hils)\otimes \Comp(\Hils')), &
                                                                                                         \unibich^{A}_{1\gamma'} &= \unibich^{f}_{1\alpha'}\unibich^{A}_{1\alpha} \in \U(\hat{A}^{\univ} \otimes \Comp(\Hils')\otimes \Comp(\Hils)), \\
       \unibich^{C}_{1\delta} &= \unibich^{f'}_{1\beta} \unibich^{C}_{1\beta' } \in \U(\hat{C}^{\univ} \otimes \Comp(\Hils) \otimes\Comp(\Hils')), &
                                                                                                                                          \unibich^{C}_{1\delta'}&= \unibich^{C}_{1\beta'} \unibich^{g'}_{1\beta} \in \U(\hat{C}^{\univ}\otimes \Comp(\Hils') \otimes \Comp(\Hils)),
     \end{align*}
and
 $\hat{\unibich}^{B}_{\alpha'\beta}\Sigma$ intertwines $(\gamma,\delta)$ and $(\gamma',\delta')$ because 
by \eqref{eq:pairs-universal},
     \begin{align*}
       \hat{\unibich}^{B}_{\alpha'\beta}\Sigma_{23}\unibich^{A}_{1\gamma} &=   \hat{\unibich}^{B}_{\alpha'\beta}\Sigma_{23}\unibich^{A}_{1\alpha}\unibich^{g}_{1\alpha'} =
                                                             \unibich^{f}_{1\alpha'}\unibich^{A}_{1\alpha}  \hat{\unibich}^{B}_{\alpha'\beta} \Sigma_{23} =       \unibich^{A}_{1\gamma'} \hat{\unibich}^{B}_{\alpha'\beta}\Sigma_{23}, \\
\hat{\unibich}^{B}_{\alpha'\beta}       \Sigma_{23} \unibich^{C}_{1\delta} &=\Sigma_{23}(\unibich^{B}_{\beta\alpha'})^{*} \unibich^{f'}_{1\beta}\unibich^{C}_{1\beta'} =
\Sigma_{23}\unibich^{C}_{1\beta'}\unibich^{g'}_{1\beta } (\unibich^{B}_{\beta\alpha'})^{*}  = \unibich^{C}_{1\delta'} \hat{\unibich}^{B}_{\alpha'\beta}\Sigma_{23}.
\end{align*}

 Finally, if $T$  intertwines $(\alpha,\beta)$ and some pair $f\xrightarrow{(\alpha'',\beta'') } g$, and $S$  intertwines $(\alpha',\beta')$ and some pair $f' \xrightarrow{(\alpha''',\beta''')} g'$,  then clearly $\hat{\unibich}^{B}_{\alpha'\beta}\Sigma(T\otimes S) = (S\otimes T)\hat{\unibich}^{B}_{\alpha'''\beta''}\Sigma$.
 \end{proof}
We can now define a second composition of pairs of representations as one part of a cubical functor  \cite{Gordon-Power-Street:Tricategory}; see also \cite{gurski:dissertation}.
\begin{proposition} \label{proposition:pairs-horizontal} Let $\G[F],\G$ and $\G[H]$ be $\Cst$-quantum groups.  There exists a cubical functor
  $\mathfrak{C}(\UG,\UDG[H]) \times \mathfrak{C}(\UG[F],\UG) \to \mathfrak{C}(\UG[F],\UDG[H])$, given on pairs of objects by $(f',f) \mapsto f'\circ f$, on pairs of representations
$f'\xrightarrow{(\alpha',\beta')} g'$ and $ f\xrightarrow{(\alpha,\beta)} g$ by
\begin{align*}
  ((\alpha',\beta'),(\alpha,\beta)) \mapsto (\alpha',\beta') \circ (\alpha,\beta):=  f^{*}(\alpha',\beta') \otimes g'_{*}(\alpha,\beta),
\end{align*}
and on pairs of intertwiners by $(T',T) \mapsto T \otimes T'$. 
\end{proposition}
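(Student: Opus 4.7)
The plan is to unpack the definition of a cubical functor in the sense of~\cite{gordon-power-street} and~\cite{gurski:dissertation} and verify the required data and axioms by chaining together the three preceding lemmas.

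First, I check that the assignment on pairs of representations is well defined. Given an $(f',g')$-pair $(\alpha',\beta')$ and an $(f,g)$-pair $(\alpha,\beta)$, Lemma~\ref{lemma:pairs-push-pull} shows that $f^{*}(\alpha',\beta')$ is an $(f'\!\circ\! f,\,g'\!\circ\! f)$-pair and $g'_{*}(\alpha,\beta)$ is a $(g'\!\circ\! f,\,g'\!\circ\! g)$-pair, so their vertical tensor product is an $(f'\!\circ\! f,\,g'\!\circ\! g)$-pair, as required. On intertwiners, since $\phi^{*}$ and $\psi_{*}$ act as the identity on intertwiners and vertical composition is a functor, the assignment is plainly functorial in each of the two variables separately.

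Second, I verify the cubical functor axioms. Recall that a cubical functor $\mathfrak{B} \times \mathfrak{B}' \to \mathfrak{B}''$ consists of an underlying map of objects and $1$-cells, strictness in each variable separately, and specified interchanger $2$-cells mediating the two possible orders of composing a $1$-cell in one factor with a $1$-cell in the other factor (via identities on the opposite side), subject to naturality, associativity, and unit coherences. In our setting, the strictness in each variable follows from the fact that $\phi^{*}$ and $\psi_{*}$ are strict $2$-functors (Lemma~\ref{lemma:pairs-push-pull}) together with the strict associativity and unitality of the vertical tensor product after suppressing the canonical Hilbert-space isomorphisms~\eqref{eq:hils-monoidal}. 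The interchanger is precisely the $2$-cell $U^{(\alpha',\beta')}_{(\alpha,\beta)} = \hat{\unibich}^{B}_{\alpha'\beta}\Sigma$ provided by Lemma~\ref{lemma:push-pull-intertwine}; its naturality in both variables is immediate because $\hat{\unibich}^{B}_{\alpha'\beta}$ depends on the input pairs only through the representations $\alpha'$ and $\beta$, which are preserved by any intertwiner.

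For the unit coherence, observe that when one of the input pairs is the unit pair $(\varepsilon^{\univ}_{-},\varepsilon^{\univ}_{-})$, the relation~\eqref{eq:bicharacter-counit} causes $\hat{\unibich}^{B}_{\alpha'\beta}$ to collapse to~$1$, so the interchanger becomes the bare flip~$\Sigma$; under the suppressed identifications of~\eqref{eq:hils-monoidal}, this matches the unit constraint. For the associativity coherence, given a third pair $(\alpha'',\beta'')$, one compares the two iterated interchangers obtained either by swapping past $(\alpha',\beta')$ and $(\alpha'',\beta'')$ separately, or by first composing them vertically and then swapping past the composite; unwinding the definitions, the equality reduces to the corepresentation identities~\eqref{eq:corep-left} and~\eqref{eq:corep-right} for the bicharacter $\unibich^{B}$, which split $\hat{\unibich}^{B}$ acting through $(\beta_1 \otimes \beta_2)\circ \Delta^{\univ}_{B}$ into a product of two $\hat{\unibich}^{B}$'s with separate legs, and dually on the $\alpha'$-leg.

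The main obstacle will be the bookkeeping for these associativity pentagons: one must carefully track which legs of $\unibich^{B}$ act on which Hilbert-space factors across several instances of the flip $\Sigma$ and composite $2$-cells, and then match the two sides. Once the diagrams are set up, the verification is a direct application of the corepresentation identities for $\unibich^{B}$ together with the braided commutation relation~\eqref{eq:pairs-universal} for the pairs in question; all remaining checks are routine diagram chasing using the strict $2$-functoriality of $\phi^{*}$ and $\psi_{*}$.
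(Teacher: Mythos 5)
Your proposal is correct and follows essentially the same route as the paper: both verify the conditions of a cubical functor (Gurski's Proposition 5.2.2) by taking the strict 2-functors $f^{*}$, $g'_{*}$ from Lemma~\ref{lemma:pairs-push-pull}, the interchanger $U^{(\alpha',\beta')}_{(\alpha,\beta)}=\hat{\unibich}^{B}_{\alpha'\beta}\Sigma$ from Lemma~\ref{lemma:push-pull-intertwine}, deducing naturality from the intertwining relations and the compatibility with vertical composition from the corepresentation identities \eqref{eq:corep-left} and \eqref{eq:corep-right}. The only cosmetic difference is that the paper writes out the leg-numbered identities $\bigl(U^{(\alpha',\beta')}_{(\alpha,\beta)}\bigr)_{23}\bigl(U^{(\gamma',\delta')}_{(\alpha,\beta)}\bigr)_{12}=U^{(\phi',\psi')}_{(\alpha,\beta)}$ explicitly where you defer the bookkeeping, and the composition-compatibility step needs only \eqref{eq:corep-left}--\eqref{eq:corep-right}, not \eqref{eq:pairs-universal}, which was already consumed in Lemma~\ref{lemma:push-pull-intertwine}.
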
 
\begin{proof}
We  show that the functors and the unitary intertwiners obtained in Lemma \ref{lemma:pairs-push-pull} and Lemma \ref{lemma:push-pull-intertwine} satisfy the conditions in 
 \cite{gurski:dissertation}*{Proposition 5.2.2}.

Suppose given morphisms, pairs of representations and intertwiners as follows:
 \begin{align*}
   \xymatrix@C=60pt{
   f \rtwocell^{(\alpha,\beta)}_{(\gamma,\delta)}{\ T} & g
} \text{ in } \mathfrak{C}(\UG[F],\UG) \quad \text{and} \quad
   \xymatrix@C=60pt{
   f' \rtwocell^{(\alpha',\beta')}_{(\gamma',\delta')}{\ T'} & \corr{$g$}{g'}}
 \text{ in } \mathfrak{C}(\UG,\UDG[H]).
 \end{align*}
 Then the compositions of the 2-cells in
\begin{align*}
  \xymatrix@C=80pt@R=40pt{
  f' \circ  f \ar@/^7pt/[r]^{f'_{*}(\alpha,\beta)} \ar@{}[r]|{\Downarrow T}
\ar@/_7pt/[r]_{f'_{*}(\gamma,\delta)}
 \ar@/^7pt/[d]^{f^{*}(\alpha',\beta')} \ar@{}[d]|{ \stackrel{\scriptstyle T'}{\Leftarrow}}
\ar@/_7pt/[d]_{f^{*}(\gamma',\delta')} & f'\circ g \ar[d]^{g^{*}(\alpha',\beta')} \ar@/^10pt/@{=>}[ld]^(0.4){U^{(\alpha',\beta')}_{(\gamma,\delta)}}  \ar@{}[rd]|{\displaystyle\text{and}}&
f'\circ f  \ar[r]^{f'_{*}(\alpha,\beta)} \ar[d]_{f^{*}(\alpha',\beta')}   & f'\circ g  \ar@/^7pt/[d]^{f^{*}(\gamma',\delta')} \ar@{}[d]|{ \stackrel{\scriptstyle T'}{\Rightarrow}}
\ar@/_7pt/[d]_{f^{*}(\alpha',\beta')} \ar@/_10pt/@{=>}[ld]_(0.6){U^{(\alpha',\beta')}_{(\alpha,\beta)}} 
 \\
g' \circ f  \ar[r]_{g'_{*}(\gamma,\delta)}& g'\circ g  &
                                    g' \circ f    \ar@/^7pt/[r]^{f'_{*}(\alpha,\beta)} \ar@{}[r]|{\Downarrow T}
\ar@/_7pt/[r]_{f'_{*}(\gamma,\delta)} & g'\circ g 
  } 
\end{align*}
coincide because they are just
\begin{align*}
  (T'\otimes 1) U^{(\alpha',\beta')}_{(\gamma,\delta)} (T\otimes 1) &= (T' \otimes 1)\hat{\unibich}^{B}_{\alpha'\delta} \Sigma (T\otimes 1) 
 = (T'\otimes T) \hat{\unibich}^{B}_{\alpha'\beta}\Sigma = (T'\otimes T)U^{(\alpha',\beta')}_{(\alpha,\beta)}.
\end{align*}

Next, suppose that we have morphisms and pairs of braided-commuting representations
   \begin{align} \label{eq:cubical}
     f \xrightarrow{(\alpha,\beta)} g \xrightarrow{(\gamma,\delta)} h \text{ in } \mathfrak{C}(\UG[F],\UG) \quad \text{and} \quad
     f' \xrightarrow{(\alpha',\beta')} g' \xrightarrow{(\gamma',\delta')} h' \text{ in } \mathfrak{C}(\UG,\UDG[H]),
   \end{align}
with underlying Hilbert spaces $\Hils,\mathcal{K},\Hils',\mathcal{K}'$, respectively.  Write $(\phi,\psi)=(\gamma,\delta) \otimes (\alpha,\beta)$ and $(\phi',\psi') =(\gamma',\delta') \otimes (\alpha',\beta')$, and consider the diagram
\begin{align} \label{eq:cubical-compositions}
  \xymatrix@C=70pt{
  f'\circ f \ar[r]^{f'_{*}(\alpha,\beta)} \ar[d]_{f^{*}(\alpha',\beta')} & f'\circ  g \ar[d]|{g^{*}(\alpha',\beta')} \ar[r]^{f'_{*}(\gamma,\delta)}
\ar@{=>}[ld]|{U^{(\alpha',\beta')}_{(\alpha,\beta)}} & f'\circ  h \ar[d]^{h^{*}(\alpha',\beta')}
\ar@{=>}[ld]|{U^{(\alpha',\beta')}_{(\gamma,\delta)}} \\
  g'\circ f \ar[r]|{g'_{*}(\alpha,\beta)} \ar[d]_{f^{*}(\gamma',\delta')}   & g'\circ g \ar[r]|{g'_{*}(\gamma,\delta)}  \ar[d]|{g^{*}(\gamma',\delta')}
\ar@{=>}[ld]|{U^{(\gamma',\delta')}_{(\alpha,\beta)}}
& g'\circ h \ar[d]^{h^{*}(\gamma',\delta')} \ar@{=>}[ld]|{U^{(\gamma',\delta')}_{(\gamma,\delta)}} \\
h'\circ f \ar[r]_{h'_{*}(\alpha,\beta)} & h'\circ g \ar[r]_{h'_{*}(\gamma,\delta)} & h' \circ h.
   }
\end{align}
Then by \eqref{eq:corep-left} and \eqref{eq:corep-right}, 
\begin{align*}
\left(U^{(\alpha',\beta')}_{(\alpha,\beta)}\right)_{23}\left(U^{(\gamma',\delta')}_{(\alpha,\beta)}\right)_{12} &=
(\hat{\unibich}^{B}_{\alpha'\beta}\Sigma_{\Hils,\Hils'})_{23}(\hat{\unibich}^{B}_{\gamma'\beta}\Sigma_{\Hils,\mathcal{K}'})_{12} =
(\hat{\unibich}^{B}_{\phi'\beta}) \Sigma_{\Hils,\mathcal{K}'\otimes\Hils'} = U^{(\phi',\psi')}_{(\alpha,\beta)}
\end{align*}
and
\begin{align*}
\left(U^{(\alpha',\beta')}_{(\gamma,\delta)}\right)_{12}\left(U^{(\alpha',\beta')}_{(\alpha,\beta)}\right)_{23} &=
  (\hat{\unibich}^{B}_{\alpha'\delta}\Sigma_{\mathcal{K},\Hils'})_{12} (\hat{\unibich}^{B}_{\alpha'\beta}\Sigma_{\Hils,\Hils'})_{23} =
  (\hat{\unibich}^{B}_{\alpha'\psi}) \Sigma_{\mathcal{K}\otimes \Hils, \Hils'} =U^{(\alpha',\beta')}_{(\phi,\psi)},
\end{align*}
where $\Sigma_{\Hils,\Hils'}$ denotes the flip $\Hils \otimes \Hils' \to \Hils' \otimes \Hils$ et cetera.
Thus, the three axioms in \cite{gurski:dissertation}*{Proposition 5.2.2} hold and the assertion follows.
 \end{proof}
 Note that for \corr{every}{general} pairs of representations as in \eqref{eq:cubical}, the two compositions 
 \begin{align*}
   ((\alpha',\beta') \circ (\alpha,\beta)) \otimes ((\gamma',\delta')\circ (\gamma,\delta)) &= f^{*}(\alpha',\beta') \otimes g'_{*}(\alpha,\beta) \otimes g^{*}(\gamma',\delta') \otimes h'_{*}(\gamma,\delta), \\
    ((\alpha',\beta') \otimes (\gamma',\delta')) \circ ((\alpha,\beta) \otimes (\gamma,\delta)) &= f^{*}(\alpha',\beta') \otimes f^{*}(\gamma',\delta') \otimes h'_{*}(\alpha,\beta) \otimes h'_{*}(\gamma,\delta)
 \end{align*}
  are not equal, but naturally isomorphic via $1 \otimes U^{(\gamma',\delta')}_{(\alpha,\beta)} \otimes 1$; see also \eqref{eq:cubical-compositions}.
 \begin{theorem}
   There exists a cubical tricategory $\mathfrak{C}$, where the objects are universal $\Cst$-quantum groups, the 2-category of morphisms between two universal $\Cst$-quantum groups $\UG[F]$ and $\UG$ is $\mathfrak{C}(\UG[F],\UG)$,  the composition functors are as in Proposition \ref{proposition:pairs-horizontal},  and the composition of 2-cells is strictly associative.
    \end{theorem}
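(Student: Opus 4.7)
The plan is to verify that the data already constructed, namely the universal $\Cst$-quantum groups as objects, the strict $2$-categories $\mathfrak{C}(\UG[F],\UG)$ as hom objects, the cubical composition functors from Proposition~\ref{proposition:pairs-horizontal}, the identity $1$-cells $\Id_{\UG}$, and the identity $2$-cells $1_f=(\varepsilon^\univ_A,\varepsilon^\univ_B)$, satisfy the axioms of a cubical tricategory in the sense of \cite{gurski:dissertation}*{Chapter~5}. Since the hom objects are \emph{strict} 2-categories and the only non-trivial higher data is the interchanger $U^{(\alpha',\beta')}_{(\alpha,\beta)}$ from Lemma~\ref{lemma:push-pull-intertwine}, the remaining axioms split into (i)~strict associativity and unit laws for horizontal composition of $1$- and $2$-cells, and (ii)~the three coherence equations for $U$ listed in \cite{gurski:dissertation}*{Proposition~5.2.2}.

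For (i), strict associativity of horizontal composition of $1$-cells is automatic, since composition of bialgebra morphisms between the universal $\Cst$-algebras is strictly associative. The crucial computation is strict associativity of horizontal composition of $2$-cells. Given composable pairs $f_i \xrightarrow{(\alpha_i,\beta_i)} g_i$ between morphisms $\UG[F]_i\to\UG[F]_{i+1}$ for $i=1,2,3$ on Hilbert spaces $\Hils_i$, I would expand both bracketings of $(\alpha_3,\beta_3)\circ(\alpha_2,\beta_2)\circ(\alpha_1,\beta_1)$ using the definition $(\alpha',\beta')\circ(\alpha,\beta)=f^*(\alpha',\beta')\otimes g'_*(\alpha,\beta)$, and show that both yield the same pair on $\Hils_3\otimes\Hils_2\otimes\Hils_1$, namely
\begin{align*}
  \bigl((\alpha_3 f_2 f_1 \otimes \alpha_2 f_1 \otimes \alpha_1)(\Id\otimes\Comult[A_1]^\univ)\Comult[A_1]^\univ, (\beta_3\otimes\beta_2\hat g_3\otimes\beta_1\hat g_2\hat g_3)(\Id\otimes\Comult[A_4]^\univ)\Comult[A_4]^\univ\bigr).
\end{align*}
The equality then follows from coassociativity of $\Comult^\univ$, the bialgebra morphism property $(f\otimes f)\Comult^\univ=\Comult^\univ f$ applied to $f_1$ and $\hat g_3$, and the anti-multiplicativity of the duality operation on morphisms. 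The unit laws reduce to $(\varepsilon^\univ\otimes\Id)\Comult^\univ=\Id=(\Id\otimes\varepsilon^\univ)\Comult^\univ$ together with~\eqref{eq:morphism-antipode-counit}, and absorption of $\C\otimes\Hils\cong\Hils\cong\Hils\otimes\C$ under the strictness convention for the monoidal category of Hilbert spaces.

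For (ii), all three cubical coherence axioms were essentially verified in the proof of Proposition~\ref{proposition:pairs-horizontal}: compatibility of $U$ with vertical composition was the first step there, and the two factorization identities $U^{(\alpha',\beta')}_{(\phi,\psi)}= (U^{(\alpha',\beta')}_{(\gamma,\delta)})_{12}(U^{(\alpha',\beta')}_{(\alpha,\beta)})_{23}$ and the analogous one in the other variable are exactly the two computations displayed in~\eqref{eq:cubical-compositions}, each resting on~\eqref{eq:corep-left} and~\eqref{eq:corep-right} applied to $\unibich^B$. Finally, the axioms for unit interchangers $U^{(\alpha',\beta')}_{1_{\Id}}$ and $U^{1_{\Id}}_{(\alpha,\beta)}$ collapse to identities because $\hat\unibich^B_{\alpha'\,\varepsilon^\univ_B}=1$ and $\hat\unibich^B_{\varepsilon^\univ_B\,\beta}=1$ by~\eqref{eq:bicharacter-counit}.

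The only place where care is needed is bookkeeping of the dualities $f\mapsto\hat f$ and reductions $\Lambda$ in the unit-law and coherence computations; the substantive input is already in place, so no genuinely new obstacle appears and the theorem follows by assembling these verifications.
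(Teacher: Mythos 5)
Your proposal is correct and takes essentially the same route as the paper: the paper's proof consists precisely of the strict-associativity computation for the horizontal composition of 2-cells, showing that both bracketings equal $(\alpha,\beta\,\widehat{g'}\widehat{g''})\otimes(\alpha'f,\beta'\widehat{g''})\otimes(\alpha''f'f,\beta'')$ — which matches your displayed formula — and dismisses the remaining axioms as straightforward, the interchanger coherence having already been established in Proposition \ref{proposition:pairs-horizontal}. Your extra verifications (unit laws via the counit identities, and the collapse of the unit interchangers using \eqref{eq:bicharacter-counit}) simply make explicit what the paper leaves implicit.
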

 \begin{proof}
This is straightforward. For example,  the composition of 2-cells is strictly associative because for any sequence of morphisms of $\Cst$-quantum groups and pairs of representations 
  \begin{align*}
     \xymatrix@C=90pt{
     \UG[E] \rtwocell^{f}_{g}{\qquad (\alpha,\beta)} & \UG[F] \rtwocell^{f'}_{g'}{\qquad(\alpha',\beta')} &\UG[G] \rtwocell^{f''}_{g''}{\qquad  \  (\alpha'',\beta'')} & \UG[H],
     }
   \end{align*}
 a short calculation shows that $((\alpha'',\beta'') \circ (\alpha',\beta')) \circ (\alpha,\beta)$ and $(\alpha'',\beta'') \circ ((\alpha',\beta') \circ (\alpha,\beta))$ both are equal to
$(\alpha,\beta  \widehat{g'}\widehat{g''})  \otimes (\alpha'f,\beta'\widehat{g''}) \otimes (\alpha''f'f,\beta'')$.
 \end{proof}

\section{Reduced braided-commuting representations}
\label{sec:commutation-reduced}

Let $\Qgrp{G}{A}$ and $\Qgrp{H}{B}$ be $\Cst$\nb-quantum groups as before. We now turn to consider braided-commuting pairs of representations of the (reduced)  $\Cst$-algebras  $A$ and $B$, which can be defined similarly as for their universal counterparts $A^{\univ}$ and $B^{\univ}$.
Let $\chi,\chi ' \in \U(\hat  A\otimes \hat  B)$ be bicharacters.
\begin{definition}
\label{definition:pairs-reduced}
 We say that two representations $\alpha$ of $A$ and $\beta$ of $B$ on the same Hilbert space $\Hils$  form \emph{a $(\chi,\chi')$\nb-pair} or \emph{$(\chi,\chi')$\nb-commute}  if
\begin{align*}
  \chi_{12}\multunit[A]_{1\alpha}\multunit[B]_{2\beta} = \multunit[B]_{2\beta}\multunit[A]_{1\alpha}\chi'_{12} 
  \qquad\text{in } \U(\hat  A\otimes \hat  B \otimes\Comp(\Hils)).
\end{align*}
We call $(\alpha,\beta)$ \emph{faithful} if both $\alpha$ and $\beta$ are faithful.
 \end{definition}
We will primarily be interested in the  case where $\chi$ or $\chi'$ is trivial.
Clearly, a $(1,1)$-pair is just a commuting pair of representations. 
\begin{definition}[{\cite{Meyer-Roy-Woronowicz:Twisted_tensor}, \cite{Roy:Codoubles}}]
\label{definition:heisenberg}  Let $\chi \in \U(\hat  A\otimes \hat  B)$ be a bicharacter. A
 \emph{Heisenberg pair for $\chi$} is a $(1,\chi)$\nb-pair, an \emph{anti-Heisenberg pair for $\chi$} is a $(\chi,1)$\nb-pair, and a \emph{Drinfeld pair for $\chi$} is a $(\chi,\chi)$\nb-pair.
  \end{definition}
We define intertwiners and equivalence of $(\chi,\chi')$-pairs similarly as before for $(f,g)$-pairs. Moreover, for every $(\chi,\chi')$-pair $(\alpha,\beta)$, the pair
$(\bar\alpha,\bar\beta)$ defined by $\bar{\alpha}(a)=\alpha(R_{A}(a))^{\transpose}$ and $\bar\beta(b)=\beta(R_{B}(b))^{\transpose}$ for all $a\in A$, $b\in B$ is a $(\chi',\chi)$-pair, and the tensor product of a $(\chi,\chi')$-pair $(\alpha,\beta)$ and a $(\chi',\chi'')$-pair $(\alpha',\beta')$,
\begin{align*}
  (\alpha,\beta) \otimes (\alpha',\beta') &:= ((\alpha\otimes \alpha')\circ \Delta_{A},(\beta\otimes \beta')\circ \Delta_{B})
\end{align*}
is a $(\chi,\chi'')$-pair again.  

\begin{lemma}
  For any two bicharacters $\chi,\chi' \in \U(\hat  A\otimes \hat  B)$, there exists a faithful $(\chi,\chi')$\nb-pair.
\end{lemma}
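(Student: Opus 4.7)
The plan is to produce the desired pair as a tensor product (in the sense of \eqref{eq:vertical-composition}) of a faithful anti-Heisenberg pair for $\chi$ and a faithful Heisenberg pair for $\chi'$. Existence of both at the reduced level, for any bicharacter, is established in \cite{Meyer-Roy-Woronowicz:Twisted_tensor}, so all ingredients are available at the outset.

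First, I would fix a faithful $(\chi,1)$-pair $(\alpha_{1},\beta_{1})$ on some Hilbert space $\Hils_{1}$ and a faithful $(1,\chi')$-pair $(\alpha_{2},\beta_{2})$ on some Hilbert space $\Hils_{2}$, and then define
\[
  \alpha:=(\alpha_{1}\otimes \alpha_{2})\circ\Comult[A], \qquad \beta:=(\beta_{1}\otimes \beta_{2})\circ\Comult[B]
\]
on $\Hils_{1}\otimes \Hils_{2}$, exactly mirroring \eqref{eq:vertical-composition}. Since $\multunit[A]$ is a left corepresentation of $(A,\Comult[A])$ and $\multunit[B]$ a right corepresentation of $(B,\Comult[B])$, the reduced analogue of \eqref{eq:pairs-vertical-legs} yields $\multunit[A]_{1\alpha}=\multunit[A]_{1\alpha_{1}}\multunit[A]_{1\alpha_{2}}$ and $\multunit[B]_{2\beta}=\multunit[B]_{2\beta_{1}}\multunit[B]_{2\beta_{2}}$. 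Combining these with the commutation relations for the two input pairs, and using that $\multunit[A]_{1\alpha_{2}}$ and $\multunit[B]_{2\beta_{1}}$ commute because they act on disjoint tensor factors of $\Hils_{1}\otimes \Hils_{2}$, the same chain of equalities as in the universal computation following \eqref{eq:vertical-composition} delivers
\[
  \chi_{12}\multunit[A]_{1\alpha}\multunit[B]_{2\beta}=\multunit[B]_{2\beta}\multunit[A]_{1\alpha}\chi'_{12},
\]
so $(\alpha,\beta)$ is a $(\chi,\chi')$-pair.

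Finally, I would verify faithfulness. Since $\alpha_{1},\alpha_{2}$ are faithful, so is the minimal tensor product $\alpha_{1}\otimes \alpha_{2}$ on $A\otimes A$; and $\Comult[A]$ is injective, which is immediate from the explicit formula $\Comult[A](a)=\multunit_{12}(a\otimes 1)\multunit_{12}^{*}$ of \eqref{eq:comult}. If $\alpha(a)=0$, then $(\alpha_{1}\otimes \alpha_{2})(\Comult[A](a)(x\otimes y))=0$ for all $x,y\in A$, hence $\Comult[A](a)(A\otimes A)=0$; by nondegeneracy of $A\otimes A$ in its multiplier algebra this forces $\Comult[A](a)=0$, and injectivity of $\Comult[A]$ then gives $a=0$. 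The same argument handles $\beta$. The main obstacle here is really just book-keeping: setting up the reduced counterpart of the vertical composition and confirming that faithfulness survives composition with the comultiplications; once that is in place, everything is formal.
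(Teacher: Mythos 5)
Your argument is exactly the paper's proof: take a faithful anti-Heisenberg pair for $\chi$ and a faithful Heisenberg pair for $\chi'$ (both supplied by \cite{Meyer-Roy-Woronowicz:Twisted_tensor}*{Lemma 3.8}) and observe that their tensor product is a faithful $(\chi,\chi')$-pair. Your additional details --- the reduced analogue of \eqref{eq:pairs-vertical-legs} and the verification that faithfulness survives composition with the injective comultiplications --- are correct and simply make explicit what the paper leaves implicit.
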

\begin{proof}
  By \cite{Meyer-Roy-Woronowicz:Twisted_tensor}*{Lemma 3.8}, there exist a faithful anti-Heisenberg pair for $\chi$ and a faithful Heisenberg pair for $\chi'$. The  tensor product of the two is a faithful $(\chi,\chi')$\nb-pair.
\end{proof}
There exist canonical  Heisenberg, anti-Heisenberg and Drinfeld pairs which are unique up to equivalence:
\begin{example}  
\label{example:canonical-heisenberg}
  Let $(\alpha,\beta)$ be a $(1,\chi)$-pair, that is, a Heisenberg pair for $\chi$, and denote by $\iota_{A},\iota_{B}$ the canonical morphisms from $A$ and $B$ to $A\otimes B$. Since $(\iota_{A},\iota_{B})$ is a $(1,1)$\nb-pair, the tensor product
  $(\alpha',\beta')\defeq 
   (\iota_{A},\iota_{B}) \otimes (\alpha,\beta)$
is a Heisenberg pair for $\chi$ again.  \cite{Meyer-Roy-Woronowicz:Twisted_tensor}*{Theorem 4.6} shows that $(\alpha',\beta')$ does not depend on $(\alpha,\beta)$ up to equivalence. We call~$(\alpha',\beta')$ a \emph{canonical Heisenberg pair} associated to~$\chi$, and~$(\bar {\alpha'},\bar {\beta'})$ a \emph{canonical anti\nb-Heisenberg pair} associated to~$\chi$.  The tensor product $(\bar {\alpha'},\bar {\beta'}) \otimes (\alpha',\beta')$ defines the \emph{canonical Drinfeld pair} associated to $\chi$, which plays a fundamental role in the construction of the generalised Drinfeld double in~\cite{Roy:Codoubles}.
% \Sutanucomment{If you agree with my justification please leave out the following part of this example} \Thomascomment{I shall do so.}
% This fact was proven already in~\cite{Meyer-Roy-Woronowicz:Twisted_tensor}*{}, where $(\alpha',\beta')$ was called the \emph{canonical Heisenberg pair} associated to $(\alpha,\beta)$. 
% In particular, \cite{Meyer-Roy-Woronowicz:Twisted_tensor}, all canonical Heisenberg pairs associated to $\chi$\nb-Heisenberg pairs are equivalent.
\end{example}
The following result is a  strengthening of \cite{Meyer-Roy-Woronowicz:Twisted_tensor}*{Proposition 3.9}:
\begin{proposition}
\label{proposition:pairs-vertical-reduced}
  Let $(\alpha,\beta)$ and $(\alpha',\beta')$ be pairs of representations of $A$ and $B$ on   some Hilbert spaces~$\Hils$ and~$\Hils[K]$, and let $\chi,\chi'' \in \U(\hat  A\otimes \hat  B)$ be   bicharacters such that  $(\alpha,\beta) \otimes (\alpha',\beta')$ is a $(\chi,\chi'')$-pair. Then there exists a bicharacter $\chi'$ such that $(\alpha,\beta)$ is a $(\chi,\chi')$\nb-pair and  $(\alpha',\beta')$ is  a $(\chi',\chi'')$\nb-pair.
\end{proposition}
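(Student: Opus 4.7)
The plan is to define $\chi'$ explicitly by conjugation and to use the hypothesis to argue that the result lives in $\U(\hat A \otimes \hat B)$ rather than in a larger space.

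Using the corepresentation identities $\multunit[A]_{1((\alpha\otimes \alpha')\Comult[A])} = \multunit[A]_{1\alpha}\multunit[A]_{1\alpha'}$ and its $B$-analogue, the hypothesis becomes an identity in $\hat A \otimes \hat B \otimes \Comp(\Hils) \otimes \Comp(\Hils[K])$:
\begin{align*}
  \chi_{12}\multunit[A]_{13}\multunit[A]_{14}\multunit[B]_{23}\multunit[B]_{24} = \multunit[B]_{23}\multunit[B]_{24}\multunit[A]_{13}\multunit[A]_{14}\chi''_{12},
\end{align*}
with $\alpha,\beta$ implicit on leg $3$ and $\alpha',\beta'$ on leg $4$. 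Since $\multunit[A]_{14}$ and $\multunit[B]_{23}$ act on disjoint tensor legs, and likewise $\multunit[A]_{13}$ and $\multunit[B]_{24}$, this rearranges to
\begin{align*}
  \multunit[A]_{13}^{-1}\multunit[B]_{23}^{-1}\chi_{12}\multunit[A]_{13}\multunit[B]_{23} = \multunit[B]_{24}\multunit[A]_{14}\chi''_{12}\multunit[B]_{24}^{-1}\multunit[A]_{14}^{-1}.
\end{align*}
The left-hand side is trivial on leg $4$ while the right-hand side is trivial on leg $3$, so their common value is a unitary $\chi' \in \U(\hat A \otimes \hat B)$. Reading the two sides in their respective $3$-leg products recovers the identities $\chi_{12}\multunit[A]_{1\alpha}\multunit[B]_{2\beta} = \multunit[B]_{2\beta}\multunit[A]_{1\alpha}\chi'_{12}$ and $\chi'_{12}\multunit[A]_{1\alpha'}\multunit[B]_{2\beta'} = \multunit[B]_{2\beta'}\multunit[A]_{1\alpha'}\chi''_{12}$, so $(\alpha,\beta)$ is a $(\chi,\chi')$-pair and $(\alpha',\beta')$ is a $(\chi',\chi'')$-pair.

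It remains to check that $\chi'$ is a bicharacter. For the left-corepresentation identity $(\DuComult[A]\otimes \Id_{\hat B})(\chi') = \chi'_{23}\chi'_{13}$, I apply $\DuComult[A]\otimes \Id_{\hat B}\otimes \Id_{\Comp(\Hils)}$ to the $(\chi,\chi')$-pair identity; the bicharacter property of $\chi$ produces $\chi_{23}\chi_{13}$, and the relation $(\DuComult[A]\otimes \Id_A)\multunit[A] = \multunit[A]_{23}\multunit[A]_{13}$ splits $\multunit[A]_{1\alpha}$ into a product over two $\hat A$-legs, yielding an identity in $\hat A \otimes \hat A \otimes \hat B \otimes \Comp(\Hils)$ that isolates $(\DuComult[A]\otimes \Id_{\hat B})(\chi')_{123}$ on one side. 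Separately, inserting the $(\chi,\chi')$-pair identity into this $4$-leg space at leg triples $(1,3,4)$ and $(2,3,4)$ and combining the two using disjoint-leg commutativity (notably of $\chi_{13}$ with $\multunit[A]_{24}$ and of $\chi'_{23}$ with $\multunit[A]_{14}$) transforms the other side into $\chi'_{23}\chi'_{13}$ times the same unitary factor; cancelling it yields the desired relation. The $\DuComult[B]$-version is symmetric. The main obstacle is conceptual rather than computational: the hypothesis couples the two pairs across two Hilbert spaces and thus constrains $\chi'$ from two independent ``sides'', forcing it to be independent of both auxiliary spaces; once $\chi'$ is pinned down as an element of $\hat A \otimes \hat B$, the remaining verifications reduce to routine manipulations on disjoint tensor legs.
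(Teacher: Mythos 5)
Your proof is correct, but it takes a genuinely different route from the paper's. The paper reduces the claim to the untwisted case: it tensors $(\alpha,\beta)\otimes(\alpha',\beta')$ on the left with a Heisenberg pair for $\chi$ and on the right with an anti-Heisenberg pair for $\chi''$, so that the resulting four-fold tensor product is a commuting $(1,1)$\nb-pair, then invokes Proposition~3.9 of Meyer--Roy--Woronowicz --- which both produces $\chi'$ and certifies that it is a bicharacter --- and finally strips the padding off again by a cancellation argument. You instead construct $\chi'$ directly as the common value of the two conjugates $\multunit[A]_{1\alpha}^{*}\multunit[B]_{2\beta}^{*}\chi_{12}\multunit[A]_{1\alpha}\multunit[B]_{2\beta}$ and $\multunit[B]_{2\beta'}\multunit[A]_{1\alpha'}\chi''_{12}\multunit[B]_{2\beta'}^{*}\multunit[A]_{1\alpha'}^{*}$; since these live on complementary legs, slicing the unused leg with a state pins $\chi'$ down as a unitary in $\U(\hat A\otimes\hat B)$, and the two defining expressions immediately give the $(\chi,\chi')$- and $(\chi',\chi'')$-pair relations. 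Your sketch of the bicharacter verification also checks out: applying $\DuComult[A]\otimes\Id\otimes\Id$ to $\chi_{12}\multunit[A]_{1\alpha}\multunit[B]_{2\beta}=\multunit[B]_{2\beta}\multunit[A]_{1\alpha}\chi'_{12}$ and rewriting $\chi_{23}\chi_{13}\multunit[A]_{24}\multunit[A]_{14}\multunit[B]_{34}$ in $\U(\hat A\otimes\hat A\otimes\hat B\otimes\Comp(\Hils))$ via the pair relation on legs $(2,3,4)$ and $(1,3,4)$, together with the disjoint-leg commutations you name, yields $\multunit[B]_{34}\multunit[A]_{24}\multunit[A]_{14}\chi'_{23}\chi'_{13}$, and cancelling the unitary prefactor gives the left-corepresentation identity; the $\DuComult[B]$ case is symmetric. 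The trade-off: the paper's argument is shorter because the bicharacter property of $\chi'$ is inherited from the cited result, whereas yours is self-contained and in effect re-proves that result in the twisted setting. Both constructions produce the same $\chi'$, since $\chi'$ is uniquely determined by the $(\chi,\chi')$-relation for $(\alpha,\beta)$.
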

\begin{proof}
 Choose  a $(1,\chi)$\nb-pair $(\gamma,\delta)$ and a $(\chi',1)$\nb-pair $(\gamma',\delta')$. Then
 $(\gamma,\delta)\otimes (\alpha,\beta) \otimes (\alpha',\beta') \otimes (\gamma',\delta')$
is a $(1,1)$\nb-pair, and by  \cite{Meyer-Roy-Woronowicz:Twisted_tensor}*{Proposition 3.9}, there exists a bicharacter $\chi'$ such that $(\gamma,\delta) \otimes (\alpha,\beta)$ is a $(1,\chi')$\nb-pair and $(\alpha',\beta') \otimes (\gamma',\delta')$ is a $(\chi',1)$\nb-pair.  By  \eqref{eq:pairs-vertical-legs}, the first relation implies
\begin{align*}
  \multunit[A]_{1\gamma}\multunit[B]_{2\delta}\multunit[A]_{1\alpha}\multunit[B]_{2\beta} &=
  \multunit[A]_{1\gamma}\multunit[A]_{1\alpha}\multunit[B]_{2\delta}\multunit[B]_{2\beta} = \multunit[B]_{2\delta}\multunit[B]_{2\beta}\multunit[A]_{1\gamma}\multunit[A]_{1\alpha}\chi'_{12} =
  \multunit[B]_{2\delta}\multunit[A]_{1\gamma} \multunit[B]_{2\beta}\multunit[A]_{1\alpha}\chi'_{12}.
\end{align*}
Since $\multunit[A]_{1\gamma}\multunit[B]_{2\delta} = \multunit[B]_{2\delta}\multunit[A]_{1\gamma}\chi_{12}$, we can conclude that $(\alpha,\beta)$ is a $(\chi,\chi')$\nb-pair. A similar calculation shows that $(\alpha',\beta')$ is a $(\chi',\chi'')$\nb-pair.
\end{proof}

Let now $f,g$ be morphisms from $\UG$ to $\UDG[H]$ and let $\chi=W^{f}$, $\chi'=W^{g}$.
Then the $(\chi,\chi')$-pairs defined above correspond to $(f,g)$-pairs that are reduced in the following sense:
\begin{definition}
   We call an
  $(f,g)$-pair $(\alpha,\beta)$ \emph{reduced} if $\alpha$ and $\beta$
  factorize through the reducing homomorphisms
  $\Lambda_{A} \colon A^{\univ} \to A$ and
  $\Lambda_{B} \colon B^{\univ} \to B$, respectively.
\end{definition}
\begin{lemma} 
 \label{lemma:pairs-reduced-universal}
  Let $\alpha$ and $\beta$ be representations of $A$ and $B$ on the same Hilbert space. Then $(\alpha,\beta)$ is a $(\chi,\chi')$\nb-pair if and only if  $(\alpha\Lambda_{A},\beta\Lambda_{B})$ is an $(f,g)$\nb-pair.
\end{lemma}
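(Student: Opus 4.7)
My plan is a direct reduction to Lemma \ref{lemma:pairs-universal-equivalence}: that lemma shows that the universal braiding relation \eqref{eq:pairs-universal} defining an $(f,g)$-pair is equivalent to its weak (mixed) version \eqref{eq:pairs-universal-weak}, and for the particular pair $(\alpha\Lambda_{A},\beta\Lambda_{B})$ the weak version should be precisely the $(\chi,\chi')$-pair relation of Definition \ref{definition:pairs-reduced}.  So the proof amounts to identifying terms on the two sides and invoking that lemma.

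Concretely, I would first observe that for $\alpha' := \alpha\Lambda_{A}$ one has
\[
\maxcorep[A]_{1\alpha'} = (\Id_{\hat A} \otimes \alpha\Lambda_{A})(\maxcorep[A]) = (\Id_{\hat A} \otimes \alpha)\bigl((\Id_{\hat A} \otimes \Lambda_{A})(\maxcorep[A])\bigr) = \multunit[A]_{1\alpha},
\]
using the defining lifting property of the mixed universal bicharacter $\maxcorep[A]\in \U(\hat A \otimes A^{\univ})$, and likewise $\maxcorep[B]_{2\beta'} = \multunit[B]_{2\beta}$ for $\beta' := \beta\Lambda_{B}$.  Combining this with $\chi = W^{f}$ and $\chi' = W^{g}$ (the reduced bicharacters associated to $f$ and $g$), the weak relation \eqref{eq:pairs-universal-weak} written for $(\alpha',\beta')$ reads exactly
\[
\chi_{12}\multunit[A]_{1\alpha}\multunit[B]_{2\beta} = \multunit[B]_{2\beta}\multunit[A]_{1\alpha}\chi'_{12} \quad\text{in } \U(\hat A \otimes \hat B \otimes \Comp(\Hils)),
\]
which is the $(\chi,\chi')$-pair condition.

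Hence the forward implication follows by applying $\hat\Lambda_{A}\otimes \hat\Lambda_{B}\otimes \Id_{\Comp(\Hils)}$ to \eqref{eq:pairs-universal} for $(\alpha',\beta')$ and using the above identifications, while the converse follows by invoking the non-trivial direction of Lemma \ref{lemma:pairs-universal-equivalence} to lift the weak identity back to the universal relation \eqref{eq:pairs-universal}, which then by definition says that $(\alpha\Lambda_{A},\beta\Lambda_{B})$ is an $(f,g)$-pair.  The only subtle step, namely the passage from the mixed (reduced--universal) identity to the fully universal one, is already handled in Lemma \ref{lemma:pairs-universal-equivalence}, so no new obstacle arises here; the present lemma is essentially a translation statement.
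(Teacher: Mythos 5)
Your argument is correct, but it follows a genuinely different route from the paper's. You reduce the statement directly to Lemma \ref{lemma:pairs-universal-equivalence}: since $\maxcorep[A]$ and $\maxcorep[B]$ lift $\multunit[A]$ and $\multunit[B]$, one indeed has $\maxcorep[A]_{1(\alpha\Lambda_A)}=\multunit[A]_{1\alpha}$ and $\maxcorep[B]_{2(\beta\Lambda_B)}=\multunit[B]_{2\beta}$, so the ``weak'' relation \eqref{eq:pairs-universal-weak} for $(\alpha\Lambda_A,\beta\Lambda_B)$ is literally the $(\chi,\chi')$-pair condition for $(\alpha,\beta)$, and the equivalence \eqref{eq:pairs-universal}$\Leftrightarrow$\eqref{eq:pairs-universal-weak} finishes the proof in both directions. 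The paper instead proves the nontrivial direction by tensoring $(\alpha,\beta)$ with an auxiliary $(1,\chi)$-pair, invoking the lifting result \cite{Meyer-Roy-Woronowicz:Twisted_tensor}*{(4.3)} for the resulting Heisenberg pairs, and then cancelling legs via \eqref{eq:pairs-vertical-legs}; it never routes through Lemma \ref{lemma:pairs-universal-equivalence}. Your reduction is arguably cleaner and exposes the lemma as a pure translation statement; the only point worth making explicit is that Lemma \ref{lemma:pairs-universal-equivalence} is stated for arbitrary nondegenerate representations of $A^{\univ}$ and $B^{\univ}$ (so it does apply to $\alpha\Lambda_A$, $\beta\Lambda_B$, which are nondegenerate because the reducing maps are surjective) and that its hard direction \eqref{eq:pairs-universal-weak}$\Rightarrow$\eqref{eq:pairs-universal} is proved independently of the present lemma, so no circularity arises. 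What the paper's route buys is a self-contained argument relying only on the previously cited Heisenberg-pair lifting; what yours buys is brevity and a clearer conceptual dependence on the earlier equivalence.
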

\begin{proof}
  The if part is trivial. Suppose that $(\alpha,\beta)$ is a  $(\chi,\chi')$\nb-pair. Choose a $(1,\chi)$\nb-pair $(\alpha',\beta')$ and let  $(\alpha'',\beta''):=(\alpha',\beta')\otimes(\alpha,\beta)$. Then $(\alpha',\beta')$ and $(\alpha'',\beta'')$ are Heisenberg pairs for $\chi$ and $\chi'$, respectively. Denote the respective compositions with $\Lambda_{A}$ or $\Lambda_{B}$ by  $\tilde\alpha',\tilde\beta',\tilde
\alpha'',\tilde \beta''$, respectively. Then by \cite{Meyer-Roy-Woronowicz:Twisted_tensor}*{(4.3)}, 
     \begin{align*}
\unibich^{A}_{1\tilde \alpha'}\unibich^{B}_{2\tilde \beta'} =
    \unibich^{B}_{2\tilde\beta'}\unibich^{A}_{1\tilde\alpha'}\unibich^{f}_{12} \quad\text{and}\quad
\unibich^{A}_{1\tilde\alpha''}\unibich^{B}_{2\tilde\beta''} =
    \unibich^{B}_{2\tilde\beta''}\unibich^{A}_{1\tilde\alpha''}\unibich^{g}_{12},
  \end{align*}
and using \eqref{eq:pairs-vertical-legs}, we conclude that
$\unibich^{f}_{12}\unibich^{A}_{1\tilde \alpha}\unibich^{B}_{2\tilde \beta} =
    \unibich^{B}_{2\tilde\beta}\unibich^{A}_{1\tilde\alpha}\unibich^{g}_{12}$.
\end{proof}
With respect to the tensor product, reduced pairs of braided-commuting representations form a two-sided ideal. To prove this, 
we need the following well-known result, for which we did not find a convenient reference.
\begin{proposition} \label{prop:reduced-universal-coaction}
 There \corr{exists}{exist}  unique morphisms   $\Delta^{\red,\univ}_{A}$ and $\Delta^{\univ,\red}_{A}$  that make the following diagram commute,
\begin{align}
  \label{eq:delta-red-universal}
  \xymatrix@C=40pt@R=18pt{
A^{\univ} \otimes A^{\univ} \ar[d]_{\Id_{A^{\univ}} \otimes \Lambda_{A}} &    A^{\univ}\ar[r]^(0.4){\Delta^{\univ}_{A}} \ar[d]_{\Lambda_{A}} \ar[l]_(0.4){\Delta^{\univ}_{A}} &  A^{\univ}\otimes A^{\univ} \ar[d]^{\Lambda_{A} \otimes \Id_{A^{\univ}}} \\
A^{\univ} \otimes A &  \ar[l]_(0.4){\Delta^{\univ,\red}_{A}}   A \ar[r]^(0.4){\Delta^{\red,\univ}_{A}} & A \otimes A^{\univ}.
  }
\end{align}
\end{proposition}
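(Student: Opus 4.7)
By symmetry, I will construct $\Delta^{\red,\univ}_{A}$ and verify the right square of the diagram; $\Delta^{\univ,\red}_{A}$ and the left square are entirely analogous with left coactions in place of right. Since $\Lambda_{A}$ is a morphism of $\Cst$\nb-bialgebras, i.e., $(\Lambda_{A}\otimes\Lambda_{A})\Delta^{\univ}_{A} = \Delta_{A}\Lambda_{A}$, the morphism $\phi := (\Lambda_{A}\otimes\Id_{A^{\univ}})\Delta^{\univ}_{A} \colon A^{\univ} \to \Mult(A\otimes A^{\univ})$ automatically satisfies $(\Id_{A}\otimes\Lambda_{A})\phi = \Delta_{A}\Lambda_{A}$. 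If $\phi$ factors through $\Lambda_{A}$, then the factor $\Delta^{\red,\univ}_{A}\colon A \to \Mult(A\otimes A^{\univ})$ makes the right square commute by construction; uniqueness is then automatic from the density of $\Lambda_{A}(A^{\univ})$ in $A$.

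The essential point is thus the well-definedness of this factorisation, i.e., $\ker\Lambda_{A} \subset \ker\phi$, or equivalently $\Delta^{\univ}_{A}(\ker\Lambda_{A}) \subset \ker\Lambda_{A}\otimes A^{\univ}$. This is a Fell\nb-type absorption statement for the $\Cst$\nb-quantum group $\G$. The tool to obtain it is the universal lift construction developed in the appendix: every continuous right coaction $\gamma\colon C\to\Mult(C\otimes A)$ admits a unique universal lift $\gamma^{\univ}\colon C\to\Mult(C\otimes A^{\univ})$ with $(\Id_{C}\otimes\Lambda_{A})\gamma^{\univ} = \gamma$, and $\gamma^{\univ}$ is a continuous right coaction of $A^{\univ}$ on $C$. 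Applying this to the right coaction $\Delta_{A}$ of $A$ on itself, I set $\Delta^{\red,\univ}_{A} := (\Delta_{A})^{\univ}$.

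With $\Delta^{\red,\univ}_{A}$ in hand, the identity $\Delta^{\red,\univ}_{A}\Lambda_{A} = \phi$ follows by a uniqueness argument: both sides reduce to $\Delta_{A}\Lambda_{A}$ after applying $\Id_{A}\otimes\Lambda_{A}$ (by the defining property of $\Delta^{\red,\univ}_{A}$ on the one hand, and the bialgebra\nb-morphism identity for $\Lambda_{A}$ on the other), and both exhibit $\Lambda_{A}$ as an equivariant morphism for the $A^{\univ}$\nb-coactions $\Delta^{\univ}_{A}$ on $A^{\univ}$ and $\Delta^{\red,\univ}_{A}$ on $A$; this compatibility is exactly the form of uniqueness supplied by the appendix's universal lift, so the two morphisms coincide.

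The main obstacle is the existence of the universal lift, the nontrivial content to be established in the appendix. Its proof relies on the multiplicative\nb-unitary framework together with the partial reductions $\maxcorep[A]\in\U(\hat{A}\otimes A^{\univ})$ and $\dumaxcorep[A]\in\U(\hat{A}^{\univ}\otimes A)$ of the universal bicharacter $\unibich^{A}$; these encode just enough of the algebraic structure to descend the coaction $\Delta^{\univ}_{A}$ through $\Lambda_{A}$ one leg at a time, yielding the desired Fell absorption property on which the whole diagram rests.
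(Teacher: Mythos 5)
Your reduction of the problem to the factorisation $\ker\Lambda_{A}\subseteq\ker\phi$ with $\phi=(\Lambda_{A}\otimes\Id_{A^{\univ}})\Delta^{\univ}_{A}$ is the right framing, and your uniqueness argument is fine. But the way you discharge the essential step is circular. You define $\Delta^{\red,\univ}_{A}$ as the ``universal lift'' $(\Delta_{A})^{\univ}$ supplied by the appendix, yet the appendix's lifting theorem is \emph{stated} in terms of $\Delta^{\red,\univ}_{A}$ (the defining diagram for $\gamma^{\univ}$ is $(\gamma\otimes\Id_{A^{\univ}})\gamma^{\univ}=(\Id_{C}\otimes\Delta^{\red,\univ}_{A})\gamma$) and its proof explicitly invokes Proposition \ref{prop:reduced-universal-coaction} to obtain the half-lifted comultiplications. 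Nor can you substitute the reduced push-forward machinery of Meyer--Roy--Woronowicz: that produces coactions of the \emph{reduced} $\Cst$-bialgebra along $\Delta_{R}$ or $\Delta_{L}$, not a lift whose second leg lands in $A^{\univ}$. So the Fell-absorption statement you correctly identify as the crux is never actually proved; your closing paragraph only asserts that $\maxcorep[A]$ and $\dumaxcorep[A]$ ``encode just enough'' structure.

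The missing content is a short direct computation. Since $\maxcorep[A]\in\U(\hat{A}\otimes A^{\univ})$ is a right corepresentation of $(A^{\univ},\Delta^{\univ}_{A})$, one has $(\Id_{\hat{A}}\otimes\Delta^{\univ}_{A})(\maxcorep[A])=\maxcorep[A]_{12}\maxcorep[A]_{13}$; applying $\Id_{\hat{A}}\otimes\Lambda_{A}\otimes\Id_{A^{\univ}}$ and using $(\Id_{\hat A}\otimes\Lambda_{A})(\maxcorep[A])=W^{A}$ together with the corepresentation identity \eqref{eq:corep-explicit} yields
\begin{align*}
(\Id_{\hat{A}}\otimes(\Lambda_{A}\otimes\Id_{A^{\univ}})\Delta^{\univ}_{A})(\maxcorep[A])
= W^{A}_{12}\maxcorep[A]_{13}
= \maxcorep[A]_{23}\,W^{A}_{12}\,(\maxcorep[A]_{23})^{*},
\end{align*}
whence $(\Lambda_{A}\otimes\Id_{A^{\univ}})\Delta^{\univ}_{A}(a)=\maxcorep[A]\bigl(\Lambda_{A}(a)\otimes 1_{A^{\univ}}\bigr)(\maxcorep[A])^{*}$ because $A$ is generated by slices of $W^{A}$ in the first leg. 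The right-hand side visibly depends on $a$ only through $\Lambda_{A}(a)$, which gives the factorisation and the commutativity of the right square in one stroke; this implementation formula is exactly the content your argument presupposes but does not establish.
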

\begin{proof}
Uniqueness is clear. We only prove existence of $\Delta^{\red,\univ}_{A}$.
 Relations \eqref{eq:corep-right} and \eqref{eq:corep-explicit}, applied to the bicharacter $\maxcorep[A]$, imply
\begin{align*}
  (\Id_{\hat  A} \otimes (\Lambda_{A} \otimes \Id_{A^{\univ}})\Delta^{\univ}_{A})(\maxcorep[A]) &= (\Id_{\hat  A} \otimes \Lambda_{A} \otimes \Id_{A^{\univ}})(\maxcorep[A]_{12}\maxcorep[A]_{13}) = W^{A}_{12}\maxcorep[A]_{13} = \maxcorep[A]_{23}W^{A}_{12}(\maxcorep[A]_{23})^{*}
\end{align*}
and hence
$(\Lambda_{A} \otimes \Id_{A^{\univ}})\Delta^{\univ}_{A}(a) =  \maxcorep[A](\Lambda_{A}(a) \otimes 1_{A^{\univ}})(\maxcorep[A])^{*}$.
\end{proof}
 \begin{corollary} \label{cor:pairs-reduced-ideal}
 Let $(\alpha,\beta)$ be an $(f,g)$\nb-pair and let $(\alpha',\beta')$ be a $(g,h)$\nb-pair. If one of the two is reduced, then so is $(\alpha,\beta) \otimes (\alpha',\beta')$.
 \end{corollary}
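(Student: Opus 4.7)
The plan is to derive the statement directly from the commutative diagram \eqref{eq:delta-red-universal}, essentially unraveling definitions. Recall that $(\alpha,\beta) \otimes (\alpha',\beta') = ((\alpha\otimes \alpha')\circ \Delta^{\univ}_A,\, (\beta\otimes \beta')\circ \Delta^{\univ}_B)$, and ``reduced'' means that each component factors through the reducing map $\Lambda_A$ or $\Lambda_B$.

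Suppose first that $(\alpha,\beta)$ is reduced, so $\alpha = \tilde\alpha\circ \Lambda_A$ and $\beta = \tilde\beta\circ \Lambda_B$ for some representations $\tilde\alpha$ of $A$ and $\tilde\beta$ of $B$. Then by the left commuting square of \eqref{eq:delta-red-universal},
\begin{align*}
(\alpha\otimes \alpha')\circ \Delta^{\univ}_A
= (\tilde\alpha\otimes \alpha')\circ (\Lambda_A\otimes \Id_{A^{\univ}})\circ \Delta^{\univ}_A
= (\tilde\alpha\otimes \alpha')\circ \Delta^{\red,\univ}_A\circ \Lambda_A,
\end{align*}
which factors through $\Lambda_A$; the analogous computation for $\beta$ gives a factorization through $\Lambda_B$. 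Hence $(\alpha,\beta)\otimes(\alpha',\beta')$ is reduced. If instead $(\alpha',\beta')$ is reduced, the same argument using the right commuting square of \eqref{eq:delta-red-universal} (and the morphisms $\Delta^{\univ,\red}_A$, $\Delta^{\univ,\red}_B$) shows that both components of $(\alpha,\beta)\otimes(\alpha',\beta')$ factor through the reducing maps.

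There is essentially no obstacle here: the corollary is purely an unpacking of definitions once Proposition \ref{prop:reduced-universal-coaction} is in hand. The only subtlety worth a sentence is that we must use \emph{both} halves of diagram \eqref{eq:delta-red-universal} to cover the two symmetric cases, which is what makes it natural to state the proposition with the full diagram rather than a single square.
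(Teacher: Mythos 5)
Your argument is correct and is exactly the intended one: the paper states this as an immediate corollary of Proposition \ref{prop:reduced-universal-coaction} with no written proof, and unpacking the definition of ``reduced'' via the two commuting squares of \eqref{eq:delta-red-universal} is precisely what is meant. The only (cosmetic) slip is that you have swapped the labels ``left'' and ``right'': the identity $(\Lambda_A\otimes\Id_{A^{\univ}})\circ\Delta^{\univ}_A=\Delta^{\red,\univ}_A\circ\Lambda_A$ that you use when $(\alpha,\beta)$ is reduced is the right-hand square of the diagram as drawn.
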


Let us call a morphism $f\in \Mor( A^{\univ}, \hat  B^{\univ})$ of $\Cst$-bialgebras \emph{reduced} if the compositions $\hat \Lambda_{B} \circ f\in \Mor( A^{\univ}, \hat  B)$ and $\hat \Lambda_{A} \circ \hat  f \in \Mor( B^{\univ}, \hat  A)$ factorize through $\Lambda_{A}$ and $\Lambda_{B}$, respectively, such that we obtain commutative diagrams
\begin{align*}
  \xymatrix@R=10pt{
     A^{\univ} \ar[r]^{f} \ar[d]_{\Lambda_{A}} & \hat  B^{\univ} \ar[d]^{\hat  \Lambda_{B}}  &&
     B^{\univ} \ar[r]^{\hat  f} \ar[d]_{\Lambda_{B}} & \hat  A^{\univ} \ar[d]^{\hat  \Lambda_{A}} 
     \\
   A \ar[r]^{f^{\red}} & \hat  B &  &  B \ar[r]^{\hat  f^{\red}} & \hat  A,}
\end{align*}
and denote by $\tau\in \Mor( A^{\univ}, \hat  B^{\univ})$ the trivial morphism, given by $a\mapsto \varepsilon^{\univ}_{A}(a)1_{\hat  B^{\univ}}$.
\begin{remark}
  \begin{enumerate}
  \item In case $\DuG[H]=\G$, the identity $f=\Id_{A^{\univ}}$  evidently is reduced.
  \item It may happen that $f$ factorizes through $\Lambda_{A}$ and $\hat  f$
    does not factorize through $\Lambda_{B}$. For example, if $\G$ is trivial,
    then $f=f^{\red}$ but $\hat  f=\varepsilon^{\univ}_{B}$ need not \corr{descent}{descend} to  $B$.
  \end{enumerate}
\end{remark}
The following result shows that   Heisenberg and anti-Heisenberg pairs are automatically reduced:
\begin{proposition} \label{proposition:heisenberg-reduced}
  If $f$ is reduced, then every Heisenberg pair and every anti-Heisenberg pair for $f$ is reduced.
\end{proposition}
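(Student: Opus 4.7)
The plan is to show that $\alpha$ vanishes on $\ker \Lambda_{A}$ and that $\beta$ vanishes on $\ker \Lambda_{B}$, exploiting the characterization \eqref{eq:hb-universal} of Heisenberg pairs (respectively \eqref{eq:anti-hb-universal} for anti-Heisenberg pairs) together with the factorization property of the reduced morphism $f^{\red}$ and the compatibility of $\Delta^{\univ}_{A}$ with $\Lambda_{A}$ furnished by Proposition \ref{prop:reduced-universal-coaction}.

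Concretely, for a Heisenberg pair $(\alpha,\beta)$ for $f$, the relation \eqref{eq:hb-universal} rearranges to
\begin{equation*}
  (\alpha\otimes f)\Delta^{\univ}_{A}(a) = \hat{\unibich}^{B}_{\beta 1}\bigl(\alpha(a)\otimes 1_{\hat B^{\univ}}\bigr)\bigl(\hat{\unibich}^{B}_{\beta 1}\bigr)^{*} \qquad (a\in A^{\univ}).
\end{equation*}
Applying $\Id_{\Comp(\Hils)}\otimes \hat{\Lambda}_{B}$ to both sides and using that $f$ is reduced, so that $\hat{\Lambda}_{B}\circ f = f^{\red}\circ \Lambda_{A}$, the left-hand side becomes $(\alpha\otimes f^{\red})(\Id_{A^{\univ}}\otimes \Lambda_{A})\Delta^{\univ}_{A}(a)$, which by Proposition \ref{prop:reduced-universal-coaction} equals $(\alpha\otimes f^{\red})\Delta^{\univ,\red}_{A}(\Lambda_{A}(a))$. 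The right-hand side becomes $V(\alpha(a)\otimes 1_{\hat B})V^{*}$ with $V := (\beta\otimes \hat{\Lambda}_{B})(\hat{\unibich}^{B}) \in \U(\Comp(\Hils)\otimes \hat B)$ unitary. For $a\in \ker \Lambda_{A}$, the left-hand side vanishes, and unitarity of $V$ forces $\alpha(a)\otimes 1_{\hat B} = 0$, hence $\alpha(a)=0$. Therefore $\alpha$ factors through $\Lambda_{A}$.

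To obtain the analogous statement for $\beta$, we invoke Lemma \ref{lemma:pairs-symmetry}: the pair $(\beta,\alpha)$ is a $(\hat{\tau},\hat{f})$-pair, that is, a Heisenberg pair for $\hat{f}$ (using that $\hat{\tau}$ is again trivial). The definition of "reduced morphism" was arranged symmetrically, so $\hat{f}$ is reduced as well, and the previous argument applied to $(\beta,\alpha)$ shows that $\beta$ factors through $\Lambda_{B}$. For an anti-Heisenberg pair for $f$ the reasoning is parallel: start from \eqref{eq:anti-hb-universal}, apply $\hat{\Lambda}_{B}\otimes \Id$ to the first leg, use $\hat{\Lambda}_{B}\circ f = f^{\red}\circ \Lambda_{A}$ and the identity $(\Lambda_{A}\otimes \Id_{A^{\univ}})\Delta^{\univ}_{A} = \Delta^{\red,\univ}_{A}\circ \Lambda_{A}$ from Proposition \ref{prop:reduced-universal-coaction}; this forces $\alpha$ to vanish on $\ker\Lambda_{A}$, and the symmetric argument via Lemma \ref{lemma:pairs-symmetry} treats $\beta$.

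The only subtle point is bookkeeping of the legs and verifying that $\hat{f}$ inherits reducedness from $f$ (which is immediate from the symmetric phrasing of the definition); all other manipulations are a direct combination of \eqref{eq:hb-universal}, \eqref{eq:anti-hb-universal}, Proposition \ref{prop:reduced-universal-coaction} and the unitarity of the conjugating bicharacter.
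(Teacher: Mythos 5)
Your proof is correct and follows essentially the same route as the paper's: apply the reducing morphism $\hat\Lambda_{B}$ to the universal commutation relation \eqref{eq:hb-universal} resp.\ \eqref{eq:anti-hb-universal}, use $\hat\Lambda_{B}\circ f = f^{\red}\circ\Lambda_{A}$ together with Proposition \ref{prop:reduced-universal-coaction} to see that the left-hand side factorizes through $\Lambda_{A}$, conclude by unitarity of the conjugating bicharacter that $\alpha$ does too, and then pass to the dual pair $(\beta,\alpha)$ via Lemma \ref{lemma:pairs-symmetry} (noting that $\hat f$ is reduced) to handle $\beta$. The only cosmetic difference is that the paper writes out only the anti-Heisenberg case and leaves the Heisenberg case implicit, whereas you treat both explicitly.
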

\begin{proof}
Let $(\alpha,\beta)$ be an anti-Heisenberg pair for $f$. Then by  \eqref{eq:anti-hb-universal},
\begin{align*}
  (f \otimes \alpha)\Delta^{\univ}_{A}(a)  = \unibich^{B}_{1\beta}(1_{\hat  B^{\univ}} \otimes \alpha(a))(\unibich^{B}_{1\beta})^{*}.
\end{align*}
We apply $\hat  \Lambda_{B}$ on the first tensor factor and obtain
\begin{align*}
  (f^{\red}\circ\Lambda_{A} \otimes \alpha)\Delta^{\univ}_{A}(a) = \maxcorep[B]_{1\beta}(1_{\hat  B} \otimes \alpha(a))(\maxcorep[B]_{1\beta})^{*}.
\end{align*}
By Proposition \ref{prop:reduced-universal-coaction},  $(\Lambda_{A} \otimes \Id_{A^{\univ}})\Delta^{\univ}_{A}$ factorizes through $\Lambda_{A}$. Hence, so does $\alpha$. Repeating this argument for the $(\hat  f,\hat  \tau)$-pair  $(\beta,\alpha)$, we find that  $\beta$ factorizes through $\Lambda_{B}$. 
\end{proof}

% The defining relation for a $(\chi,\chi')$-pair can be rewritten as follows.
% \begin{remark} \label{remark:pairs-deltal-deltar}
% A pair of representations $(\alpha,\beta)$ is a $(\chi,\chi')$-pair if and only if
% \begin{align} \label{eq:pairs-deltal-deltar}
% (\Id_{\hat  B} \otimes \alpha)  \Delta_{L}(a) &= W^{B}_{1\beta}(\Id_{\hat  B} \otimes \alpha)\sigma\Delta'_{R}(a)(W^{B}_{1\beta})^{*} \quad\text{for all } a\in A,
% \end{align}
% where $\Delta_{L}$ and $\Delta'_{R}$ denote the left and the right quantum group homomorphisms associated to $\chi$ and $\chi'$, respectively. This follows easily from the relations \eqref{eq:left-right-homomorphisms}.
% \end{remark}

Of course,   relations \eqref{eq:hb-universal}--\eqref{eq:id-anti-hb-universal} have reduced counterparts which include, for example, the following generalization of \cite{Meyer-Roy-Woronowicz:Homomorphisms}*{Theorem 5.3 (33)}:
\begin{lemma}
  \label{lemma:delta-implementation}
  Let $\chi \in \U(\hat  A \otimes \hat  B)$ be a bicharacter, denote by $\Delta_{R}$ and $\Delta_{L}$ the associated right and left quantum group homomorphisms, and let
 $(\pi,\hat \pi)$  a Heisenberg pair for $\multunit[A]$. Then for all $a\in A$,
  \begin{align*}
  (\pi \otimes \Id_{\hat  B} )\Delta_{R}(a) &= \chi_{\hat \pi 2}(\pi(a) \otimes 1)\chi_{\hat \pi 2}^{*}, \\
  (\Id_{\hat  B} \otimes \bar \pi)\Delta_{L}(a) &= \hat \chi_{1\bar {\hat \pi}}(1\otimes \bar \pi(a))\hat \chi_{1\bar {\hat \pi}}^{*}.
  \end{align*}
\end{lemma}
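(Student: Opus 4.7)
Both identities are of the form ``$\Delta_\bullet$ of $a$, paired with $\pi$ on one slot, is implemented by a conjugation involving $\chi$ (or its dual)''. My strategy is to deduce them from the special case $\chi = \multunit[A]$ and $\Delta_{R} = \Delta_{A}$ (resp.\ $\Delta_{L} = \Delta_{A}$), which is nothing but the implementation of $\Delta_{A}$ by the Heisenberg (resp.\ anti-Heisenberg) pair itself, and then transport that special identity to general $\chi$ via the quantum-group-homomorphism properties \eqref{eq:right_homomorphism}--\eqref{eq:left_homomorphism} and the defining relations \eqref{eq:left-right-homomorphisms}.

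For the first identity, since $\Id_{A^{\univ}}$ is reduced, Lemma \ref{lemma:pairs-reduced-universal} lifts $(\pi,\hat\pi)$ to a universal Heisenberg pair for the identity on $\G^{\univ}$; reducing the universal formula \eqref{eq:id-hb-universal} via the intertwining properties of $\Lambda_{A}$ and $\hat\Lambda_{A}$ established in Proposition \ref{prop:reduced-universal-coaction} yields
\begin{equation*}
(\pi \otimes \Id_{A})\Delta_{A}(a) = \multunit[A]_{\hat\pi 2}\bigl(\pi(a)\otimes 1_{A}\bigr)\bigl(\multunit[A]_{\hat\pi 2}\bigr)^{*}, \qquad a\in A. \tag{$\star$}
\end{equation*}
I then apply $\Id_{\Bound(\Hils)}\otimes \Delta_{R}$ to both sides of $(\star)$. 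On the left, the right-homomorphism property \eqref{eq:right_homomorphism} rewrites $(\Id_{A}\otimes \Delta_{R})\Delta_{A}$ as $(\Delta_{A}\otimes \Id_{\hat B})\Delta_{R}$, and a second use of $(\star)$ turns the result into
\begin{equation*}
\bigl(\multunit[A]_{\hat\pi 2}\bigr)_{12}\cdot \bigl((\pi\otimes \Id_{\hat B})\Delta_{R}(a)\bigr)_{13}\cdot \bigl(\multunit[A]_{\hat\pi 2}\bigr)^{*}_{12}
\end{equation*}
in $\Bound(\Hils)\otimes A\otimes \hat B$. On the right, applying $\hat\pi \otimes \Id_{A} \otimes \Id_{\hat B}$ to \eqref{eq:left-right-homomorphisms} gives $(\Id_{\Bound(\Hils)}\otimes \Delta_{R})\multunit[A]_{\hat\pi 2} = \bigl(\multunit[A]_{\hat\pi 2}\bigr)_{12}\bigl(\chi_{\hat\pi 2}\bigr)_{13}$, so the right-hand side becomes
\begin{equation*}
\bigl(\multunit[A]_{\hat\pi 2}\bigr)_{12}\cdot \bigl(\chi_{\hat\pi 2}(\pi(a)\otimes 1)\chi^{*}_{\hat\pi 2}\bigr)_{13}\cdot \bigl(\multunit[A]_{\hat\pi 2}\bigr)^{*}_{12}.
\end{equation*}
Cancelling the outer $\multunit[A]_{\hat\pi 2}$-factors and invoking injectivity of the embedding $\Bound(\Hils)\otimes \hat B \hookrightarrow \Bound(\Hils)\otimes A\otimes \hat B$ in positions $(1,3)$ yields the first formula.

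For the second identity the same scheme works with Heisenberg replaced by anti-Heisenberg: by Lemma \ref{lemma:pairs-symmetry}, $(\bar\pi,\bar{\hat\pi})$ is an anti-Heisenberg pair for $\multunit[A]$, so the reduced form of \eqref{eq:id-anti-hb-universal} produces the analogue of $(\star)$ which implements $\Delta_{A}$ from the left by $\hat\multunit[A]_{1\bar{\hat\pi}}$. Applying $\Delta_{L}\otimes \Id_{\Bound(\Hils)}$ to this analogue, using the left-homomorphism property \eqref{eq:left_homomorphism} together with $(\Id_{\hat A}\otimes \Delta_{L})\multunit[A] = \chi_{12}\multunit[A]_{13}$ from \eqref{eq:left-right-homomorphisms}, and cancelling as before yields the second identity. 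An alternative route is to deduce the second identity from the first via the unitary antipodes and \eqref{eq:antipode-morphisms}. The main obstacle I anticipate is keeping the leg-numbering bookkeeping straight, especially when reducing the universal implementation identities to their $\Cst$-algebraic counterparts and tracking the embeddings into the triple tensor product.
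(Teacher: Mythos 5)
Your argument is correct, but it routes the computation differently from the paper. The paper stays at the universal level: it lifts $(\bar\pi,\bar{\hat\pi})$ to an anti-Heisenberg pair for $\Id_{A^{\univ}}$ via Lemma \ref{lemma:pairs-reduced-universal}, applies $f\otimes\Id$ (with $f$ the morphism corresponding to $\chi$) directly to \eqref{eq:id-anti-hb-universal} to obtain \eqref{eq:delta-implementation-universal}, and then descends in one step by applying $\hat\Lambda_{B}\otimes\Lambda_{A}$ and invoking \eqref{eq:morphism-bicharacter} and the diagram \eqref{eq:morphism-deltal-deltar-universal}, which converts $(f\otimes\Id)\Delta^{\univ}_{A}$ into $\Delta_{L}\Lambda_{A}$. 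You instead reduce first, obtaining the implementation $(\star)$ of $\Delta_{A}$ by the Heisenberg pair at the level of $A$, and only then bring in $\chi$ by applying $\Id\otimes\Delta_{R}$ (resp.\ $\Delta_{L}\otimes\Id$) and exploiting the module property \eqref{eq:right_homomorphism}--\eqref{eq:left_homomorphism} together with the characterization \eqref{eq:left-right-homomorphisms} of $\Delta_{R}$ and $\Delta_{L}$ on $\multunit[A]$, finishing with a cancellation of unitaries and injectivity of the leg-$(1,3)$ embedding. Both are sound; the paper's version is shorter because the compatibility diagram \eqref{eq:morphism-deltal-deltar-universal} absorbs the bookkeeping, whereas yours avoids the universal bicharacter $\unibich^{f}$ entirely at the cost of an extra tensor leg and a slicing argument. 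Two minor points: the implementation $(\star)$ follows already from the reduced Heisenberg relation of Definition \ref{definition:pairs-reduced} by slicing, so the detour through \eqref{eq:id-hb-universal} is not needed; and the reduction of the universal formula uses that $\Lambda_{A}$ is a morphism of $\Cst$-bialgebras rather than Proposition \ref{prop:reduced-universal-coaction}, which only records the half-reduced comultiplications.
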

\begin{proof}
We only prove the second equation; the first one follows similarly. 
By Lemma \ref{lemma:pairs-reduced-universal}, $(\bar\pi,\bar{\hat{\pi}})$ is an anti-Heisenberg pair for the identity on $A$.   Denote by $f\in \Mor(A^{\univ},\hat  B^{\univ})$  the morphism corresponding to $\chi$ and
apply $ f\otimes \Id$ to \eqref{eq:id-anti-hb-universal} to find that
\begin{align} \label{eq:delta-implementation-universal}
  (f \otimes \bar \pi \Lambda_{A})\Delta^{\univ}_{A}(a) &= \hat \unibich^{A}_{f\bar {\hat \pi}}(1\otimes \bar \pi(\Lambda_{A}(a)))(\hat{\unibich}^{A}_{f\bar {\hat \pi}})^{*}.
\end{align}
Now, apply $\hat\Lambda_{B} \otimes \Id_{A}$ and use \eqref{eq:morphism-bicharacter} and \eqref{eq:morphism-deltal-deltar-universal} to get the desired relation.
\end{proof}

\section{The maximal twisted tensor product of $\mathrm{C}^{*}$-algebras}
  \label{sec:max}

Let  $\Qgrp{G}{A}$ and~$\Qgrp{H}{B}$ be $\Cst$\nb-quantum groups, $(C,\gamma)$ a $\G$-$\Cst$-algebra,~$(D,\delta)$ a  $\G[H]$\nb-$\Cst$\nb-algebra and  $\chi \in \U(\hat  A \otimes \hat  B)$  a bicharacter. Note that we do not require $\gamma$ or $\delta$ to be injective. 

We now define the maximal or universal counterpart to the minimal twisted tensor product of $(C,\gamma)$ and $(D,\delta)$ with respect to $\bichar$  introduced in  \cite{Meyer-Roy-Woronowicz:Twisted_tensor}. The  following commutation relation
   \eqref{eq:V_comm_rep} is the key.
\begin{lemma}
Let $E$ be a $\Cst$\nb-algebra, $\varphi\in\Mor(C,E)$ and~$\psi\in\Mor(D,E)$ such that
  \begin{equation}
   \label{eq:V_comm_rep}
    [(\varphi\otimes\bar \alpha)\gamma(c), 
     (\psi\otimes\bar \beta)\delta(d)] =0 \quad\text{for all } c\in C,d\in D
  \end{equation}
and one  $\chi$-anti-Heisenberg pair  $(\bar \alpha,\bar \beta)$. Then this relation holds for every $\chi$-anti-Heisenberg pair $(\bar \alpha,\bar \beta)$.
\end{lemma}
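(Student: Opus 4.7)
My plan is to show that the validity of \eqref{eq:V_comm_rep} for a $\chi$-anti-Heisenberg pair $(\bar\alpha, \bar\beta)$ depends only on the equivalence class of the pair after tensoring with the canonical $(1,1)$-pair $(\bar\iota_A, \bar\iota_B)$ arising from the embeddings $\iota_A, \iota_B$ of $A$ and $B$ into $A \otimes B$.  Combined with the uniqueness of canonical anti-Heisenberg pairs from Example \ref{example:canonical-heisenberg}, this gives independence of the choice of pair.

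Step~1 (equivalence invariance). If $\Phi$ is a $\Cst$-isomorphism from $\bar\alpha(A)\bar\beta(B)$ to $\bar\alpha'(A)\bar\beta'(B)$ intertwining $\bar\alpha,\bar\beta$ with $\bar\alpha',\bar\beta'$, then $\Id_E \otimes \Phi$ carries the commutator for $(\bar\alpha, \bar\beta)$ to the one for $(\bar\alpha', \bar\beta')$. Hence \eqref{eq:V_comm_rep} is an invariant of the equivalence class $\sim$.

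Step~2 (stability under $(\bar\iota_A, \bar\iota_B)$). Using coassociativity,
\begin{align*}
(\varphi \otimes (\bar\alpha \otimes \bar\iota_A)\Delta_A)\gamma(c) = \sum (\varphi \otimes \bar\alpha)\gamma(c_{(0)}) \otimes \bar\iota_A(c_{(1)}),
\end{align*}
and analogously for $(\psi \otimes (\bar\beta \otimes \bar\iota_B)\Delta_B)\delta(d)$.  The factors $\bar\iota_A(c_{(1)})$ and $\bar\iota_B(d_{(1)})$ in the third leg commute pointwise since $\iota_A$ and $\iota_B$ land in distinct tensor factors of $A \otimes B$. Hence \eqref{eq:V_comm_rep} for $(\bar\alpha, \bar\beta)$ implies \eqref{eq:V_comm_rep} for $(\bar\alpha, \bar\beta) \otimes (\bar\iota_A, \bar\iota_B)$. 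Conversely, given the commutation for the tensor product, applying slice maps $\omega_A \otimes \omega_B$ on the third leg yields
\begin{align*}
\bigl[(\varphi \otimes \bar\alpha)\gamma(c^{\omega'_A}),(\psi \otimes \bar\beta)\delta(d^{\omega'_B})\bigr]=0 \quad\text{in } \Mult(E \otimes \Comp(\conj\Hils)),
\end{align*}
where $\omega'_A := \omega_A \circ \bar\iota_A$, $\omega'_B := \omega_B \circ \bar\iota_B$, $c^{\omega'_A} := (\Id_C \otimes \omega'_A)\gamma(c)$, and $d^{\omega'_B} := (\Id_D \otimes \omega'_B)\delta(d)$. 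By faithfulness of $\bar\iota_A$ and $\bar\iota_B$ and the weak continuity~\eqref{eq:action-slices}, the elements $c^{\omega'_A}$ and $d^{\omega'_B}$ span dense subsets of $C$ and $D$, and \eqref{eq:V_comm_rep} for $(\bar\alpha, \bar\beta)$ follows by continuity of multiplication.

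Step~3 (conclusion). By Lemma \ref{lemma:pairs-symmetry}, every $\chi$-anti-Heisenberg pair has the form $(\bar\alpha, \bar\beta)$ for some $\chi$-Heisenberg pair $(\alpha,\beta)$. Example \ref{example:canonical-heisenberg} combined with \eqref{eq:antipode-tensor} and \eqref{eq:pairs-bar-equivalence} gives
\begin{align*}
(\bar\alpha, \bar\beta) \otimes (\bar\iota_A, \bar\iota_B) \sim (\bar\alpha', \bar\beta') \otimes (\bar\iota_A, \bar\iota_B)
\end{align*}
for any two $\chi$-anti-Heisenberg pairs, so the equivalence class of the augmented pair does not depend on the original choice. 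Combining with Steps~1 and~2, the validity of \eqref{eq:V_comm_rep} is independent of $(\bar\alpha, \bar\beta)$.

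The principal obstacle is the reverse direction of Step~2: the passage from the three-leg commutation to the original two-leg commutation requires faithfulness of $\bar\iota_A, \bar\iota_B$ and enough weak continuity of $\gamma$ and $\delta$ to supply sufficiently many slice maps.
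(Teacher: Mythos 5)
Your argument is correct and follows essentially the same route as the paper: both reduce \eqref{eq:V_comm_rep} to the corresponding commutation for the augmented pair $(\bar\alpha,\bar\beta)\otimes(\bar\iota_A,\bar\iota_B)\cong\overline{(\iota_A,\iota_B)\otimes(\alpha,\beta)}$, use Example \ref{example:canonical-heisenberg} together with \eqref{eq:antipode-tensor} and \eqref{eq:pairs-bar-equivalence} to see that this augmented pair is independent of the choice up to equivalence, and recover the original relation by slicing the extra leg via \eqref{eq:action-slices}. Your Step 1 and the forward half of Step 2 only spell out what the paper leaves implicit.
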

\begin{proof}
  Denote by $\iota_{A}$ and $\iota_{B}$  the canonical morphisms from $A$ and $B$ to $A\otimes B$ and regard $(\iota_{A},\iota_{B})$ as a $(1,1)$\nb-pair. Then  \eqref{eq:action-slices} implies that
   \eqref{eq:V_comm_rep} holds if for all $c\in C$ and $d\in D$, the elements
   \begin{align*}
 ((\varphi\otimes\bar \alpha) \circ \gamma  \otimes \bar \iota_{A}) \gamma(c) =
     (\varphi\otimes(\bar \alpha \otimes \bar \iota_{A})\Comult[A])\gamma(c) 
   \end{align*}
commutes with the element
   \begin{align*}
((\psi\otimes\bar \beta)\delta  \otimes \bar \iota_{B}) \delta(d)=
     (\psi \otimes (\bar \beta\otimes \bar \iota_{B})\Comult[B])\delta(d).
   \end{align*}
  But Example \ref{example:canonical-heisenberg} and relations \eqref{eq:antipode-tensor}, \eqref{eq:pairs-bar-equivalence} imply that the anti-Heisenberg pair
\begin{align*}
  ((\bar \alpha \otimes \bar \iota_{A})\Comult[A], (\bar \beta\otimes\bar \iota_{B})\Comult[B]) = (\bar \alpha,\bar \beta) \otimes (\bar \iota_{A},\bar \iota_{B}) \cong \corr{$\overline{(\iota_{A},\iota_{B})(\alpha,\beta)}$}{\overline{(\iota_{A},\iota_{B})\otimes (\alpha,\beta)}}
\end{align*}
does not depend on the anti-Heisenberg pair $(\bar \alpha,\bar \beta)$ up to equivalence. 
Note that in this argument, we could have replaced $(\bar \iota_{A},\bar \iota_{B})$  by the equivalent pair $(\iota_{A},\iota_{B})$ everywhere.
\end{proof}
 \begin{definition}  \label{definition:pairs-action}
A \emph{$\bichar$\nb-com\-mutative representation} of  $(C,\gamma)$
and~$(D,\delta)$  consists of  a $\Cst$\nb-algebra  $E$ and \corr{}{morphisms} $\varphi\in\Mor(C,E)$ and~$\psi\in\Mor(D,E)$  such that    \eqref{eq:V_comm_rep}
holds for some (and then for every) anti-Heisenberg pair $(\bar \alpha,\bar \beta)$ for $\chi$.  

A \emph{morphism} of $\bichar$-commutative representations $(E,\varphi,\psi)$ and $(E',\varphi',\psi')$ is a morphism $\Psi \in \Mor(E,E')$   satisfying $\varphi'=\Psi\circ\varphi$ and  $\psi'=\Psi\circ \psi$. If $\Psi$ can be chosen to be an isomorphism, we call $(\varphi,\psi)$ and $(\varphi',\psi')$
 \emph{equivalent}.
\end{definition}
Let us  consider some simple examples.
\begin{example} \label{example:one-pair} Let $\chi$ be the trivial bicharacter $1\in \U(\hat A \otimes \hat B)$.  Then a $1$-commutative representation is just a commuting pair of representations. Indeed, an anti-Heisenberg pair for $\chi$ is given by $\bar \alpha(a)=a\otimes 1$ and $\bar \beta(b)=1\otimes b$, and two representations $(\varphi,\psi)$ of $C$ and $D$ on a common $\Cst$-algebra $\chi$-commute if and only if $(\varphi\otimes \Id_{A})(\gamma(c))_{12}$ and $(\psi \otimes \Id_{B})(\delta(d))_{13}$ commute for all $c\in C$ and $d\in D$, which by \eqref{eq:action-slices} holds if and only if $\varphi(c)$ and $\psi(d)$ commute for all $c\in C$ and $d\in D$.
\end{example}

\begin{example}
Suppose that $(C,\gamma)=(A,\Comult[A])$ and $(D,\delta)=(B,\Comult[B])$. Then
two nondegenerate representations $\varphi$ and $\psi$   of $A$ and $B$,
respectively, on the same Hilbert space form a $\chi$-commutative representation
of $(A,\Comult[A])$ and $(B,\Comult[B])$ if and only if they \corr{form
  Heisenberg}{form a Heisenberg} pair for $\chi$ in the sense of Definition \ref{definition:heisenberg}.
Indeed, 
let $(\bar \alpha,\bar \beta)$ be an anti-Heisenberg pair for $\chi$, that is, a $(\chi,1)$\nb-pair.
Then Proposition \ref{proposition:pairs-vertical-reduced} implies that
$(\varphi \otimes \bar \alpha)\Comult[A](A)$  and $(\psi\otimes \bar \beta)\Comult[B](B)$ commute, that is, $(\varphi,\psi)\otimes(\bar \alpha,\bar \beta)$ is a $(1,1)$\nb-pair, if and only if $(\varphi,\psi)$  is a $(1,\chi)$\nb-pair.
\end{example}

\begin{example} \label{example:pairs-action-discrete}
  Let $\Gamma$ be a discrete group and suppose that $A=C_{0}(\Gamma)$ and $B:=\hat  A=\Cst_{r}(\Gamma)$ are equipped with the usual comultiplications.  Then  $\delta$ corresponds to a grading of $D$ by $G$ and $\gamma$ corresponds to a (left) action of $G$ on $C$, which we write as $(g,c)\mapsto g\cdot c$. In the notation of Example \ref{example:anti-hb-discrete}, an anti-Heisenberg pair $(\bar \alpha,\bar \beta)$ for $\multunit[A]=\sum_{g} \rho_{g} \otimes \delta_{g}$ is given by 
  $\bar \alpha(\delta_{h})=\delta_{h^{-1}}$ and $\bar \beta(\rho_{g})=\rho_{g}$. Hence, a pair of representations $(\varphi,\psi)$ of $C$ and $D$ is a $\multunit[A]$\nb-commutative representation if and only if for every $c\in C$ and every $d\in D$ of degree $g$,
  \begin{align*}
 \sum_{h}
    \varphi(h\cdot c)\psi(d)\otimes \delta_{h^{-1}}\rho_{g} =
\sum_{h'}\psi(d)\varphi(h'\cdot c) \otimes \rho_{g}\delta_{h'^{-1}},
  \end{align*}
that is, if and only if $\psi(d) \varphi(c')= \varphi(g\cdot c')\psi(d)$ for all $c'\in C$ and all $d\in D$ of degree $g$.
\end{example}

Every $\chi$-commutative representation is a crossed tensor product of $C$ and $D$ in the sense of \cite{Meyer-Roy-Woronowicz:Twisted_tensor}*{Definition 2.1}:
\begin{lemma} 
 \label{lem:Cstar_alg}  Let $(E,\varphi,\psi)$ be a $\chi$\nb-commutative representation of $(C,\gamma)$ and $(D,\delta)$. Then
  \begin{equation}
   \label{eq:Cstar_alg}
    \varphi(C)\cdot\psi(D)=\psi(D)\cdot\varphi(C)\subseteq\Mult(E).
  \end{equation}  
\end{lemma}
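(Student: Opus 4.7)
My plan is to derive the two inclusions $\varphi(C) \cdot \psi(D) \subseteq \psi(D) \cdot \varphi(C)$ and the reverse from the defining commutation relation \eqref{eq:V_comm_rep} by combining it with the Podle\'s condition for $\gamma$ and $\delta$ and a slicing argument on the Hilbert-space leg.

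Concretely, I would fix a faithful anti-Heisenberg pair $(\bar\alpha,\bar\beta)$ for $\chi$ on a Hilbert space $\Hils$ (available by the lemma on faithful $(\chi,\chi')$-pairs above), set $P\defeq \bar\alpha(A)$, $Q\defeq\bar\beta(B)$, and write $X\defeq(\varphi\otimes\bar\alpha)\gamma(C)$, $Y\defeq(\psi\otimes\bar\beta)\delta(D)$, both regarded as subspaces of $\Mult(E\otimes\Comp(\Hils))$. By slicing the commutation identity in Definition~\ref{definition:pairs-reduced}, just as in the proof of the corresponding $\Cst$-algebra statement for $(f,g)$-pairs in Section~\ref{sec:commutation-universal}, one sees that $P\cdot Q = Q\cdot P$ is a $\Cst$-subalgebra of $\Bound(\Hils)$. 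Applying $\varphi\otimes\bar\alpha$ and $\psi\otimes\bar\beta$ to the Podle\'s identities $(1_C\otimes A)\cdot\gamma(C)=C\otimes A$ (the adjoint form of \eqref{eq:Podles_cond}) and $\delta(D)\cdot(1_D\otimes B)=D\otimes B$ gives
\begin{equation*}
(1_E\otimes P)\cdot X = \varphi(C)\otimes P, \qquad Y\cdot(1_E\otimes Q) = \psi(D)\otimes Q.
\end{equation*}
Multiplying these, and invoking $X\cdot Y = Y\cdot X$ as closed linear spans (which is the elementwise form of \eqref{eq:V_comm_rep}), I arrive at the key identity
\begin{equation*}
\varphi(C)\cdot\psi(D)\otimes P\cdot Q = (1_E\otimes P)\cdot X\cdot Y\cdot(1_E\otimes Q) = (1_E\otimes P)\cdot Y\cdot X\cdot(1_E\otimes Q).
\end{equation*}

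The main step is then to show that the last expression lies inside $\psi(D)\cdot\varphi(C)\otimes P\cdot Q$. For this, I would slice a typical generator $(1_E\otimes p)\cdot(\psi\otimes\bar\beta)\delta(d)\cdot(\varphi\otimes\bar\alpha)\gamma(c)\cdot(1_E\otimes q)$ by an arbitrary functional $\theta\in\Bound(\Hils)^*$: the result is a closed linear combination of elements of $\psi(D)\cdot\varphi(C)$ with Hilbert-space coefficient obtained from $\theta$ evaluated on an element of $P\cdot Q\cdot P\cdot Q = P\cdot Q$, where the collapse uses that $P\cdot Q=Q\cdot P$ is a $\Cst$-algebra. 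Consequently $(1_E\otimes P)\cdot Y\cdot X\cdot(1_E\otimes Q) \subseteq \psi(D)\cdot\varphi(C)\otimes P\cdot Q$, and slicing the key identity by a $\theta$ non-zero on $P\cdot Q$ gives $\varphi(C)\cdot\psi(D)\subseteq\psi(D)\cdot\varphi(C)$; the reverse inclusion follows by swapping the roles of $X$ and $Y$. The main obstacle I anticipate is precisely this $\Cst$-algebra collapse $P\cdot Q\cdot P\cdot Q = P\cdot Q$, which is what ultimately allows the interleaved product in the reordered expression to be absorbed back into $P\cdot Q$.
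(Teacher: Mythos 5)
Your overall strategy -- fix an anti-Heisenberg pair, combine the Podle\'s conditions with the commutation relation \eqref{eq:V_comm_rep}, and slice the auxiliary leg -- is exactly the paper's, but the choice of what to put on the auxiliary leg creates a gap at your ``main step''. By sandwiching with $1_E\otimes P$ on the left and $1_E\otimes Q$ on the right you get the clean identity $(1_E\otimes P)\cdot X\cdot Y\cdot(1_E\otimes Q)=\varphi(C)\cdot\psi(D)\otimes P\cdot Q$, because each Podle\'s condition meets its matching algebra. But after reordering to $(1_E\otimes P)\cdot Y\cdot X\cdot(1_E\otimes Q)$ the sandwich no longer matches: $P=\bar\alpha(A)$ sits next to $Y$ (which only interacts with $Q=\bar\beta(B)$ via Podle\'s) and $Q$ sits next to $X$. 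Your proposed repair -- slicing a ``typical generator'' $(1_E\otimes p)\,y\,x\,(1_E\otimes q)$ and reading off a coefficient in $P\cdot Q\cdot P\cdot Q=P\cdot Q$ -- implicitly writes $y=(\psi\otimes\bar\beta)\delta(d)$ and $x=(\varphi\otimes\bar\alpha)\gamma(c)$ as norm-limits of finite sums $\sum\psi(d_i)\otimes\bar\beta(b_i)$ and $\sum\varphi(c_j)\otimes\bar\alpha(a_j)$. These elements are only \emph{multipliers} of $E\otimes\Comp(\Hils)$ (since $\gamma(c)\in\Mult(C\otimes A)$ need not lie in $C\otimes A$), so no such decomposition is available, and the claimed containment $(1_E\otimes P)\cdot Y\cdot X\cdot(1_E\otimes Q)\subseteq\psi(D)\cdot\varphi(C)\otimes P\cdot Q$ is essentially equivalent to the statement being proved rather than a step towards it.

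The paper avoids this entirely by multiplying with $1_E\otimes\Comp(\Hils)$ instead of $1_E\otimes P$ and $1_E\otimes Q$: since $\bar\alpha$ and $\bar\beta$ are nondegenerate, $\bar\alpha(A)\cdot\Comp(\Hils)=\Comp(\Hils)=\bar\beta(B)\cdot\Comp(\Hils)$, so the Podle\'s conditions give $(\varphi\otimes\bar\alpha)\gamma(C)\cdot(1_E\otimes\Comp(\Hils))=\varphi(C)\otimes\Comp(\Hils)$ and likewise for $\delta$, and both orderings collapse symmetrically to $\varphi(C)\cdot\psi(D)\otimes\Comp(\Hils)$ and $\psi(D)\cdot\varphi(C)\otimes\Comp(\Hils)$; then one slices. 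If you insist on keeping $P$ and $Q$, your argument can be patched by sandwiching with $1_E\otimes R$ on \emph{both} sides, where $R=P\cdot Q=Q\cdot P$ is the $\Cst$-algebra you identified: one has $R\cdot P=R=Q\cdot R$, hence $(1_E\otimes R)\cdot Y=\psi(D)\otimes R$ and $X\cdot(1_E\otimes R)=\varphi(C)\otimes R$, and the two orderings of $(1_E\otimes R)\cdot X\cdot Y\cdot(1_E\otimes R)$ give $\varphi(C)\cdot\psi(D)\otimes R=\psi(D)\cdot\varphi(C)\otimes R$ -- but this is strictly more work than the paper's one-line use of the compacts, and it still requires the preliminary lemma that $P\cdot Q$ is a $\Cst$-algebra, which the paper's proof does not need.
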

\begin{proof} Let~$(\bar \alpha,\bar \beta)$ be a $\bichar$\nb-anti\nb-Heisenberg  pair on~$\Hils$. Since $\bar \alpha(A)\cdot\Comp(\Hils)
  =\Comp(\Hils)$, 
   the Podle\'s condition~\eqref{eq:Podles_cond} for~$\gamma$ gives
  \begin{align*}
      (\Id_{C}\otimes\bar \alpha)\gamma(C)\cdot (1_{C}\otimes 
       \Comp(\Hils))
 &=C\otimes\Comp(\Hils).
  \end{align*} Similarly, $(\Id_{D}\otimes\bar \beta)\psi(D)\cdot (1_{D}\otimes 
  \Comp(\Hils))=D\otimes\Comp(\Hils)$.     
  Using \eqref{eq:V_comm_rep}, we conclude 
   \begin{align*}
\varphi(C)\cdot\psi(D)\otimes\Comp(\Hils)     
&=     (\varphi\otimes\bar \alpha)\gamma(C)\cdot
     (\psi\otimes\bar \beta)\delta(D)\cdot(1_{E}\otimes\Comp(\Hils))
\\ &=     (\psi\otimes\bar \beta)\delta(D)
     \cdot (\varphi\otimes\bar \alpha)\gamma(C)\cdot(1_{E}\otimes\Comp(\Hils))
\\ &  = \psi(D)\cdot\varphi(C)\otimes\Comp(\Hils).      
    \end{align*}
Slicing the second leg by~$\omega\in\Bound(\Hils)_{*}$ completes  the proof.
 \end{proof} 
 
 The $\bichar$\nb-commutative representations with morphisms as above form a category.  Standard cardinality arguments show that there exists a $\bichar$\nb-commutative representation which is universal in the sense that it is an initial object in this category. 
\begin{definition}
 \label{def:max_tens} The \emph{maximal twisted tensor product} of a $\G$\nb-$\Cst$\nb-algebra $(C,\gamma)$ and an $\G[H]$\nb-$\Cst$\nb-algebra $(D,\delta)$ with respect to a bicharacter $\chi\in \U(\hat  A\otimes \hat  B)$ is the $\Cst$\nb-algebra  
 \begin{align*}
  C\boxtimes^{\bichar}_{\maxtensor}D\defeq j^\univ_{C}(C)\cdot j^\univ_{D}(D) 
 \end{align*}
generated by a universal $\chi$\nb-commutative representation $(E^{\univ},j^{\univ}_C,j^{\univ}_D)$ of $(C,\gamma)$ and $(D,\delta)$.
\end{definition}
Note that by Lemma \ref{lem:Cstar_alg}, $(C\boxtimes^{\bichar}_{\maxtensor}D,j^{\univ}_C,j^{\univ}_D)$ is a crossed product of $C$ and $D$ in the sense of \cite{Meyer-Roy-Woronowicz:Twisted_tensor}*{Definition 2.1}.

By definition, we obtain for every $\bichar$\nb-commutative representation $(E,\varphi,\psi)$ of $(C,\gamma)$ and $(D,\delta)$ a unique morphism
\begin{align*}
  \varphi \chiprod \psi \in \Mor(C \boxtimes^{\chi}_{\max} D,E) \quad \text{such that } (\varphi \chiprod \psi)j^{\univ}_{C} = \varphi \text{ and } (\varphi \chiprod \psi)j^{\univ}_{D} = \psi.
\end{align*}
% that makes the following diagram commute:
% \begin{align*}
% \xymatrix{
%   C \ar[r]^(0.4){j^{\univ}_C} \ar[rd]_{\varphi} & C\boxtimes^{\chi}_{\max} D \ar[d]|{\varphi \chiprod \psi} & D \ar[l]_(0.4){j^{\univ}_D} \ar[ld]^{\psi} \\ & E. &
% }
% \end{align*}

The assignment $((C,\gamma),(D,\delta)) \mapsto C\boxtimes^{\chi}_{\max} D$ extends to a bifunctor as follows.

If also $(C',\gamma')$ is a $\G$-$\Cst$-algebra and $(D',\delta')$ is a $\G[H]$-$\Cst$-algebra, and if $\rho \in \Mor(C,C')$ and $\theta \in \Mor(D,D')$ are equivariant, we obtain a commutative diagram 
\begin{align*}
  \xymatrix{ C
    \ar[r]^(0.4){j^{\univ}_C} \ar[d]_{\rho} & C\boxtimes^{\chi}_{\max} D \ar[d]|{\rho \boxtimes^{\chi}_{\max} \theta}  & D \ar[l]_(0.4){j^{\univ}_D } \ar[d]^{\theta} \\
    C' \ar[r]^(0.4){j^{\univ}_{C'}}  & C'\boxtimes^{\chi}_{\max} D' & D', \ar[l]_(0.4){j^{\univ}_{D'}}  }
\end{align*}
where the representations $j^{\univ}_{C'}\circ \rho$ and $j^{\univ}_{D'}\circ \theta$   $\chi$-commute  and
\begin{align*}
\rho \boxtimes^{\chi}_{\max} \theta =  (j_{C'}^{\univ}\circ \rho) \chiprod (j_{D'}^{\univ}\circ \theta).
\end{align*}
\begin{proposition}
  The assignments $((C,\gamma),(D,\delta)) \mapsto C\boxtimes^{\chi}_{\max} D$ and $(\rho,\theta) \mapsto \rho \boxtimes^{\chi}_{\max} \theta$ form a bifunctor  from
$\Cstcat(\G) \times\Cstcat(\G[H])$ to $\Cstcat$.
\end{proposition}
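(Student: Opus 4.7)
The plan is to verify three things: that $\rho\boxtimes^{\chi}_{\max}\theta$ is well-defined for equivariant $\rho$ and $\theta$, that identities are preserved, and that composition is respected. All three steps reduce to applying the universal property of $(C\boxtimes^{\chi}_{\max}D,j^{\univ}_{C},j^{\univ}_{D})$; the only non-formal ingredient is the well-definedness check.

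First I would establish well-definedness. Given equivariant morphisms $\rho\in\Mor(C,C')$ and $\theta\in\Mor(D,D')$, I claim that the pair $(j^{\univ}_{C'}\circ\rho,\,j^{\univ}_{D'}\circ\theta)$ is a $\chi$-commutative representation of $(C,\gamma)$ and $(D,\delta)$. Fix an anti-Heisenberg pair $(\bar\alpha,\bar\beta)$ for $\chi$. Equivariance gives $(\rho\otimes\Id_{A})\circ\gamma=\gamma'\circ\rho$ and $(\theta\otimes\Id_{B})\circ\delta=\delta'\circ\theta$, so
\begin{align*}
(j^{\univ}_{C'}\circ\rho\otimes\bar\alpha)\gamma(c) &= (j^{\univ}_{C'}\otimes\bar\alpha)\gamma'(\rho(c)), \\
(j^{\univ}_{D'}\circ\theta\otimes\bar\beta)\delta(d) &= (j^{\univ}_{D'}\otimes\bar\beta)\delta'(\theta(d)).
\end{align*}
Since $(j^{\univ}_{C'},j^{\univ}_{D'})$ is a $\chi$-pair for $(C',\gamma')$ and $(D',\delta')$, the commutator relation~\eqref{eq:V_comm_rep} applied to $\rho(c)\in C'$ and $\theta(d)\in D'$ shows that the two displayed elements commute, as required. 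The universal property of $C\boxtimes^{\chi}_{\max}D$ then produces the unique morphism $\rho\boxtimes^{\chi}_{\max}\theta=(j^{\univ}_{C'}\circ\rho)\chiprod(j^{\univ}_{D'}\circ\theta)$ making the stated diagram commute.

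For the identity law, both $\Id_{C\boxtimes^{\chi}_{\max}D}$ and $\Id_{C}\boxtimes^{\chi}_{\max}\Id_{D}$ restrict along $j^{\univ}_{C}$ and $j^{\univ}_{D}$ to $j^{\univ}_{C}$ and $j^{\univ}_{D}$ themselves, so they coincide by uniqueness in the universal property. For compatibility with composition, given further equivariant $\rho'\in\Mor(C',C'')$ and $\theta'\in\Mor(D',D'')$, both $(\rho'\circ\rho)\boxtimes^{\chi}_{\max}(\theta'\circ\theta)$ and $(\rho'\boxtimes^{\chi}_{\max}\theta')\circ(\rho\boxtimes^{\chi}_{\max}\theta)$ are morphisms $C\boxtimes^{\chi}_{\max}D\to C''\boxtimes^{\chi}_{\max}D''$ whose compositions with $j^{\univ}_{C}$ and $j^{\univ}_{D}$ equal $j^{\univ}_{C''}\circ\rho'\circ\rho$ and $j^{\univ}_{D''}\circ\theta'\circ\theta$, respectively; uniqueness forces the two to agree.

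The potential obstacle is only the well-definedness step, and even there the argument is essentially an application of equivariance combined with the key observation, already exploited in Definition~\ref{definition:pairs-action}, that the validity of~\eqref{eq:V_comm_rep} does not depend on the choice of anti-Heisenberg pair. The remainder is formal manipulation of an initial object.
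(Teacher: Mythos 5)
Your proof is correct and follows essentially the same route the paper takes implicitly: the paper states the proposition immediately after observing that $j^{\univ}_{C'}\circ\rho$ and $j^{\univ}_{D'}\circ\theta$ $\chi$-commute and defining $\rho\boxtimes^{\chi}_{\max}\theta=(j^{\univ}_{C'}\circ\rho)\chiprod(j^{\univ}_{D'}\circ\theta)$, leaving the identity and composition laws to the uniqueness clause of the universal property, exactly as you do. The only cosmetic point is that $\rho(c)$ lies in $\Mult(C')$ rather than $C'$, so the commutation relation \eqref{eq:V_comm_rep} for the pair $(j^{\univ}_{C'},j^{\univ}_{D'})$ must be extended to multipliers by nondegeneracy and strict continuity, which is routine.
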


As one should expect, there exists a canonical quotient map from the maximal twisted tensor product $C\boxtimes^{\chi}_{\max} D$ to the minimal twisted tensor product $C\boxtimes^{\chi}_{\min} D$ introduced in \cite{Meyer-Roy-Woronowicz:Twisted_tensor}. To prove this, we use the following analogue of Proposition \ref{proposition:pairs-vertical-reduced}.
\begin{lemma} \label{lemma:pairs-vertical-action}
  Let $(\varphi,\psi)$ be a $\chi$\nb-commutative representation of $(C,\gamma)$ and $(D,\delta)$,  and let $(\alpha,\beta)$  be an $(\chi,\chi')$\nb-pair on some Hilbert space $\Hils$. Then 
  \begin{align*}
    (\varphi,\psi)\otimes(\alpha,\beta) &:=((\varphi\otimes \alpha)\gamma, (\psi\otimes \beta)\delta)
  \end{align*}
is a $\chi'$\nb-commutative representation of $(C,\gamma)$ and $(D,\delta)$. In particular, there exists a morphism
\begin{align*}
  (j^{\univ}_C \otimes \alpha)\gamma \chiprod (j^{\univ}_D\otimes \beta)\delta \in \Mor(C \boxtimes^{\chi'}_{\max} D, (C\boxtimes^{\chi}_{\max} D) \otimes \Comp(\Hils)),
\end{align*}
where $j^{\univ}_{C}$ and $j^{\univ}_{D}$ denote the canonical morphisms from $C$ and $D$, respectively, to $C\boxtimes^{\chi}_{\max} D$.
\end{lemma}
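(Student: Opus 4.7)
The plan is to unfold the $\chi'$\nobreakdash-commutativity condition for the composed representations and reduce it to the assumed $\chi$\nobreakdash-commutativity of $(\varphi,\psi)$ by exploiting coassociativity of $\gamma$ and $\delta$ together with the tensor-product behaviour of $(\chi,\chi')$\nobreakdash-pairs established in Proposition~\ref{proposition:pairs-vertical-reduced} and the discussion preceding it.

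First, pick any anti-Heisenberg pair $(\bar\alpha',\bar\beta')$ for $\chi'$, that is, a $(\chi',1)$\nobreakdash-pair on some Hilbert space $\Hils'$. To show that $((\varphi\otimes\alpha)\gamma,(\psi\otimes\beta)\delta)$ is a $\chi'$\nobreakdash-pair, I need to verify that for all $c\in C$, $d\in D$,
\begin{align*}
\bigl[\,((\varphi\otimes\alpha)\gamma\otimes\bar\alpha')\gamma(c),\ ((\psi\otimes\beta)\delta\otimes\bar\beta')\delta(d)\,\bigr]=0.
\end{align*}
Using coassociativity $(\gamma\otimes\Id_A)\gamma=(\Id_C\otimes\Comult[A])\gamma$ and the analogous identity for $\delta$, these two elements rewrite as
\begin{align*}
\bigl(\varphi\otimes(\alpha\otimes\bar\alpha')\Comult[A]\bigr)\gamma(c) \quad \text{and} \quad \bigl(\psi\otimes(\beta\otimes\bar\beta')\Comult[B]\bigr)\delta(d).
\end{align*}

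Next, observe that $(\alpha,\beta)\otimes(\bar\alpha',\bar\beta')=((\alpha\otimes\bar\alpha')\Comult[A],(\beta\otimes\bar\beta')\Comult[B])$ is a $(\chi,1)$\nobreakdash-pair, i.e.\ an anti-Heisenberg pair for $\chi$: indeed, $(\alpha,\beta)$ is a $(\chi,\chi')$\nobreakdash-pair by assumption and $(\bar\alpha',\bar\beta')$ is a $(\chi',1)$\nobreakdash-pair, so their tensor product (in the sense defined just before Proposition~\ref{proposition:pairs-vertical-reduced}) is a $(\chi,1)$\nobreakdash-pair. Since $(\varphi,\psi)$ is a $\chi$\nobreakdash-pair, relation~\eqref{eq:V_comm_rep} applied to the anti-Heisenberg pair $(\alpha,\beta)\otimes(\bar\alpha',\bar\beta')$ for $\chi$ yields exactly the commutation we need. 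This establishes the first assertion.

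For the ``in particular'' statement, consider the morphisms $\varphi':=(j^{\univ}_C\otimes\alpha)\gamma\in\Mor(C,(C\boxtimes^{\chi}_{\max}D)\otimes\Comp(\Hils))$ and $\psi':=(j^{\univ}_D\otimes\beta)\delta$. Since $(j^{\univ}_C,j^{\univ}_D)$ is a $\chi$\nobreakdash-pair by construction of $C\boxtimes^{\chi}_{\max}D$, the first part applied to $(j^{\univ}_C,j^{\univ}_D)$ and $(\alpha,\beta)$ shows that $(\varphi',\psi')$ is a $\chi'$\nobreakdash-pair for $(C,\gamma)$ and $(D,\delta)$. The universal property of $C\boxtimes^{\chi'}_{\max}D$ then produces the stated morphism $\varphi'\chiprod\psi'$. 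The only mild subtlety is keeping track of the tensor leg on which $\Comp(\Hils)$ sits when applying coassociativity; this is routine.
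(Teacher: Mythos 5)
Your proof is correct and follows essentially the same route as the paper's: choose a $(\chi',1)$\nb-pair $(\bar\alpha',\bar\beta')$, observe that $(\alpha,\beta)\otimes(\bar\alpha',\bar\beta')$ is a $(\chi,1)$\nb-pair, and apply the defining commutation relation of the $\chi$\nb-pair $(\varphi,\psi)$ to that anti-Heisenberg pair, with coassociativity identifying the resulting elements with those appearing in the $\chi'$\nb-commutativity condition. The paper's proof is a one-line compression of exactly this argument, and your handling of the ``in particular'' clause via the universal property is the intended one.
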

\begin{proof}
Let $(\bar  \alpha',\bar  \beta')$ be a $(\chi',1)$\nb-pair. Then $(\alpha,\beta)\otimes (\bar \alpha',\bar  \beta')$ is a $(\chi,1)$\nb-pair and hence $((\varphi \otimes \alpha)\gamma\otimes \bar \alpha')\gamma(c)$ and $((\psi\otimes\beta)\delta\otimes \bar \beta')\delta(d)$ commute for all $c\in C$ and $d\in D$.
\end{proof}
The minimal twisted tensor product $C \boxtimes^{\chi}_{\min} D$ of $(C,\gamma)$ and $(D,\delta)$ with respect to $\chi$ was introduced in \cite{Meyer-Roy-Woronowicz:Twisted_tensor} as follows. 
Choose a $\bichar$\nb-Heisenberg pair $(\alpha,\beta)$ on $\Hils$ and  define morphisms $j_{C}$ and $j_{D}$ from $C$ and $D$ to $C\otimes D\otimes\Comp(\Hils)$ by 
 \begin{align*} 
   j_{C}(c) \defeq (\Id_{C}\otimes\alpha)\gamma(c)_{13}, \quad
j_{D}(d) \defeq (\Id_{D}\otimes\beta)\delta(d)_{23} 
\quad\text{for all~$c\in C$, $d\in D$.}
 \end{align*} 
Then  the minimal twisted  tensor product is  the $\Cst$\nb-algebra
\begin{align*}
  C \boxtimes^{\chi}_{\min} D = j_{C}(C) \cdot j_{D}(D) \subseteq \Mult(C\otimes D\otimes\Comp(\Hils)).
\end{align*}
 This $\Cst$\nb-algebra does not depend on the choice of $(\alpha,\beta)$ \cite{Meyer-Roy-Woronowicz:Twisted_tensor}*{Section 4}. 
\begin{proposition} 
 \label{prop:qmap} 
For every $\G$-$\Cst$-algebra $(C,\gamma)$ and $\G[H]$-$\Cst$-algebra $(D,\delta)$, 
there exists a unique quotient map $C\boxtimes^{\chi}_{\maxtensor}D\to C\boxtimes^{\chi}_{\mintensor}D$  that makes the following diagram commute:
\begin{align*}
\xymatrix@C=40pt@R=15pt{ C \ar[r]^{j_{C}^\univ} \ar[rd]_{j_{C}} &   C\boxtimes^{\chi}_{\maxtensor} D
  \ar@{->}[d] &   D \ar[l]_{j_{D}^\univ} \ar[ld]^{j_{D}}\\
  & C\boxtimes^{\chi}_{\mintensor} D. &
}
\end{align*}
These quotient maps form a natural transformation from the maximal to the minimal twisted tensor product.
\end{proposition}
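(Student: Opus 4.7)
The plan is to exhibit the reduced embeddings $(j_C,j_D)$ as a $\chi$-commutative representation of $(C,\gamma)$ and $(D,\delta)$, and then invoke the universal property of $C\boxtimes^{\chi}_{\max}D$.

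First I would observe that the canonical embeddings $\iota_C\colon C\to C\otimes D$ and $\iota_D\colon D\to C\otimes D$ form a $1$-pair for $(C,\gamma)$ and $(D,\delta)$ in the sense of Definition \ref{definition:pairs-action}: taking the obvious anti-Heisenberg pair for the trivial bicharacter given by $a\mapsto a\otimes 1_B$ and $b\mapsto 1_A\otimes b$ (cf.\ Example \ref{example:one-pair}), the elements $(\iota_C\otimes\bar\alpha_0)\gamma(c)$ and $(\iota_D\otimes\bar\beta_0)\delta(d)$ live in disjoint legs of $C\otimes D\otimes A\otimes B$ and commute trivially. Next I would apply Lemma \ref{lemma:pairs-vertical-action} with this $1$-pair and a $(1,\chi)$-pair $(\alpha,\beta)$, i.e.\ a $\chi$-Heisenberg pair on $\Hils$. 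The lemma produces a $\chi$-pair
\begin{align*}
  (\iota_C,\iota_D)\otimes(\alpha,\beta) = ((\iota_C\otimes\alpha)\gamma,\ (\iota_D\otimes\beta)\delta)
\end{align*}
for $(C,\gamma)$ and $(D,\delta)$, and a direct leg-numbering check shows that these two morphisms coincide with $j_C$ and $j_D$ viewed as morphisms into $\Mult(C\otimes D\otimes\Comp(\Hils))$.

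With $(j_C,j_D)$ identified as a $\chi$-commutative representation, the universal property of $C\boxtimes^{\chi}_{\max}D$ yields a unique morphism $j_C \chiprod j_D \in \Mor(C\boxtimes^{\chi}_{\max}D,\ C\otimes D\otimes\Comp(\Hils))$ satisfying $(j_C\chiprod j_D)\circ j_C^{\univ}=j_C$ and $(j_C\chiprod j_D)\circ j_D^{\univ}=j_D$. Its image equals $j_C(C)\cdot j_D(D) = C\boxtimes^{\chi}_{\min}D$ by the very definition of the minimal twisted tensor product, so after restricting the codomain it becomes a surjection $C\boxtimes^{\chi}_{\max}D \to C\boxtimes^{\chi}_{\min}D$. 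Uniqueness of this quotient map follows immediately from the fact that $C\boxtimes^{\chi}_{\max}D$ is generated by $j_C^{\univ}(C)\cdot j_D^{\univ}(D)$.

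For the naturality claim, given equivariant morphisms $\rho\colon(C,\gamma)\to(C',\gamma')$ and $\theta\colon(D,\delta)\to(D',\delta')$, I would check commutativity of the naturality square by composing with $j_C^{\univ}$ and $j_D^{\univ}$; both paths send these to $j_{C'}\circ\rho$ and $j_{D'}\circ\theta$, respectively, and the uniqueness in the universal property then forces the two induced morphisms $C\boxtimes^{\chi}_{\max}D\to C'\boxtimes^{\chi}_{\min}D'$ to agree. The only step requiring genuine care is the identification of the pair supplied by Lemma \ref{lemma:pairs-vertical-action} with $(j_C,j_D)$; everything else is formal, so the main obstacle is simply keeping the leg-numbering conventions straight.
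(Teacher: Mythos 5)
Your proposal is correct and follows exactly the paper's argument: exhibit $(\iota_C,\iota_D)$ as a $1$-pair via Example \ref{example:one-pair}, tensor with a $\chi$-Heisenberg pair using Lemma \ref{lemma:pairs-vertical-action} to recognize $(j_C,j_D)$ as a $\chi$-pair, and take the induced morphism $j_C\chiprod j_D$ from the universal property. The additional details you supply on surjectivity, uniqueness, and naturality are the routine verifications the paper leaves implicit.
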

\begin{proof}
The natural morphisms $\iota_{C},\iota_{D}$ from $C$ and $D$ to $C\otimes D$ form a 1-commutative representation by Example \ref{example:one-pair}, and $(j_{C},j_{D}) = (\iota_{C},\iota_{D})\otimes(\alpha,\beta)$
is a $\chi$-commutative representation by Lemma \ref{lemma:pairs-vertical-action}. The desired quotient map is $j_{C} \chiprod j_{D}$.
\end{proof}

Let $\chi \in \U(\hat  A \otimes \hat  B)$ be a bicharacter as before, and $\hat \chi=\sigma(\chi)^{*} \in \U(\hat  A \otimes \hat  B)$ its opposite.
\begin{proposition}  \label{prop:symmetry}
  There exists a natural isomorphism $C \boxtimes^{\chi}_{\maxtensor} D \cong D \boxtimes^{\hat {\chi}}_{\maxtensor} C$ which intertwines the canonical maps of $C$ and $D$.
\end{proposition}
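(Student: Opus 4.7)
The plan is to prove the proposition by exhibiting a bijection between $\chi$\nb-com\-mu\-ta\-tive representations of $(C,\gamma),(D,\delta)$ and $\hat\chi$\nb-com\-mu\-ta\-tive representations of $(D,\delta),(C,\gamma)$ that simply swaps the roles of the two morphisms, and then invoking the universal property of Definition \ref{def:max_tens}. Concretely, I would show that a triple $(E,\varphi,\psi)$ is $\chi$\nb-commutative for $(C,\gamma),(D,\delta)$ if and only if $(E,\psi,\varphi)$ is $\hat\chi$\nb-commutative for $(D,\delta),(C,\gamma)$.

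The central step is the observation that if $(\bar\alpha,\bar\beta)$ is a $\chi$\nb-anti-Heisenberg pair (so representations of $A$ and $B$ satisfying the $(\chi,1)$\nb-commutation relation), then the swapped pair $(\bar\beta,\bar\alpha)$ of representations of $B$ and $A$ is a $\hat\chi$\nb-anti-Heisenberg pair. This is a consequence of Lemma \ref{lemma:pairs-symmetry}(3) applied at the universal level together with Lemma \ref{lemma:pairs-reduced-universal}, but it can also be checked directly: writing out the $(\chi,1)$\nb-commutation relation
\[
 \chi_{12}\multunit[A]_{1\bar\alpha}\multunit[B]_{2\bar\beta} = \multunit[B]_{2\bar\beta}\multunit[A]_{1\bar\alpha}
\]
in $\U(\hat A\otimes \hat B\otimes \Comp(\Hils))$ and applying the flip $\sigma_{12}$ yields, after using $\hat\chi=\sigma(\chi)^{*}$, exactly the $(\hat\chi,1)$\nb-commutation relation for $(\bar\beta,\bar\alpha)$ in $\U(\hat B\otimes \hat A\otimes \Comp(\Hils))$.

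With this in hand, the defining commutation relation \eqref{eq:V_comm_rep} for a $\chi$\nb-pair $(\varphi,\psi)$ with respect to $(\bar\alpha,\bar\beta)$ is literally identical to the corresponding condition for $(\psi,\varphi)$ viewed as a $\hat\chi$\nb-pair with respect to $(\bar\beta,\bar\alpha)$, since the vanishing of the commutator is symmetric in its two arguments. Hence $\chi$\nb-commutative representations of $(C,\gamma),(D,\delta)$ are the same objects as $\hat\chi$\nb-commutative representations of $(D,\delta),(C,\gamma)$, and this identification is clearly functorial in morphisms of $\bichar$\nb-commutative representations.

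Applying the universal property of $C\boxtimes^{\chi}_{\max} D$ to the $\hat\chi$\nb-commutative representation $(D\boxtimes^{\hat\chi}_{\max} C,\,j^{\univ}_{C},\,j^{\univ}_{D})$ obtained from $(D\boxtimes^{\hat\chi}_{\max} C,\,j^{\univ}_{D},\,j^{\univ}_{C})$ via the swap, and vice versa, produces mutually inverse morphisms $C\boxtimes^{\chi}_{\max} D \rightleftarrows D\boxtimes^{\hat\chi}_{\max} C$ which intertwine the canonical maps of $C$ and $D$; naturality in $(C,\gamma)$ and $(D,\delta)$ is automatic from the universal property. I do not foresee any serious obstacle beyond bookkeeping in the leg numbering; the only thing that needs real care is to confirm the transition between the $\chi$- and $\hat\chi$-anti-Heisenberg pairs under swap, but this reduces to the conjugation identity $\hat\chi = \sigma(\chi)^{*}$ recalled in \eqref{eq:dual_bicharacter}.
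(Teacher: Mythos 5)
Your proposal is correct and follows essentially the same route as the paper's proof: both reduce the statement to the observation that $(\bar\alpha,\bar\beta)$ is an anti-Heisenberg pair for $\chi$ if and only if $(\bar\beta,\bar\alpha)$ is one for $\hat\chi$ (via Lemma \ref{lemma:pairs-symmetry}, or equivalently your direct flip computation using $\hat\chi=\sigma(\chi)^{*}$), so that $\chi$-pairs $(\varphi,\psi)$ and $\hat\chi$-pairs $(\psi,\varphi)$ coincide and the universal property yields the isomorphism. The only difference is that you spell out the universal-property and naturality bookkeeping that the paper leaves implicit.
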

\begin{proof}
By Lemma  \ref{lemma:pairs-symmetry}, $(\bar \alpha,\bar \beta)$ is an anti-Heisenberg pair for $\chi$ if and only if $(\bar \beta,\bar \alpha)$ is an anti-Heisenberg pair for $\hat \chi$. Therefore,
two  representations $(\varphi,\psi)$ form a $\chi$\nb-commutative representation of $(C,\gamma)$ and $(D,\delta)$  if and only if $(\psi,\varphi)$ is a $\hat \chi$\nb-commutative representation of $(D,\delta)$ and $(C,\gamma)$.
\end{proof}

In the case where $\G$ is a finite group and $\G[H]$ is its dual, we can describe the commutation relations between elements of $C$ and $D$ in the maximal twisted tensor product  as follows.
\begin{example}
  Suppose that $\Gamma$ is a finite group, that $A=C_{0}(\Gamma)$ and $B=\hat  A=\Cst_{r}(\Gamma)$ as in Example \ref{example:pairs-action-discrete} and that $\chi=\multunit[A]$. Then $\gamma$ and $\delta$ correspond to an action of $\Gamma$ on $C$ and a grading on $D$,  and the algebraic tensor product $C \odot D$ can be endowed with the structure of a \Star{}algebra with multiplication and involution given by
  \begin{align} \label{eq:twisted-group-case}
    (c \odot d)(c'\odot d') &= c(g\cdot c')\odot dd' &&\text{and} & (c\odot d)^{*} &= (g^{-1}\cdot c^{*}) \odot d^{*}
  \end{align}
for all $c,c' \in C$ and $d,d'\in D$ such that $d$ has degree $g\in \Gamma$. Example  \ref{example:pairs-action-discrete} shows that $\chi$\nb-commutative representations of $C$ and $D$ correspond to nondegenerate representations of $C\odot D$, and therefore the maximal twisted tensor product  $C\boxtimes_{\max}^{\chi} D$ is  canonically isomorphic to the enveloping $\Cst$\nb-algebra $\Cst(C \odot D)$.

 In the case $\Gamma=\Z/2\Z$, the coactions  $\gamma$ and $\delta$ correspond to  $\Z/2\Z$\nb-gradings on $C$ and $D$, and \eqref{eq:twisted-group-case} takes the form
 \begin{align*}
   (c\odot d)(c'\odot d') &= (-1)^{|c'||d|}cc'\odot dd', & (c\odot d)^{*} &= (-1)^{|c||d|} c^{*}\odot d^{*},
 \end{align*}
where $|x|\in \{0,1\}$ denotes the degree of a homogeneous element $x$.
\end{example}

\section{Some properties and special cases of the maximal twisted tensor product}
\label{sec:prop}

Throughout this section,
let $\Qgrp{G}{A}$ and $\Qgrp{H}{B}$ be $\Cst$\nb-quantum groups with a bicharacter $\chi \in \U(\hat  A\otimes \hat  B)$ as before.

 \subsection{Exactness}
 Let $(C,\gamma)$ be a $\G$\nb-$\Cst$\nb-algebra with  an ideal $I\subseteq C$   that is $\G$-invariant in the sense
 that $\gamma(c)= c \otimes 1$ for all $c\in I$.  Denote by $i \colon  I\to C$ the inclusion and by $q \colon C \to C/I$ the quotient
 map. By assumption on $I$,  $\gamma$ descends to a $*$-homomorphism $\tilde\gamma
 \colon C/I \to \Mult(C/I\otimes A)$. Clearly, $\tilde \gamma$ is a coaction, satisfies the Podle\'s condition, and makes $q$ equivariant.
\begin{proposition}
 \label{prop:exactness} For every $\G[H]$-$\Cst$-algebra $(D,\delta)$, the sequence
\begin{align*}
 0 \to I \boxtimes_{\maxtensor}^{\chi} D \xrightarrow{i \boxtimes_{\maxtensor}^{\chi} \Id_{D}} C \boxtimes_{\maxtensor}^{\chi} D
\xrightarrow{q \boxtimes_{\maxtensor}^{\chi} \Id_{D}} (C/I) \boxtimes_{\maxtensor}^{\chi} D \to 0
\end{align*} is exact.
\end{proposition}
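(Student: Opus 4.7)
The plan is to break exactness into the standard three parts and handle the easy ones first. Surjectivity of $q \boxtimes_{\maxtensor}^{\chi} \Id_D$ is immediate because $(C/I) \boxtimes_{\maxtensor}^{\chi} D$ is generated by the two canonical images, both of which lie in the range. The vanishing of the composition $(q \boxtimes_{\maxtensor}^{\chi} \Id_D) \circ (i \boxtimes_{\maxtensor}^{\chi} \Id_D)$ follows from $q \circ i = 0$ together with uniqueness in the universal property. A crucial preliminary observation is that since $\gamma|_I$ is trivial by hypothesis, the $\chi$\nb-commutation relation for $(I,\gamma|_I)$ and $(D,\delta)$ reduces, after slicing against an anti-Heisenberg pair as in Example \ref{example:one-pair}, to plain commutation; in particular $j_C^{\univ}(I)$ and $j_D^{\univ}(D)$ commute inside $C \boxtimes_{\maxtensor}^{\chi} D$.

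For exactness in the middle, let $J$ denote the image of $i \boxtimes_{\maxtensor}^{\chi} \Id_D$, which coincides with $j_C^{\univ}(I) \cdot j_D^{\univ}(D)$. Combining Lemma \ref{lem:Cstar_alg} with the commutation just noted, one checks that $J$ is a two-sided ideal of $C \boxtimes_{\maxtensor}^{\chi} D$ contained in $\ker(q \boxtimes_{\maxtensor}^{\chi} \Id_D)$. For the reverse inclusion, the quotient $(C \boxtimes_{\maxtensor}^{\chi} D)/J$ receives $\chi$\nb-commuting morphisms from $C/I$ and $D$ induced by the canonical maps, so the universal property of $(C/I) \boxtimes_{\maxtensor}^{\chi} D$ yields a morphism in the other direction, mutually inverse to the factorization of $q \boxtimes_{\maxtensor}^{\chi} \Id_D$ through $(C \boxtimes_{\maxtensor}^{\chi} D)/J$. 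This gives $\ker(q \boxtimes_{\maxtensor}^{\chi} \Id_D) = J$.

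The main obstacle is injectivity of $i \boxtimes_{\maxtensor}^{\chi} \Id_D$, and the plan is to construct a left inverse. Since $I$ is an ideal of $C$, left multiplication defines a canonical morphism $C \to \Mult(I)$; composing with the multiplier extension of the nondegenerate morphism $j_I^{\univ} \colon I \to I \boxtimes_{\maxtensor}^{\chi} D$ yields a nondegenerate morphism $\hat{j}_I \colon C \to \Mult(I \boxtimes_{\maxtensor}^{\chi} D)$. Together with $j_D^{\univ}$ regarded as a morphism into $\Mult(I \boxtimes_{\maxtensor}^{\chi} D)$, this supplies a candidate $\chi$\nb-commuting representation of $(C,\gamma)$ and $(D,\delta)$. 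The delicate point that I expect to be the main technical step is verifying the $\chi$\nb-commutation relation $[(\hat{j}_I \otimes \bar\alpha)\gamma(c), (j_D^{\univ} \otimes \bar\beta)\delta(d)] = 0$ for all $c \in C$ and $d \in D$. The approach is to pick an approximate identity $(e_\lambda)$ of $I$: since $\gamma(e_\lambda) = e_\lambda \otimes 1$, one obtains $\gamma(c)(e_\lambda \otimes 1) = \gamma(ce_\lambda) = ce_\lambda \otimes 1$ because $ce_\lambda \in I$, so applying $\hat{j}_I \otimes \bar\alpha$ produces $j_I^{\univ}(ce_\lambda) \otimes 1$. This element commutes with $(j_D^{\univ} \otimes \bar\beta)\delta(d)$ by the $\chi$\nb-commutation of the universal pair for $(I,D)$, and taking the strong-operator limit in any faithful Hilbert space representation (using $\hat{j}_I(e_\lambda) \to 1$ strongly) yields the desired relation for arbitrary $c \in C$. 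The universal property of $C \boxtimes_{\maxtensor}^{\chi} D$ then provides a morphism $\Psi \colon C \boxtimes_{\maxtensor}^{\chi} D \to \Mult(I \boxtimes_{\maxtensor}^{\chi} D)$ such that $\Psi \circ (i \boxtimes_{\maxtensor}^{\chi} \Id_D)$ acts as the identity on the generators $j_I^{\univ}(I)$ and $j_D^{\univ}(D)$, hence equals the natural inclusion $I \boxtimes_{\maxtensor}^{\chi} D \hookrightarrow \Mult(I \boxtimes_{\maxtensor}^{\chi} D)$, proving injectivity.
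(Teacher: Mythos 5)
Your proof is correct and follows essentially the same route as the paper: surjectivity on generators, a left inverse to $i \boxtimes_{\maxtensor}^{\chi} \Id_{D}$ built from the canonical morphism $C \to \Mult(I)$ composed with $j_{I}^{\univ}$, and exactness in the middle via the ideal $J = j_{C}^{\univ}(I)\cdot j_{D}^{\univ}(D)$ and the universal property of $(C/I)\boxtimes_{\maxtensor}^{\chi} D$. The only difference is presentational: where you verify the $\chi$-commutation of $(\hat{j}_{I}, j_{D}^{\univ})$ by hand with an approximate identity, the paper simply notes that $r\colon C \to \Mult(I)$ is equivariant for the trivial coaction on $I$ and composes the universal $\chi$-pair for $(I,D)$ with it.
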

\begin{proof} 
 Denote  by $E_{0},E_{1}$ and $E_{2}$, respectively, the $\Cst$-algebras in the sequence above, read from the left to the right, and  by 
\begin{align*}
  \varphi^{\univ}_{0} \in \Mor(I, E_{0}), \quad
  \varphi^{\univ}_{1} \in \Mor(C, E_{1}), \quad
  \varphi^{\univ}_{2} \in \Mor(C/I,  E_{2}), \quad \psi^{\univ}_{i} \in \Mor(D,E_{i})
\end{align*}
for $i=0,1,2$ the canonical morphisms.

First, the map $q \boxtimes_{\maxtensor}^{\chi} \Id_{D}$ is surjective because 
\begin{align*} 
E_{2} = \varphi^{\univ}_{2}(C/I) \cdot \psi^{\univ}_{2}(D)
       &= \varphi^{\univ}_{2}(q(C))\cdot \psi^{\univ}_{2}(D) \\ 
       &= (q \boxtimes_{\maxtensor}^{\chi} \Id_{D})(\varphi^{\univ}_{1}(C)\cdot \psi^{\univ}_{1}(D)) 
       = (q \boxtimes_{\maxtensor}^{\chi} \Id_{D})(E_{1}).
\end{align*}

Next, we show that the map $i \boxtimes_{\maxtensor}^{\chi} \Id_{D}$ is injective. Since the natural map  $r\colon C \to\Mult(I)$ is equivariant and $(\varphi^{\univ}_{0},\psi^{\univ}_{0})$ is a $\chi$-commutative representation, also $(\varphi^{\univ}_{0}\circ r,\psi^{\univ}_{0})$ is a $\chi$-commutative representation. The induced morphism $(\varphi^{\univ}_{0}\circ r) \chiprod \psi^{\univ}_{0}$ from $C
\boxtimes_{\maxtensor}^{\chi} D$ to $I\boxtimes_{\maxtensor}^{\chi} D$  is a left inverse to $i \boxtimes_{\maxtensor}^{\chi} \Id_{D}$.

Let us finally  prove exactness in the middle.  Clearly, the ideal
\begin{align*} J\defeq (i \boxtimes_{\maxtensor}^{\chi} \Id_{D})(I \boxtimes_{\maxtensor}^{\chi} D) =\varphi^{\univ}_{1}(I)\cdot \psi^{\univ}_{1}(D)\subseteq C\boxtimes_{\maxtensor}^{\chi} D
\end{align*} 
is  contained in $\ker (q \boxtimes_{\maxtensor}^{\chi} \Id_{D})$.  To deduce the converse inclusion, consider the  natural maps
\begin{align*}
  \check \varphi_{2}^{\univ} &\colon C \xrightarrow{\varphi^{\univ}_{1}} \Mult(C \boxtimes_{\maxtensor}^{\chi} D) \to \Mult((C\boxtimes_{\maxtensor}^{\chi} D)/J), \\
  \tilde \psi^{\univ}_{2} &\colon D \xrightarrow{\psi^{\univ}_{1}} \Mult(C\boxtimes_{\maxtensor}^{\chi} D) \to \Mult((C \boxtimes_{\maxtensor}^{\chi} D)/J).
\end{align*} 
Since $\corr{$\varphi^{\univ}_{2}$}{\varphi^{\univ}_{1}}(I)(C\boxtimes_{\maxtensor}^{\chi} D) \subseteq J$, the map $\check
\varphi^{\univ}_{2}$ factorizes through the quotient map $q\colon C\to C/I$ and yields a map ${\tilde{\varphi}}^{\univ}_{2}\colon C/I \to \Mult((C \boxtimes_{\maxtensor}^{\chi} D)/J)$. Since   $(\varphi^{\univ}_{1},\psi^{\univ}_{1})$ is a $\chi$\nb-commutative representation and  the quotient map $q$ is equivariant,  $(\check \varphi^{\univ}_{2},\tilde
\psi^{\univ}_{2})$ and
$({\tilde{\varphi}}^{\univ}_{2},\tilde \psi^{\univ}_{2})$ are $\chi$\nb-commutative representations.  The induced morphism
\begin{align*}
\pi={\tilde{\varphi}}^{\univ}_{2} \chiprod \tilde \psi^{\univ}_{2} \in \Mor((C/I)\boxtimes_{\maxtensor}^{\chi} D, (C \boxtimes_{\maxtensor}^{\chi} D)/J)
\end{align*} 
makes the following diagram commute,
\begin{align*}
  \xymatrix@R=10pt{C\boxtimes_{\maxtensor}^{\chi} D \ar[rr]^{q \boxtimes_{\maxtensor}^{\chi} \Id_{D}} \ar[rd] && (C/I)\boxtimes_{\maxtensor}^{\chi} D \ar[ld]^{\pi} \\ & (C\boxtimes_{\maxtensor}^{\chi} D)/J,}
\end{align*} 
whence $\ker (q \boxtimes_{\maxtensor}^{\chi} \Id_{D}) \subseteq J$.
\end{proof}

\subsection{Relation with the universal crossed product}
 The universal crossed product construction can be regarded as a special case of a maximal twisted tensor product as follows.

Let $(C,\gamma)$ be a $\G$-$\Cst$-algebra.
Recall that a \emph{covariant representation} of $(C,\gamma)$ on a Hilbert space $\Hils$ consists of a nondegenerate representation $\varphi\colon C\to \Bound(\Hils)$ and a right representation $\corep{U}\in \U(\Comp(\Hils)\otimes A)$ of $\G$ on $\Hils$ satisfying
\begin{align}
  \label{eq:covariance}
  \corep{U}(\varphi(c) \otimes 1)\corep{U}^{*} &= (\varphi \otimes \Id_{A})\gamma(c) \quad\text{for all } c\in C.
\end{align}
 The \emph{universal crossed product} $C\rtimes \hat  A^{\univ}$ is the $\Cst$\nb-algebra $\varphi(C) \cdot \rho(\hat  A^{\univ})$ generated by a universal covariant representation $(\varphi,\corep{U})$ of $(C,\gamma)$, where $\rho$ is the representation of $\hat  A^{\univ}$ on $\mathcal{H}$ determined by $(\rho \otimes \Id_{A})(\dumaxcorep[A])=\corep{U}$. Here,  $\varphi$ is faithful if and only 
if $\gamma$ is.

  The $\Cst$\nb-algebra $\hat  A^{\univ}$ of the universal dual quantum group $\UDG$ of $\G$ can be  regarded as a $\DuG$\nb-$\Cst$\nb-algebra via the coaction
  \begin{align*}
   \delta'\defeq(\Id_{\hat  A^\univ}\otimes\hat {\Lambda}_{A})\circ\hat \Delta^{\univ}_{A} \in \Mor(\hat  A^{\univ}, \hat {A}^\univ\otimes\hat {A}),
  \end{align*}
 where $\hat {\Lambda}_{A}\in\Mor(\hat {A}^\univ,\hat {A})$ denotes the reducing morphism. Note that $\delta'$ need not be injective.

We now consider the maximal twisted tensor product of $(C,\gamma)$ and $(\hat  A^{\univ},\delta')$ with respect to the bicharacter $W^{A} \in \U(\hat  A\otimes  A)$.
 \begin{theorem}
  \label{the:cov_rep_vs_br_comm}  There exists a unique isomorphism $C \boxtimes^{\multunit[A]}_{\max} \hat  A^{\univ} \cong C \rtimes \hat  A^{\univ}$ that intertwines the canonical morphisms of $C$ and $\hat  A^{\univ}$ to both sides.
 \end{theorem}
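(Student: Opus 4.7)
The plan is to establish a natural bijection between covariant representations of $(C,\gamma)$ and $W^A$-commutative representations of $(C,\gamma)$ and $(\hat A^\univ,\delta')$, and then transfer universality. First I would use the universal property of $\dumaxcorep[A]$ recalled in Subsection \ref{sec:univ_qgr}: for any $\Cst$-algebra $E$, nondegenerate morphisms $\rho\in\Mor(\hat A^\univ,E)$ correspond bijectively to right corepresentations $\corep{U}_\rho\defeq (\rho\otimes\Id_A)\dumaxcorep[A]\in\U(E\otimes A)$. So the assignment $(\varphi,\rho)\mapsto (\varphi,\corep{U}_\rho)$ is already a bijection on pairs; the task reduces to showing that the covariance condition \eqref{eq:covariance} for $(\varphi,\corep{U}_\rho)$ is equivalent to the $W^A$-commutation condition for $(\varphi,\rho)$.

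To verify this equivalence I would fix a Heisenberg pair $(\pi,\hat\pi)$ for $W^A$ on some Hilbert space $\Hils[K]$, so that by Lemma \ref{lemma:pairs-symmetry} the conjugate pair $(\bar\pi,\bar{\hat\pi})$ is an anti-Heisenberg pair for $W^A$ and may be used to test the $W^A$-commutation. Applying $\Id\otimes\bar\pi$ to the covariance identity rewrites the right hand side of \eqref{eq:covariance} as
\[
(\varphi\otimes\bar\pi)\gamma(c)=(\corep{U}_\rho)_{1\bar\pi}(\varphi(c)\otimes 1)(\corep{U}_\rho)_{1\bar\pi}^{*},
\]
so that $(\varphi\otimes\bar\pi)\gamma(C)$ lies in the $\Cst$-algebra generated by $\varphi(C)$ and the legs of $\corep{U}_\rho$. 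On the other leg, using $\delta'=(\Id_{\hat A^\univ}\otimes\hat\Lambda_A)\circ\hat\Delta^\univ_A$ together with the left-corepresentation identity $(\hat\Delta^\univ_A\otimes\Id_A)\dumaxcorep[A]=\dumaxcorep[A]_{23}\dumaxcorep[A]_{13}$, I would compute
\[
(\rho\otimes\bar{\hat\pi})\delta'(d)=\hat W^{A}_{\bar\pi'\bar{\hat\pi}}\bigl(\rho(d)\otimes 1\bigr)\hat W^{A}_{\bar\pi'\bar{\hat\pi}}{}^{*}
\]
or an analogous conjugation formula (the exact form being supplied by Lemma \ref{lemma:delta-implementation} applied to $\chi=W^{A}$ and the bar-version of the identity morphism). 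Commutativity of $(\varphi\otimes\bar\pi)\gamma(C)$ with $(\rho\otimes\bar{\hat\pi})\delta'(\hat A^\univ)$ then reduces, after substituting these conjugation formulas, to a purely ``internal'' identity involving the three unitaries $\corep{U}_\rho$, $W^{A}$ and $\hat W^{A}$ on the three legs $E$, $\Hils[K]$ and $\bar{\Hils[K]}$, which follows from the Heisenberg commutation relation between $\pi$ and $\hat\pi$ together with the pentagon equation for $\Multunit$. Going the other way, slicing the $W^A$-commutation by arbitrary states on the $\bar\Hils[K]$-legs, and using the density $\{(\Id\otimes\omega)(\dumaxcorep[A]):\omega\in A'\}^{\CLS}=\hat A^\univ$, recovers \eqref{eq:covariance}.

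The main obstacle, as I see it, is purely bookkeeping: carefully tracking leg numbering and keeping straight which relations live on the universal versus the reduced level, since $\rho$ is a representation of $\hat A^\univ$ whereas the anti-Heisenberg pair $(\bar\pi,\bar{\hat\pi})$ sees only the reduced algebras $A$ and $\hat A$, with $\delta'$ bridging the two via $\hat\Lambda_A$. Proposition \ref{prop:reduced-universal-coaction} and Lemma \ref{lemma:pairs-reduced-universal} should handle these passage issues cleanly. Once the equivalence of the two commutation conditions is proved, the theorem is immediate: both $C\rtimes\hat A^\univ$ and $C\boxtimes^{W^A}_{\max}\hat A^\univ$ are generated by the images of their respective universal representations, the bijection between covariant representations and $W^{A}$-commutative pairs sends universal objects to universal objects (checked on the respective generating morphisms from $C$ and $\hat A^\univ$), and the resulting mutually inverse morphisms necessarily intertwine the canonical maps of $C$ and $\hat A^\univ$.
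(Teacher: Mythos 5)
Your overall strategy is the same as the paper's: reduce the theorem to the claim that $(\varphi,\rho)$ is a $\multunit[A]$\nb-pair if and only if $\varphi$ and $\corep{U}_{\rho}=(\rho\otimes\Id_{A})(\dumaxcorep[A])$ form a covariant representation, and test the braided commutation against a fixed anti-Heisenberg pair. That reduction, and your treatment of the $C$\nb-leg via covariance, are fine. The gap is on the $\hat A^{\univ}$\nb-leg. You propose to rewrite $(\rho\otimes\bar{\hat\pi})\delta'(d)$, for \emph{arbitrary} $d\in\hat A^{\univ}$, as a unitary conjugate of $\rho(d)\otimes 1$. No such formula can hold in general: conjugation by a unitary is injective in $d$ (for faithful $\rho$), whereas $\delta'=(\Id_{\hat A^{\univ}}\otimes\hat\Lambda_{A})\circ\hat\Delta^{\univ}_{A}$ need not be injective, as the paper notes explicitly when introducing it. Citing Lemma \ref{lemma:delta-implementation} does not repair this: that lemma implements $\Delta_{L}$ and $\Delta_{R}$ on elements of $A$ represented through a Heisenberg pair, whereas here $\rho$ is an arbitrary representation of $\hat A^{\univ}$ with no Heisenberg structure (and your conjugating unitary $\hat W^{A}_{\bar\pi'\bar{\hat\pi}}$ involves an undefined $\bar\pi'$ and appears to live entirely on the auxiliary leg, so it cannot produce the first leg of $\delta'(d)$). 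Finally, the assertion that commutativity then reduces to an ``internal identity'' following from the pentagon equation is precisely the computation that has to be carried out, and it is not supplied.

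The paper circumvents this by never evaluating $\delta'$ on general elements: since $\hat A^{\univ}$ is generated by slices of $\dumaxcorep[A]$, it suffices to check that $(\varphi\otimes\bar\alpha)\gamma(C)$ commutes with the single unitary $((\rho\otimes\bar\beta)\delta'\otimes\Id_{A})(\dumaxcorep[A])$, which the bicharacter property of $\dumaxcorep[A]$ factorizes as $\multunit[A]_{\bar\beta 3}\dumaxcorep[A]_{\rho 3}$. Commutation with this product is equivalent to the statement that conjugating $(\varphi\otimes\bar\alpha)\gamma(c)$ by the $\dumaxcorep[A]$\nb-factor agrees with conjugating it by the inverse of the $\multunit[A]$\nb-factor; the anti-Heisenberg relation \eqref{eq:id-anti-hb-universal} identifies the latter conjugate with $((\varphi\otimes\Id_{A})\gamma\otimes\bar\alpha)\gamma(c)$, and slicing the $\bar\alpha$\nb-leg using weak continuity \eqref{eq:action-slices} yields exactly the covariance condition \eqref{eq:covariance}. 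With this replacement for your second conjugation formula, the rest of your argument (the bijection $\rho\leftrightarrow\corep{U}_{\rho}$ from the universal property of $\dumaxcorep[A]$, and the transfer of universality) goes through as you describe.
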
    
\begin{proof}
  It suffices to prove the following assertion: If $\varphi$ and $\rho$ are representations of $C$ and $\hat  A^{\univ}$ on some Hilbert space $\Hils$, then  $(\varphi,\rho)$ is a $\multunit[A]$\nb-commutative representation if and only if $\varphi$ and $\corep{U}=(\rho\otimes \Id_{A})(\dumaxcorep[A])$ form a covariant representation of $(C,\gamma)$.

So, suppose that $\varphi$ and $\rho$ are representations of $C$ and $\hat  A^{\univ}$ on a Hilbert space $\Hils$ and
   let $(\bar \alpha,\bar \beta)$ be a faithful anti-Heisenberg pair for $\multunit[A]$ on a Hilbert space $\Hils[K]$.
Since $\hat  A^{\univ}$ is generated by slices of $\dumaxcorep[A]$, the representations
$(\varphi,\rho)$  form a $\multunit[A]$\nb-commutative representation if and only if
$(\varphi \otimes \bar \alpha)\gamma(C)_{12}$ commutes with
\begin{align*}
  ((\rho \otimes \bar \beta)\delta' \otimes \Id_{A})(\dumaxcorep[A])
\end{align*}
in $\Mult(\Comp(\Hils) \otimes \Comp(\Hils[K]) \otimes A)$.  Since $\dumaxcorep[A]$ is a bicharacter, the operator above is equal to
\begin{align*}
    (\rho \otimes \bar \beta \hat \Lambda_{A} \otimes \Id_{A})(\dumaxcorep[A]_{23}\dumaxcorep[A]_{13}) = \multunit[A]_{\bar \beta 3}\dumaxcorep[A]_{\rho 3}.
\end{align*}
Thus, $(\varphi,\rho)$  is a $\multunit[A]$\nb-commutative representation if and only if 
$(\varphi \otimes \bar \alpha)\gamma(C)_{12}$ commutes with $\multunit[A]_{\bar \beta 3}\dumaxcorep[A]_{\rho 3}$ or, equivalently, if and only if
\begin{align} \label{eq:aux-universal-crossed}
  \dumaxcorep[A]_{\rho 2}(\varphi \otimes \bar \alpha)\gamma(c)_{13}  (\dumaxcorep[A]_{\rho 2})^{*} = 
\Dumultunit[A]_{2\bar \beta}(\varphi \otimes \bar \alpha)\gamma(c)_{13}(\Dumultunit[A]_{2\bar \beta})^{*}
\end{align}
for all $c\in C$. 
Since $(\bar \alpha,\bar \beta)$ is an anti-Heisenberg pair for $\multunit[A]$,  \eqref{eq:id-anti-hb-universal} implies
\begin{align*}
  \Dumultunit[A]_{2\bar \beta}(\varphi \otimes \bar \alpha)\gamma(c)_{13} (\Dumultunit[A]_{2\bar \beta})^{*} =
  (\varphi \otimes (\Id_{A} \otimes \bar \alpha)\Comult[A])\gamma(c) = ((\varphi \otimes \Id_{A})\gamma \otimes \bar \alpha)\gamma(c).
\end{align*}
Slicing  the third tensor factor above and in \corr{\eqref{eq:aux-universal-crossed} and}{\eqref{eq:aux-universal-crossed}, and} using \eqref{eq:action-slices}, we conclude that $(\varphi,\rho)$ is a $\multunit[A]$\nb-commutative representation if and only if for all $c\in C$,
\begin{align*}
  \dumaxcorep[A]_{\rho 2}(\varphi(c) \otimes \Id_{A}) (\dumaxcorep[A]_{\rho 2})^{*} &= (\varphi \otimes \Id_{A})\gamma(c). \qedhere
\end{align*}
\end{proof}

\subsection{The quasi-triangular case}
\label{subsec:triangular}
Suppose that $\G$ is quasi-triangular in the following sense.
\begin{definition}[\cite{Meyer-Roy-Woronowicz:Twisted_tensor_2}*{Definition 3.1}] \label{def:quasi-triangular}
A $\Cst$\nb-quantum group $\G=(A,\Comult[A])$ is \emph{quasi-triangular} if it comes with a fixed bicharacter $\Rmat \in  \U(\hat {A} \otimes \hat {A})$, called its \emph{$R$-matrix}, satisfying
\begin{align}
  \label{eq:quasi-triangular}
  \Rmat (\sigma \circ \hat \Delta_{A}(\hat a)) \Rmat^{*} = \hat\Delta_{A}(\hat a) \quad\text{for all } \hat a \in \hat A.
\end{align}
\end{definition}
A short calculation shows that  \eqref{eq:quasi-triangular} is equivalent to the relation
\begin{align} \label{eq:dual-r-matrix}
  \Rmat_{23}\widehat{W}^{A}_{13}\widehat{W}^{A}_{12} = \widehat{W}^{A}_{12}\widehat{W}^{A}_{13}\Rmat_{23} 
  \qquad\text{in~$\U(A\otimes\hat {A}\otimes\hat {A})$},
\end{align}
which in turn is equivalent to  $(\Id_{A},\Id_{A})$ being an $(\Rmat,\Rmat)$\nb-pair, that is, a Drinfeld pair for $\Rmat$.

Suppose that  $ \Rmat\in \U(\hat {A} \otimes \hat {A})$ is an $R$-matrix. 
\begin{proposition} \label{prop:quasi-triangular}
  Let $(C,\gamma_{C})$ and $(D,\gamma_{D})$ be $\G$\nb-$\Cst$\nb-algebras. Then there exists a unique continuous coaction $\gamma_{C\boxtimes D}$ of $\G$ on $C \boxtimes^{\Rmat}_{\maxtensor} D$ that makes the canonical morphisms $j_{C}^{u}$ and $j_{D}^{u}$ from  $C$ and $D$ to $C \boxtimes^{\Rmat}_{\maxtensor} D$ equivariant.
\end{proposition}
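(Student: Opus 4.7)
The plan is to construct $\gamma_{C\boxtimes D}$ directly via the universal property of $C\boxtimes^{\Rmat}_{\maxtensor} D$, by feeding in an $\Rmat$\nb-pair whose image lands in $\Mult((C\boxtimes^{\Rmat}_{\maxtensor} D)\otimes A)$.  The crucial input is the observation recorded immediately after Definition \ref{def:quasi-triangular}: quasi\nb-triangularity of $\G$ is equivalent to $(\Id_A,\Id_A)$ being a Drinfeld pair for $\Rmat$, i.e.\ an $(\Rmat,\Rmat)$\nb-pair on the underlying Hilbert space $\Hils$ of $\G$.

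Since $(j_C^{\univ},j_D^{\univ})$ is an $\Rmat$\nb-pair for $(C,\gamma_C)$ and $(D,\gamma_D)$ by definition of the maximal twisted tensor product, Lemma \ref{lemma:pairs-vertical-action} applied with $(\alpha,\beta)=(\Id_A,\Id_A)$ and $\chi=\chi'=\Rmat$ implies that
\begin{align*}
   \varphi := (j_C^{\univ}\otimes\Id_A)\gamma_C \quad\text{and}\quad \psi := (j_D^{\univ}\otimes\Id_A)\gamma_D
\end{align*}
form an $\Rmat$\nb-pair for $(C,\gamma_C)$ and $(D,\gamma_D)$ again.  Although the lemma formally places the codomain in $\Mult((C\boxtimes^{\Rmat}_{\maxtensor} D)\otimes\Comp(\Hils))$, the images of $\varphi$ and $\psi$ already lie in $\Mult((C\boxtimes^{\Rmat}_{\maxtensor} D)\otimes A)$ since $\Id_A$ takes values in $A\subseteq \Bound(\Hils)$.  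The universal property in Definition \ref{def:max_tens} therefore yields a unique
\begin{align*}
   \gamma_{C\boxtimes D} := \varphi\chiprod\psi \in \Mor\bigl(C\boxtimes^{\Rmat}_{\maxtensor} D,(C\boxtimes^{\Rmat}_{\maxtensor} D)\otimes A\bigr)
\end{align*}
with $\gamma_{C\boxtimes D}\circ j_C^{\univ}=\varphi$ and $\gamma_{C\boxtimes D}\circ j_D^{\univ}=\psi$, so equivariance of $j_C^{\univ}$ and $j_D^{\univ}$ is built into the construction.

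The coaction identity $(\Id\otimes\Delta_A)\gamma_{C\boxtimes D}=(\gamma_{C\boxtimes D}\otimes\Id_A)\gamma_{C\boxtimes D}$ is an equation between $*$\nb-homomorphisms into $\Mult((C\boxtimes^{\Rmat}_{\maxtensor} D)\otimes A\otimes A)$, so it suffices to check it on the generators $j_C^{\univ}(C)$ and $j_D^{\univ}(D)$, where it reduces by a routine slicing argument to the coaction identities for $\gamma_C$ and $\gamma_D$.  The Podle\'s condition is obtained from
\begin{align*}
  \gamma_{C\boxtimes D}(C\boxtimes^{\Rmat}_{\maxtensor} D)\cdot(1\otimes A)
  &= \varphi(C)\cdot\psi(D)\cdot(1\otimes A) \\
  &= \varphi(C)\cdot(j_D^{\univ}(D)\otimes A) \\
  &= \varphi(C)\cdot(1\otimes A)\cdot(j_D^{\univ}(D)\otimes 1) \\
  &= (j_C^{\univ}(C)\otimes A)\cdot(j_D^{\univ}(D)\otimes 1)
   = (C\boxtimes^{\Rmat}_{\maxtensor} D)\otimes A,
\end{align*}
using in turn the Podle\'s condition for $\gamma_D$, the fact that $1\otimes A$ commutes with $j_D^{\univ}(D)\otimes 1$ in disjoint tensor legs, and the Podle\'s condition for $\gamma_C$.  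Uniqueness is immediate, since any coaction making $j_C^{\univ}$ and $j_D^{\univ}$ equivariant must agree with $\gamma_{C\boxtimes D}$ on the generating set $j_C^{\univ}(C)\cdot j_D^{\univ}(D)$.

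The main obstacle is just identifying the correct $(\Rmat,\Rmat)$\nb-pair to plug into Lemma \ref{lemma:pairs-vertical-action}: once quasi\nb-triangularity is recognised as the statement that $(\Id_A,\Id_A)$ itself is Drinfeld, the construction and all remaining verifications are forced and reduce to the corresponding identities for $\gamma_C$, $\gamma_D$ together with cancellation in disjoint tensor factors.
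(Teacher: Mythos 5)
Your proof is correct and follows essentially the same route as the paper: both apply Lemma \ref{lemma:pairs-vertical-action} to the $\Rmat$-pair $(j^{\univ}_C,j^{\univ}_D)$ and the Drinfeld pair $(\Id_A,\Id_A)$ coming from quasi-triangularity, and then invoke the universal property to obtain $\gamma_{C\boxtimes D}=\varphi\chiprod\psi$. You merely spell out the coaction identity, the Podle\'s condition and the uniqueness, which the paper leaves as ``easily seen''.
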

\begin{proof}
 Lemma  \ref{lemma:pairs-vertical-action}, applied to $(j^{\univ}_C,j^{\univ}_D)$ and the $(\Rmat,\Rmat)$\nb-pair $(\Id_{A},\Id_{A})$, shows that 
$\varphi:= (j^{\univ}_C \otimes \Id_{A}) \circ \gamma_{C}$
and $\psi:= (j^{\univ}_D \otimes \Id_{A}) \circ \gamma_{D}$ form an $\Rmat$\nb-commutative representation. The induced morphism
\begin{align*}
  \gamma_{(C\boxtimes D)}&:=  \varphi \chiprod \psi \in \Mor( C\boxtimes^{\Rmat}_{\maxtensor} D, (C\boxtimes^{\Rmat}_{\maxtensor} D ) \otimes A)
\end{align*}
is easily seen to be a coaction and to satisfy the Podle\'s condition.
\end{proof}
Denote by $\tau_{\G}$ the trivial coaction of $\G$ on $\C$.
\begin{theorem}  \label{theorem:quasi-triangular}
Let $\G$ be a quasi-triangular $\Cst$-quantum group with $R$-matrix $\Rmat$. Then
 the assignment
\begin{align*}% \label{eq:quasi-triangular-product}
 ((C,\gamma_{C}),(D,\gamma_{D})) \mapsto (C\boxtimes^{\Rmat}_{\max} D,\corr{$\gamma_{C\boxtimes^{R}_{\max} D}$}{\gamma_{C\boxtimes D}})
\end{align*}
 extends to a bifunctor $\Cstcat(\G)\times \Cstcat(\G) \to \Cstcat(\G)$ which endows $\Cstcat(\G)$ with the structure of a monoidal category. Its unit is $(\C,\tau_{\G})$. 
\end{theorem}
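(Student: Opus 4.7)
The plan is to establish the three ingredients of a monoidal structure in turn: functoriality on morphisms, unit isomorphisms, and the associator, and then to observe that the pentagon and triangle axioms follow for free from universal properties.

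First I would verify that the assignment is bifunctorial \emph{in the equivariant sense}. Given equivariant morphisms $\rho \in \Mor(C,C')$ and $\theta \in \Mor(D,D')$, the induced morphism $\rho \boxtimes^{\Rmat} \theta$ already exists from the preceding section. Its equivariance is a direct check on the generators: on $j^{\univ}_{C}(c)$, both $\gamma_{C'\boxtimes D'} \circ (\rho \boxtimes^{\Rmat} \theta)$ and $((\rho \boxtimes^{\Rmat} \theta) \otimes \Id_{A})\circ \gamma_{C\boxtimes D}$ equal $(j^{\univ}_{C'}\rho \otimes \Id_{A})\gamma_{C}(c)$ by construction of the diagonal coactions and equivariance of $\rho$, and symmetrically on $j^{\univ}_{D}(d)$.

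Next I would handle the unit $(\C,\tau_{\G})$. For $\C \boxtimes^{\Rmat} D$, an $\Rmat$-commuting pair consists of a morphism $\varphi \colon \C \to E$ (so $\varphi(1)=1_{E}$) together with $\psi \in \Mor(D,E)$; the commutation relation \eqref{eq:V_comm_rep} reduces to $[1,(\psi\otimes \bar\beta)\delta(d)]=0$ and is automatic. Hence $(D,\Id_{D},\psi \mapsto \psi)$ is the universal such pair, so $\C \boxtimes^{\Rmat} D \cong D$ canonically via $j^{\univ}_{D}$; and a straightforward computation on generators shows this isomorphism intertwines $\gamma_{\C\boxtimes D}$ with $\gamma_{D}$. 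The right unit is symmetric (or may be obtained through Proposition \ref{prop:symmetry}).

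The heart of the proof is the associator $(C_{1}\boxtimes^{\Rmat} C_{2})\boxtimes^{\Rmat} C_{3} \cong C_{1}\boxtimes^{\Rmat} (C_{2}\boxtimes^{\Rmat} C_{3})$. My strategy is to characterise both sides by a common universal property: a universal ``triple'' $(\varphi_{1},\varphi_{2},\varphi_{3})$ of $\G$-equivariant-target morphisms such that each pair $(\varphi_{i},\varphi_{j})$ with $i<j$ is an $\Rmat$-commutative representation of $(C_{i},\gamma_{i})$ and $(C_{j},\gamma_{j})$. To see that the iterated product on the left has this property, write a morphism out of $(C_{1}\boxtimes^{\Rmat} C_{2})\boxtimes^{\Rmat} C_{3}$ as $(\Phi,\psi_{3})$; then $\Phi$ unpacks to an $\Rmat$-pair $(\varphi_{1},\varphi_{2})$, and using that $j^{\univ}_{C_{i}}$ is equivariant for $i=1,2$, the diagonal coaction $\gamma_{12}$ from Proposition \ref{prop:quasi-triangular} pulls back along $j^{\univ}_{C_{i}}$ to $\gamma_{i}$. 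Hence the $\Rmat$-commuting relation between $\Phi$ and $\psi_{3}$, evaluated on $j^{\univ}_{C_{i}}(c)$, becomes exactly the $\Rmat$-commuting relation for $(\varphi_{i},\psi_{3})$. The converse direction — that three pairwise $\Rmat$-commuting morphisms $(\varphi_{1},\varphi_{2},\varphi_{3})$ assemble into a morphism $\Phi$ out of $C_{1}\boxtimes^{\Rmat} C_{2}$ that forms an $\Rmat$-pair with $\psi_{3}=\varphi_{3}$ against the diagonal coaction $\gamma_{12}$ — is the step I expect to be the main obstacle. The technical point is that one must verify the commutation relation for $\Phi$ not just on $j^{\univ}_{C_{1}}(C_{1})\cdot j^{\univ}_{C_{2}}(C_{2})$ separately but on the whole $\Cst$-algebra $C_{1}\boxtimes^{\Rmat} C_{2}$ simultaneously; for this I would exploit Lemma \ref{lemma:pairs-vertical-action} and the fact that $(\Id_{A},\Id_{A})$ is a Drinfeld pair for $\Rmat$, together with the observation that $\gamma_{12}$ is itself built from applying $(\Id \otimes \Id_{A})\gamma_{i}$ on each tensorand. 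The analogous analysis on the right gives the same universal characterisation, so the two iterated products are canonically isomorphic, and the isomorphism intertwines the three structural morphisms $C_{i}\to \cdot$ as well as the induced $\G$-coactions (again, checked on generators).

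Finally, the coherence axioms are automatic: the pentagon equates two associators between $(((C_{1}\boxtimes C_{2})\boxtimes C_{3})\boxtimes C_{4})$ and $(C_{1}\boxtimes(C_{2}\boxtimes(C_{3}\boxtimes C_{4})))$, both of which satisfy the universal property of \emph{quadruples} of pairwise $\Rmat$-commuting morphisms, so the two composite associators agree by uniqueness. The triangle identity is obtained identically from the triple universal property, using the unit identification above. This reduces the monoidality to coherence-for-free from universal properties, avoiding any direct coherence calculation.
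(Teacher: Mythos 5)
Your proposal is correct and follows essentially the same route as the paper: the unit isomorphisms come from the observation that $\Rmat$-commutation with $(\C,\tau_{\G})$ is vacuous, and the associator is obtained by showing that both iterated products represent the same functor of triples $(\pi_{C},\pi_{D},\pi_{E})$ that are pairwise $\Rmat$-pairs, with coherence then following from uniqueness. The paper compresses this into a ``Yoneda-type argument'' and simply states the equivalence of the three bracketing conditions; the step you flag as the main obstacle (unpacking an $\Rmat$-pair $(\pi_{C}\chiprod\pi_{D},\pi_{E})$ into the pairwise conditions and back, using Lemma \ref{lemma:pairs-vertical-action} and the Drinfeld-pair property of $(\Id_{A},\Id_{A})$) is exactly the content the paper leaves implicit, and your way of filling it in is the intended one.
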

\begin{proof}
Clearly, the assignment extends to a bifunctor. To show that $\Cstcat(\G)$ becomes monoidal,
it suffices to prove the following two assertions:
  \begin{enumerate}
    \item  For any $\G$\nb-$\Cst$\nb-algebra $(C,\gamma)$, the canonical morphisms  to $(C,\gamma) \boxtimes^{\Rmat}_{\maxtensor} (\C,\tau_{\G})$ and $(\C,\tau_{\G}) \boxtimes^{\Rmat}_{\maxtensor} (C,\gamma)$ are isomorphisms.
    \item For any $\G$\nb-$\Cst$\nb-algebras $(C,\gamma_{C})$, $(D,\gamma_{D})$, $(E,\gamma_{E})$, there exists a unique isomorphism of $\G$\nb-$\Cst$\nb-algebras
      \begin{align*}
        (C \boxtimes^{\Rmat}_{\maxtensor} D) \boxtimes^{\Rmat}_{\maxtensor} E \to C \boxtimes^{\Rmat}_{\maxtensor} (D\boxtimes^{\Rmat}_{\maxtensor} E)
      \end{align*}
that intertwines the canonical maps of $C,D$ and $E$ to these $\Cst$\nb-algebras.
  \end{enumerate}
Both follow easily from Yoneda-type arguments. For example, to prove (2), it suffices to note that for every $\Cst$\nb-algebra $F$ with morphisms $\pi_{C},\pi_{D},\pi_{E}$ from $C,D,E$, respectively, to $F$, the following conditions are equivalent:
  \begin{itemize}
  \item $(\pi_{C},\pi_{D})$ and $(\pi_{C} \chiprod \pi_{D},\pi_{E})$ are $\Rmat$\nb-commutative representations;
  \item $(\pi_{C},\pi_{D})$,  $(\pi_{C},\pi_{E})$ and $(\pi_{D},\pi_{E})$ are  $\Rmat$\nb-commutative representations;
  \item $(\pi_{D},\pi_{E})$ and $(\pi_{C},\pi_{D} \chiprod \pi_{E})$ are $\Rmat$\nb-commutative representations. \qedhere
  \end{itemize}
\end{proof}

We can also define the notion of braided commutativity for $\G$\nb-$\Cst$\nb-algebras.
In the  von-Neumann algebraic setting, the corresponding notion was introduced in \cite{Enock-Timmermann:yd}*{Definition 2.5.3}.
\begin{definition}
  Let $\G$ be a quasi-triangular $\Cst$\nb-quantum group.
  A $\G$\nb-$\Cst$\nb-algebra $(C,\gamma)$ is \emph{braided-commutative} if $ (\Id_{C},\Id_{C})$ is an $\Rmat$\nb-commutative representation or, equivalently, if there exists a morphism
  \begin{align*}
   C \boxtimes^{\Rmat}_{\maxtensor} C \to C, \quad
   j^{\univ}_C(c)j^{\univ}_D(c') \mapsto cc', 
  \end{align*}
  where $j^{\univ}_C$ and $j^{\univ}_D$ denote the two  canonical morphisms from $C$ to $C \boxtimes^{\Rmat}_{\maxtensor} C$.
\end{definition}

\section{An isomorphism of two crossed products}
\label{sec:iso}

Let $\Qgrp{G}{A}$ \corr{be $\Qgrp{H}{B}$}{and $\Qgrp{H}{B}$ be} $\Cst$-quantum
groups with a bicharacter $\chi\in\U(\hat {A}\otimes\hat {B})$, and let
$(C,\gamma)$ be a $\G$-$\Cst$-algebra and $(D,\delta)$ an $\G[H]$-$\Cst$-algebra
as before.

Then the maximal tensor product $C \otimes_{\maxtensor} D$ carries a natural coaction of the product $\Cst$-quantum group
$\G\times\G[H]:=(\hat {A}\otimes\hat {B},\flip_{23}(\DuComult[A]\otimes\DuComult[B]))$ and we can form the crossed product
\begin{align}
  \label{eq:crossed-untwisted}
  (C \otimes_{\maxtensor} D) \rtimes (\hat  A\otimes \hat  B).
\end{align}
The maximal twisted tensor product $C\boxtimes^{\chi}_{\maxtensor} D$ can informally be regarded as a deformation of $C\otimes_{\maxtensor} D$ with respect to $\chi$.  Likewise, there exists a deformation of $\G\times \G[H]$ with respect to $\chi$,  the generalised
 Drinfeld double~$\GenDrinfdouble{\G}{\G[H]}{\bichar}=\Bialg{\DrinAlg_{\bichar}}$ 
 associated to the bicharacter $\chi$ in \cite{Roy:Codoubles}. We show that like the 
 minimal twisted tensor product $C \boxtimes^{\chi}_{\maxtensor} D$, see  \cite{Roy:Codoubles}*{Theorem 6.3}, also the maximal twisted tensor product carries a natural coaction of $\GenDrinfdouble{\G}{\G[H]}{\bichar}$. Moreover, we show that the associated crossed product
 \begin{align} \label{eq:crossed-twisted}
  (C \boxtimes_{\maxtensor} D) \rtimes \CodoubAlg_{\chi}
\end{align}
is naturally isomorphic to the crossed product \eqref{eq:crossed-untwisted}. 

Recall that the  $\Cst$\nb-quantum group  $\GenDrinfdouble{\G}{\G[H]}{\bichar}$ 
 comes with two morphisms
 $\rho \colon A \to \DrinAlg_{\bichar}$ and $
  \theta \colon B \to \DrinAlg_{\bichar}$
of $\Cst$\nb-bialgebras such that $(\rho,\theta)$ form a Drinfeld pair for $\chi$ and $\rho(A) \cdot \theta(B) = \DrinAlg_{\bichar}$ \cite{Roy:Codoubles}.
\begin{proposition} \label{prop:drinfeld-action}
There exists a unique coaction of $\GenDrinfdouble{\G}{\G[H]}{\bichar}$ on $C\boxtimes^{\chi}_{\max} D$ that makes the following diagram commute, and 
this coaction is continuous:
\begin{align*}
  \xymatrix{
    C \ar[d]_{\gamma} \ar[r]^{j^{\univ}_C} & C\boxtimes^{\chi}_{\max} D \ar[d] &  D\ar[l]_{j^{\univ}_D}  \ar[d]^{\delta} \\
    C \otimes A \ar[r]^(0.3){j^{\univ}_C \otimes \rho} & (C \boxtimes^{\chi}_{\max} D) \otimes 
\DrinAlg_{\bichar} & D \otimes B \ar[l]_(0.3){j^{\univ}_D \otimes \theta}
  }
\end{align*}
\end{proposition}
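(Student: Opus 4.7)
The plan is to construct $\gamma_{C\boxtimes D}$ via the universal property of $C\boxtimes^{\chi}_{\max} D$. By construction $(j^{\univ}_C,j^{\univ}_D)$ is a $\chi$-pair, and by the defining property of the generalised Drinfeld double recalled just above the proposition, $(\rho,\theta)$ is a $(\chi,\chi)$-pair, that is, a Drinfeld pair for $\chi$. The conclusion of Lemma~\ref{lemma:pairs-vertical-action} extends verbatim when the pair $(\alpha,\beta)$ takes values in an arbitrary $\Cst$-algebra rather than $\Comp(\Hils)$, since one can faithfully represent that codomain on a Hilbert space. Applied to $((j^{\univ}_C,j^{\univ}_D),(\rho,\theta))$, this shows that
\begin{align*}
((j^{\univ}_C\otimes\rho)\gamma,\ (j^{\univ}_D\otimes\theta)\delta)
\end{align*}
is a $\chi$-pair for $(C,\gamma)$ and $(D,\delta)$ with codomain $(C\boxtimes^{\chi}_{\max} D)\otimes\DrinAlg_{\chi}$. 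The universal property then produces a unique morphism $\gamma_{C\boxtimes D}$ making the diagram commute; uniqueness of the coaction is immediate, because any such coaction is determined on $j^{\univ}_C(C)\cdot j^{\univ}_D(D) = C\boxtimes^{\chi}_{\max} D$.

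I would verify coassociativity by appealing once more to the universal property. Both $(\gamma_{C\boxtimes D}\otimes\Id)\gamma_{C\boxtimes D}$ and $(\Id\otimes\Delta_{\DrinAlg_{\chi}})\gamma_{C\boxtimes D}$ are morphisms from $C\boxtimes^{\chi}_{\max} D$ into $(C\boxtimes^{\chi}_{\max} D)\otimes\DrinAlg_{\chi}\otimes\DrinAlg_{\chi}$. On the generators $j^{\univ}_C(c)$ and $j^{\univ}_D(d)$ they both reduce to $(j^{\univ}_C\otimes\rho\otimes\rho)(\gamma\otimes\Id_A)\gamma(c)$ and $(j^{\univ}_D\otimes\theta\otimes\theta)(\delta\otimes\Id_B)\delta(d)$, using coassociativity of $\gamma,\delta$ on one side and the bialgebra morphism identities $\Delta_{\DrinAlg_{\chi}}\circ\rho = (\rho\otimes\rho)\Delta_A$ and $\Delta_{\DrinAlg_{\chi}}\circ\theta = (\theta\otimes\theta)\Delta_B$ on the other. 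Agreement on the generators forces the two morphisms to coincide globally.

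The main obstacle is the Podle\'s condition $\gamma_{C\boxtimes D}(C\boxtimes^{\chi}_{\max} D)\cdot(1\otimes\DrinAlg_{\chi}) = (C\boxtimes^{\chi}_{\max} D)\otimes\DrinAlg_{\chi}$, where only the reverse inclusion requires work. My strategy is to apply $(j^{\univ}_C\otimes\rho)$ and $(j^{\univ}_D\otimes\theta)$ to the Podle\'s identities $\gamma(C)(1\otimes A)=C\otimes A$ and $\delta(D)(1\otimes B)=D\otimes B$, producing
\begin{align*}
(j^{\univ}_C\otimes\rho)\gamma(C)\cdot(1\otimes\rho(A)) &= j^{\univ}_C(C)\otimes\rho(A), &
(j^{\univ}_D\otimes\theta)\delta(D)\cdot(1\otimes\theta(B)) &= j^{\univ}_D(D)\otimes\theta(B).
\end{align*}
Exploiting the factorisation $1\otimes\DrinAlg_{\chi}=(1\otimes\theta(B))(1\otimes\rho(A))$ coming from $\DrinAlg_{\chi}=\rho(A)\theta(B)=\theta(B)\rho(A)$, the identity $\gamma_{C\boxtimes D}(C\boxtimes^{\chi}_{\max} D)=(j^{\univ}_C\otimes\rho)\gamma(C)\cdot(j^{\univ}_D\otimes\theta)\delta(D)$ furnished by Lemma~\ref{lem:Cstar_alg}, and the observation that the factors $(j^{\univ}_D(D)\otimes 1)$ and $(1\otimes\rho(A))$ sit in disjoint tensor legs and therefore commute, the two Podle\'s identities combine to deliver $(j^{\univ}_C(C)\otimes\rho(A))\cdot(j^{\univ}_D(D)\otimes\theta(B)) = (C\boxtimes^{\chi}_{\max} D)\otimes\DrinAlg_{\chi}$ on the left-hand side. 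The delicate point is purely the bookkeeping of tensor factors; no new braiding relation beyond Lemma~\ref{lem:Cstar_alg} is needed.
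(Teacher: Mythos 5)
Your proposal follows exactly the paper's route: apply Lemma~\ref{lemma:pairs-vertical-action} to the $\chi$-pair $(j^{\univ}_C,j^{\univ}_D)$ and the Drinfeld pair $(\rho,\theta)$ to obtain the morphism via the universal property, then check coassociativity and the Podle\'s condition. The paper dismisses the latter two as ``a routine computation''; your write-up simply carries out that routine computation correctly (and your remark that $(\rho,\theta)$ must first be realized on a Hilbert space to invoke the lemma literally is a legitimate, easily repaired point that the paper glosses over).
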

\begin{proof}
 Lemma \ref{lemma:pairs-vertical-action}, applied to the $\chi$\nb-commutative representation  $(j^{\univ}_C,j^{\univ}_D)$ and the $(\chi,\chi)$\nb-pair $(\rho,\theta)$, yields the desired morphism $(j^{\univ}_C \otimes \rho) \gamma \chiprod (j^{\univ}_D \otimes \theta)\delta$. A routine computation shows that this  morphism is a coaction and satisfies the Podle\'s condition.
\end{proof}
We thus find:
 \begin{theorem} \label{the:gen_drinf_coact}
The maximal twisted tensor product  is a bifunctor
\begin{align*}
\boxtimes_{\maxtensor}^{\chi}\colon \Cstcat(\G)\times\Cstcat(\G[H])\to 
 \Cstcat(\GenDrinfdouble{\G}{\G[H]}{\bichar}).  
\end{align*}
\end{theorem}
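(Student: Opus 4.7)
The bulk of the work has been done in Proposition \ref{prop:drinfeld-action}, which for each object pair $((C,\gamma),(D,\delta))$ produces a continuous coaction $\gamma_{C\boxtimes D}$ of $\GenDrinfdouble{\G}{\G[H]}{\bichar}$ on $C\boxtimes^{\chi}_{\maxtensor}D$, and we already know from the preceding section that $\boxtimes^{\chi}_{\maxtensor}$ is a bifunctor into $\Cstcat$. The plan is therefore simply to check that this bifunctor lifts to the category of $\GenDrinfdouble{\G}{\G[H]}{\bichar}$-$\Cst$-algebras, i.e.\ that the morphisms $\rho\boxtimes^{\chi}_{\maxtensor}\theta$ are automatically equivariant.

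Concretely, let $\rho\in\Mor(C,C')$ and $\theta\in\Mor(D,D')$ be equivariant morphisms between $\G$-$\Cst$-algebras $(C,\gamma),(C',\gamma')$ and $\G[H]$-$\Cst$-algebras $(D,\delta),(D',\delta')$, respectively. Write $\pi_A\in\Mor(A,\DrinAlg_{\chi})$ and $\pi_B\in\Mor(B,\DrinAlg_{\chi})$ for the canonical morphisms of $\Cst$-bialgebras furnished by the generalized Drinfeld double, so that by Proposition \ref{prop:drinfeld-action} the coaction $\gamma_{C\boxtimes D}$ is uniquely characterised by
\begin{align*}
  \gamma_{C\boxtimes D}\circ j^{\univ}_C &= (j^{\univ}_C\otimes\pi_A)\circ\gamma, &
  \gamma_{C\boxtimes D}\circ j^{\univ}_D &= (j^{\univ}_D\otimes\pi_B)\circ\delta,
\end{align*}
and similarly for $\gamma'_{C'\boxtimes D'}$. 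I would then verify equivariance of $\rho\boxtimes^{\chi}_{\maxtensor}\theta$ by comparing both composites on $j^{\univ}_C(C)$ and $j^{\univ}_D(D)$, which generate $C\boxtimes^{\chi}_{\maxtensor}D$ by Lemma \ref{lem:Cstar_alg}. On $j^{\univ}_C(c)$ the defining relations together with equivariance of $\rho$ give
\begin{align*}
  \gamma'_{C'\boxtimes D'}\bigl((\rho\boxtimes\theta)(j^{\univ}_C(c))\bigr)
    &= (j^{\univ}_{C'}\otimes\pi_A)(\gamma'\circ\rho)(c)
    = (j^{\univ}_{C'}\rho\otimes\pi_A)\gamma(c) \\
    &= \bigl((\rho\boxtimes\theta)\otimes\Id\bigr)\bigl(\gamma_{C\boxtimes D}(j^{\univ}_C(c))\bigr),
\end{align*}
and an entirely analogous computation works on $j^{\univ}_D(d)$ using equivariance of $\theta$.

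The remaining bookkeeping is that $\rho\boxtimes^{\chi}_{\maxtensor}\theta$ respects identities and composition, both of which are inherited from the already-established bifunctoriality of $\boxtimes^{\chi}_{\maxtensor}$ with target $\Cstcat$. There is really no hard step here; the only subtle point is making sure one knows that the two sides of the equivariance relation are indeed determined by their values on $j^{\univ}_C(C)$ and $j^{\univ}_D(D)$, which needs the nondegeneracy/density statement of Lemma \ref{lem:Cstar_alg} plus the fact that both sides extend uniquely from $j^{\univ}_C(C)\cdot j^{\univ}_D(D)$ to the multiplier algebra via the standard extension of nondegenerate morphisms.
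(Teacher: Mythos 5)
Your proposal is correct and follows exactly the route the paper intends: the paper derives this theorem directly from Proposition \ref{prop:drinfeld-action} together with the bifunctoriality of $\boxtimes^{\chi}_{\maxtensor}$ into $\Cstcat$ established earlier, leaving the equivariance check of $\rho\boxtimes^{\chi}_{\maxtensor}\theta$ implicit. Your verification of that equivariance on the generating copies $j^{\univ}_C(C)$ and $j^{\univ}_D(D)$ is precisely the omitted detail, and it is carried out correctly.
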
   

Let us now turn to the crossed products \eqref{eq:crossed-untwisted} and \eqref{eq:crossed-twisted}. First, we recall their definition.

Denote by $\iota^{\univ}_{C},\iota^{\univ}_{D}$ and \corr{$j^{u}_{C},j^{u}_{D}$}{$j^{\univ}_{C},j^{\univ}_{D}$} the canonical morphisms from $C$ and $D$  to $C\maxotimes D$ and to $C\boxtimes_{\maxtensor}^{\chi} D$, respectively.

Choose faithful Heisenberg pairs $(\pi,\hat \pi)$ and $(\eta,\hat \eta)$  for $\G$ and $\G[H]$ on Hilbert spaces $\Hils$ and $\Hils[K]$, respectively. Then   $(\pi \otimes \eta,\hat \pi\otimes \hat \eta)$ is a Heisenberg pair for $\G\times \G[H]$, and the reduced crossed product $(C\maxotimes D) \rtimes (\hat  A\otimes \hat  B)$ can be identified with the $\Cst$\nb-subalgebra of~$\Mult((C\maxotimes D) \otimes \Comp(\Hils)\otimes\Comp(\Hils[K]))$
%\begin{align*}
%  \Mult((C\maxotimes D) \otimes \Comp(\Hils\otimes\Hils[K]))
%\end{align*}
 generated by all elements of the form
\begin{align} \label{eq:elements-crossed-untwisted}
  \dot c &:= (\iota^{\univ}_{C} \otimes \pi)(\gamma(c))_{12}, & \dot d &:= (\iota^{\univ}_{D} \otimes \eta)(\delta(d))_{13}, & \dot \omega &:= (\hat \pi \otimes \hat  \eta)(\omega)_{23},
\end{align}
where $c\in C$, $d\in D$ and $\omega\in \hat  A \otimes \hat  B$.

Following \cite{Roy:Codoubles}, we next define a $\chi$\nb-Heisenberg pair $(\alpha,\beta)$ on $\Hils[K] \otimes \Hils$  by
\begin{align*}
    \alpha(a) &= 1\otimes \pi(a), & \beta(b) &= (\eta \otimes \hat \pi)\hat \Delta_{R}(b),
\end{align*}
see \cite{Roy:Codoubles}*{Proposition 2.35}. Denote by $(\bar \alpha,\bar \beta)$ the associated $\chi$\nb-anti-Heisenberg pair and define, as in \cite{Roy:Codoubles}*{Proposition 3.10},  representations
$\rho, \theta,\xi,\zeta$ of  $A,B,\hat  A,\hat  B$, respectively, on $\conj{\Hils[K]} \otimes \conj{\Hils} \otimes \Hils[K] \otimes \Hils$  by
\begin{align*}
  \rho(a) &= (\bar \alpha \otimes \alpha)\Comult[A](a), &
\theta(b) &= (\bar \beta \otimes \beta)\Comult[B](b), \\
  \xi(\hat  a)&= 1\otimes 1\otimes  1 \otimes\hat \pi(\hat  a), &
  \zeta(\hat  b) &= 1\otimes 1 \otimes \hat \eta(\hat  b)\otimes 1.
\end{align*}
Then the reduced crossed product $(C\boxtimes_{\maxtensor}^{\chi} D) \rtimes \DrinAlg_{\chi}$ can be identified with the $\Cst$\nb-subalgebra of 
$\Mult((C\boxtimes^{\bichar}_{\maxtensor} D) \otimes \Comp(\conj{\Hils[K]}) \otimes \Comp(\conj{\Hils}) \otimes \Comp(\Hils[K]) \otimes \Comp(\Hils))$
%\begin{align*}
%  \Mult((C\boxtimes D) \otimes \Comp(\conj{\Hils[K]}\otimes\conj{\Hils}\otimes\Hils[K]\otimes\Hils))
%\end{align*}
generated by all elements of the form
\begin{align*}
  \ddot c&= (\corr{$j^u_C$}{j^{\univ}_C} \otimes \rho)\gamma(c), & \ddot d &= (\corr{\qquad $j^u_D$}{j^{\univ}_D} \otimes \theta)\delta(d), &
  \ddot \omega &= (\hat \eta \otimes \hat \pi)(\sigma(\omega))_{45},
\end{align*}
where $c\in C$, $d\in D$ and $\omega \in \hat  A \otimes \hat  B$. Moreover, the $\Cst$-quantum group $\GenDrinfdouble{\G}{\G[H]}{\bichar}=\Bialg{\DrinAlg_{\bichar}}$  arises from the modular multiplicative unitary
\begin{align*}
   \Multunit^{\mathcal{D}_{\chi}} = W^{A}_{\xi \rho} W^{B}_{\zeta \theta} \in \U(\overline{\Hils[K]} \otimes \overline{\Hils[H]} \otimes \Hils[K] \otimes \Hils[H]),
\end{align*}
see \cite{Roy:Codoubles}*{Theorem 4.1}.
\begin{lemma} 
 \label{lemm:const_phi}
  There exists a non-degenerate $*$-homomorphism
\begin{align*}
\Phi \colon (C \maxotimes D) \rtimes (\hat  A \otimes \hat  B) \to (C \boxtimes^{\chi}_{\maxtensor} D) \rtimes \CodoubAlg_{\chi}
\end{align*}
 such that for all $c\in C$,
 $d\in D$ and $\omega \in \hat  A\otimes \hat  B$,
    \begin{align*}
\Phi(\dot c) &=  \ddot{c},  & \Phi(\dot d) &= \ddot{\chi} \ddot{d} \ddot{\chi}^{*},
& \Phi(\dot \omega) &= \ddot{\omega}.
    \end{align*}
\end{lemma}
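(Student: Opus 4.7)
The plan is to construct $\Phi$ by exhibiting in $F := (C \boxtimes^{\chi}_{\maxtensor} D) \rtimes \CodoubAlg_{\chi}$ a replica of the defining data of the crossed product $(C \maxotimes D) \rtimes (\hat{A} \otimes \hat{B})$, namely a morphism $\pi_{CD} \in \Mor(C \maxotimes D, F)$ together with a representation $\omega \mapsto \ddot{\omega}$ of $\hat{A} \otimes \hat{B}$ satisfying the standard covariance relation with respect to $\gamma \otimes \delta$. We set $\pi_{CD}(c \otimes 1) := \ddot{c}$ and $\pi_{CD}(1 \otimes d) := \ddot{\chi} \ddot{d} \ddot{\chi}^{*}$.

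The core of the proof is the commutation $[\ddot{c}, \ddot{\chi} \ddot{d} \ddot{\chi}^{*}] = 0$ for all $c \in C$ and $d \in D$, from which the existence of $\pi_{CD}$ follows by the universal property of $C \maxotimes D$. The plan is to rewrite this identity in the equivalent form $[\ddot{\chi}^{*} \ddot{c} \ddot{\chi}, \ddot{d}] = 0$ and to expand the definitions $\rho = (\bar{\alpha} \otimes \alpha) \circ \Comult[A]$ and $\theta = (\bar{\beta} \otimes \beta) \circ \Comult[B]$ together with $\alpha = 1 \otimes \pi$, $\beta = (\eta \otimes \hat{\pi}) \hat{\Delta}_{R}$, and $\ddot{\chi} = (\hat{\eta} \otimes \hat{\pi})(\sigma(\chi))_{45}$. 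Conjugation by $\ddot{\chi}$ acts nontrivially only on the fourth and fifth legs, and on the fifth leg of $\ddot{c}$, where $\pi$ acts, it realises precisely the bicharacter intertwining of Lemma \ref{lemma:delta-implementation}, converting the $\pi$\nb-factor into a $\Comult[A]$\nb-twisted form whose fourth-leg contribution fits the fourth leg of $\ddot{d}$. The leftover commutation then reduces to the $\chi$\nb-pair identity $[(j^{\univ}_{C} \otimes \bar{\alpha}) \gamma(c),\, (j^{\univ}_{D} \otimes \bar{\beta}) \delta(d)] = 0$ satisfied by the defining anti\nb-Heisenberg pair $(\bar{\alpha}, \bar{\beta})$ for the maximal twisted tensor product, combined with the trivial commutation of the remaining legs.

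Having established $\pi_{CD}$, the assignment $\omega \mapsto \ddot{\omega}$ is manifestly a nondegenerate $*$-homomorphism of $\hat{A} \otimes \hat{B}$ into $\Mult(F)$, since it is built from the representations $\hat{\pi}, \hat{\eta}$ and the flip. The covariance axiom breaks into two pieces. On the $C$-leg, the identity implementing $\gamma$ via adjoint action of the $\ddot{\omega}$'s on $\ddot{c}$ is built into the construction of $\rho$ through \eqref{eq:id-anti-hb-universal} and the Heisenberg pair properties of $(\pi, \hat{\pi})$ and $(\eta, \hat{\eta})$. On the $D$-leg, the analogous identity for $\ddot{\chi} \ddot{d} \ddot{\chi}^{*}$ is deduced from the corresponding identity for $\ddot{d}$ together with the bicharacter relations for $\chi$, which guarantee that the conjugation by $\ddot{\chi}$ is compatible with the adjoint action of $\ddot{\omega}$. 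A standard universal-property argument for the crossed product then extends these data to the desired $\Phi$.

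The main obstacle is the commutation verification in the first step: it demands careful bookkeeping of the five legs on which the various operators act, and a precise identification of how $\ddot{\chi}$ intertwines the four representations $\bar{\alpha}, \alpha, \bar{\beta}, \beta$ making up $\rho$ and $\theta$. Once this is in place, the construction of $\pi_{CD}$, the verification of covariance, and the extension to $\Phi$ are all routine.
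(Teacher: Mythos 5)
Your overall strategy --- build a covariant representation of $(C\maxotimes D,\hat A\otimes\hat B)$ inside the target and then invoke ``a standard universal-property argument for the crossed product'' --- has a genuine gap at the final step. Both crossed products in this lemma are \emph{reduced} crossed products, defined concretely as the $\Cst$-subalgebras generated by $\dot c,\dot d,\dot\omega$ (resp.\ $\ddot c,\ddot d,\ddot\omega$) for a fixed faithful Heisenberg pair; they carry no universal property with respect to covariant representations. Only the universal crossed product $(C\maxotimes D)\rtimes(\hat A^{\univ}\otimes\hat B^{\univ})$, as in Theorem \ref{the:cov_rep_vs_br_comm}, is universal in that sense, and a covariant representation need not descend to the reduced quotient (just as for groups, where a covariant pair whose unitary part is merely a representation of $\Cred(G)$ need not factor through $C\rtimes_{r}G$). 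So exhibiting $\pi_{CD}$ and $\omega\mapsto\ddot\omega$ as a covariant pair does not by itself produce $\Phi$. The only clean way to land in the reduced crossed product is to recognize the covariant pair as \emph{regular}, i.e.\ as the image of the defining generators under a morphism --- and that is exactly what the paper does: it first forms the morphism $\varphi\colon C\maxotimes D\to\Mult\bigl((C\boxtimes^{\chi}_{\maxtensor}D)\otimes\Comp(\conj{\Hils[K]}\otimes\conj{\Hils})\bigr)$ induced by the commuting pair $\varphi_{C}=(j^{u}_{C}\otimes\bar\alpha)\gamma$ and $\varphi_{D}=(j^{u}_{D}\otimes\bar\beta)\delta$ (their commutation is \emph{immediate} from the definition of a $\chi$-pair, since $(\bar\alpha,\bar\beta)$ is an anti-Heisenberg pair for $\chi$), and then tensors with the Heisenberg pair $((\eta\otimes\pi)\circ\sigma,(\hat\eta\otimes\hat\pi)\circ\sigma)$ in the last two legs, thereby transporting the defining representation of $(C\maxotimes D)\rtimes(\hat A\otimes\hat B)$ into the target.

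A second, smaller point: by defining $\pi_{CD}(1\otimes d):=\ddot\chi\,\ddot d\,\ddot\chi^{*}$ at the outset, you commit yourself to verifying $[\ddot c,\ddot\chi\,\ddot d\,\ddot\chi^{*}]=0$ by a five-leg computation, which your sketch leaves vague. In the paper's order of operations this identity is never checked by hand: the construction yields $\Phi(\dot d)=(\varphi_{D}\otimes\eta)\delta(d)_{1234}$ directly, and the formula $\Phi(\dot d)=\ddot\chi\,\ddot d\,\ddot\chi^{*}$ is then a one-line consequence of Lemma \ref{lemma:delta-implementation} via \eqref{eq:beta}, because $\beta(b)=(\hat\eta\otimes\hat\pi)(\hat\chi)(\eta(b)\otimes 1)(\hat\eta\otimes\hat\pi)(\hat\chi)^{*}$. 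I would reorganize your argument along these lines; as written, the commutation step is only sketched and the universal-property step does not go through.
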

\begin{proof}
Since $(\corr{$j^{u}_{C},j^{u}_{D}$}{j^{\univ}_{C},j^{\univ}_{D}})$ is a  $\chi$-commutative representation, the morphisms $\varphi_{C}$ and $\varphi_{D}$ from $C$ and $D$, respectively, to $(C\boxtimes^{\bichar}_{\maxtensor} D) \otimes \Comp(\conj{\Hils[K]}\otimes\conj{\Hils})$ given by
\begin{align*}
  \varphi_{C}(c) &:=(\corr{$j^u_C$}{j^{\univ}_C} \otimes \bar \alpha)\gamma(c) &&\text{and} & \varphi_{D}(d)&:=(\corr{\qquad $j^u_D$}{j^{\univ}_D} \otimes \bar \beta)\delta(d)
\end{align*}
commute and induce a morphism $\varphi$ from $C\maxotimes D$ to $(C\boxtimes_{\maxtensor}^{\chi} D) \otimes \Comp(\conj{\Hils[K]}\otimes\conj{\Hils})$.  Since the representations $(\eta\otimes \pi)\circ \sigma$ and $(\hat \eta \otimes \hat \pi)\circ \sigma$
form a  Heisenberg pair for $\G\times \G[H]$, we obtain a morphism
\begin{align*}
 \Phi \in \Mor((C \maxotimes D)\rtimes(\hat  A\otimes \hat  B),   (C\boxtimes_{\maxtensor}^{\chi} D) \otimes \Comp(\conj{\Hils[K]}) \otimes \Comp(\conj{\Hils})\otimes \Comp(\Hils[K])\otimes\Comp(\Hils))
\end{align*}
satisfying
\begin{align*}
  \Phi(\dot c) &= (\varphi_{C} \otimes \pi)\gamma(c)_{1235} = (\corr{$\iota^u_C$}{j^{\univ}_C} \otimes (\bar  \alpha \otimes \pi)\Comult[A])\gamma(c)_{1235} = \ddot c, \\
  \Phi(\dot d) &= (\varphi_{D} \otimes \eta)\delta(d)_{1234} = (\corr{$\iota^u_D$}{j^{\univ}_D} \otimes (\bar  \beta \otimes \eta)\Comult[B])\delta(d)_{1234}, \\
  \Phi(\dot \omega) &= (\hat \eta \otimes \hat \pi)\sigma(\omega)_{45} = \ddot \omega
\end{align*}
for all $c\in C$, $d\in D$ and $\omega\in \hat  A\otimes \hat  B$. But  by   Lemma~\ref{lemma:delta-implementation} and definition of $\beta$,
\begin{align} \label{eq:beta}
(\hat  \eta \otimes \hat \pi)(\hat  \chi)  (\eta(b) \otimes 1)  (\hat \eta\otimes \hat \pi)(\hat \chi^{*}) &=
 (\eta \otimes \hat \pi)(\hat \Delta_{R}(b)) = \beta(b),
\end{align}
and hence
\begin{align*}
  \ddot\chi^{*}\Phi(\dot d)\ddot\chi &=
(\corr{$\iota^u_C$}{j^{\univ}_C} \otimes (\bar  \beta \otimes \beta)\Comult[B])\delta(d)_{12345} 
 =(\corr{$\iota^u_C$}{j^{\univ}_C} \otimes \theta)\delta(d)_{12345}  =
  \ddot d. \qedhere
\end{align*}
\end{proof}
To show that $\Phi$ is an isomorphism, we shall construct its inverse and use the following result.
\begin{lemma}
 \label{lemm:inv_phi}
  There exists a representation $\lambda$ of $A\otimes B$ on $\Hils[K] \otimes \Hils \otimes \overline{\Hils[K]} \otimes \overline{\Hils}$ such that for all $a \in A$ and $b\in B$,
  \begin{align*}
    \lambda( a\otimes b)= (\alpha \otimes \bar \alpha)\Comult[A](a)(\beta\otimes \bar \beta)\Comult[B](b).
  \end{align*}
\end{lemma}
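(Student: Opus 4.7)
The plan is to recognize $\lambda$ as coming from the tensor product of $(\alpha,\beta)$ and $(\bar\alpha,\bar\beta)$ in the sense of reduced pairs, and then use that commuting pairs of representations extend to representations of the minimal tensor product.

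More precisely, by construction $(\alpha,\beta)$ is a $(1,\chi)$-pair, that is, a Heisenberg pair for $\chi$. By Lemma \ref{lemma:pairs-symmetry}, or directly from the definition of $(\bar\alpha,\bar\beta)$ via the unitary antipodes and transposition, the pair $(\bar\alpha,\bar\beta)$ is a $(\chi,1)$-pair, that is, an anti-Heisenberg pair for $\chi$. The tensor product construction for reduced pairs (the reduced analogue of \eqref{eq:vertical-composition}, whose compatibility with the commutation relations is recorded in Proposition \ref{proposition:pairs-vertical-reduced} and the discussion preceding it) then shows that
\[
  (\alpha,\beta)\otimes(\bar\alpha,\bar\beta) = \bigl((\alpha\otimes\bar\alpha)\circ\Delta_A,\,(\beta\otimes\bar\beta)\circ\Delta_B\bigr)
\]
is a $(1,1)$-pair, i.e., a commuting pair of representations of $A$ and $B$ on the Hilbert space $\Hils[K]\otimes\Hils\otimes\conj{\Hils[K]}\otimes\conj{\Hils}$.

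Since the representations $(\alpha\otimes\bar\alpha)\circ\Delta_A$ of $A$ and $(\beta\otimes\bar\beta)\circ\Delta_B$ of $B$ are nondegenerate and have commuting ranges, the universal property of the minimal tensor product yields a unique nondegenerate $*$-homomorphism
\[
  \lambda\colon A\otimes B \to \Bound(\Hils[K]\otimes\Hils\otimes\conj{\Hils[K]}\otimes\conj{\Hils})
\]
satisfying $\lambda(a\otimes b)=(\alpha\otimes\bar\alpha)\Delta_A(a)\,(\beta\otimes\bar\beta)\Delta_B(b)$ for all $a\in A$ and $b\in B$, which is exactly the claim.

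There is no substantial obstacle: the whole statement is essentially the observation that a Heisenberg pair tensored with the associated anti-Heisenberg pair is a commuting pair. The only point that requires a moment's thought is ensuring that the commutation really lifts to the minimal tensor product $A\otimes B$ rather than merely the algebraic one, but this is automatic because both partial representations are defined on $\Cst$-algebras and act on the same Hilbert space with commuting images.
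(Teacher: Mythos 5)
Your identification of $\bigl((\alpha\otimes\bar\alpha)\Delta_{A},(\beta\otimes\bar\beta)\Delta_{B}\bigr)$ as the tensor product of the $(1,\chi)$-pair $(\alpha,\beta)$ with the $(\chi,1)$-pair $(\bar\alpha,\bar\beta)$, hence a $(1,1)$-pair with commuting ranges, is correct. But the final step is a genuine gap: two nondegenerate representations of $A$ and $B$ with commuting ranges on a common Hilbert space induce, by the relevant universal property, a representation of the \emph{maximal} tensor product $A\maxotimes B$; the minimal tensor product has no such universal property, and the induced representation of $A\maxotimes B$ need not factor through $A\otimes B$. Your closing remark that this is ``automatic because both partial representations are defined on $\Cst$-algebras and act on the same Hilbert space with commuting images'' is precisely the assertion that the maximal and minimal norms agree on this particular representation, which is the whole content that has to be proved --- if it held for arbitrary commuting pairs, $\maxotimes$ and $\otimes$ would always coincide. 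The distinction matters here, because the lemma is used in the proof of Theorem \ref{theorem:crossed} to produce a Heisenberg pair $(\kappa,\hat\kappa)$ for $\G\times\G[H]$, whose underlying $\Cst$-algebras are the \emph{minimal} tensor products $A\otimes B$ and $\hat A\otimes\hat B$.

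The paper's proof supplies exactly the ingredient you dismiss. It first builds a representation $\kappa$ of $A\otimes B$ with $\kappa(a\otimes b)=\pi^{(2)}(a)_{23}\,\eta^{(2)}(b)_{14}$, where the two factors act on \emph{disjoint} tensor legs, so that $\kappa$ is a leg permutation of the tensor product representation $\pi^{(2)}\otimes\eta^{(2)}$ and hence manifestly continuous for the minimal norm; it then shows, using Lemma \ref{lemma:delta-implementation} and the commutation of $\pi^{(2)}(A)$ with $\hat\pi^{(2)}(\hat A)$, that $\lambda$ is obtained from $\kappa$ by conjugating with the explicit unitary $U=(\hat\eta\otimes\hat\pi^{(2)})(\hat\chi)_{123}$ and flipping two legs. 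To repair your argument you would need to produce such a spatial realization of $\lambda$ (or some other reason for min-continuity, e.g.\ nuclearity of $A$ or $B$, which is not assumed); commutativity of the ranges alone does not suffice.
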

\begin{proof}
Denote by $(\bar \pi,\bar {\hat \pi})$ and $(\bar \eta,\bar {\hat \eta})$ the  anti-Heisenberg pairs   associated to $(\pi,\hat \pi)$ and $(\eta,\hat \eta)$, respectively, and write
\begin{align*}
  \pi^{(2)}&:=(\pi \otimes \bar \pi)\Comult[A], & \hat \pi^{(2)}&:=(\hat \pi \otimes \bar {\hat \pi})\DuComult[A], & \eta^{(2)}&:= (\eta\otimes \bar \eta)\Comult[B].
\end{align*}
Then there exists a representation $\kappa$ of $A\otimes B$ such that
\begin{align*}
  \kappa(a\otimes b) = \pi^{(2)}(a)_{23}\eta^{(2)}(b)_{14}.
\end{align*}
Let $U:=  (\hat \eta \otimes \hat \pi^{(2)})(\hat  \chi)_{123}$. Since $\pi^{(2)}(A)$ and $\hat \pi^{(2)}(\hat  A)$ commute   \cite{Meyer-Roy-Woronowicz:Twisted_tensor}*{Proposition 3.15},  
\begin{align*}
  U \kappa(a\otimes 1) U^{*} &= \pi^{(2)}(a)_{23} = (\alpha \otimes \bar \pi)(\Comult[A](a))_{123}.
\end{align*}
   On the other hand, Lemma \ref{lemma:delta-implementation} implies that
   \begin{align*}
  U\kappa(1\otimes b) U^{*}=
 (\eta \otimes \hat \pi^{(2)} \otimes \bar  \eta)(\hat \Delta_{R} \otimes \Id_{B})\Comult[B](b).
   \end{align*}
Here, \eqref{eq:right_homomorphism} and the relation $(\hat{\Delta}_{R} \otimes \Id_{B})\Delta_{B}=(\Id_{B} \otimes \hat{\Delta}_{L})\Delta_{B}$   \cite{Meyer-Roy-Woronowicz:Homomorphisms}*{Lemma 5.7} imply
\begin{align*}
  (\Id_{B} \otimes \hat{\Delta}_{A} \otimes \Id_{B})(\hat{\Delta}_{R} \otimes \Id_{B})\Delta_{B} =
(\hat{\Delta}_{R} \otimes \Id_{A} \otimes \Id_{B})(\hat{\Delta}_{R} \otimes \Id_{B})\Delta_{B} = (\hat{\Delta}_{R} \otimes \hat{\Delta}_{L})\Delta_{B},
\end{align*}
whence
 \begin{align*}
U\kappa(1\otimes b) U^{*}
 &= (\eta\otimes \hat{\pi} \otimes \bar{\hat{\pi}} \otimes \bar \eta)(\hat  \Delta_{R} \otimes \hat \Delta_{L})\Comult[B](b) 
= (\beta \otimes \bar{\hat{\pi}} \otimes \bar \eta)(\Id_{B} \otimes \hat \Delta_{L})\Comult[B](b).
\end{align*}
 Flipping the third and fourth tensor factor, we obtain the desired representation $\lambda$ because $  \bar \alpha(a) = 1\otimes \bar \pi(a)$  and by \eqref{eq:antipode-morphisms}, $\hat  \Delta_{R} \circ R_{B} = \sigma (R_{\hat  A} \otimes R_{B}) \hat  \Delta_{L}$ and hence
$\bar \beta(b) = (\bar \eta \otimes \bar {\hat \pi})\sigma\hat \Delta_{L}(b)$.
\end{proof}

\begin{theorem} \label{theorem:crossed}
  The reduced crossed products $ (C\maxotimes D) \rtimes (\hat  A \otimes \hat  B)$ and 
  $(C\boxtimes_{\maxtensor}^{\chi} D) \rtimes \DrinAlg_{\chi}$ are isomorphic.
\end{theorem}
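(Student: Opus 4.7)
The plan is to show that the \Star{}homomorphism $\Phi$ constructed in Lemma \ref{lemm:const_phi} is an isomorphism, by verifying surjectivity directly and producing a two-sided inverse $\Psi$ for injectivity.

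For surjectivity, I would observe that $\Phi(\dot c) = \ddot c$ and $\Phi(\dot \omega) = \ddot \omega$ by construction for all $c \in C$ and $\omega \in \hat A \otimes \hat B$, and that $\Phi(\dot \chi) = \ddot \chi$ is a unitary multiplier contained in the image of $\Phi$. From the identity $\Phi(\dot d) = \ddot \chi \ddot d \ddot \chi^*$ one then deduces $\ddot d = \Phi(\dot \chi^* \dot d \dot \chi)$, so all generators of $(C\boxtimes^{\chi}_{\maxtensor} D) \rtimes \CodoubAlg_\chi$ lie in the image of $\Phi$.

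For injectivity, the strategy is to define a morphism $\Psi$ in the opposite direction by setting, on generators,
\[
\Psi(\ddot c) = \dot c, \qquad \Psi(\ddot d) = \dot \chi^* \dot d \dot \chi, \qquad \Psi(\ddot \omega) = \dot \omega.
\]
Once $\Psi$ is shown to be a well-defined \Star{}homomorphism, the compositions $\Psi\circ \Phi$ and $\Phi\circ\Psi$ are easily checked to be the identity on generators, completing the argument. Heuristically, $\Phi$ implements conjugation by $\dot \chi$ to interchange commuting and $\chi$-commuting images of $D$, and $\Psi$ simply reverses this conjugation.

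The main obstacle is producing $\Psi$. The key input is Lemma \ref{lemm:inv_phi}: the representation $\lambda$ of $A\otimes B$ supplies commuting images of $A$ and $B$ on the doubled Hilbert space, and combined with the commuting canonical morphisms $\iota^u_C, \iota^u_D$ into $C\maxotimes D$, it allows one to build a $\chi$-pair of representations of $(C,\gamma)$ and $(D,\delta)$ inside $(C\maxotimes D) \rtimes (\hat A \otimes \hat B)$, yielding by the universal property of Definition \ref{def:max_tens} a factoring morphism out of $C\boxtimes^{\chi}_{\maxtensor} D$. The essential computation is \eqref{eq:beta} from the proof of Lemma \ref{lemm:const_phi}, which expresses the Heisenberg-twisted representation $\beta$ as the conjugate of the naive representation $\eta$ by a representative of $\hat \chi$; this is precisely what is needed to verify that $\dot c$ and $\dot \chi^* \dot d \dot \chi$ satisfy the $\chi$-pair relations in $(C\maxotimes D)\rtimes(\hat A\otimes \hat B)$, and that together with $\dot \omega$ they satisfy the covariance relations matching those defining $(C\boxtimes^{\chi}_{\maxtensor} D) \rtimes \CodoubAlg_\chi$. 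The whole verification is dual to the construction of $\Phi$ in Lemma \ref{lemm:const_phi} and proceeds along the same pattern mutatis mutandis.
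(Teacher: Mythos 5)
Your proposal matches the paper's proof in all essentials: the paper also inverts $\Phi$ by using Lemma \ref{lemma:pairs-vertical-action} to obtain a $\chi$-pair $\bigl((\iota^{\univ}_{C}\otimes\alpha)\gamma,(\iota^{\univ}_{D}\otimes\beta)\delta\bigr)$, hence a morphism out of $C\boxtimes^{\chi}_{\maxtensor}D$ and then a morphism $\Psi$ on the crossed product satisfying exactly your formulas $\Psi(\ddot c)=\dot c$, $\Psi(\ddot d)=\dot\chi^{*}\dot d\dot\chi$, $\Psi(\ddot\omega)=\dot\omega$, with \eqref{eq:beta} as the key computation. The one small correction is the role you assign to Lemma \ref{lemm:inv_phi}: it does not produce the $\chi$-pair (that comes from the Heisenberg pair $(\alpha,\beta)$ via Lemma \ref{lemma:pairs-vertical-action}); rather, it yields a faithful Heisenberg pair $(\kappa,\hat\kappa)$ for $\G\times\G[H]$ and hence an embedding $\Xi$ of $(C\maxotimes D)\rtimes(\hat A\otimes\hat B)$ into the common ambient algebra, which is what lets one recognize $\Psi$ as $\Xi$ composed with your candidate inverse.
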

\begin{proof}
  We construct an inverse to $\Phi$ as follows. The  morphisms $\psi_{C}$ and $\psi_{D}$
 from $C$ and $D$ to $(C\maxotimes D) \otimes \Comp(\Hils[K]\otimes\Hils)$ given by
 \begin{align*}
   \psi_{C} &:=(\iota^{\univ}_{C} \otimes \alpha)\gamma \quad \text{and}\quad
   \psi_{D} := (\iota^{\univ}_{D} \otimes \beta)\delta
 \end{align*}
form a $\chi$\nb-commutative representation  by  Lemma \ref{lemma:pairs-vertical-action} and induce a morphism 
$ \psi=\psi_{C} \chiprod \psi_{D}$ from   $C\boxtimes^{\chi}_{\max} D$  to $(C\maxotimes D) \otimes \Comp(\Hils[K] \otimes \Hils)$. 
This, in turn, yields a morphism $\Psi$ from $(C \boxtimes^{\chi}_{\max} D) \rtimes \CodoubAlg_{\chi}$ to 
\begin{align} \label{eq:target-psi}
 (C\maxotimes D) \otimes  \Comp(\Hils[K]) \otimes  \Comp( \Hils) \otimes \Comp(\overline{\Hils[K]}) \otimes  \Comp(\overline{\Hils}) \otimes \Comp(\Hils[K]) \otimes \Comp(\Hils)
\end{align}
such that for all $c\in C$, $d\in D$,  and $\omega \in \hat  A \otimes \hat  B$,
\begin{align*}
  \Psi(\ddot c) &= (\psi_{C} \otimes \rho)\gamma(c) = (\iota^{\univ}_{C} \otimes  (\alpha \otimes \bar  \alpha \otimes \alpha)\Comult[A]^{(2)})\gamma(c), \\ 
  \Psi(\ddot d) &= (\psi_{D} \otimes \theta)\delta(d) =(\iota^{\univ}_{D} \otimes (\beta \otimes \bar  \beta \otimes \beta)\Delta^{(2)}_{B})\delta(d), \\
  \Psi(\ddot \omega) &= (\hat  \eta \otimes \hat \pi)\sigma(\omega)_{67},
\end{align*}
where~$\Comult^{(2)}= (\Id\otimes\Comult)\Comult$. By the preceding Lemma~\ref{lemm:inv_phi}, we can define representations  $\kappa$ and $\hat \kappa$ of  $A\otimes B$ and $\hat  A \otimes \hat  B$ on $\Hils[K] \otimes \Hils \otimes \overline{\Hils[K]} \otimes \overline{\Hils} \otimes \Hils[K] \otimes \Hils$ by the formulas
\begin{align*}
  \kappa(a\otimes b) &:= (\alpha \otimes \bar  \alpha \otimes \pi)\Comult[A]^{(2)}(a)_{12346}(\beta\otimes \bar \beta \otimes\eta)\Comult[B]^{(2)}(b)_{12345},  \\
  \hat \kappa(\hat  a\otimes \hat  b) &:= (\hat  \eta \otimes \hat \pi)( \hat{b} \otimes \hat{a})_{56},
\end{align*}
and $(\kappa,\hat \kappa)$ forms a faithful Heisenberg pair for $\G\times \G[H]$. We therefore obtain an embedding $\Xi$ of $(C \maxotimes D)  \rtimes (\hat  A \otimes \hat  B)$ into the $\Cst$-algebra \eqref{eq:target-psi}
such that
\begin{align*}
  \Xi(\dot c) &=  (\iota^{\univ}_{C} \otimes  (\alpha \otimes \bar  \alpha \otimes \pi)\Comult[A]^{(2)})\gamma(c)_{123457} = \Psi(\ddot c), \\
  \Xi(\dot d) &= (\iota^{\univ}_{D} \otimes (\beta \otimes \bar \beta \otimes \eta)\Delta^{(2)}_{B})\delta(d)_{123456}, \\
  \Xi(\dot \omega) &= \Psi(\ddot \omega) =  (\hat  \eta \otimes \hat \pi)\sigma(\omega)_{67}.
\end{align*}
Now, we conclude from \eqref{eq:beta} that
$\Psi(\ddot d) = \Xi(\dot \chi^{*}) \Xi(\dot d)\Xi(\dot \chi)$. Evidently, the composition $\Xi^{-1} \circ \Psi$   is inverse to $\Phi$.
\end{proof}
The arguments above can be adapted to the minimal twisted tensor product:
\begin{theorem} \label{theorem:crossed-reduced}
  There exists an isomorphism  \corr{$\Phi_{r}$}{$\Phi_{\red}$} that makes the following diagram  commute,
where the vertical maps are the canonical quotient maps:  \begin{align*}
    \xymatrix@R=15pt{
(C\maxotimes D) \rtimes (\hat  A \otimes \hat  B) \ar[r]^{\Phi} \ar[d] & (C\boxtimes_{\maxtensor}^{\chi} D) \rtimes \CodoubAlg_{\chi}  \ar[d] \\
(C\otimes D) \rtimes (\hat  A \otimes \hat  B) \ar[r]^{\corr{$\Phi_r$}{\Phi_{\red}}} & (C\boxtimes_{\mintensor}^{\chi} D) \rtimes \CodoubAlg_{\chi}.
    }
  \end{align*}
  \end{theorem}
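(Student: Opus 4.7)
The approach is to adapt the construction of $\Phi$ in Lemma~\ref{lemm:const_phi} and of its inverse in Theorem~\ref{theorem:crossed}, substituting the canonical morphisms $j_C, j_D, \iota_C, \iota_D$ for their universal counterparts throughout. Denote the left and right vertical quotient maps by $q_A$ and $q_B$.

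To define $\Phi_{r}$, note that by Proposition~\ref{prop:qmap}, the pair $(j_C, j_D)$ is a $\chi$\nb-pair, so $(j_C \otimes \bar\alpha)\gamma$ and $(j_D \otimes \bar\beta)\delta$ take commuting values in $\Mult((C\boxtimes^{\chi}_{\mintensor} D) \otimes \Comp(\conj{\Hils[K]}\otimes\conj{\Hils}))$. Because $C\boxtimes^{\chi}_{\mintensor} D$ is realised concretely inside $\Mult(C\otimes D \otimes \Comp(\Hils))$, the induced morphism from $C \maxotimes D$ in fact factors through the minimal tensor product $C \otimes D$. Combining this with the Heisenberg pair representations of $\hat A \otimes \hat B$ used in Lemma~\ref{lemm:const_phi}, we obtain a morphism $\Phi_{r} \colon (C\otimes D) \rtimes (\hat A \otimes \hat B) \to (C\boxtimes^{\chi}_{\mintensor} D) \rtimes \DrinAlg_{\chi}$ acting on the generators $\dot c, \dot d, \dot\omega$ by the same formulas as $\Phi$. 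Commutativity of the diagram is then immediate on these generators, as both $\Phi_{r} \circ q_A$ and $q_B \circ \Phi$ send $\dot c, \dot d, \dot\omega$ to $\ddot c, \ddot\chi\ddot d\ddot\chi^{*}, \ddot\omega$ interpreted in the minimal twisted tensor product.

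To prove that $\Phi_{r}$ is an isomorphism, I would construct its inverse in parallel to the inverse construction in the proof of Theorem~\ref{theorem:crossed}: set $\psi_C := (\iota_C \otimes \alpha)\gamma$ and $\psi_D := (\iota_D \otimes \beta)\delta$, where $\iota_C, \iota_D$ are the canonical morphisms into $C \otimes D$. By Lemma~\ref{lemma:pairs-vertical-action}, $(\psi_C, \psi_D)$ is a $\chi$\nb-pair, so $\psi_C \chiprod \psi_D$ combined with the representations of $\DrinAlg_{\chi}$ gives a morphism $\Psi_{r}$. Lemma~\ref{lemm:inv_phi} applies verbatim to produce $\Xi_{r}$, and the identity $\Psi_{r}(\ddot d) = \Xi_{r}(\dot\chi^{*})\Xi_{r}(\dot d)\Xi_{r}(\dot\chi)$ from \eqref{eq:beta} then shows that $\Xi_{r}^{-1}\circ \Psi_{r}$ is inverse to $\Phi_{r}$. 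The main technical point requiring care is the factorisation of the commuting pair through the minimal rather than maximal tensor product; once this is established, the remainder of the proof is a direct parallel of the preceding argument.
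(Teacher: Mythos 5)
Your overall strategy---rerunning the maximal argument with $j_{C},j_{D},\iota_{C},\iota_{D}$ in place of their universal counterparts---is also the paper's, whose proof consists exactly of the minimal analogues $\Psi_{\mintensor}$ and $\Xi_{\mintensor}$ of the two embeddings from the proof of Theorem \ref{theorem:crossed} together with a comparison of generators. However, your construction of $\Phi_{r}$ itself has a genuine gap. A commuting pair of representations only induces a morphism out of the \emph{maximal} tensor product; to imitate Lemma \ref{lemm:const_phi} you must show that $c\otimes d\mapsto (j_{C}\otimes\bar\alpha)\gamma(c)\cdot(j_{D}\otimes\bar\beta)\delta(d)$ is bounded for the \emph{minimal} norm, and the reason you give---that $C\boxtimes^{\chi}_{\mintensor}D$ is realised concretely inside $\Mult(C\otimes D\otimes\Comp(\Hils[K]\otimes\Hils))$---does not establish this: every $\Cst$-algebra, in particular $C\maxotimes D$, is realised concretely inside the multiplier algebra of some (even nuclear) $\Cst$-algebra, so concreteness of the target gives no control over which tensor norm the product map dominates. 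The same issue recurs when you write $\psi_{C}\chiprod\psi_{D}$ for a morphism out of $C\boxtimes^{\chi}_{\mintensor}D$: the operation $\chiprod$ encodes the universal property of the \emph{maximal} twisted product, and a general $\chi$-pair does not descend to the minimal one; here it does only because $(\psi_{C},\psi_{D})=((\iota_{C}\otimes\alpha)\gamma,(\iota_{D}\otimes\beta)\delta)$ is literally the defining pair $(j_{C},j_{D})$ of $C\boxtimes^{\chi}_{\mintensor}D$, so the ``induced morphism'' is the tautological inclusion.

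The gap in the forward direction can be closed, but by Lemma \ref{lemm:inv_phi} rather than by concreteness: coassociativity gives
\[
(j_{C}\otimes\bar\alpha)\gamma(c)\cdot(j_{D}\otimes\bar\beta)\delta(d)
=(\Id_{C}\otimes\Id_{D}\otimes\lambda)\bigl(\gamma(c)_{13}\,\delta(d)_{24}\bigr),
\]
where $\lambda(a\otimes b)=(\alpha\otimes\bar\alpha)\Delta_{A}(a)(\beta\otimes\bar\beta)\Delta_{B}(b)$; since $c\otimes d\mapsto\gamma(c)_{13}\delta(d)_{24}$ is $\sigma_{23}\circ(\gamma\otimes\delta)$, a morphism on the minimal tensor product, and $\lambda$ is a representation of $A\otimes B$ by Lemma \ref{lemm:inv_phi}, the composite is min-continuous. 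Alternatively---and this is what the paper does---one need not construct $\Phi_{r}$ directly at all: build only the two embeddings $\Psi_{\mintensor}$ and $\Xi_{\mintensor}$ of the two crossed products into $\Mult((C\otimes D)\otimes\Comp(\Hils[K]\otimes\Hils\otimes\conj{\Hils[K]}\otimes\conj{\Hils}\otimes\Hils[K]\otimes\Hils))$, which use only the tautological faithful representations of $C\otimes D$ and $C\boxtimes^{\chi}_{\mintensor}D$ and the independence of reduced crossed products from the choice of faithful Heisenberg pair, and read off both $\Phi_{r}$ and the commutativity of the square from the identification of generators up to conjugation by the image of $\chi$. Your verification of the commuting square on generators and your inverse construction are otherwise in order.
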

  \begin{proof}
    A straightforward modification of the proof above yields  embeddings $\Psi_{\mintensor}$ and $\Xi_{\mintensor}$ from
    $(C\boxtimes^{\chi}_{\mintensor} D) \rtimes \CodoubAlg_{\chi}$ and $ (C\otimes D) \rtimes (\hat  A \otimes \hat  B)$, respectively, into  
    \begin{align*}
  \Mult((C \otimes D) \otimes\Comp(\Hils[K] \otimes \Hils \otimes \conj{\Hils[K]} \otimes\conj{\Hils} \otimes \Hils[K] \otimes \Hils))
    \end{align*}
    such that, denoting by $\dot c,\dot d,\dot \omega$ and $\ddot c,\ddot d, \ddot \omega$ the canonical images of $c\in C$, $d\in D$ and $\omega \in \hat  A\otimes \hat  B$ in  $(C\boxtimes_{\mintensor}^{\chi} D) \rtimes \CodoubAlg_{\chi}$ and $ (C\otimes D) \rtimes (\hat  A \otimes \hat  B)$, respectively, 
    \begin{align*}
      \Psi_{\mintensor}(\ddot c) &= \Xi_{\mintensor}(\dot c), & \Psi_{\mintensor}(\ddot d)&= \Xi_{\mintensor}(\dot \chi^{*} \dot d\dot \chi), & \Psi_{\mintensor}(\ddot \omega) &= \Xi_{\mintensor}(\dot \omega).  \qedhere
    \end{align*}
  \end{proof}
As before, denote by
 $\dot \chi$, $\dot C=\{\dot c: c\in C\}$ and $\dot D=\{\dot d :d\in D\}$  the natural images of $\chi$, $C$ and $D$, respectively, in the crossed product  $(C\maxotimes D) \rtimes (\hat  A \otimes \hat  B)$.
\begin{corollary} \label{corollary:crossed}
  Suppose that  the coaction of $\GenDrinfdouble{\G}{\G[H]}{\chi}$ on $C\boxtimes^{\chi}_{\maxtensor} D$ is injective. Then:
  \begin{enumerate}
  \item  $\Phi$ maps
  $C\boxtimes_{\max}^{\chi} D$ isomorphically to $[\dot \chi \dot C\dot \chi^{*} \cdot  \dot D] \subseteq (C\maxotimes D) \rtimes (\hat  A \otimes \hat  B)$.
\item   If $C$ is nuclear, then the canonical map $C\boxtimes_{\maxtensor}^{\chi} D \to C\boxtimes_{\mintensor}^{\chi} D$ is an isomorphism.
  \end{enumerate}
\end{corollary}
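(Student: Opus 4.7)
The plan is to combine the isomorphism $\Phi$ from Theorem~\ref{theorem:crossed} with the explicit formulas of Lemma~\ref{lemm:const_phi}, and to exploit the commutative diagram of Theorem~\ref{theorem:crossed-reduced} together with nuclearity of $C$.

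For (1), I would first compute $\Phi(\dot\chi \dot c \dot\chi^{*} \dot d)$ for $c \in C$ and $d\in D$ using the formulas $\Phi(\dot{c}) = \ddot{c}$, $\Phi(\dot{\omega}) = \ddot{\omega}$ and $\Phi(\dot{d}) = \ddot{\chi}\ddot{d}\ddot{\chi}^{*}$: the product $\ddot\chi^{*}\ddot\chi$ in the middle collapses and gives $\ddot\chi (\ddot c \ddot d)\ddot\chi^{*}$. Hence $\Phi$ restricts to a \Star{}isomorphism from $[\dot\chi\dot C\dot\chi^{*}\cdot\dot D]$ onto $\ddot\chi[\ddot C\cdot\ddot D]\ddot\chi^{*}$. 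The subalgebra $[\ddot C \cdot \ddot D]$ is the image of the canonical morphism $C\boxtimes^{\chi}_{\max}D \to (C\boxtimes^{\chi}_{\max}D)\rtimes\CodoubAlg_{\chi}$ obtained by combining the coaction of Proposition~\ref{prop:drinfeld-action} with a faithful Heisenberg pair for $\CodoubAlg_{\chi}$, so under the injectivity assumption this image is an isomorphic copy of $C\boxtimes^{\chi}_{\max} D$. Its unitary conjugate $\ddot\chi[\ddot C\cdot\ddot D]\ddot\chi^{*}$ is then also isomorphic to $C\boxtimes^{\chi}_{\max}D$, which yields (1).

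For (2), I would invoke the commutative diagram of Theorem~\ref{theorem:crossed-reduced}. Nuclearity of $C$ gives $C\maxotimes D = C\otimes D$, so the left vertical quotient is an isomorphism. Together with the horizontal isomorphisms $\Phi$ and $\Phi_{r}$, this forces the right vertical map
\[
q_{2}\colon (C\boxtimes^{\chi}_{\max}D)\rtimes\CodoubAlg_{\chi}\to (C\boxtimes^{\chi}_{\min}D)\rtimes\CodoubAlg_{\chi}
\]
to be an isomorphism. This map is the functorial extension of the equivariant quotient $q\colon C\boxtimes^{\chi}_{\max}D \to C\boxtimes^{\chi}_{\min}D$ of Proposition~\ref{prop:qmap} and sends $\ddot{x}$ to the corresponding element built from $q(x)$. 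If $q(x)=0$ then $q_{2}(\ddot x)=0$, so $\ddot x=0$; injectivity of the coaction on $C\boxtimes^{\chi}_{\max}D$ ensures the canonical map $x\mapsto \ddot x$ is injective, whence $x=0$. Thus $q$ is injective and, being surjective by definition, an isomorphism.

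The main technical obstacle will be carefully justifying that injectivity of the $\CodoubAlg_{\chi}$\nb-coaction on $C\boxtimes^{\chi}_{\max}D$ implies injectivity of the embedding $x\mapsto\ddot x$ into the (reduced) crossed product. This is the quantum-group analogue of the classical fact that an injective coaction admits a faithful realization on the crossed-product Hilbert space via a faithful Heisenberg pair; the verification should proceed by unwinding the definition $\ddot x = (j^{\univ}_{C}\otimes\rho)\gamma(c)\cdot(j^{\univ}_{D}\otimes\theta)\delta(d)$ and using faithfulness of $(\rho,\theta)$ on $\CodoubAlg_{\chi}$ together with injectivity of the coaction $\gamma_{C\boxtimes D}$ from Proposition~\ref{prop:drinfeld-action}. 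Once this point is settled, both parts reduce to straightforward diagram chases using the formulas of Lemma~\ref{lemm:const_phi}.
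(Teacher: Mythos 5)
Your proposal is correct and follows essentially the same route as the paper, which simply observes that (1) is immediate from Theorem~\ref{theorem:crossed} and that, after identifying the two untwisted crossed products via nuclearity, $\Phi$ and $\Phi_{r}$ carry $C\boxtimes^{\chi}_{\maxtensor}D$ and $C\boxtimes^{\chi}_{\mintensor}D$ onto the same subalgebra $[\dot\chi\dot C\dot\chi^{*}\cdot\dot D]$. The technical point you flag --- that injectivity of the $\CodoubAlg_{\chi}$-coaction makes $x\mapsto\ddot x$ faithful, since $\ddot x$ is obtained by applying the coaction and then the faithful concrete realization of $\DrinAlg_{\chi}$ via $(\rho,\theta)$ to the second leg --- is exactly where the hypothesis enters, and your sketch of it is sound; the paper leaves this implicit.
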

\begin{proof}
Assertion  (1)  follows immediately from Theorem \ref{theorem:crossed}. Suppose that $C$ is nuclear. Then 
we can identify $(C\maxotimes D) \rtimes (\hat  A \otimes \hat  B) \cong (C\otimes D) \rtimes (\hat  A \otimes \hat  B)$ using the quotient map, and
 $\Phi$ and \corr{$\Phi_{r}$}{$\Phi_{\red}$} map   $C\boxtimes_{\max}^{\chi} D$ and $C \boxtimes_{\min}^{\chi} D$, respectively, isomorphically to the same $\Cst$-subalgebra $[\dot \chi \dot C\dot \chi^{*} \cdot  \dot D]$.
\end{proof}
\begin{remark} \label{remark:functoriality}
If the coaction of $\GenDrinfdouble{\G}{\G[H]}{\chi}$ on $C\boxtimes^{\bichar}_{\maxtensor} D$ is injective, one can use the isomorphism $\varphi$ of Corollary \ref{corollary:crossed} (1) and functoriality  of the maximal tensor product and the reduced crossed product to construct a twisted maximal tensor product $f\boxtimes g$ for
equivariant \Star{}homomorphisms/completely positive maps/completely positive contractions $f$ and $g$ on $C$ and $D$, respectively.
\end{remark}

Let us end this section with an application of Corollary \ref{corollary:crossed} to the case  where  $\G$ and $\G[H]$ are duals of locally compact abelian groups $G$ and $H$, respectively. In that case, the minimal twisted tensor product $C\boxtimes^{\chi}_{\min} D$ can be regarded as a Rieffel deformation of the minimal tensor product $C\otimes D$ as  defined in  \cite{Kasprzak:Rieffel_deformation}.  This result carries over to the universal setting easily as follows.

Let $G$ and $H$ be locally compact abelian groups with Pontrjagin duals $\hat  G$ and $\hat  H$, respectively. Let $C$ be a $\hat  G$\nb-$\Cst$\nb-algebra, $D$ an $\hat  H$\nb-$\Cst$\nb-algebra and  $\chi \in\Contb(G \times H,\T)$  \corr{be a}{a}  bicharacter. Then the maximal tensor product $C \maxotimes D$  carries the product action of $\Gamma :=  \hat  G\times  \hat  H$,  the formula
\begin{align} \label{eq:psi}
  \Psi((g,h),(g',h')) := \chi(g,h')
\end{align}
defines a bicharacter $\Psi$ on $\Gamma$, which we can regard as a 2-cocycle, and as in \cite{Kasprzak:Rieffel_deformation}, we can form a Rieffel deformation of  $C\maxotimes D$ with respect to $\Psi$ in the form of a $\Cst$\nb-subalgebra 
\begin{align*}
  (C\maxotimes D)^{\Psi} \subseteq \Mult((C \maxotimes D) \rtimes \Gamma).  
\end{align*}
The following explicit description of this Rieffel deformation was obtained already in \cite{Meyer-Roy-Woronowicz:Twisted_tensor}*{Theorem 6.2}, but we include the proof for convenience of the reader.   Note that the bicharacter $\Psi$ above is denoted by $\Psi'$ in \cite{Meyer-Roy-Woronowicz:Twisted_tensor}*{Theorem 6.2}, but the difference is inessential.
For elements of $\Mult((C \maxotimes D) \rtimes \Gamma)$,  we use the notation \eqref{eq:elements-crossed-untwisted} as before.
\begin{lemma} \label{lemma:rieffel}
  $(C\maxotimes D)^{\Psi}=    [\dot \chi \dot C\chi^{*}  \dot D]$ as $\Cst$\nb-subalgebras of  $\Mult((C \maxotimes D) \rtimes \Gamma)$.
\end{lemma}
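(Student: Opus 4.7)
The strategy is to mimic the proof of \cite{Meyer-Roy-Woronowicz:Twisted_tensor}*{Theorem 6.2}, which the excerpt explicitly cites and which establishes the analogous equality for the minimal tensor product. The key observation is that Kasprzak's deformation $(C\maxotimes D)^{\Psi}$ is defined as a Landstad subalgebra of $\Mult((C\maxotimes D)\rtimes\Gamma)$ whose specification refers only to the $\Gamma$-action on $C\maxotimes D$ and to the cocycle $\Psi$; the relevant computations inside the crossed product depend only on the commutation relations among the canonical generators $\dot c$, $\dot d$, $\dot\omega$ and on the fact that $\dot\chi$ is a unitary that implements, by conjugation on the appropriate leg, the bicharacter $\chi$. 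None of these facts is sensitive to whether the tensor product of $C$ and $D$ was formed maximally or minimally.

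First I would unpack Kasprzak's description. Since $\Gamma=\hat G\times\hat H$ is abelian, $\Mult((C\maxotimes D)\rtimes\Gamma)$ carries canonical embeddings of $C\maxotimes D$ and of $\Contvin(\Gamma)\cong \Cst(\hat\Gamma)$ producing the elements $\dot c$, $\dot d$ and $\dot\omega$. The Rieffel deformation $(C\maxotimes D)^{\Psi}$ is the unique $\Cst$-subalgebra of $\Mult((C\maxotimes D)\rtimes\Gamma)$ that is invariant under a $\Psi$-twisted dual action of $\Gamma$ and satisfies the Landstad density conditions. Under Pontrjagin duality the cocycle $\Psi((g,h),(g',h'))=\chi(g,h')$ translates precisely to conjugation by $\dot\chi$ on the first leg, so the ``twisted embedding'' of $C$ is realized inside the crossed product by $c\mapsto \dot\chi\dot c\dot\chi^{*}$, while that of $D$ is unaltered because $\Psi$ is trivial on the $\hat H$-leg of $\Gamma$ paired with itself.

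Next I would verify that $[\dot\chi\dot C\dot\chi^{*}\cdot\dot D]$ is a $\Cst$-subalgebra of $\Mult((C\maxotimes D)\rtimes\Gamma)$. For this one uses: (i) $\dot\chi$ is a unitary, (ii) $\dot C$ and $\dot D$ commute in the ambient algebra because the images of $C$ and $D$ commute in $C\maxotimes D$, and (iii) $\dot\chi$ commutes with $\dot D$ because $\chi$ only involves the $G$-variable of $\hat\Gamma$ and $\dot D$ is built from the $\hat H$-coaction $\delta$. Items (i)--(iii) are the same three identities that power the corresponding step in the minimal case and close $[\dot\chi\dot C\dot\chi^{*}\cdot\dot D]$ under products and adjoints.

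Finally I would match the two subalgebras. By (ii)--(iii) above, conjugation by $\dot\chi$ on the first leg sends $[\dot C\cdot \dot D]$ to $[\dot\chi\dot C\dot\chi^{*}\cdot \dot D]$, and the bicharacter identity for $\chi$ together with the Podle\'s conditions for $\gamma$ and $\delta$ yield exactly the Landstad density and invariance conditions that characterize $(C\maxotimes D)^{\Psi}$. Since this verification only manipulates $\dot c,\dot d,\dot\omega,\dot\chi$ using relations that transport literally from the minimal to the maximal setting, the argument of \cite{Meyer-Roy-Woronowicz:Twisted_tensor}*{Theorem 6.2} carries over without change. The main obstacle, and the only place where care is required, is keeping track of Kasprzak's conventions when translating the twisted dual action into the form of conjugation by $\dot\chi$; once this is done, the identification is an immediate consequence of the commutation relations listed above.
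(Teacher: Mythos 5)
Your overall strategy (reduce to Kasprzak's description of the deformation and identify the deformed copies of $C$ and $D$ inside the crossed product) is the right one, but two of the steps you rely on do not hold as stated. First, your item (iii) --- that $\dot\chi$ commutes with $\dot D$ because ``$\chi$ only involves the $G$\nb-variable'' --- is false: $\chi\in\Contb(G\times H,\T)$ is a genuine bicharacter depending on both variables, and its $H$\nb-dependence pairs non-trivially with the $\hat H$\nb-action on $D$, so that $\dot\chi\dot d\dot\chi^{*}$ differs from $\dot d$ by a non-trivial unitary in $\Mult(\Cst(\hat G))$. What is true, and what the argument actually needs, is that the individual Kasprzak unitaries $U_{g,h}(g',h')=\chi(g',h)$ are, for fixed $(g,h)$, characters of $G$ alone, i.e.\ elements of $\hat G\times\{1\}\subseteq\Gamma$, and these act trivially on $D$; this gives $(1\maxotimes D)^{\Psi}=\dot D$ but not the commutation you invoke to close $[\dot\chi\dot C\dot\chi^{*}\cdot\dot D]$ under products or to ``conjugate $[\dot C\cdot\dot D]$ into $[\dot\chi\dot C\dot\chi^{*}\cdot\dot D]$''.

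Second, the identification of the deformed $C$\nb-leg with $\dot\chi\dot C\dot\chi^{*}$ is asserted (``translates precisely under Pontrjagin duality'') rather than proved, and your closing claim that the Podle\'s conditions ``yield exactly the Landstad density and invariance conditions'' is exactly the verification that has to be done; it is not automatic. The mechanism that actually produces the conjugation by $\dot\chi$ is cohomological: $\Psi$ differs from the cocycle $\Psi'((g,h),(g',h'))=\chi^{-1}(g',h)$ by the coboundary $\partial\chi$, and Kasprzak's lemma on cohomologous cocycles gives $(C\maxotimes 1)^{\Psi}=\chi\,(C\maxotimes 1)^{\Psi'}\chi^{*}$, while the $\Psi'$\nb-unitaries lie in $1\otimes\Contb(H,\T)$ and hence commute with $\dot C$, so $(C\maxotimes 1)^{\Psi'}=\dot C$. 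Combined with Kasprzak's factorization $(C\maxotimes D)^{\Psi}=[(C\maxotimes 1)^{\Psi}(1\maxotimes D)^{\Psi}]$ --- which you never invoke, and which replaces the direct Landstad verification --- this yields the claimed equality. Without the cohomology step there is no reason the conjugating unitary should be $\dot\chi$ at all, and without the factorization lemma the asserted matching of the two subalgebras is unsupported.
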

\begin{proof} 
We follow \cite{Meyer-Roy-Woronowicz:Twisted_tensor}*{proof of Theorem 6.3} and only switch $\Psi$ and $\Psi'$.

The Rieffel deformation  $(C\maxotimes D)^{\Psi}$ is defined as a $\Cst$\nb-subalgebra
of the crossed product $(C\maxotimes D) \rtimes \Gamma$ by means of the unitaries
\begin{align*}
  U_{g,h} &\in \Contb(G\times H,\T), \quad U_{g,h}(g',h')  =\Psi((g',h'),(g,h)) = \chi(g',h),
\end{align*}
 see \cite{Kasprzak:Rieffel_deformation}. Since $C \maxotimes D = [(C\maxotimes 1)(1\maxotimes D)]$, \cite{Kasprzak:Rieffel_coaction}*{Lemma 3.4} implies that
 \begin{align}
   (C\maxotimes D)^{\Psi} = [(C\maxotimes 1)^{\Psi} (1\maxotimes D)^{\Psi}]. \label{eq:rieffel-aux-1}
 \end{align}
Since the unitaries  $U_{g,h}$ lie  in the subalgebra
$\Contb(G,\T)\otimes 1$ and $\hat {G}$ acts trivially on $D$, the images of $U_{g,h}$ in $(C \maxotimes D) \rtimes \Gamma$ commute with  $\dot D$. Therefore,  
\begin{align} \label{eq:rieffel-aux-2}
 (1\maxotimes D)^{\Psi}=  \dot D \subseteq \Mult((C \maxotimes D) \rtimes \Gamma).
\end{align}

The 2-cocycle $\Psi$ is cohomologous to the 2-cocycle $\Psi'$ defined by
\begin{align*}
  \Psi'((g,h),(g',h')):=\chi^{-1}(g',h).
\end{align*}
Indeed,
\begin{align*}
  (\partial \chi)((g,h),(g',h')) &:= \frac{\chi(gg',hh')}{\chi(g,h)\chi(g',h')} = \chi(g,h')\chi(g',h)
\end{align*}
and hence $\Psi= (\partial \chi)\Psi'$. By \cite{Kasprzak:Rieffel_deformation}*{Lemmas 3.4 and 3.5}, we get
\begin{align} \label{eq:rieffel-aux-3}
  (C \maxotimes 1)^{\Psi} = \chi   (C \maxotimes 1)^{\Psi'} \chi^{*}
\end{align}
in $\Mult((C\maxotimes D)\rtimes \Gamma)$. Now, a similar argument as above shows that $(C\maxotimes 1)^{\Psi'} = \dot C \subseteq \Mult((C\maxotimes D) \rtimes \Gamma)$.
Combining formulas \eqref{eq:rieffel-aux-1}--\eqref{eq:rieffel-aux-3}, the assertion follows.
\end{proof}

\begin{theorem}
  Let $C$ be a $\hat G$-$\Cst$-algebra and $D$ a $\hat H$-$\Cst$-algebra. Then
there exists an isomorphism $C\boxtimes_{\max}^{\chi} D \to (C\maxotimes D)^{\Psi}$ that intertwines the canonical embeddings of $C$ and $D$.
\end{theorem}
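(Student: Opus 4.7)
My plan is to combine Corollary~\ref{corollary:crossed}(1) and Lemma~\ref{lemma:rieffel}: the former identifies $C\boxtimes^{\chi}_{\max} D$ with the $\Cst$\nb-subalgebra $[\dot\chi\dot C\dot\chi^{*}\cdot\dot D]$ of the crossed product $(C\maxotimes D)\rtimes(\hat A\otimes\hat B)$, and the latter identifies this same subalgebra with the Rieffel deformation $(C\maxotimes D)^{\Psi}$. Under the Pontryagin-duality identification $\hat A\otimes\hat B \cong \Contvin(\Gamma)$ with $\Gamma=\hat G\times\hat H$, the two crossed products coincide, so composing yields the desired isomorphism $C\boxtimes^{\chi}_{\max} D \cong (C\maxotimes D)^{\Psi}$. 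Intertwining of the canonical embeddings of~$C$ and~$D$ can then be read off from the explicit formulas in Lemma~\ref{lemm:const_phi}, which describe~$\Phi$ on generators $\dot c$, $\dot d$, $\dot\omega$, and match the canonical embeddings of $C$ and $D$ into $(C\maxotimes D)^{\Psi}$ visible from Lemma~\ref{lemma:rieffel}.

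The nontrivial content is to verify the hypothesis of Corollary~\ref{corollary:crossed}(1), namely that the coaction of the generalized Drinfeld double $\GenDrinfdouble{\hat G}{\hat H}{\chi}$ on $C\boxtimes^{\chi}_{\max} D$ constructed in Proposition~\ref{prop:drinfeld-action} is injective. I expect this to be the main obstacle. In the present abelian setting, $\GenDrinfdouble{\hat G}{\hat H}{\chi}$ is itself classical (a $\chi$-twist of the abelian group $G\times H$), hence amenable, so the distinction between universal and reduced completions collapses. Injectivity should then follow either by constructing a left inverse to the coaction out of the counit of $\GenDrinfdouble{\hat G}{\hat H}{\chi}$ (and of $\G$, $\G[H]$ via the morphisms $\rho$, $\theta$), or by reducing via Proposition~\ref{prop:exactness} and bifunctoriality of $\boxtimes^{\chi}_{\max}$ to the universal example $C=\Contvin(G)$, $D=\Contvin(H)$ equipped with their translation coactions, where both sides can be computed directly and the Drinfeld-double coaction is manifestly injective.

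Once injectivity is secured, no further work is required: the isomorphism follows formally from the chain of identifications above. As a byproduct, the same argument (applying Theorem~\ref{theorem:crossed-reduced} in place of Theorem~\ref{theorem:crossed}) should recover \cite{Meyer-Roy-Woronowicz:Twisted_tensor}*{Theorem 6.2} and confirm that the quotient map $C\boxtimes^{\chi}_{\max} D \to C\boxtimes^{\chi}_{\min} D$ of Proposition~\ref{prop:qmap} is an isomorphism in the abelian case, consistent with amenability.
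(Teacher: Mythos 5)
Your proposal is correct and follows essentially the same route as the paper: the proof there is literally ``combine Lemma \ref{lemma:rieffel} with Corollary \ref{corollary:crossed} (1)'', with the injectivity hypothesis disposed of in one line because the coaction of $\DrinAlg_{\chi}$ corresponds to an ordinary action of the locally compact abelian group $G\times H$, so evaluation at the group unit (the counit) is a left inverse. Your first suggested justification of injectivity is exactly this; the appeals to amenability and to a reduction to the case $C=\Contvin(G)$, $D=\Contvin(H)$ are unnecessary (and the latter would not by itself yield injectivity for general $C$ and $D$).
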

\begin{proof}
Combine the preceding result and Corollary \ref{corollary:crossed} (1). Note that here, the coaction of $\DrinAlg_{\chi}$
is injective because it just corresponds to an action of $G\times H$.
\end{proof}

\section{Passage to coactions  of universal quantum groups}
\label{sec:universal}

The results that we would like to present next involve the push-forward of coactions along morphisms of $\Cst$-quantum groups. Such a push-forward, however, can only be defined under  additional assumptions on the coaction, like injectivity, see \cite{Meyer-Roy-Woronowicz:Homomorphisms} and the Appendix,  which we  are unable to verify in the cases of interest to us. 

We therefore switch to coactions of universal quantum groups, which subsume injective, continuous coactions of $\Cst$-quantum groups and  where the push-forward is straightforward.

 Indeed, 
let $\Qgrp{G}{A}$ be a $\Cst$-quantum group and let $(C,\gamma)$ be a (continuous) coaction of the universal $\Cst$-bialgebra $(A^{\univ},\Delta^{\univ}_{A})$. If $(D,\Delta_{D})$ is another $\Cst$-bialgebra and $f\in \Mor(A^{\univ},D)$ is a morphism of $\Cst$-bialgebras, then
\begin{align*}
  f_{*}\gamma:=(\Id_{C} \otimes f)\gamma\in \Mor( C, C\otimes 
    D)
\end{align*}
is a  (continuous) coaction again. 
In the case where $(D,\Delta_{D})=(A,\Delta_{A})$ and $f=\Lambda_{A}$, we write
\begin{align*}
  \gamma^{\red}:=(\Lambda_{A})_{*}\gamma = (\Id_{C} \otimes \Lambda_{A})\gamma \in \Mor(C,C\otimes A),
\end{align*}
and then the assignment  $(C,\gamma) \mapsto (C,\gamma^{\red})$ identifies the  \emph{normal} and continuous coactions of $(A^{\univ},\Delta^{\univ}_{A})$ with the injective and continuous coactions of $(A,\Delta_{A})$.

% In the case where  $\Qgrp{H}{B}$ is a $\Cst$-quantum group and $(D,\Delta_{D}) = (B^{\univ},\Delta^{\univ}_{B})$, the morphism $f$ corresponds to a homomorphism from $\G$ to $\G[H]$ and we obtain a functor
%   \begin{align*}
%   f_{*} \colon \Cstcat(\UG) \to \Cstcat(\hat {\G[H]}^{\univ}), \ (C,\gamma) \mapsto (C,f_{*}\gamma).
%   \end{align*}

The construction of the maximal twisted tensor product lifts to coactions of universal $\Cst$-quantum groups as follows.

Suppose that $\Qgrp{G}{A}$ and $\Qgrp{H}{B}$ are $\Cst$-quantum groups with a morphism of $\Cst$-bialgebras $f\in \Mor(A^{\univ},\hat B^{\univ})$, 
 and denote by $\chi=W^{f} \in \U(\hat A \otimes \hat B)$ the corresponding bicharacter.
 Let $(C,\gamma)$ be a $\UG$-$\Cst$-algebra and $(D,\delta)$  a $\UG[H]$-$\Cst$-algebra.  
 \begin{lemma} \label{lemma:f-pairs}
   Let $E$ be a $\Cst$-algebra with morphisms $\varphi \in \Mor(C,E)$  and $\psi \in \Mor(D,E)$, and suppose that $(\bar\alpha,\bar\beta)$ is an anti-Heisenberg pair for $f$ on some Hilbert space $\Hils$. Then the following conditions are equivalent:
   \begin{enumerate}
   \item $(\phi \otimes \bar\alpha)\gamma(c)$ and $(\psi \otimes \bar\beta)\delta(d)$ commute for all $c\in C$, $d\in D$.
   \item $(\varphi,\psi)$ is a $\chi$-commutative representation of $(C,\gamma^{\red})$ and $(D,\delta^{\red})$.
   \end{enumerate}
 \end{lemma}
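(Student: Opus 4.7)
The plan is to establish (1) first and then derive (2) by a lifting argument.

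For (1), I would interpret the commutativity in $\Mult(C \otimes D \otimes \Comp(\Hils))$ via the natural embeddings on legs $(1,3)$ and $(2,3)$, respectively. The central algebraic input is the anti-Heisenberg relation \eqref{eq:pairs-universal} for $(\bar\alpha,\bar\beta)$ with $g = \tau$, which (using $\unibich^{\tau} = 1$) becomes
\[
\unibich^{f}_{12}\, \unibich^{A}_{1\bar\alpha}\, \unibich^{B}_{2\bar\beta} = \unibich^{B}_{2\bar\beta}\, \unibich^{A}_{1\bar\alpha}
\]
in $\Mult(\hat A^{\univ}\otimes\hat B^{\univ}\otimes\Comp(\Hils))$. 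My plan is to transport this relation through $\gamma$ and $\delta$ by combining the Podle\'s condition (which holds by continuity of the universal coactions) with the universal property expressing $\gamma$ and $\delta$ in terms of $\unibich^{A}$ and $\unibich^{B}$. In particular, writing $(\Id_{C}\otimes\bar\alpha)\gamma(c)$ as a conjugation by $\unibich^{A}_{1\bar\alpha}$ (and similarly for $\delta$) should reduce the desired commutation to the anti-Heisenberg identity, with the $\unibich^{f}$ twist cancelling when the two elements are placed on disjoint legs corresponding to the independent algebras $C$ and $D$.

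For (2), the plan is to lift. Given any anti-Heisenberg pair $(\bar\alpha_{0},\bar\beta_{0})$ for $\chi$ on a Hilbert space $\Hils'$, Lemma \ref{lemma:pairs-reduced-universal} implies that $(\bar\alpha_{0}\circ\Lambda_{A},\bar\beta_{0}\circ\Lambda_{B})$ is an anti-Heisenberg pair for $f$. Applying part (1) to this lifted pair, and using the identities $(\Id_{C}\otimes\bar\alpha_{0}\Lambda_{A})\gamma = (\Id_{C}\otimes\bar\alpha_{0})\gamma^{\red}$ and $(\Id_{D}\otimes\bar\beta_{0}\Lambda_{B})\delta = (\Id_{D}\otimes\bar\beta_{0})\delta^{\red}$, yields commutation of $(\Id_{C}\otimes\bar\alpha_{0})\gamma^{\red}(c)$ and $(\Id_{D}\otimes\bar\beta_{0})\delta^{\red}(d)$ in $\Mult(C\otimes D\otimes\Comp(\Hils'))$. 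Applying $\varphi\otimes\Id_{\Comp(\Hils')}$ and $\psi\otimes\Id_{\Comp(\Hils')}$, which are $*$-homomorphisms and hence preserve commutators, gives the commutation of $(\varphi\otimes\bar\alpha_{0})\gamma^{\red}(c)$ and $(\psi\otimes\bar\beta_{0})\delta^{\red}(d)$ in $\Mult(E\otimes\Comp(\Hils'))$, which is precisely the $\chi$-pair condition of Definition \ref{definition:pairs-action}.

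The main obstacle is (1). A naive slice-wise argument would require element-wise commutation of $\bar\alpha(A^{\univ})$ and $\bar\beta(B^{\univ})$ in $\Bound(\Hils)$, which is strictly stronger than the anti-Heisenberg relation and visibly fails, e.g.\ in Example~\ref{example:anti-hb-discrete}. The resolution must therefore exploit the coaction structure rather than just the algebras $\bar\alpha(A^{\univ})$ and $\bar\beta(B^{\univ})$: I expect the key step to be a careful manipulation showing that the $\unibich^{f}$ twist in the anti-Heisenberg relation becomes trivial once one pairs it against the compatible leg structure provided by the universal coactions $\gamma$ and $\delta$.
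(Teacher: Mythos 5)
Your reading of the lemma derails both halves of the argument. The lemma is not a pair of assertions to be proven; it is an equivalence of two \emph{conditions} on the pair $(\varphi,\psi)$ (the $\Id_{C},\Id_{D}$ in (1) is a typo for $\varphi,\psi$ --- as literally written the two elements do not even lie in a common algebra, and the paper's proof manifestly works with $(\varphi\otimes\bar\alpha)\gamma(c)$ and $(\psi\otimes\bar\beta)\delta(d)$ in $\Mult(E\otimes\Comp(\Hils))$; compare the phrasing of Lemma \ref{lemma:pairs-action-push-pull}). Under your own interpretation --- legs $(1,3)$ and $(2,3)$ of $C\otimes D\otimes\Comp(\Hils)$ --- statement (1) is simply false in general: in the setting of Example \ref{example:pairs-action-discrete} one computes that $(\Id_{C}\otimes\bar\alpha)\gamma(c)_{13}$ and $(\Id_{D}\otimes\bar\beta)\delta(d)_{23}$ commute only when $g\cdot c=c$ for $d$ homogeneous of degree $g$; equivalently, the canonical pair $(\iota_{C},\iota_{D})$ into $C\otimes D$ is a $1$-pair but not a $\chi$-pair. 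So the ``careful manipulation showing that the $\unibich^{f}$ twist becomes trivial'' that you hope for does not exist; you correctly sense the obstacle, but it is not surmountable. Your passage from (1) to (2) is also broken: applying $\varphi\otimes\Id$ and $\psi\otimes\Id$ ``separately'' to a commutator of elements sitting on legs $(1,3)$ and $(2,3)$ would require a $*$-homomorphism $C\otimes D\to\Mult(E)$ combining $\varphi$ and $\psi$, which exists only if $\varphi(C)$ and $\psi(D)$ already commute. If that step were legitimate, \emph{every} pair $(\varphi,\psi)$ would be a $\chi$-pair and the universal property defining $C\boxtimes^{\chi}_{\max}D$ would be vacuous.

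The one correct ingredient in your proposal --- that composing an anti-Heisenberg pair for $\chi$ with the reducing maps yields an anti-Heisenberg pair for $f$ (Lemma \ref{lemma:pairs-reduced-universal}) --- is indeed the key, but it must be deployed differently. The paper tensors the \emph{given} pair $(\bar\alpha,\bar\beta)$ with $(\Lambda_{A},\Lambda_{B})$: by Corollary \ref{cor:pairs-reduced-ideal} the product $(\bar\alpha,\bar\beta)\otimes(\Lambda_{A},\Lambda_{B})$ is a \emph{reduced} anti-Heisenberg pair for $f$, hence of the form $(\bar\alpha'\Lambda_{A},\bar\beta'\Lambda_{B})$ for some anti-Heisenberg pair $(\bar\alpha',\bar\beta')$ for $\chi$. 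Weak continuity of $\gamma^{\red}$ and $\delta^{\red}$ allows one to slice the extra legs off again, so the commutation condition for $(\varphi,\psi)$ relative to $(\bar\alpha,\bar\beta)$ is \emph{equivalent} to the one relative to $(\bar\alpha',\bar\beta')$, and the latter is exactly the $\chi$-pair condition (2). Nothing is proven unconditionally; the entire content of the lemma is that the two conditions on $(\varphi,\psi)$ coincide.
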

   Note that if $f$ is reduced, then the assertion follows immediately from Proposition \ref{proposition:heisenberg-reduced}.
 \begin{proof}
   Since the coactions $\gamma^{\red}$ and $\delta^{\red}$ are (strongly) continuous, they are also weakly continuous. Hence, (1) is equivalent to the commutation of the elements
   \begin{align*}
     ((\varphi\otimes \bar\alpha)\gamma \otimes \Id_{A})\gamma^{\red}(c)_{123}
     =(\varphi \otimes (\bar\alpha \otimes \Lambda_{A})\Delta^{\univ}_{A})\gamma(c)_{123}
   \end{align*}
and
\begin{align*}
     ((\psi \otimes \bar\beta) \delta \otimes \Id_{B})\delta^{\red}(d)_{124} =
     (\psi \otimes (\bar\beta \otimes \Lambda_{B})\Delta^{\univ}_{B})\delta(d)_{124}
\end{align*}
in $\Mult(E\otimes \Comp(\Hils) \otimes A \otimes B)$ for all $c\in C$ and $d\in D$. By Corollary \ref{cor:pairs-reduced-ideal}, 
\begin{align*}
  ((\bar\alpha \otimes \Lambda_{A})\Delta^{\univ}_{A}, (\bar\beta\otimes\Lambda_{B})\Delta^{\univ}_{B}) = (\bar\alpha,\bar\beta) \otimes (\Lambda_{A},\Lambda_{B})
\end{align*}
is a reduced anti-Heisenberg pair for $f$ and hence, by Lemma \ref{lemma:pairs-reduced-universal}, of the form $(\bar\alpha' \Lambda_{A},\bar\beta'\Lambda_{B})$ for some anti-Heisenberg pair $(\bar\alpha',\bar\beta')$ for $\chi$. Now, (1) is equivalent to commutation of
$(\varphi\otimes \bar\alpha')\gamma^{\red}(c)$ and $(\psi\otimes \bar\beta')\delta^{\red}(d)$ for all $c\in C$ and $d\in D$, which is (2).
 \end{proof}
Thanks to this result, we can quickly  define an \emph{$f$-commutative representation} of $(C,\gamma)$ and $(D,\delta)$ to be a $\chi$-commutative representation of $(C,\gamma^{\red})$ and $(D,\delta^{\red})$,  and  the maximal twisted tensor product of $(C,\gamma)$ and $(D,\delta)$ with respect to $f$ to be the $\Cst$-algebra
\begin{align*}
  (C,\gamma)\boxtimes^{f}_{\max} (D,\delta):= (C,\gamma^{\red})\boxtimes^{\chi}_{\max} (D,\delta^{\red}).
\end{align*}

The construction of the maximal twisted tensor product is functorial with respect to the $\Cst$-quantum groups involved in the following sense.  Denote
by $\hat f \in \Mor(B^{\univ},\hat A^{\univ})$ the dual morphism of $f$; see Theorem \ref{the:equivalent_notion_of_homomorphisms}.
\begin{lemma} \label{lemma:pairs-action-push-pull}
   Let $E$ be a $\Cst$-algebra with morphisms $\varphi \in \Mor(C,E)$  and $\psi \in \Mor(D,E)$.
  Then the following conditions are equivalent:
  \begin{enumerate}
  \item  $(\varphi,\psi)$ is an $f$\nb-commutative representation of $(C,\gamma)$ and $(D,\delta)$;
  \item $(\varphi,\psi)$ is an $\Id_{\hat B^{\univ}}$-commutative representation of $(C,f_{*}\gamma)$ and $(D,\delta)$;
  \item $(\varphi,\psi)$ is an $\Id_{A^{\univ}}$\nb-commutative representation of $(C,\gamma)$ and $(D,\hat f_{*}\delta)$.
  \end{enumerate}
\end{lemma}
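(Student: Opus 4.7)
The plan is to use Lemma~\ref{lemma:f-pairs} to reformulate each of the three conditions as a concrete commutation relation of the form
\begin{equation*}
 [(\varphi \otimes \bar\alpha)\gamma'(c),\, (\psi \otimes \bar\beta)\delta'(d)] = 0 \qquad \text{for all } c \in C,\ d \in D,
\end{equation*}
with suitable coactions $\gamma'$ and $\delta'$ and an anti-Heisenberg pair $(\bar\alpha, \bar\beta)$ for the appropriate morphism, and then to identify these three commutation relations using the push-pull 2-functors from Lemma~\ref{lemma:pairs-push-pull}.

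For the equivalence (1)$\Leftrightarrow$(2), I would fix an anti-Heisenberg pair $(\bar\alpha, \bar\beta)$ for $\Id_{\hat B^{\univ}}$ on some Hilbert space $\Hils$, so that $\alpha$ and $\beta$ are non-degenerate representations of $\hat B^{\univ}$ and $B^{\univ}$, respectively. Applying the pullback 2-functor $f^{*}$ from Lemma~\ref{lemma:pairs-push-pull} yields $f^{*}(\bar\alpha, \bar\beta) = (\bar\alpha \circ f, \bar\beta)$, which is an $(\Id_{\hat B^{\univ}} \circ f, \tau \circ f) = (f, \tau)$-pair; here I use that $\tau \circ f = \tau$ because $f$ intertwines the counits, see~\eqref{eq:morphism-antipode-counit}. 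Hence $(\bar\alpha \circ f, \bar\beta)$ is an anti-Heisenberg pair for $f$. Lemma~\ref{lemma:f-pairs}, applied to $(C, \gamma)$, $(D, \delta)$ with this pair, then states that condition~(1) is equivalent to the commutation of $(\varphi \otimes \bar\alpha \circ f)\gamma(c)$ and $(\psi \otimes \bar\beta)\delta(d)$ for all $c, d$. Using the identity $(\varphi \otimes \bar\alpha \circ f)\gamma = (\varphi \otimes \bar\alpha)(f_{*}\gamma)$ and re-invoking Lemma~\ref{lemma:f-pairs} for $(C, f_{*}\gamma)$, $(D, \delta)$ with the original anti-Heisenberg pair $(\bar\alpha, \bar\beta)$ for $\Id_{\hat B^{\univ}}$, the resulting commutation is precisely condition~(2).

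The equivalence (1)$\Leftrightarrow$(3) is entirely analogous: one starts with an anti-Heisenberg pair $(\bar\alpha, \bar\beta)$ for $\Id_{A^{\univ}}$ (so $\alpha$ is a representation of $A^{\univ}$ and $\beta$ of $\hat A^{\univ}$), applies the push-forward 2-functor $f_{*}$ from Lemma~\ref{lemma:pairs-push-pull} to obtain the anti-Heisenberg pair $(\bar\alpha, \bar\beta \circ \hat f) = f_{*}(\bar\alpha, \bar\beta)$ for $f$, and uses $(\psi \otimes \bar\beta \circ \hat f)\delta = (\psi \otimes \bar\beta)(\hat f_{*}\delta)$ to identify the commutation relation coming from~(1) with that coming from~(3). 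I do not anticipate any serious obstacle: the argument is essentially bookkeeping, with the only substantive inputs being the reformulation in Lemma~\ref{lemma:f-pairs} and the fact that the push-pull 2-functors preserve the trivial morphisms---which is just the statement that $f$ intertwines the counits.
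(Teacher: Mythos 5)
Your proof is correct and follows essentially the same route as the paper: both reduce each condition to a commutation relation via Lemma~\ref{lemma:f-pairs}, observe that the pullback $f^{*}(\bar\alpha,\bar\beta)=(\bar\alpha\circ f,\bar\beta)$ (respectively the push-forward $(\bar\alpha,\bar\beta\circ\hat f)$) of an anti-Heisenberg pair for the identity is an anti-Heisenberg pair for $f$ by Lemma~\ref{lemma:pairs-push-pull}, and identify $(\varphi\otimes\bar\alpha f)\gamma$ with $(\varphi\otimes\bar\alpha)(f_{*}\gamma)$. Your explicit remark that $\tau\circ f=\tau$ via the counit-intertwining property \eqref{eq:morphism-antipode-counit} is a detail the paper leaves implicit.
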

\begin{proof}
We only prove equivalence of (1) and (2); equivalence of (1) and (3) follows similarly.  Choose an anti-Heisenberg pair $(\bar\alpha,\bar\beta)$ for $\Id_{\hat B^{\univ}}$. Then by Lemma \ref{lemma:f-pairs}, (2) holds if and only if $(\varphi\otimes \bar\alpha f)\gamma(c)$ commutes with $(\psi\otimes\bar\beta)\delta(d)$ for all $c\in C$ and $d\in D$. But by Lemma \ref{lemma:pairs-push-pull},   $(\bar\alpha f,\bar \beta)$ is an anti-Heisenberg pair  for $f$, and so,  by Lemma \ref{lemma:f-pairs} again, this commutation relation is equivalent with (1).
\end{proof}
We obtain the following immediate consequence:
\begin{theorem} \label{theorem:qgp-functoriality}
 There exist canonical isomorphisms
  \begin{align*}
(C,f_{*}\gamma) \boxtimes^{\Id}_{\maxtensor} (D,\gamma) \cong    (C,\gamma) \boxtimes^{f}_{\maxtensor} (D,\delta) \cong (C,\gamma) \boxtimes^{\Id}_{\maxtensor} (D,\hat f_{*}\delta)
  \end{align*}
which intertwine the canonical morphisms from $C$ and $D$, respectively, to the three $\Cst$-algebras above.
\end{theorem}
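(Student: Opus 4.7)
The plan is to deduce the theorem directly from the universal property defining the maximal twisted tensor product and from Lemma \ref{lemma:pairs-action-push-pull}, which identifies the three relevant classes of $\chi$-commutative representations with each other.

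More precisely, I would first recall that by the definition given just after Lemma \ref{lemma:f-pairs}, the $\Cst$-algebra $(C,\gamma)\boxtimes^{f}_{\maxtensor}(D,\delta)$ is (via the passage to the reduced coactions $\gamma^{\red}$ and $\delta^{\red}$) characterised up to unique isomorphism by the universal property of being initial among $\Cst$-algebras carrying an $f$-pair of morphisms from $C$ and $D$. The same universal-property characterisation applies, with $f$ replaced by $\Id_{\hat{B}^{\univ}}$ (respectively $\Id_{A^{\univ}}$), to $(C,f_{*}\gamma)\boxtimes^{\Id}_{\maxtensor}(D,\delta)$ and $(C,\gamma)\boxtimes^{\Id}_{\maxtensor}(D,\hat{f}_{*}\delta)$.

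The key step is then just to quote Lemma \ref{lemma:pairs-action-push-pull}: for any $\Cst$-algebra $E$ and any pair $\varphi\in\Mor(C,E)$, $\psi\in\Mor(D,E)$, being an $f$-pair for $(C,\gamma),(D,\delta)$ is equivalent to being an $\Id_{\hat{B}^{\univ}}$-pair for $(C,f_{*}\gamma),(D,\delta)$ and to being an $\Id_{A^{\univ}}$-pair for $(C,\gamma),(D,\hat{f}_{*}\delta)$. Hence the three initial objects solving these three universal problems are canonically identified, and the resulting isomorphisms automatically intertwine the canonical morphisms from $C$ and $D$, since under each identification the corresponding pair is sent to the same pair of universal morphisms.

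There is essentially no obstacle: the only thing to keep in mind is the implicit reduction step in the definition $\boxtimes^{g}_{\maxtensor}:=\boxtimes^{W^{g}}_{\maxtensor}$ applied to reduced coactions, but Lemma \ref{lemma:pairs-action-push-pull} is already formulated at the level of the (continuous) coactions of the universal quantum groups, so no further bookkeeping between universal and reduced levels is needed.
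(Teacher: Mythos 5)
Your proposal is correct and is exactly the paper's argument: the paper states the theorem as an immediate consequence of Lemma \ref{lemma:pairs-action-push-pull}, which identifies the three classes of pairs, so the three universal ($\chi$-commutative representation) problems coincide and their initial objects are canonically isomorphic, compatibly with the canonical morphisms from $C$ and $D$. Your remark about the implicit passage to reduced coactions being already absorbed into Lemma \ref{lemma:pairs-action-push-pull} is also accurate.
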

\begin{corollary}
Let $\G_{i}=(A_{i},\Delta_{i})$ be a $\Cst$-quantum group for $i=1,2,3,4$, let $f^{(i)} \in \Mor(A^{\univ}_{i},A^{\univ}_{i+1})$ be morphisms of $\Cst$-bialgebras for $i=1,2,3$, and let $(C,\gamma)$ be a $\G_{1}$-$\Cst$-algebra and $(D,\delta)$ a $\G_{4}$-$\Cst$-algebra. Write $f:=f^{(3)} \circ f^{(2)} \circ f^{(1)}$. Then there exists a canonical isomorphism
\begin{align*}
  (C,\gamma) \boxtimes_{\max}^{f} (D,\delta) \cong (C, f^{(1)}_{*}\gamma) \boxtimes^{f^{(2)}}_{\max} (D,\hat f^{(3)}_{*}\delta).
\end{align*}
\end{corollary}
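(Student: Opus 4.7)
The plan is to prove the corollary by three successive applications of Theorem~\ref{theorem:qgp-functoriality}, each shuttling one of the outer morphisms $f^{(1)}$ or $f^{(3)}$ between the twisting parameter $\boxtimes^{\bullet}$ and the adjacent coaction on $C$ or $D$. The idea is that both sides of the claimed isomorphism can be identified with the common intermediate $(C,f^{(1)}_*\gamma)\boxtimes^{g}_{\max}(D,\delta)$, where $g := f^{(3)}\circ f^{(2)}\colon A_2^{\univ}\to A_4^{\univ}$.

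To identify this intermediate with the left-hand side of the corollary, I first apply the isomorphism $(C,\gamma)\boxtimes^{f}_{\max}(D,\delta) \cong (C,f_*\gamma)\boxtimes^{\Id}_{\max}(D,\delta)$ from Theorem~\ref{theorem:qgp-functoriality} to $f = f^{(3)}\circ f^{(2)}\circ f^{(1)}$, and then apply the same isomorphism in reverse to $g$ acting on $(C,f^{(1)}_*\gamma)$. The two applications match because $f_*\gamma = g_*\, f^{(1)}_*\gamma$ by functoriality of pushforward.

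To identify the intermediate with the right-hand side, I apply the companion isomorphism $(C,\gamma')\boxtimes^{h}_{\max}(D,\delta') \cong (C,\gamma')\boxtimes^{\Id}_{\max}(D,\hat h_*\delta')$ of Theorem~\ref{theorem:qgp-functoriality}, once with $h = g$ acting on $(C,f^{(1)}_*\gamma)$ and $(D,\delta)$, and once with $h = f^{(2)}$ acting on $(C,f^{(1)}_*\gamma)$ and $(D,\hat f^{(3)}_*\delta)$. These two applications land in the same $\boxtimes^{\Id}_{\max}$-object provided $\hat g_*\delta = \hat f^{(2)}_*\hat f^{(3)}_*\delta$; that is, provided the duality of morphisms of quantum groups satisfies $\widehat{f^{(3)}\circ f^{(2)}} = \hat f^{(2)}\circ \hat f^{(3)}$.

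The main obstacle is verifying this contravariance of the duality under composition, which is not stated explicitly in the excerpt but is routine from Theorem~\ref{the:equivalent_notion_of_homomorphisms}: the passage from $f$ to $\hat f$ corresponds to the flip $\sigma$ on the associated bicharacter via \eqref{eq:bicharacters} and \eqref{eq:morphism-bicharacter}, and such a flip reverses composition order. Once this compatibility is in place, chaining the three isomorphisms above yields the corollary.
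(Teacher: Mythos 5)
Your proof is correct and rests on the same engine as the paper's, namely a chain of applications of Theorem~\ref{theorem:qgp-functoriality} glued together by the trivial functoriality $f_{*}\gamma=f^{(3)}_{*}f^{(2)}_{*}f^{(1)}_{*}\gamma$; the only genuine difference is the order of the moves. The paper's chain is
$(C,\gamma)\boxtimes^{f}_{\max}(D,\delta)\cong(C,f_{*}\gamma)\boxtimes^{\Id}_{\max}(D,\delta)\cong(C,f^{(2)}_{*}f^{(1)}_{*}\gamma)\boxtimes^{\Id}_{\max}(D,\hat f^{(3)}_{*}\delta)\cong(C,f^{(1)}_{*}\gamma)\boxtimes^{f^{(2)}}_{\max}(D,\hat f^{(3)}_{*}\delta)$,
which only ever dualizes a single $f^{(i)}$ at a time, so the question of how the duality $f\mapsto\hat f$ interacts with composition never arises. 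Your route through the intermediate $(C,f^{(1)}_{*}\gamma)\boxtimes^{g}_{\max}(D,\delta)$ with $g=f^{(3)}\circ f^{(2)}$ does require the extra identity $\hat g=\hat f^{(2)}\circ\hat f^{(3)}$, which you correctly isolate as the one missing ingredient. Your justification of it is sound and can be made precise in one line from \eqref{eq:morphism-bicharacter}: the universal bicharacter of $f^{(3)}\circ f^{(2)}$ equals $(\Id\otimes f^{(3)})(\unibich^{f^{(2)}})$, and rewriting $\unibich^{f^{(2)}}$ via the second equality in \eqref{eq:morphism-bicharacter} and then applying the first equality to $f^{(3)}$ exhibits this as $(\hat f^{(2)}\circ\hat f^{(3)}\otimes\Id)$ applied to the relevant universal bicharacter; uniqueness of the dual morphism in Theorem~\ref{the:equivalent_notion_of_homomorphisms} then gives the claim. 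So your argument is complete once that computation is written out; the paper's ordering of the same two isomorphisms is marginally more economical precisely because it avoids it.
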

\begin{proof}
Use the sequence of isomorphisms
\begin{align*}
  (C,\gamma) \boxtimes^{f}_{\max} (D,\delta) &\cong (C,f^{(3)}_{*}f^{(2)}_{*}f^{(1)}_{*}\gamma) \boxtimes^{\Id}_{\max} (D,\delta)  \\ & \cong (C,f^{(2)}_{*}f^{(1)}_{*}\gamma) \boxtimes^{\Id}_{\max} (D,\hat f^{(3)}_{*}\delta) \cong
  (C,f^{(1)}_{*}\gamma) \boxtimes^{f^{(2)}}_{\max} (D,\hat f^{(3)}_{*}\delta). \qedhere
\end{align*}
\end{proof}
By Proposition \ref{prop:drinfeld-action}, the maximal twisted tensor product $C\boxtimes_{\maxtensor}^{\chi} D$ carries a canonical coaction of the generalised Drinfeld double $\GenDrinfdouble{\G}{\G[H]}{\bichar}=(\DrinAlg_{\chi},\Delta_{\DrinAlg_{\chi}})$.  We show that this coaction lifts to the universal level.  Recall that the $\Cst$-algebra $\DrinAlg_{\chi}$ is generated by the images of two morphisms $\rho \in \Mor(A, \DrinAlg_{\bichar})$ and $ \theta \in \Mor(B, \DrinAlg_{\bichar})$ of $\Cst$\nb-bialgebras which form a Drinfeld pair for $\chi$ \cite{Roy:Codoubles}.  By \cite{Meyer-Roy-Woronowicz:Homomorphisms}*{Section 4}, the compositions $\rho\circ \Lambda_{A}$ and $\theta \circ \Lambda_{B}$ lift uniquely to morphisms $\rho^{\univ} \in \Mor(A^{\univ},\DrinAlg_{\bichar}^{\univ})$ and $\theta^{\univ} \in \Mor(B^{\univ},\DrinAlg_{\bichar}^{\univ})$ of $\Cst$-bialgebras.
\begin{lemma}\label{lemma:lifted-drinfeld}
 $(\rho^{\univ},\sigma^{\univ})$ is a Drinfeld pair for $f$.  
\end{lemma}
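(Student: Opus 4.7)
The plan is to verify the defining relation \eqref{eq:pairs-universal} for the pair $(\rho^\univ,\theta^\univ)$ at $f=g$, viewing these as representations valued in the $\Cst$-algebra $\DrinAlg_\chi^\univ$ rather than a Hilbert space (the notation and the arguments of Lemma \ref{lemma:pairs-universal-equivalence} carry over verbatim). Explicitly, we must show
\[
\unibich^{f}_{12}\unibich^{A}_{1\rho^\univ}\unibich^{B}_{2\theta^\univ} = \unibich^{B}_{2\theta^\univ}\unibich^{A}_{1\rho^\univ}\unibich^{f}_{12}
\quad\text{in }\U(\hat A^\univ \otimes \hat B^\univ \otimes \DrinAlg_\chi^\univ).
\]

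By the construction of the generalised Drinfeld double in \cite{Roy:Codoubles}, the pair $(\rho,\theta)$ is a Drinfeld pair for $\chi$, that is, a $(\chi,\chi)$-pair in the sense of Definition \ref{definition:heisenberg}. Lemma \ref{lemma:pairs-reduced-universal} then gives that $(\rho\circ\Lambda_A,\theta\circ\Lambda_B)$ is an $(f,f)$-pair in the universal sense, now with values in the reduced algebra $\DrinAlg_\chi$. Since $\Lambda_{\DrinAlg_\chi}\circ\rho^\univ = \rho\circ\Lambda_A$ and $\Lambda_{\DrinAlg_\chi}\circ\theta^\univ = \theta\circ\Lambda_B$ by the construction of $\rho^\univ$ and $\theta^\univ$, this is precisely the image of the displayed identity under $\Id_{\hat A^\univ}\otimes\Id_{\hat B^\univ}\otimes\Lambda_{\DrinAlg_\chi}$.

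To lift this equality from $\DrinAlg_\chi$ to $\DrinAlg_\chi^\univ$ in the third leg, we adapt the argument from the proof of Lemma \ref{lemma:pairs-universal-equivalence}. The key observation is that, because $\rho^\univ$ and $\theta^\univ$ are morphisms of $\Cst$\nobreakdash-bialgebras (not merely $\Cst$\nobreakdash-algebras), the unitaries $(\Id\otimes\rho^\univ)(\unibich^A)$ and $(\Id\otimes\theta^\univ)(\unibich^B)$ are bicharacters between the universal $\Cst$\nobreakdash-bialgebras $(\hat A^\univ,\hat\Delta^\univ_A)$, $(\hat B^\univ,\hat\Delta^\univ_B)$ and $(\DrinAlg_\chi^\univ,\Delta^\univ_{\DrinAlg_\chi})$. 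Consequently, both sides of the desired relation are products of right corepresentations of $(\DrinAlg_\chi^\univ,\Delta^\univ_{\DrinAlg_\chi})$ in the third leg whose reductions via $\Lambda_{\DrinAlg_\chi}$ agree; the uniqueness of the lift of bicharacters stated in \cite{Meyer-Roy-Woronowicz:Homomorphisms}*{Lemma 4.13} (applied now to the third leg, exactly as Lemma \ref{lemma:pairs-universal-equivalence} applies it to the first two) forces them to agree in $\U(\hat A^\univ \otimes \hat B^\univ \otimes \DrinAlg_\chi^\univ)$.

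The main obstacle is the lifting step in the last paragraph: one must carefully identify the right-corepresentation structure in the third leg on both sides of the relation and verify that the uniqueness result of \cite{Meyer-Roy-Woronowicz:Homomorphisms}*{Lemma 4.13} applies, which is exactly where the bialgebra-morphism property of $\rho^\univ$ and $\theta^\univ$ (inherited from the universal lifting of morphisms of $\Cst$-bialgebras) is used.
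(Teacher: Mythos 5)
Your proof is correct and follows essentially the same route as the paper: both arguments rest on $(\rho,\theta)$ being a $(\chi,\chi)$\nb-pair, on $\rho^{\univ}$ and $\theta^{\univ}$ being morphisms of $\Cst$\nb-bialgebras so that the relevant unitaries are right corepresentations of $\DrinAlg_{\bichar}^{\univ}$ in the last leg, and on the uniqueness of lifts from \cite{Meyer-Roy-Woronowicz:Homomorphisms}*{Lemma 4.13}; the paper merely passes first to the form \eqref{eq:pairs-universal-weak} via Lemma \ref{lemma:pairs-universal-equivalence} so that only the third leg needs lifting, while you keep the first two legs universal via Lemma \ref{lemma:pairs-reduced-universal}. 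One small point to make explicit: the factor $\unibich^{f}_{12}$ is not itself a corepresentation in the $\DrinAlg_{\bichar}^{\univ}$\nb-leg, so before invoking the uniqueness lemma you should rewrite the identity as $\unibich^{A}_{1\rho^{\univ}}\unibich^{B}_{2\theta^{\univ}} = (\unibich^{f}_{12})^{*}\unibich^{B}_{2\theta^{\univ}}\unibich^{A}_{1\rho^{\univ}}\unibich^{f}_{12}$, whose two sides are genuine right corepresentations in the last leg --- exactly the conjugated form \eqref{eq:lifted-drinfeld} used in the paper.
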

\begin{proof}
  Denote by $\maxcorep[A]$ and $\maxcorep[B]$ the maximal corepresentations.
By Lemma \ref{lemma:pairs-universal-equivalence}, it suffices to show that the products \begin{align} \label{eq:lifted-drinfeld}
  \maxcorep[A]_{1\rho^{\univ}}\maxcorep[B]_{2\theta^{\univ}} \quad \text{and} \quad \chi_{12}^{*}\maxcorep[B]_{2\theta^{\univ}}\maxcorep[A]_{1\rho^{\univ}}\chi_{12}
\end{align}
 in $\U(\hat A \otimes \hat  B \otimes \DrinAlg_{\bichar}^{\univ})$ coincide.   Since $\rho^{\univ}$ and $\theta^{\univ}$ are morphisms of $\Cst$-bialgebras, both products
 are right corepresentations.  We apply the reducing morphism to $ \DrinAlg_{\bichar}^{\univ}$ and obtain the right corepresentations
$W^{A}_{1\rho}W^{B}_{2\theta}$ and $\chi_{12}^{*}W^{B}_{2\theta}W^{A}_{1\rho}\chi_{12}$, respectively,
which coincide because $(\rho,\theta)$ is a $(\chi,\chi)$-pair.  By \cite{Meyer-Roy-Woronowicz:Homomorphisms}*{Lemma 4.13}, the products \eqref{eq:lifted-drinfeld} have to  coincide.
\end{proof}
Now, the proofs of Proposition \ref{prop:drinfeld-action} and \ref{the:gen_drinf_coact} carry over to the universal setting and we obtain the following results:
\begin{proposition} \label{proposition:lifted-double-action}
Let $(C,\gamma)$ be a $\UG$-$\Cst$-algebra and $(D,\delta)$ be a $\UG[H]$-$\Cst$-algebra. Then   there exists a unique coaction of $\GenDrinfdoubleU{\G}{\G[H]}{\bichar}$ on $C\boxtimes^{f}_{\max} D$ that makes the  following diagram commute, and 
this coaction is continuous:
\begin{align*}
  \xymatrix@C=40pt{
    C \ar[d]_{\gamma} \ar[r]^{j^{\univ}_C} & C\boxtimes^{f}_{\max} D \ar[d] &  D\ar[l]_{j^{\univ}_D}  \ar[d]^{\delta} \\
    C \otimes A^{\univ} \ar[r]^(0.35){j^{\univ}_C \otimes \rho^{\univ}} & (C \boxtimes^{f}_{\max} D) \otimes 
\DrinAlg_{\bichar}^{\univ} & D \otimes B^{\univ} \ar[l]_(0.35){j^{\univ}_D \otimes \theta^{\univ}}
  }
\end{align*}
\end{proposition}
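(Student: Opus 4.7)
The plan is to adapt the proof of Proposition~\ref{prop:drinfeld-action} to the universal setting, replacing the reduced Drinfeld pair $(\rho,\theta)$ by its universal lift $(\rho^{\univ},\theta^{\univ})$ provided by Lemma~\ref{lemma:lifted-drinfeld}. First I would define the morphisms
\[
  \varphi := (j^{\univ}_C \otimes \rho^{\univ})\gamma \in \Mor\bigl(C, (C\boxtimes^{f}_{\max} D) \otimes \DrinAlg^{\univ}_{\chi}\bigr)
\]
and
\[
  \psi := (j^{\univ}_D \otimes \theta^{\univ})\delta \in \Mor\bigl(D, (C\boxtimes^{f}_{\max} D) \otimes \DrinAlg^{\univ}_{\chi}\bigr),
\]
with the aim of producing the desired coaction as $\Theta := \varphi \chiprod \psi$ via the universal property of $C\boxtimes^{f}_{\max} D = (C,\gamma^{\red})\boxtimes^{\chi}_{\max}(D,\delta^{\red})$.

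The key step is to verify that $(\varphi,\psi)$ is an $f$-pair for $(C,\gamma)$ and $(D,\delta)$, that is, a $\chi$-pair for $(C,\gamma^{\red})$ and $(D,\delta^{\red})$. Combining coassociativity of $\gamma$ and $\delta$ with the fact that $\rho^{\univ}$ and $\theta^{\univ}$ are morphisms of $\Cst$-bialgebras, one  computes, for an anti-Heisenberg pair $(\bar\alpha,\bar\beta)$ for $\chi$,
\[
  (\varphi \otimes \bar\alpha)\gamma^{\red}(c) = \bigl(j^{\univ}_C \otimes (\rho^{\univ} \otimes \bar\alpha\Lambda_A)\Delta^{\univ}_A\bigr)\gamma(c),
\]
and the analogous expression with $(\psi \otimes \bar\beta)\delta^{\red}(d)$. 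The required commutation of these two elements then reduces, via Lemma~\ref{lemma:f-pairs} applied to the tautological $f$-pair $(j^{\univ}_C,j^{\univ}_D)$, to the assertion that the ``tensor product'' of an $f$-pair with a Drinfeld pair for $f$ is again an $f$-pair. This is precisely the universal analogue of Lemma~\ref{lemma:pairs-vertical-action} applied to $(\rho^{\univ},\theta^{\univ})$, whose Drinfeld-pair property with respect to $f$ is guaranteed by Lemma~\ref{lemma:lifted-drinfeld}.

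Once $\Theta$ has been produced by the universal property, the coaction identity $(\Theta\otimes \Id)\Theta = (\Id \otimes \Delta^{\univ}_{\DrinAlg_{\chi}})\Theta$ needs only be checked on $j^{\univ}_C(C)$ and $j^{\univ}_D(D)$, where it reduces to coassociativity of $\gamma$ and $\delta$ together with the bialgebra-morphism property of $\rho^{\univ}$ and $\theta^{\univ}$. Continuity of $\Theta$ follows from the Podle\'s condition for $\gamma$ and $\delta$ together with the identity $\DrinAlg^{\univ}_{\chi} = \rho^{\univ}(A^{\univ})\cdot\theta^{\univ}(B^{\univ})$, which is the universal counterpart of the generating property of $\DrinAlg_{\chi}$ established in \cite{Roy:Codoubles}. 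Uniqueness of $\Theta$ is immediate since $C\boxtimes^{f}_{\max} D = j^{\univ}_C(C)\cdot j^{\univ}_D(D)$.

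The main obstacle is that Lemma~\ref{lemma:pairs-vertical-action} is stated for pairs on Hilbert spaces, whereas $(\rho^{\univ},\theta^{\univ})$ takes values in the $\Cst$-algebra $\DrinAlg^{\univ}_{\chi}$. This is handled by composing with a faithful representation of $\DrinAlg^{\univ}_{\chi}$ on a Hilbert space, applying the lemma there, and then sliding the resulting commutation relation back through the faithful representation; the same device was used implicitly in the proof of Proposition~\ref{prop:drinfeld-action}.
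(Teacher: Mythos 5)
Your proposal is correct and follows exactly the route the paper intends: the paper gives no separate argument for this proposition beyond the remark that the proofs of Proposition \ref{prop:drinfeld-action} and Theorem \ref{the:gen_drinf_coact} carry over to the universal setting, and your sketch is precisely that carry-over, with the reduced Drinfeld pair $(\rho,\theta)$ replaced by the $(f,f)$-pair $(\rho^{\univ},\theta^{\univ})$ from Lemma \ref{lemma:lifted-drinfeld} and the $\chi$-pair verification routed through Lemma \ref{lemma:f-pairs}. Your explicit handling of the Hilbert-space versus $\Cst$-algebra issue (composing with a faithful representation of $\DrinAlg^{\univ}_{\chi}$) fills in a detail the paper leaves implicit even in the reduced case.
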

 \begin{theorem} \label{the:lifted_drinf_coact}
The maximal twisted tensor product  extends to a bifunctor
\begin{align*}
\boxtimes_{\maxtensor}^{f}\colon \Cstcat(\UG)\times\Cstcat(\UG[H])\to 
 \Cstcat(\GenDrinfdoubleU{\G}{\G[H]}{\bichar}).  
\end{align*}
\end{theorem}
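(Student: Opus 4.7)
The plan is to deduce the theorem from Proposition \ref{proposition:lifted-double-action} using the universal property of the maximal twisted tensor product, mirroring the argument that produced Theorem \ref{the:gen_drinf_coact} from Proposition \ref{prop:drinfeld-action}. What is left after Proposition \ref{proposition:lifted-double-action} is essentially bookkeeping: defining the bifunctor on morphisms and verifying that it is equivariant for the coactions constructed there.

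First, I define $\boxtimes_{\maxtensor}^{f}$ on morphisms. Given equivariant $\rho\in\Mor(C,C')$ between $\UG$-$\Cst$-algebras $(C,\gamma)$, $(C',\gamma')$ and equivariant $\theta\in\Mor(D,D')$ between $\UG[H]$-$\Cst$-algebras $(D,\delta)$, $(D',\delta')$, I note that $\rho$ and $\theta$ are automatically equivariant with respect to the reduced coactions $\gamma^{\red},\gamma'^{\red},\delta^{\red},\delta'^{\red}$. By the bifunctoriality of $\boxtimes^{\chi}_{\maxtensor}$ established in Section \ref{sec:max} (using the universal property of the maximal twisted tensor product), I obtain a morphism
\[
\rho\boxtimes^{f}_{\maxtensor}\theta \defeq (j^{\univ}_{C'}\circ\rho)\chiprod (j^{\univ}_{D'}\circ\theta)\in \Mor(C\boxtimes^{f}_{\max}D,\,C'\boxtimes^{f}_{\max}D').
\]
Functoriality (preservation of identities and composition) follows immediately from the uniqueness in the universal property that defines $\chiprod$.

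Next, I verify equivariance. Write $\beta$ and $\beta'$ for the coactions of $\GenDrinfdoubleU{\G}{\G[H]}{\bichar}$ on $C\boxtimes^{f}_{\max}D$ and $C'\boxtimes^{f}_{\max}D'$ furnished by Proposition \ref{proposition:lifted-double-action}. I must show
\[
\beta'\circ (\rho\boxtimes^{f}_{\maxtensor}\theta)=((\rho\boxtimes^{f}_{\maxtensor}\theta)\otimes\Id)\circ\beta.
\]
By the universal property, it suffices to test this equality after precomposition with $j^{\univ}_{C}$ and $j^{\univ}_{D}$. On $j^{\univ}_{C}(c)$, using the defining square of $\beta'$ and equivariance of $\rho$, both sides reduce to $((j^{\univ}_{C'}\circ\rho)\otimes \rho^{\univ})\gamma(c)$, and similarly on $j^{\univ}_{D}(d)$ both sides reduce to $((j^{\univ}_{D'}\circ\theta)\otimes \theta^{\univ})\delta(d)$. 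Hence $\rho\boxtimes^{f}_{\maxtensor}\theta$ is equivariant, and the assignment is a bifunctor into the category of $\UGenDrinfdouble{\G}{\G[H]}{\bichar}$-$\Cst$-algebras with continuous coactions (continuity of the target coaction is already part of Proposition \ref{proposition:lifted-double-action}).

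I do not expect any serious obstacle here: the conceptual content—lifting the Drinfeld pair $(\rho,\theta)$ of Proposition \ref{prop:drinfeld-action} to a Drinfeld pair $(\rho^{\univ},\theta^{\univ})$ for $f$ on the universal generalized Drinfeld double, and then invoking Lemma \ref{lemma:pairs-vertical-action} to produce the coaction—is already contained in Lemma \ref{lemma:lifted-drinfeld} and Proposition \ref{proposition:lifted-double-action}. The only care required is to remember that the maximal twisted tensor product for coactions of the universal quantum groups is by definition the one for the associated reduced coactions, so that all the earlier results of Sections \ref{sec:max}--\ref{sec:iso} can be applied verbatim.
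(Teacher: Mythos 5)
Your proposal is correct and follows essentially the same route as the paper: the paper states this theorem as an immediate consequence of Proposition \ref{proposition:lifted-double-action} (together with the bifunctoriality of $\chiprod$ coming from the universal property, exactly as in the reduced case of Theorem \ref{the:gen_drinf_coact}), and your write-up merely spells out the routine details of defining $\rho\boxtimes^{f}_{\maxtensor}\theta$ and checking equivariance on the generating copies of $C$ and $D$. The only cosmetic remark is that the target category should be written $\Cstcat(\GenDrinfdoubleU{\G}{\G[H]}{\bichar})$ in the paper's notation.
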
   

Let us finally consider the case where $\G$ is quasi-triangular with $R$-matrix $\Rmat \in \U(\hat A \otimes \hat A)$. 
\begin{theorem} \label{thm:triangular-universal}
  Let $\G$ be a quasi-triangular $\Cst$-quantum group with $R$-matrix $\Rmat$ and denote by $f = f_{\Rmat}\in \Mor(A^{\univ},\hat A^{\univ})$ the corresponding morphism of $\Cst$-bialgebras. 
  \begin{enumerate}
  \item   Let $(C,\gamma_{C})$ and $(D,\gamma_{D})$ be $\UG$\nb-$\Cst$\nb-algebras. Then there exists a unique continuous coaction $\gamma_{C\boxtimes D}$ of $\UG$ on $C \boxtimes^{f}_{\maxtensor} D$ that makes the canonical morphisms $\corr{$j^u_C$}{j^{\univ}_C}$ and $\corr{ $j^u_D$}{j^{\univ}_D}$ from  $C$ and $D$ to $C \boxtimes^{f}_{\maxtensor} D$ equivariant.  
  \item 
 The assignment
$((C,\gamma_{C}),(D,\gamma_{D})) \mapsto (C\boxtimes^{f} D,\gamma_{C\boxtimes D})$
 extends to a bifunctor
 \begin{align*}
  \Cstcat(\UG)\times \Cstcat(\UG) \to \Cstcat(\UG) 
 \end{align*}
 which endows $\Cstcat(\UG)$ with the structure of a monoidal category. Its unit is $(\C,\tau_{\G})$. 
  \end{enumerate}
\end{theorem}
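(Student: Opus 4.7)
The plan is to lift Theorem \ref{theorem:quasi-triangular} from reduced coactions of $\G$ to universal coactions of $\UG$, using that an $f$-pair for universal coactions is, by definition and by Lemma \ref{lemma:f-pairs}, controlled by anti-Heisenberg pairs for $f$ at the universal level.

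For part (1), the central new input is that the pair $(\Id_{A^{\univ}},\Id_{A^{\univ}})$ of representations forms a universal Drinfeld pair for $f$, i.e., an $(f,f)$-pair in the sense of Definition \ref{definition:pairs-universal}. By Lemma \ref{lemma:pairs-universal-equivalence}, this amounts to
\begin{align*}
\unibich^{f}_{12}\unibich^{A}_{13}\unibich^{A}_{23} = \unibich^{A}_{23}\unibich^{A}_{13}\unibich^{f}_{12}
\qquad \text{in } \U(\hat{A}^{\univ} \otimes \hat{A}^{\univ} \otimes A^{\univ}).
\end{align*}
Both sides are right corepresentations of $\UG$ in the third leg, and applying $\hat{\Lambda}_{A} \otimes \hat{\Lambda}_{A} \otimes \Id_{A^{\univ}}$ reduces this to $\Rmat_{12}\maxcorep[A]_{13}\maxcorep[A]_{23} = \maxcorep[A]_{23}\maxcorep[A]_{13}\Rmat_{12}$, which is precisely the assertion that $(\Id_{A},\Id_{A})$ is an $(\Rmat,\Rmat)$-pair and hence holds by quasi-triangularity, compare \eqref{eq:dual-r-matrix}. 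Exactly as in Lemma \ref{lemma:lifted-drinfeld}, \cite{Meyer-Roy-Woronowicz:Homomorphisms}*{Lemma 4.13} then lifts the equality back to the universal level.

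Mirroring Proposition \ref{prop:quasi-triangular} and Proposition \ref{proposition:lifted-double-action}, I would next apply the universal analogue of Lemma \ref{lemma:pairs-vertical-action} to the $f$-pair $(j_{C}^{\univ}, j_{D}^{\univ})$ and this universal Drinfeld pair to obtain an $f$-pair
\begin{align*}
\varphi := (j_{C}^{\univ} \otimes \Id_{A^{\univ}}) \circ \gamma_{C}, \qquad \psi := (j_{D}^{\univ} \otimes \Id_{A^{\univ}}) \circ \gamma_{D}
\end{align*}
with values in $(C \boxtimes^{f}_{\maxtensor} D) \otimes A^{\univ}$. Concretely, this reduces, via Lemma \ref{lemma:f-pairs}, to noting that for any universal anti-Heisenberg pair $(\bar{\alpha}, \bar{\beta})$ for $f$, the tensor product $((\Id_{A^{\univ}} \otimes \bar{\alpha})\Delta^{\univ}_{A}, (\Id_{B^{\univ}} \otimes \bar{\beta})\Delta^{\univ}_{B})$ is again an anti-Heisenberg pair for $f$, so that commutation follows from $(j_{C}^{\univ}, j_{D}^{\univ})$ being an $f$-pair together with coassociativity. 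The universal property of $C \boxtimes^{f}_{\maxtensor} D$ then yields the unique morphism $\gamma_{C \boxtimes D} := \varphi \chiprod \psi$ that makes $j_{C}^{\univ}$ and $j_{D}^{\univ}$ equivariant. Coassociativity of $\gamma_{C \boxtimes D}$ is checked on the generators $j_{C}^{\univ}(C) \cdot j_{D}^{\univ}(D)$ using coassociativity of $\gamma_{C}$ and $\gamma_{D}$ and uniqueness in the universal property, and the Podle\'s condition is derived from the Podle\'s conditions for $\gamma_{C}, \gamma_{D}$ together with Lemma \ref{lem:Cstar_alg}.

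For part (2), bifunctoriality is immediate from uniqueness in the universal property. To obtain the monoidal structure with unit $(\C, \tau_{\G})$, I would argue exactly as in Theorem \ref{theorem:quasi-triangular} by a Yoneda-type argument: the associator arises from the fact that for any $\Cst$-algebra $F$ with morphisms $\pi_{C}, \pi_{D}, \pi_{E}$ out of the three factors, the conditions that $(\pi_{C}, \pi_{D})$ and $(\pi_{C} \chiprod \pi_{D}, \pi_{E})$ are $f$-pairs, that all three pairs $(\pi_{C}, \pi_{D}), (\pi_{C}, \pi_{E}), (\pi_{D}, \pi_{E})$ are $f$-pairs, and that $(\pi_{D}, \pi_{E})$ and $(\pi_{C}, \pi_{D} \chiprod \pi_{E})$ are $f$-pairs, are mutually equivalent, and by uniqueness this associator intertwines the $\UG$-coactions. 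The unit isomorphisms are treated analogously via $\varepsilon^{\univ}_{A}$, and the pentagon and triangle axioms reduce by uniqueness to identities among elementary morphisms. I expect the main technical obstacle to be the careful formulation of the universal analogue of Lemma \ref{lemma:pairs-vertical-action}, which must interact correctly with the reducing morphisms $\Lambda_{A}, \Lambda_{B}$ that enter the definition of an $f$-pair at the universal level; this is where Proposition \ref{prop:reduced-universal-coaction}, Corollary \ref{cor:pairs-reduced-ideal} and Lemma \ref{lemma:f-pairs} do the bookkeeping work.
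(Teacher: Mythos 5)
Your proposal is correct and follows essentially the same route as the paper: the paper's proof consists precisely of (i) lifting the $(\Rmat,\Rmat)$-pair $(\Id_{A},\Id_{A})$ to an $(f,f)$-pair $(\Id_{A^{\univ}},\Id_{A^{\univ}})$ by the corepresentation-uniqueness argument of Lemma \ref{lemma:lifted-drinfeld}, and (ii) invoking the arguments of Proposition \ref{proposition:lifted-double-action} and Theorem \ref{theorem:quasi-triangular}, which are exactly the steps you spell out. Your only imprecision is cosmetic: the relation $\Rmat_{12}\maxcorep[A]_{13}\maxcorep[A]_{23}=\maxcorep[A]_{23}\maxcorep[A]_{13}\Rmat_{12}$ is not literally the $(\Rmat,\Rmat)$-pair condition (that one has $W^{A}$ in place of $\maxcorep[A]$); it is obtained from it by the very lift via \cite{Meyer-Roy-Woronowicz:Homomorphisms}*{Lemma 4.13} that you then cite, so the logic goes through.
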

\begin{proof}
  As observed after \eqref{eq:dual-r-matrix}, $(\Id_{A},\Id_{A})$ is a Drinfeld pair for $\Rmat$. A similar argument like the one used in the proof of Lemma \ref{lemma:lifted-drinfeld} shows that $(\Id_{A^{\univ}},\Id_{A^{\univ}})$ is an $(f,f)$-pair. Now, (1) and (2) follow by similar argument as in the proofs of Proposition \ref{proposition:lifted-double-action} and of Theorem \ref{theorem:quasi-triangular}.
\end{proof}

\section{Yetter--Drinfeld C*-algebras}
 \label{sec:yd} 

 For every quasi-triangular $\Cst$-quantum group $\G$,  the maximal twisted tensor product endows the  category of $\G$-$\Cst$-algebras with a monoidal structure, as we saw in Subsection \ref{subsec:triangular} and Theorem \ref{thm:triangular-universal}. More generally, we now consider  Yetter-Drinfeld $\Cst$-algebras and their maximal twisted tensor products, and thus  obtain not a monoidal category but a bicategory.

In the reduced setting,  Yetter-Drinfeld $\Cst$-algebras were introduced in \cite{Nest-Voigt:Poincare} and generalized in \cite{Roy:Codoubles}. We need to work in the universal setting,  because the following  constructions  will involve the push-forward of coactions  along morphisms of $\Cst$-quantum groups in situations where we do not know whether this is well-defined in the reduced setting.

Let $\G=(A,\Comult[A])$ and $\G[H]=(B,\Comult[B])$ be $\Cst$\nb-quantum groups and let $f\in \Mor(A^{\univ},B^{\univ})$ be a morphism of $\Cst$-bialgebras with corresponding bicharacter
$\unibich^{f}=  (\Id_{\hat A^{\univ}} \otimes f)(\unibich^{A})$.

The definition of Yetter-Drinfeld $\Cst$-algebras over $f$ involves the  twisted flip map
 \begin{align*}
     \sigma^{\univ}_{f} &\colon B^{\univ} \otimes \hat A^{\univ} \to
     \hat A^{\univ} \otimes B^{\univ}, \quad
     b^{\univ} \otimes \hat a^{\univ}\mapsto \unibich^{f}(\hat a^{\univ} \otimes b^{\univ})(\unibich^{f})^{*}.
 \end{align*}
Since $\unibich^{\hat{f}}=
\Sigma (\unibich^{f})^{*} \Sigma$, we have
\begin{align} \label{eq:twisted-flip-inverse}
  (\sigma^{\univ}_{f})^{-1} = \sigma^{\univ}_{\hat f}.
\end{align}
Moreover,  \eqref{eq:corep-left} implies  the following cocycle relation:
\begin{align}
  \label{eq:twisted-flip-cocycle}
  (\Id_{\hat A^{\univ}} \otimes \sigma^{\univ}_{f})(\sigma^{\univ}_{f} \otimes \Id_{\hat A^{\univ}})(\Id_{B^{\univ}} \otimes \hat\Delta^{\univ}_{A}) =(\hat\Delta^{\univ}_{A} \otimes \Id_{B^{\univ}}) \sigma^{\univ}_{f}.
\end{align}

Now, we can give the following the universal counterpart to \cite{Roy:Codoubles}*{Definition 7.2}.
 \begin{definition}
  An \emph{$f$-Yetter-Drinfeld $\Cst$-algebra} is a $\Cst$-algebra with continuous coactions $\gamma$ of $\UDG$ and $\delta$ of $\UG[H]$ satisfying
   \begin{align} \label{eq:yd-universal}
     (\gamma \otimes \Id_{B}) \circ \delta =  (\Id_{C} \otimes \sigma^{\univ}_{f}) \circ (\delta \otimes \Id_{\hat {A}}) \circ \gamma.
   \end{align}
   A \emph{morphism} of $f$\nb-Yetter-Drinfeld $\Cst$\nb-algebras $(C,\gamma_{C},\delta_{C})$ and $(D,\gamma_{D},\delta_{D})$ is a morphism $\phi \in \Mor(C,D)$ that is equivariant with respect to the respective coactions of $\UDG$ and $\UG[H]$.
 \end{definition}
 Denote by $\YDcat(f)$ the category of $f$\nb-Yetter-Drinfeld $\Cst$\nb-algebras. 
 \begin{remark}
Denote by $\hat f\in \Mor(\hat B^{\univ}, \hat A^{\univ})$ the morphism dual to $f$. Then  \eqref{eq:twisted-flip-inverse}  implies that  the assignment $(C,\gamma,\delta) \mapsto (C,\delta,\gamma)$ defines an isomorphism 
\begin{align}
  \YDcat{(f)} \to \YDcat{(\hat f)}.
\end{align}
 \end{remark}
Denote by $\chi=W^{f}=(\hat \Lambda_{A} \otimes \Lambda_{B})(\unibich^{f})$  the reduced bicharacter corresponding to $f$. Then the reduced $\chi$-Yetter-Drinfeld $\Cst$-algebras defined in  \cite{Roy:Codoubles}*{Definition 7.2} form a full subcategory of  $\YDcat(f)$:
 \begin{proposition}
Let $C$ be a $\Cst$-algebra with normal continuous coactions $\gamma$ of $\UDG$ and $\delta$ of  $\UG[H]$.  Then the following assertions are equivalent:
\begin{enumerate}
\item $(C,\gamma,\delta)$ is an $f$-Yetter-Drinfeld $\Cst$-algebra;
\item  $(C,\gamma^{\red},\delta^{\red})$ is a
  $\chi$-Yetter-Drinfeld $\Cst$-algebra.
\end{enumerate}   
 \end{proposition}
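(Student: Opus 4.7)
The plan splits cleanly into the easy direction $(1)\Rightarrow(2)$ and the more delicate direction $(2)\Rightarrow(1)$.

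For $(1)\Rightarrow(2)$, the strategy is to apply the $*$-homomorphism $\Id_C \otimes \hat\Lambda_A \otimes \Lambda_B$ to both sides of \eqref{eq:yd-universal}. On the left, this immediately yields $(\gamma^{\red} \otimes \Id_B)\circ \delta^{\red}$ from the very definitions of $\gamma^{\red}$ and $\delta^{\red}$. On the right, the key observation is the intertwining relation
\[
  (\hat\Lambda_A \otimes \Lambda_B) \circ \sigma^{\univ}_f = \sigma_\chi \circ (\Lambda_B \otimes \hat\Lambda_A)
\]
as maps $B^{\univ}\otimes \hat A^{\univ} \to \hat A\otimes B$, which follows from multiplicativity of $\hat\Lambda_A \otimes \Lambda_B$ together with the defining identity $\chi = (\hat\Lambda_A \otimes \Lambda_B)(\unibich^f)$. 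This turns the right-hand side into $(\Id_C \otimes \sigma_\chi)\circ(\delta^{\red}\otimes \Id_{\hat A})\circ \gamma^{\red}$, which is the reduced $\chi$-Yetter--Drinfeld condition of \cite{Roy:Codoubles}*{Definition 7.2}.

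For $(2)\Rightarrow(1)$, the difficulty is that $\hat\Lambda_A$ and $\Lambda_B$ need not be injective, so equality after reduction does not immediately lift. Denote the two sides of \eqref{eq:yd-universal} by
\[
  F = (\gamma\otimes \Id_{B^{\univ}})\circ \delta, \qquad
  G = (\Id_C \otimes \sigma^{\univ}_f)\circ(\delta \otimes \Id_{\hat A^{\univ}}) \circ \gamma,
\]
both regarded as morphisms $C\to \Mult(C\otimes \hat A^{\univ}\otimes B^{\univ})$, and note that the hypothesis amounts to $(\Id_C \otimes \hat\Lambda_A \otimes \Lambda_B)\circ F=(\Id_C\otimes \hat\Lambda_A \otimes \Lambda_B)\circ G$. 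The plan is to exploit the rigid structure of $F$ and $G$, which are built solely from the coactions and the bicharacter $\unibich^f$. Since $\gamma$ and $\delta$ are continuous, they are normal (equivalently, injective) by the lemma in Section~\ref{sec:action}, so by the correspondence recalled in the appendix, they are the unique normal lifts of their reductions $\gamma^{\red}$ and $\delta^{\red}$. Similarly, $\sigma^{\univ}_f$ is determined by $\sigma_\chi$ via the lift $\unibich^f$ of $\chi$. Combining these ingredients, one can show that $F$ and $G$ are themselves determined by their reductions, so the reduced equality forces $F=G$.

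The concrete way I would implement the last step is to use the universal corepresentations $\dumaxcorep[A]\in \U(\hat A^{\univ}\otimes A)$ and $\maxcorep[B]\in \U(\hat B\otimes B^{\univ})$, through which $\hat A^{\univ}$ and $B^{\univ}$ arise as closed linear spans of slices against the reduced algebras $A$ and $\hat B$. Multiplying $F(c)$ and $G(c)$ by $\dumaxcorep[A]$ in the fourth tensor leg and $\maxcorep[B]$ in the fifth, and then applying $\Id_C\otimes \hat\Lambda_A\otimes \Lambda_B \otimes \Id_A\otimes \Id_{\hat B}$, reduces the proposed universal equality $F=G$ to an equality inside $\Mult(C\otimes \hat A\otimes B\otimes A\otimes \hat B)$ that is implied by the reduced YD relation and the bicharacter identities for $\multunit[A]$, $\multunit[B]$, and $\chi$. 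The main obstacle I anticipate is bookkeeping: setting up the correct leg numbering and verifying that the slicing procedure is faithful enough to conclude $F=G$ from a single such reduced identity.
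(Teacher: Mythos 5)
Your direction $(1)\Rightarrow(2)$ is correct and is exactly the paper's argument: apply $\Id_{C}\otimes\hat\Lambda_{A}\otimes\Lambda_{B}$ to \eqref{eq:yd-universal} and use the intertwining relation $(\hat\Lambda_{A}\otimes\Lambda_{B})\circ\sigma^{\univ}_{f}=\sigma_{\chi}\circ(\Lambda_{B}\otimes\hat\Lambda_{A})$, which follows from $\chi=(\hat\Lambda_{A}\otimes\Lambda_{B})(\unibich^{f})$.

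The direction $(2)\Rightarrow(1)$ has a genuine gap, located precisely in your ``concrete implementation'' of the last step. If you multiply $F(c)$ and $G(c)$ by $\dumaxcorep[A]$ and $\maxcorep[B]$ in extra legs and then apply $\Id_{C}\otimes\hat\Lambda_{A}\otimes\Lambda_{B}\otimes\Id_{A}\otimes\Id_{\hat B}$, the homomorphism property gives you $(\Id_{C}\otimes\hat\Lambda_{A}\otimes\Lambda_{B})(F(c))$ multiplied by the \emph{fixed} unitaries $\multunit[A]$ and $\multunit[B]$ sitting in the extra legs; the outcome therefore depends only on the reduced image of $F(c)$, and likewise for $G(c)$. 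Since $\hat\Lambda_{A}$ and $\Lambda_{B}$ need not be injective, no manipulation confined to the second and third tensor legs can recover $F(c)$ from its reduction, so your procedure can never extract more than hypothesis (2) already states. (The unique-lifting principle you are implicitly appealing to applies to \emph{corepresentations}, as in Lemma \ref{lemma:pairs-universal-equivalence}, not to arbitrary elements of $\Mult(C\otimes\hat A^{\univ}\otimes B^{\univ})$.) Note also that continuity does not imply normality: the lemma in Section \ref{sec:action} gives injectivity of $\gamma$ itself, not of $\gamma^{\red}$; normality is a separate hypothesis, and it is exactly the injectivity of $\gamma^{\red}$ and $\delta^{\red}$ that must do the work. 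The paper uses it by operating on the \emph{$C$-leg}: one applies $\gamma^{\red}$ to both sides of the desired identity, uses the coaction identity $(\gamma^{\red}\otimes\Id_{\hat A^{\univ}})\gamma=(\Id_{C}\otimes\tilde\Delta_{A})\gamma^{\red}$ with $\tilde\Delta_{A}=(\hat\Lambda_{A}\otimes\Id)\hat\Delta^{\univ}_{A}$ together with the cocycle relation \eqref{eq:twisted-flip-cocycle} to rewrite everything via the reduced relation \eqref{eq:yd-reduced}, and then cancels the injective map $\gamma^{\red}$ to obtain the half-lifted identity \eqref{eq:drinfeld-lift-1}; a second pass with $\delta^{\red}$ and the cocycle relation for $\hat f$ lifts the remaining leg. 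Your outline contains neither this two-stage bootstrap nor the cocycle identity, and the mechanism proposed in their place does not close the gap.
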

\begin{proof}
We need to show that \corr{\eqref{eq:yd-universal} if}{\eqref{eq:yd-universal} holds if} and only if
   \begin{align} \label{eq:yd-reduced}
     (\gamma^{\red} \otimes \Id_{B}) \circ \delta^{\red} =  (\Id_{C} \otimes \sigma_{ \chi}) \circ (\delta^{\red} \otimes \Id_{\hat {A}}) \circ \gamma^{\red},
   \end{align}
 where  $\sigma_{\chi}(b\otimes \hat a )=\chi(\hat a \otimes b)\chi^{*}$. 
 Clearly, \eqref{eq:yd-reduced} follows from \eqref{eq:yd-universal} upon application of $\Id_{C} \otimes \hat \Lambda_{A} \otimes \Lambda_{B}$. Conversely, suppose \eqref{eq:yd-reduced}.
We first show that
  \begin{align} \label{eq:drinfeld-lift-1}
(\Id_{C} \otimes \tilde \sigma)    (\delta^{\red} \otimes \Id_{\hat A^{\univ}})\gamma = (\gamma \otimes \Id_{B})\delta^{\red},
  \end{align}
where  $\tilde{\sigma} \colon B \otimes \hat{A}^{\univ} \to \hat{A}^{\univ} \otimes B$ is given by $b\otimes \hat  a\mapsto \tilde \chi(\hat  a\otimes b)\tilde \chi^{*}$ with $\tilde\chi=(\Id_{\hat  A^{\univ}} \otimes   \Lambda_{B})(\unibich^{f})$. 
Denote by $\tilde \Delta_{A} \colon \hat  A \to \hat  A\otimes \hat  A^{\univ}$  the canonical coaction. Then
$(\Id_{C} \otimes \tilde \Delta_{A})\gamma^{\red}  = (\Id_{C} \otimes \gamma^{\red})\gamma$ and
\begin{align*}
  (\gamma^{\red}\otimes \Id_{B}\otimes \Id_{\hat A^{\univ}})(\delta^{\red}\otimes \Id_{\hat A^{\univ}})\gamma &= (\Id_{C} \otimes \sigma_{\chi} \otimes \Id_{\hat A^{\univ}})(\delta^{\red} \otimes \Id_{B}\otimes \Id_{\hat A^{\univ}})(\gamma^{\red}\otimes \Id_{\hat A^{\univ}})\gamma \\ 
&= (\Id_{C} \otimes \sigma_{\chi} \otimes \Id_{\hat A^{\univ}})(\delta^{\red} \otimes    \tilde \Delta_{A})\gamma^{\red}  \\
&= (\Id_{C} \otimes \sigma_{\chi} \otimes \Id_{\hat A^{\univ}})(\Id_{C} \otimes \Id_{B} \otimes  \tilde \Delta_{A})(\Id_{C} \otimes \sigma_{\chi}^{-1})(\gamma^{\red} \otimes \Id_{B})\delta^{\red}.
\end{align*}
Now, \eqref{eq:twisted-flip-cocycle} implies
\begin{align*}
  (\sigma_{\chi} \otimes \Id_{\hat A^{\univ}})(\Id_{B} \otimes \tilde \Delta_{A})\sigma_{\chi}^{-1} = (\Id_{\hat A} \otimes \tilde \sigma^{-1})(\tilde \Delta_{A} \otimes \Id_{B})
\end{align*}
as morphisms from $\hat A \otimes B$ to $\hat A \otimes B \otimes \hat A^{\univ}$, and hence
\begin{align*}
    (\gamma^{\red}\otimes \Id_{B}\otimes \Id_{\hat A^{\univ}})(\delta^{\red}\otimes \Id_{\hat A^{\univ}})\gamma &= 
 (\Id_{C} \otimes \Id_{\hat A} \otimes \tilde \sigma^{-1})(\Id_{C} \otimes \tilde \Delta_{A} \otimes \Id_{B})(\gamma^{\red} \otimes \Id_{B})\delta^{\red} \\
 &=
 (\gamma^{\red} \otimes \Id_{B} \otimes \Id_{\hat A^{\univ}})(\Id_{C} \otimes \tilde \sigma^{-1})(\gamma \otimes \Id_{B})\delta^{\red}.
\end{align*}
Since $\gamma$ is normal, $\gamma^{\red}$ is injective and \eqref{eq:drinfeld-lift-1}  follows. Now, denote by  $\tilde \Delta_{B}\colon B\to B\otimes B^{\univ}$ the canonical coaction. Then
\begin{align*}
  (\delta^{\red} \otimes \Id_{\hat A^{\univ}}\otimes \Id_{B^{\univ}})(\gamma \otimes \Id_{B^{\univ}})\delta &= 
  (\Id_{C} \otimes \tilde \sigma^{-1} \otimes \Id_{B^{\univ}})(\gamma \otimes \Id_{B} \otimes \Id_{B^{\univ}})(\delta^{\red} \otimes \Id_{B^{\univ}})\delta  \\
  &=  (\Id_{C} \otimes \tilde \sigma^{-1} \otimes \Id_{B^{\univ}})(\gamma \otimes  \tilde \Delta_{B})\delta^{\red} \\
  &= (\Id_{C} \otimes \tilde \sigma^{-1} \otimes \Id_{B^{\univ}})(\Id_{C} \otimes \Id_{\hat A^{\univ}} \otimes \tilde \Delta_{B})(\Id_{C} \otimes\tilde\sigma)(\delta^{\red}\otimes \Id_{\hat A^{\univ}})\gamma.
\end{align*}
Now,  \eqref{eq:twisted-flip-inverse} and \eqref{eq:twisted-flip-cocycle}, applied to $\hat f$ instead of $f$,  imply
\begin{align*}
  (\tilde \sigma^{-1} \otimes \Id_{B^{\univ}})(\Id_{\hat A^{\univ}} \otimes \tilde \Delta_{B})\tilde\sigma = 
(\Id \otimes \sigma_{f}^{-1})(\tilde \Delta_{B} \otimes \Id)
\end{align*}
 and hence
\begin{align*}
  (\delta^{\red} \otimes \Id_{\hat A^{\univ}}\otimes \Id_{B^{\univ}})(\gamma \otimes \Id_{B^{\univ}})\delta &=  (\Id_{C} \otimes \Id_{\hat A^{\univ}} \otimes \sigma_{f}^{-1})(\Id_{C} \otimes \tilde \Delta_{B} \otimes \Id_{\hat A^{\univ}})(\delta^{\red} \otimes \Id_{\hat A^{\univ}})\gamma \\
  &= (\Id_{C} \otimes \Id_{B} \otimes \sigma_{f}^{-1})(\delta^{\red} \otimes \Id_{B^{\univ}} \otimes \Id_{\hat A^{\univ}})(\delta \otimes \Id_{\hat A^{\univ}})\gamma.
\end{align*}
Since $\delta^{\red}$ is injective, we can conclude the desired relation \eqref{eq:yd-universal}.
\end{proof}

Suppose now that we have three $\Cst$-quantum groups $\Qgrp{G}{A}$, $\Qgrp{H}{B}$ and $\Qgrp{I}{C}$  with  morphisms $f\in \Mor(A^{\univ},B^{\univ})$ and $g\in \Mor(B^{\univ},C^{\univ})$ of $\Cst$-bialgebras.
\begin{lemma} \label{lemma:yd-push-pull}
  \begin{enumerate}
  \item Let $(D,\gamma_{D}, \delta_{D})$ be an
    $f$-Yetter-Drinfeld $\Cst$-algebra. Then the triple $(D,\gamma_{D},g_{*}\delta_{D})$ is a $(g\circ f)$-Yetter-Drinfeld $\Cst$-algebra. 
  \item Let $(E,\gamma_{E},\delta_{E})$ be a $g$-Yetter-Drinfeld $\Cst$-algebra. Then $(E,\hat f^{*}\gamma_{E},\delta_{E})$ is a $(g\circ f)$-Yetter-Drinfeld $\Cst$-algebra.
\end{enumerate}
\end{lemma}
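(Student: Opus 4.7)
The plan is to reduce both statements to a single functoriality property of the twisted flip, together with the routine fact that pushforward of a continuous coaction along a nondegenerate morphism of $\Cst$-bialgebras is again a continuous coaction. For (1), $g_\ast\delta_D := (\Id_D\otimes g)\delta_D$ is a continuous coaction of $\UG[I]$ because $g$ is nondegenerate: applying $\Id_D\otimes g$ to the Podle\'s identity $\delta_D(D)\cdot(1\otimes B^\univ)=D\otimes B^\univ$ and then right-multiplying by $1\otimes C^\univ$ yields $D\otimes g(B^\univ)C^\univ=D\otimes C^\univ$. The analogous argument, applied to $\hat f$, handles $\hat f^\ast\gamma_E:=(\Id_E\otimes \hat f)\gamma_E$ in (2).

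The key step is the following naturality of $\sigma^\univ$ under composition:
\begin{align*}
  (\Id_{\hat A^\univ}\otimes g)\circ \sigma^\univ_{f} &= \sigma^\univ_{g\circ f}\circ (g\otimes \Id_{\hat A^\univ})
  \quad\text{on } B^\univ\otimes \hat A^\univ, \\
  (\hat f\otimes \Id_{C^\univ})\circ \sigma^\univ_{g} &= \sigma^\univ_{g\circ f}\circ (\Id_{C^\univ}\otimes \hat f)
  \quad\text{on } C^\univ\otimes \hat B^\univ.
\end{align*}
Both reduce to the identity $\unibich^{g\circ f}=(\Id_{\hat A^\univ}\otimes g)(\unibich^{f})=(\hat f\otimes \Id_{C^\univ})(\unibich^{g})$ in $\U(\hat A^\univ\otimes C^\univ)$. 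The first equality is immediate from the defining formula $\unibich^{f}=(\Id_{\hat A^\univ}\otimes f)(\unibich^{A})$; the second is the analogue in the present setting of the two descriptions of the universal bicharacter in Theorem~\ref{the:equivalent_notion_of_homomorphisms}\eqref{eq:morphism-bicharacter}, applied to the morphism $g$ and its dual $\hat g$, composed with $\hat f$.

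With these tools in hand, both assertions are a direct manipulation of the Yetter-Drinfeld condition \eqref{eq:yd-universal}. For (1), apply $\Id_D\otimes \Id_{\hat A^\univ}\otimes g$ to both sides of the $f$\nb-YD identity for $(D,\gamma_D,\delta_D)$; the left-hand side becomes $(\gamma_D\otimes \Id_{C^\univ})\circ g_\ast\delta_D$, while on the right-hand side the first naturality above converts $(\Id_{\hat A^\univ}\otimes g)\circ \sigma^\univ_{f}\circ (\delta_D\otimes \Id_{\hat A^\univ})$ into $\sigma^\univ_{g\circ f}\circ (g_\ast\delta_D\otimes \Id_{\hat A^\univ})$, producing the $(g\circ f)$\nb-YD relation for $(D,\gamma_D,g_\ast\delta_D)$. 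For (2), apply $\Id_E\otimes \hat f\otimes \Id_{C^\univ}$ to the $g$\nb-YD identity for $(E,\gamma_E,\delta_E)$; the left-hand side becomes $(\hat f^\ast\gamma_E\otimes \Id_{C^\univ})\circ \delta_E$, and the second naturality turns the right-hand side into $(\Id_E\otimes \sigma^\univ_{g\circ f})\circ (\delta_E\otimes \Id_{\hat A^\univ})\circ \hat f^\ast\gamma_E$, which is exactly the $(g\circ f)$\nb-YD condition. No step is a genuine obstacle; the only care needed is bookkeeping of the legs, and the one substantive point to isolate as a preliminary remark is the dual expression of $\unibich^{f}$ via $\hat f$ and $\unibich^{B}$.
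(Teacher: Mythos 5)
Your proposal is correct and follows essentially the same route as the paper: the paper proves (1) by deriving exactly your first naturality relation $(\Id_{\hat A^{\univ}}\otimes g)\circ\sigma^{\univ}_{f}=\sigma^{\univ}_{g\circ f}\circ(g\otimes\Id_{\hat A^{\univ}})$ from $\unibich^{g\circ f}=(\Id_{\hat A^{\univ}}\otimes g)(\unibich^{f})$ and then applying $\Id_{D}\otimes\Id_{\hat A^{\univ}}\otimes g$ to \eqref{eq:yd-universal}, leaving (2) to ``a similar argument.'' You additionally spell out (2) via the dual identity $\unibich^{g\circ f}=(\hat f\otimes\Id_{C^{\univ}})(\unibich^{g})$ and the Podle\'s-condition check, both of which are correct and consistent with the paper's implicit treatment.
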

\begin{proof}
We only prove (1); a similar argument applies to (2). By \eqref{eq:morphism-bicharacter}, the  universal bicharacters $\unibich^{f}$ and $\unibich^{g\circ f}$  corresponding to $f$ and $g\circ f$, respectively, are related by the equation
\begin{align*}
  (\Id_{\hat A^{\univ}} \otimes g)(\unibich^{f}) =
  (\Id_{\hat A^{\univ}} \otimes (g\circ f))(\unibich^{A}) =
 \unibich^{g\circ f},
\end{align*}
and hence
\begin{align}
    (\Id_{\hat A^{\univ}} \otimes g) \sigma^{\univ}_{f}(b \otimes
    \hat a) &= (\Id_{\hat A^{\univ}} \otimes g) (\unibich^{f}(\hat a
    \otimes b) (\unibich^{f})^{*}) =\sigma^{\univ}_{(g\circ f)}(g(b) \otimes \hat a)
\end{align}
for all $b\in B^{\univ}$ and $\hat a \in \hat A^{\univ}$. Now, we apply  $\Id_{D} \otimes \Id_{\hat A^{\univ}} \otimes g$ to  \eqref{eq:yd-universal}
and conclude that
\begin{align*}
  (\gamma_{D} \otimes \Id_{C^{\univ}})g_{*}\delta_{D} &= (\Id_{D} \otimes \sigma^{\univ}_{(g\circ f)})(g_{*}\delta_{D} \otimes \Id_{\hat A^{\univ}})\gamma_{D}. \qedhere
\end{align*}
\end{proof}
Clearly, the assignments   $(D,\gamma_{D}, \delta_{D}) \mapsto (D,\gamma_{D}, g_{*}\delta_{D})$ and
 $(E,\gamma_{E},\delta_{E}) \mapsto  (E,\hat f_{*}\gamma_{E},\delta_{E})$ extend to functors
\begin{align*}
 g_{*}& \colon \YDcat{(f)} \to \YDcat{(g\circ f)} &&\text{and} & f^{*}&\colon \YDcat{(g)}  \to \YDcat{(g\circ f)},
\end{align*}
respectively.
\begin{proposition} \label{proposition:yd-product}
 Let $(D,\gamma_{D},\delta_{D})$ be an $f$\nb-Yetter-Drinfeld $\Cst$\nb-algebra and $(E,\gamma_{E},\delta_{E})$  a $g$\nb-Yetter-Drinfeld $\Cst$\nb-algebra. Denote by $D\boxtimes^{\Id}_{\max} E$ the maximal twisted tensor product of $D$ and $E$ formed with respect to $\delta_{D},\gamma_{E}$ and the identity morphism on $B^{\univ}$.
Then there exist unique continuous right coactions $\gamma_{D\boxtimes E}$ and  $\delta_{D\boxtimes E}$ of $\UDG$  and $\G[I]^{\univ}$, respectively, on $D\boxtimes^{\Id}_{\max} E$ such that 
 \begin{align*}
   \gamma_{(D\boxtimes E)} \circ j^{\univ}_{D} &= (j^{\univ}_{D} \otimes \Id_{\hat A^{\univ}})\gamma_{D}, &
   \gamma_{(D\boxtimes E)} \circ j^{\univ}_{E} &= (j^{\univ}_{E} \otimes \hat f)\gamma_{E},   \\
   \delta_{(D\boxtimes E)} \circ j^{\univ}_{D} &= (j^{\univ}_{D} \otimes g)\delta_{D}, &
   \delta_{(D\boxtimes E)} \circ j^{\univ}_{E} &= (j^{\univ}_{E} \otimes \Id_{C^{\univ}})\delta_{E}.
 \end{align*}
Moreover, the triple  $(D \boxtimes^{\Id}_{\max} E,\gamma_{D\boxtimes E},\delta_{D\boxtimes E})$ is a $(g\circ f)$\nb-Yetter-Drinfeld $\Cst$\nb-algebra.
\end{proposition}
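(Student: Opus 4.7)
The plan is to apply the universal property of the maximal twisted tensor product to construct $\gamma_{(D\boxtimes E)}$ and $\delta_{(D\boxtimes E)}$ from suitable morphisms, and then to check coassociativity, the Podle\'s condition, and the Yetter--Drinfeld compatibility. Concretely, write $P=D\boxtimes^{\Id}_{\max}E$. For $\gamma_{(D\boxtimes E)}$ I would set
\[
\varphi \defeq (j^{\univ}_{D}\otimes\Id_{\hat A^{\univ}})\gamma_{D}, \qquad \psi \defeq (j^{\univ}_{E}\otimes\hat f)\gamma_{E},
\]
viewed as morphisms into $\Mult(P\otimes \hat A^{\univ})$, and similarly define $\varphi'\defeq (j^{\univ}_{D}\otimes g)\delta_{D}$ and $\psi'\defeq (j^{\univ}_{E}\otimes\Id_{C^{\univ}})\delta_{E}$ for $\delta_{(D\boxtimes E)}$. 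Once $(\varphi,\psi)$ is shown to be an $\Id_{B^{\univ}}$-pair (i.e.\ a $\multunit[B]$-pair in the sense of Section \ref{sec:commutation-reduced}) for the coactions $(\delta_{D},\gamma_{E})$ used to define $P$, the universal property delivers the unique $\gamma_{(D\boxtimes E)}$ with the required restriction formulas; the symmetric argument using $(\varphi',\psi')$ produces $\delta_{(D\boxtimes E)}$.

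The crux is the pair condition, and this is precisely where the Yetter--Drinfeld hypothesis enters. For any anti-Heisenberg pair $(\bar\alpha,\bar\beta)$ for $\multunit[B]$, one must show that $(\varphi\otimes\bar\alpha)\delta_{D}(d)$ and $(\psi\otimes\bar\beta)\gamma_{E}(e)$ commute inside $\Mult(P\otimes\hat A^{\univ}\otimes\Bound(\Hils))$. Using the YD relation on $D$,
\[
(\gamma_{D}\otimes\Id_{B^{\univ}})\delta_{D}=(\Id_{D}\otimes\sigma^{\univ}_{f})(\delta_{D}\otimes\Id_{\hat A^{\univ}})\gamma_{D},
\]
I would rewrite the first element as $V_{23}\,Z(d)\,V_{23}^{*}$, where $V\defeq (\Id_{\hat A^{\univ}}\otimes\bar\alpha)(\unibich^{f})\in\U(\hat A^{\univ}\otimes\Bound(\Hils))$ and $Z(d)$ has its $P$- and $\Bound(\Hils)$-legs filled by $(j^{\univ}_{D}\otimes\bar\alpha)\delta_{D}$ applied to the first tensor factor of $\gamma_{D}(d)$, with the $\hat A^{\univ}$-leg carrying the second factor. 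The $\multunit[B]$-pair property of $(j^{\univ}_{D},j^{\univ}_{E})$ makes the $P$- and $\Bound(\Hils)$-legs of $Z(d)$ commute with those of $(\psi\otimes\bar\beta)\gamma_{E}(e)$, so the problem reduces to absorbing the $V_{23}$-conjugation. This absorption is exactly the content of the bicharacter identities $\unibich^{f}=(\Id_{\hat A^{\univ}}\otimes f)(\unibich^{A})=(\hat f\otimes\Id_{B^{\univ}})(\unibich^{B})$: it ensures that $V$ pairs correctly with the $\hat f$-factor in $\psi$ via the universal Heisenberg-type relations satisfied by $(\bar\alpha,\bar\beta)$, yielding the required commutation. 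The argument for $(\varphi',\psi')$ is dual, replacing YD on $D$ by YD on $E$.

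Granted the two coactions, coassociativity follows formally from the universal property: both $(\gamma_{(D\boxtimes E)}\otimes\Id)\gamma_{(D\boxtimes E)}$ and $(\Id\otimes\hat\Delta^{\univ}_{A})\gamma_{(D\boxtimes E)}$ restrict to $(\gamma_{D}\otimes\Id)\gamma_{D}=(\Id\otimes\hat\Delta^{\univ}_{A})\gamma_{D}$ on $j^{\univ}_{D}(D)$ and, via $(\hat f\otimes\hat f)\hat\Delta^{\univ}_{B}=\hat\Delta^{\univ}_{A}\hat f$, to the analogous identity for $\gamma_{E}$ on $j^{\univ}_{E}(E)$; similarly for $\delta_{(D\boxtimes E)}$. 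The Podle\'s condition is inherited from those of $\gamma_{D},\gamma_{E},\delta_{D},\delta_{E}$ together with the density $P=j^{\univ}_{D}(D)\cdot j^{\univ}_{E}(E)$.

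Finally, the $(g\circ f)$-Yetter--Drinfeld compatibility
\[
(\gamma_{(D\boxtimes E)}\otimes\Id_{C^{\univ}})\delta_{(D\boxtimes E)}=(\Id_{P}\otimes\sigma^{\univ}_{g\circ f})(\delta_{(D\boxtimes E)}\otimes\Id_{\hat A^{\univ}})\gamma_{(D\boxtimes E)}
\]
is checked on the generating images $j^{\univ}_{D}(D)$ and $j^{\univ}_{E}(E)$ separately. On $j^{\univ}_{D}(D)$ both sides unfold to expressions in $\gamma_{D},\delta_{D},g,\hat f$ whose equality reduces to YD on $D$ together with $\unibich^{g\circ f}=(\Id_{\hat A^{\univ}}\otimes g)(\unibich^{f})$; on $j^{\univ}_{E}(E)$ the calculation is dual, invoking YD on $E$ and $\unibich^{g\circ f}=(\hat f\otimes\Id_{C^{\univ}})(\unibich^{g})$. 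I expect the main obstacle to be precisely the commutation verification in the second paragraph: it is a three-leg calculation that must simultaneously track the YD twist $\sigma^{\univ}_{f}$, the bicharacter $\unibich^{f}$, the morphism $\hat f$, and the $\multunit[B]$-pair relation for $(j^{\univ}_{D},j^{\univ}_{E})$, and all other steps are formal consequences of it.
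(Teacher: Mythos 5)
Your proposal follows essentially the same route as the paper: construct each coaction via the universal property by verifying the $\Id_{B^{\univ}}$-pair condition against an anti-Heisenberg pair, rewriting one factor with the Yetter--Drinfeld relation and the other with the anti-Heisenberg implementation of the comultiplication (Lemma \ref{lemma:delta-implementation} / \eqref{eq:delta-implementation-universal}) so that both become conjugates by a common unitary of elements that commute by the defining $\multunit[B]$-pair relation for $(j^{\univ}_{D},j^{\univ}_{E})$; the paper writes this out for $\delta_{(D\boxtimes E)}$ where you sketch the mirror case $\gamma_{(D\boxtimes E)}$. Your final step likewise matches the paper, which packages the generator-wise verification of the $(g\circ f)$-Yetter--Drinfeld condition as Lemma \ref{lemma:yd-push-pull} combined with the density $j^{\univ}_{D}(D)\cdot j^{\univ}_{E}(E)=D\boxtimes^{\Id}_{\max}E$.
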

\begin{proof}
  Uniqueness is clear since $j^{\univ}_{D}(D) \cdot j^{\univ}_{E}(E) =  D\boxtimes^{\Id}_{\max} E$.

To prove existence of $\delta_{(D\boxtimes E)}$, we need to show that the representations
\begin{align*}
  \phi:=(j^{\univ}_{D} \otimes g)\delta_{D} \quad\text{and}\quad \psi:=(j^{\univ}_{E} \otimes \Id_{C^{\univ}})\delta_{E}
\end{align*}
form an $\Id_{B^{\univ}}$-commutative representation. By Lemma
\ref{lemma:f-pairs}, it suffices to choose an anti-Heisenberg pair
$(\bar\pi,\bar{\hat{\pi}})$  \corr{\tiny for the reduced bicharacter $W^{B}$, which
  corresponds to the}{for the} identity morphism on $B^{\univ}$, and  to show that
 the elements
\begin{align*}
  (\phi \otimes \bar\pi)\delta_{D}(d) = (j^{\univ}_{D} \otimes (g \otimes \bar\pi)\Delta^{\univ}_{D})\corr{$\delta_{d}$}{\delta_{D}}
\end{align*}
and
\begin{align*}
  (\psi \otimes \bar{\hat{\pi}})\gamma_{E}(e) = ((j^{\univ}_{E} \otimes \Id_{C^{\univ}})\delta_{E} \otimes \bar{\hat{\pi}})\gamma_{E}(e)
\end{align*}
commute for all $d\in D$ and $e\in E$.  We use \eqref{eq:delta-implementation-universal} and the Yetter-Drinfeld condition for $(E,\gamma_{E},\delta_{E})$ to rewrite these elements in the form
\begin{align*}
  (\phi \otimes \bar\pi)\delta_{D}(d) =  (\hat\unibich^{B}_{g\bar{\hat{\pi}}})_{23} (j^{\univ}_{D} \otimes  \bar\pi)\delta_{D}(d)_{13}   (\hat\unibich^{B}_{g\bar{\hat{\pi}}})^{*}_{23},
\end{align*}
and
\begin{align*}
  (\psi \otimes \bar{\hat{\pi}})\gamma_{E}(e) &= 
  (j^{\univ}_{E} \otimes (\Id_{C^{\univ}} \otimes \bar{\hat{\pi}})(\sigma^{\univ}_{g})^{-1})(\gamma_{E} \otimes \Id_{C^{\univ}})\delta_{E}(e)  \\
&= (\hat \unibich^{B}_{g\bar{\hat{\pi}}})_{23}
  (j^{\univ}_{E} \otimes (\Id_{C^{\univ}} \otimes \bar{\hat{\pi}})\sigma)(\gamma_{E} \otimes \Id_{C^{\univ}})\delta_{E}(e)  (\hat \unibich^{B}_{g\bar{\hat{\pi}}})^{*}_{23}.
\end{align*}
Now, $(j^{\univ}_{D} \otimes  \bar\pi)\delta_{D}(d)_{13}$ commutes with $  (j^{\univ}_{E} \otimes (\Id_{C^{\univ}} \otimes \bar{\hat{\pi}})\sigma)(\gamma_{E} \otimes \Id_{C^{\univ}})\delta_{E}(e)$ because 
$(j^{\univ}_{D} \otimes \bar\pi)\delta_{D}(d)$ commutes with $(j^{\univ}_{E} \otimes \bar{\hat\pi})\gamma_{E}(e')$ for all $e'\in E$. 

 The universal property of $D\boxtimes^{\Id}_{\max} E$  yields a  morphism
 \corr{$\delta_{(D\boxtimes E)}$}{$\delta_{D\boxtimes E}$} as desired, and it is easy to see that this morphism
 is a continuous coaction. Existence of  $\gamma_{D\boxtimes E}$  follows similarly.

Finally, the relation  $j^{\univ}_{D}(D) \cdot j^{\univ}_{E}(E) =  D\boxtimes^{\Id}_{\max} E$ and Lemma \ref{lemma:yd-push-pull} imply that the triple  $(D \boxtimes^{\Id}_{\max} E,\gamma_{D\boxtimes E},\delta_{D\boxtimes E})$ is a $(g\circ f)$\nb-Yetter-Drinfeld $\Cst$\nb-algebra. 
\end{proof}

We thus obtain  a bifunctor
\begin{align} \label{eq:yd-bifunctor}
  \YDcat(f) \times \YDcat(g) \to \YDcat(g\circ f)
\end{align}
which sends a pair of Yetter-Drinfeld $\Cst$\nb-algebras $((D,\gamma_{D},\delta_{D}),(E,\gamma_{E},\delta_{E}))$ to
\begin{align*}
(D,\gamma_{D},\delta_{D}) \boxtimes_{\maxtensor}^{\Id} (E,\gamma_{E},\delta_{E}) := (D \boxtimes^{\Id}_{\max} E,\gamma_{D\boxtimes E},\delta_{D\boxtimes E}),
\end{align*}
and a pair  $(\phi,\psi)$ of morphisms to \corr{$\psi\boxtimes \psi$}{$\phi\boxtimes \psi$}. 

Letting $\G,\G[H],\G[I]$ and $f,g$ vary, we obtain a bicategory: 
\begin{theorem}
  There exists a bicategory $\YDcat$, where
 the $0$-objects are $\Cst$\nb-quantum groups,
 the category of $1$\nb-morphisms between two  $\Cst$\nb-quantum groups
    $\G=(A,\Comult[A])$ and $\G[H]=(B,\Comult[B])$ is the disjoint
    union of the categories $\YDcat(f)$ for all morphisms $f \in
    \Mor(A^{\univ},B^{\univ})$ of $\Cst$-bialgebras,
and the horizontal composition is given by the bifunctors
    in \eqref{eq:yd-bifunctor}.
\end{theorem}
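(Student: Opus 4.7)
To establish that the listed data form a bicategory, I would specify identity 1-cells, construct associator and unitor natural isomorphisms, and verify the pentagon and triangle coherence. For each $\Cst$-quantum group $\G=(A,\Delta_{A})$, take the identity 1-cell at $\G$ to be $(\C,\tau_{\hat A^{\univ}},\tau_{A^{\univ}})\in\YDcat(\Id_{A^{\univ}})$, both coactions being trivial, $c\mapsto c\otimes 1$. The Yetter--Drinfeld condition \eqref{eq:yd-universal} for $f=\Id_{A^{\univ}}$ reduces to $\sigma^{\univ}_{\Id_{A^{\univ}}}(1\otimes 1)=1\otimes 1$, which holds since $\unibich^{A}$ is unitary.

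For the unitors, fix $(D,\gamma_{D},\delta_{D})\in\YDcat(f)$ with $f\in\Mor(A^{\univ},B^{\univ})$. Example \ref{example:one-pair} gives canonical $\Cst$-algebra isomorphisms $\C\boxtimes^{\Id}_{\max}D\cong D\cong D\boxtimes^{\Id}_{\max}\C$, via the respective nondegenerate maps from $D$. Inspecting the formulas of Proposition \ref{proposition:yd-product} on the generators $j^{\univ}_{\C}$ and $j^{\univ}_{D}$ and using $\hat{\Id}=\Id$, one sees that under these isomorphisms the induced coactions on the twisted products correspond to $\gamma_{D}$ and $\delta_{D}$, so the unitors are isomorphisms inside $\YDcat(f)$.

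For the associator, given $(D,\gamma_{D},\delta_{D})\in\YDcat(f)$, $(E,\gamma_{E},\delta_{E})\in\YDcat(g)$ and $(F,\gamma_{F},\delta_{F})\in\YDcat(h)$ for composable $A^{\univ}\xrightarrow{f}B^{\univ}\xrightarrow{g}C^{\univ}\xrightarrow{h}D^{\univ}$, I would mimic the Yoneda-style argument used for part~(2) of Theorem \ref{theorem:quasi-triangular}: both $(D\boxtimes^{\Id}_{\max}E)\boxtimes^{\Id}_{\max}F$ and $D\boxtimes^{\Id}_{\max}(E\boxtimes^{\Id}_{\max}F)$ satisfy the same universal property, namely receiving a triple of morphisms from $D$, $E$, $F$ that pairwise form $\Id$-pairs with respect to the relevant coactions. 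This delivers a canonical $\Cst$-algebra isomorphism; equivariance with respect to both the $\UDG$- and $\UG[I]$-coactions follows by checking the defining formulas of Proposition \ref{proposition:yd-product} separately on the three subalgebras $j^{\univ}_{D}(D)$, $j^{\univ}_{E}(E)$, $j^{\univ}_{F}(F)$ and invoking coassociativity of $\hat{\Delta}^{\univ}_{A}$ and $\Delta^{\univ}_{D}$.

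The main obstacle is the compatibility of this abstract associator with both coactions carried by iterated maximal twisted tensor products. Once this is checked on generators, the pentagon and triangle coherence axioms reduce to the corresponding coherence of the underlying universal-property isomorphisms, which follows from the uniqueness clauses; the naturality of associators and unitors in 2-cells is immediate from functoriality of $\boxtimes^{\Id}_{\max}$.
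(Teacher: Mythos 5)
Your plan follows the paper's proof: the same identity 1-cells ($\C$ with trivial coactions) and the same Yoneda-type comparison of universal properties for associativity. One point needs sharpening, and it is precisely where the twisting enters: the three pairwise conditions characterizing morphisms out of either iterated product are \emph{not} all $\Id$-pair conditions. With $D\in\YDcat(f)$, $E\in\YDcat(g)$, $F\in\YDcat(h)$, the pairs $(\pi_{D},\pi_{E})$ and $(\pi_{E},\pi_{F})$ are $\Id$-pairs, but $(\pi_{D},\pi_{F})$ must be a $g$-pair for $(D,\delta_{D})$ and $(F,\gamma_{F})$ -- the identity is not even meaningful there, since $\delta_{D}$ and $\gamma_{F}$ are coactions over different quantum groups -- and the reason this is the right condition is that $\delta_{D\boxtimes E}\circ j^{\univ}_{D}=(j^{\univ}_{D}\otimes g)\delta_{D}$ is the push-forward $g_{*}\delta_{D}$ rather than $\delta_{D}$ itself, so Lemma \ref{lemma:pairs-action-push-pull} converts the $\Id$-pair condition on $(\pi_{D}\chiprod\pi_{E},\pi_{F})$ restricted to $j^{\univ}_{D}(D)$ into a $g$-pair condition on $(\pi_{D},\pi_{F})$. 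With that correction the two bracketings are seen to have the same universal property, and the rest of your outline (equivariance checked on generators, coherence from uniqueness) matches the paper's argument.
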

\begin{proof}
The main points to prove are existence of units and associativity of the horizontal composition.

  For every $\Cst$-quantum group $\G=(A,\Comult[A])$, the $\Cst$\nb-algebra $\C$, equipped with the trivial coactions $\tau_{\UDG}$ of $\UDG$ and $\tau_{\UG}$ of $\UG$, is an $\Id_{A^{\univ}}$\nb-Yetter-Drinfeld $\Cst$\nb-algebra, and this  is the identity of $\G$  in the sense that for every morphism $f$ as above and every $f$\nb-Yetter-Drinfeld $\Cst$\nb-algebra $(C,\gamma_{C},\delta_{C})$, one has natural isomorphisms
\begin{align*}
  (\C,\tau_{\UDG},\tau_{\UG}) \boxtimes_{\maxtensor}^{\Id_{A}} (C,\gamma_{C},\delta_{C}) \cong (C,\gamma_{C},\delta_{C}) \cong (C,\gamma_{C},\delta_{C}) \boxtimes_{\maxtensor}^{\Id_{A}} (\C,\tau_{\UDG[H]},\tau_{\UG[H]}).
\end{align*}

Let us \corr{prove is associativity}{prove associativity}. Suppose that $\G=(A_{i},\Delta_{i})$, where $i=1,\ldots,4$, are $\Cst$\nb-quantum
groups with morphisms $f^{(i)} \in \Mor(A_{i}^{\univ},A_{i+1}^{\univ})$ of $\Cst$-bialgebras
and $f^{(i)}$-Yetter-Drinfeld $\Cst$-algebras $(C_{i},\gamma_{i},\delta_{i})$
 for $i=1,\ldots,3$. Denote by $\Id_{i}$ the identity on $A^{\univ}_{i}$. We claim that there exists a unique isomorphism of $(f^{(3)}\circ f^{(2)} \circ f^{(1)})$\nb-Yetter-Drinfeld algebras
\begin{align}
  C_{1} \boxtimes_{\maxtensor}^{\Id_{2}} (C_{2}\boxtimes_{\maxtensor}^{\Id_{3}} C_{3}) 
  \to (C_{1}\boxtimes_{\maxtensor}^{\Id_{2}} C_{2}) \boxtimes_{\maxtensor}^{\Id_{3}} C_{3}
\end{align}
that intertwines the canonical morphisms from each $C_{i}$ to these $\Cst$\nb-algebras. This follows from a similar Yoneda-type argument as used in the proof of Theorem \ref{theorem:quasi-triangular}. Indeed, suppose that $F$ is a $\Cst$-algebra   with morphisms $\pi_{i} \colon C_{i} \to F$ for $i=1,\ldots,3$ such that
 \begin{enumerate}
 \item $(\pi_{1},\pi_{2})$  is an  $\Id_{2}$-commutative representation of $(C_{1},\delta_{1})$ and $(C_{2},\gamma_{2})$, and
 \item $(\pi_{1} \chiprod \pi_{2},\pi_{3})$ is an $\Id_{3}$-commutative representation of $(C_{1}\boxtimes_{\maxtensor}^{\Id_{2}} C_{2},\delta_{C_{1}\boxtimes C_{2}})$ and $(C_{3},\gamma_{3})$.
\end{enumerate}
Now, $C_{1}\boxtimes_{\maxtensor}^{\Id_{2}} C_{2}$ is generated by the images of the canonical morphisms from $C_{1}$ and $C_{2}$ , and the first morphism is equivariant with respect to $f^{(2)}_{*}\delta_{1}$ and $\delta_{C_{1}\boxtimes C_{2}}$, while the second one is equivariant with respect to $\delta_{2}$ and $\delta_{C_{1}\boxtimes C_{2}}$. Hence, (1)--(2) are equivalent to (1) together with the following two conditions:
\begin{itemize}
\item[(2a)]  $(\pi_{1},\pi_{3})$ is an $f_{2}$-commutative representation of $(C_{1},\delta_{1})$ and $(C_{3},\gamma_{3})$, and
\item[(2b)] $(\pi_{2},\pi_{3})$ is an $\Id_{3}$-commutative representation of $(C_{2},\delta_{2})$ and $(C_{3},\gamma_{3 })$.
\end{itemize}
But if (2b) holds, then a similar argument shows that (1) and (2a) are equivalent to
\begin{itemize}
\item[(3)] $(\pi_{1},\pi_{2} \chiprod \pi_{3})$ is an $\Id_{2}$-commutative representation of $(C_{1},\delta_{1})$ and $(C_{2}\boxtimes_{\maxtensor}^{\Id_{3}} C_{3},\gamma_{C_{2}\boxtimes C_{3}},\delta_{C_{2}\boxtimes C_{3}})$.
\end{itemize}
\corr{Thus, there we obtain a bijectiion}{Thus, we obtain a bijection} between the morphisms from 
$C_{1} \boxtimes_{\maxtensor}^{\Id_{2}} (C_{2}\boxtimes_{\maxtensor}^{\Id_{3}} C_{3})$ to $F$ and 
the morphisms from $(C_{1}\boxtimes_{\maxtensor}^{\Id_{2}} C_{2}) \boxtimes_{\maxtensor}^{\Id_{3}} C_{3}$ to $F$, and this bijection is compatible with the canonical morphisms from each $C_{i}$ to the two domains.
\end{proof}

% \appendix
% \section{Transfer of non-injective coactions along quantum group morphisms}

% Note that the Podle\'s condition implies
%     \begin{align} \label{eq:action-slices} \{(\Id_{C} \otimes
%       \omega)(\gamma(C)) : \omega \in A'\}^{\text{CLS}}= C.
%     \end{align}
% In particular,  any two continuous linear maps $f,g$  defined on $C$  that satisfy $(f\otimes \Id_{A})\gamma = (g\otimes \Id_{A})\gamma$ are necessarily equal. %\Sutanucomment{What does ``coincide" means here?} \Thomascomment{``are equal''}
% \end{remark}

\appendix

\section{Normal coactions of universal $\Cst$-bialgebras}

 This appendix summarises the relation between coactions of a $\Cst$-quantum group and coactions  of its universal counterpart. It does not contain any new results but is included for convenience of the reader because we did not find a good reference besides \cite{Fischer:Quantum_group_and_equivariant_kk-theory}.

Let $\Qgrp{G}{A}$ be a $\Cst$-quantum group.
\begin{definition}
  A coaction $\gamma$ of $(A^{\univ},\Delta^{\univ}_{A})$ on a $\Cst$-algebra $C$ is \emph{normal} if the morphism
  \begin{align}
   \gamma^{\red}:=(\Id_{C} \otimes \Lambda_{A}) \circ \gamma \colon C \to C\otimes A
  \end{align}
is injective. Denote by $\Cstcat^{\normal}(\UG)$ the full subcategory of $\Cstcat(\UG)$ of all normal and continuous coactions.
\end{definition}

$\G$-$\Cst$-algebras with injective underlying coaction can be identified with normal $\UG$-$\Cst$-algebras as follows. 
The assignment $(C,\gamma) \mapsto (C,\gamma^{\red})$ evidently defines a functor 
\begin{align}
  \label{eq:1}
  \Cstcat^{\normal}(\UG) \to \Cstcat^{\mathrm{i}}(\G),
\end{align}
where $\Cstcat^{\mathrm{i}}(\G)$ denotes the full subcategory of $\Cstcat(\G)$ formed by all {injective} coactions,
and this functor is an isomorphism \cite{Fischer:Quantum_group_and_equivariant_kk-theory}. To describe the inverse, we use the coaction \corr{$\Delta^{\red,\univ}$}{$\Delta^{\red,\univ}_A$} of $\UG$ on $A$ obtained in Proposition \ref{prop:reduced-universal-coaction}.
Clearly, $\Lambda_{A}$ is an equivariant  morphism  from $(A^{\univ},\Delta^{\univ}_{A})$ to  $(A,\corr{$\Delta^{\red,\univ}$}{\Delta^{\red,\univ}_A})$.

\begin{theorem}
  Let $(C,\gamma)$ be a $\G$-$\Cst$-algebra. Suppose that $\gamma$ is injective. Then there exists a unique  coaction  $\gamma^{\univ}$ of $(A^{\univ},\Delta_{A}^{\univ})$ on $C$ such that the following diagram commutes,
  \begin{align*}
    \xymatrix@C=40pt@R=20pt{
      C \ar[r]^{\gamma} \ar[d]_{\gamma^{\univ}} & C\otimes A \ar[d]^{\Id_{C} \otimes \Delta^{\red,\univ}_{A}}  \\
      C \otimes A^{\univ}\ar[r]^{\gamma \otimes \Id_{A^{\univ}}} & C \otimes A \otimes A^{\univ},
    }
  \end{align*}
and $(C,\gamma^{\univ})$ is a normal $\UG$-$\Cst$-algebra.
The assignment $(C,\gamma) \mapsto (C,\gamma^{\univ})$ extends to a functor $\Cstcat^{\mathrm{i}}(\G) \to \Cstcat^{\normal}(\UG)$ which  is inverse to  the functor given by $(C,\gamma) \mapsto (C,\gamma^{\red})$.
\end{theorem}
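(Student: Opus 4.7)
The plan is to establish existence and uniqueness of $\gamma^{\univ}$, verify that $(C, \gamma^{\univ})$ is a normal continuous $\UG$-$\Cst$-algebra, and then confirm functoriality and the inverse property.

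For uniqueness, I would first observe that the strict extension $\gamma \otimes \Id_{A^{\univ}} \colon \Mult(C \otimes A^{\univ}) \to \Mult(C \otimes A \otimes A^{\univ})$ is injective. Injectivity on the minimal tensor product $C \otimes A^{\univ}$ is immediate from injectivity of $\gamma$, and any multiplier $X$ in the kernel of the strict extension satisfies $(\gamma \otimes \Id_{A^{\univ}})(X(c\otimes a)) = 0$ for every $c \in C$, $a \in A^{\univ}$, forcing $X(c\otimes a) = 0$ and hence $X = 0$ by non-degeneracy of $C \otimes A^{\univ}$ in $\Mult(C \otimes A^{\univ})$. Uniqueness of $\gamma^{\univ}$ follows.

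For existence, the explicit formula from Proposition~\ref{prop:reduced-universal-coaction} rewrites the right-hand side of the diagram as
\begin{align*}
  (\Id_C \otimes \Delta^{\red,\univ}_A)(\gamma(c)) = (1_C \otimes \maxcorep[A])(\gamma(c) \otimes 1_{A^{\univ}})(1_C \otimes \maxcorep[A])^{*}
\end{align*}
in $\Mult(C \otimes A \otimes A^{\univ})$. The main technical obstacle, and the hardest step of the proof, is to show that this element lies in the image of $\gamma \otimes \Id_{A^{\univ}}$, so that one may define $\gamma^{\univ}(c)$ as its unique preimage. My approach would be to realize $C$ faithfully on a Hilbert space $\Hils[K]$, fix a faithful Heisenberg pair $(\pi,\hat\pi)$ for $\G$ on the canonical Hilbert space $\Hils$, and then exploit the right corepresentation property of $\maxcorep[A]$ together with the coaction identity $(\Id_C \otimes \Delta_A)\gamma = (\gamma \otimes \Id_A)\gamma$ to rearrange the displayed expression into the form $(\gamma \otimes \Id_{A^{\univ}})(Y)$ for an appropriate $Y \in \Mult(C \otimes A^{\univ})$. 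Injectivity of $\gamma$ is precisely what ensures this lift is unambiguous; this step is carried out in Fischer~\cite{Fischer:Quantum_group_and_equivariant_kk-theory}.

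The coaction property of $\gamma^{\univ}$ then follows from uniqueness applied in $\Mult(C \otimes A^{\univ} \otimes A^{\univ})$: after composing with $\gamma \otimes \Id \otimes \Id$, the two candidates $(\Id_C \otimes \Delta^{\univ}_A)\gamma^{\univ}$ and $(\gamma^{\univ} \otimes \Id_{A^{\univ}})\gamma^{\univ}$ collapse to the same element by coassociativity of $\Delta^{\univ}_A$ and the coaction identity for $\gamma$. Normality is immediate from the diagram: applying $\Id_C \otimes \Id_A \otimes \Lambda_A$ and using $(\Id_A \otimes \Lambda_A)\Delta^{\red,\univ}_A = \Delta_A$, a direct consequence of Proposition~\ref{prop:reduced-universal-coaction}, yields $(\gamma \otimes \Id_A)\circ(\Id_C \otimes \Lambda_A)\gamma^{\univ} = (\gamma \otimes \Id_A)\gamma$, and then injectivity of $\gamma \otimes \Id_A$ gives $(\gamma^{\univ})^{\red} = \gamma$, which is injective by assumption. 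Continuity of $\gamma^{\univ}$ follows from the Podle\'s condition for $\gamma$ via the same lifting argument that produced $\gamma^{\univ}$. Finally, functoriality is clear from uniqueness, and the inverse property follows because every normal continuous coaction $\gamma'$ of $(A^{\univ},\Delta^{\univ}_A)$ satisfies the defining diagram with $(\gamma')^{\red}$ in place of $\gamma$, so uniqueness yields $((\gamma')^{\red})^{\univ} = \gamma'$.
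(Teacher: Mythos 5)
Your proposal is correct and follows essentially the same route as the paper: uniqueness from injectivity of the (extended) map $\gamma\otimes\Id_{A^{\univ}}$, existence of the lift deferred to the literature (you cite Fischer; the paper says to copy the proof of Meyer--Roy--Woronowicz, Theorem~6.1, with $\Delta_{R},\Delta_{L}$ replaced by $\Delta^{\red,\univ}_{A},\Delta^{\univ,\red}_{A}$, and also points to Fischer), and the same short computations for normality via $(\Id_{A}\otimes\Lambda_{A})\Delta^{\red,\univ}_{A}=\Delta_{A}$ and for the inverse property via uniqueness. The only cosmetic difference is that the paper writes out the one-line verification that a normal coaction $\gamma'$ satisfies the defining diagram for $(\gamma')^{\red}$, which you assert without displaying.
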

\begin{proof}
  Essentially, but not literally, this is contained in
  \cite{Fischer:Quantum_group_and_equivariant_kk-theory}*{Section 3.3}. To get
  existence of $\gamma^{\univ}$ and  that  $(C,\gamma^{\univ})$ is a normal
  $\UG$-$\Cst$-algebra, one can simply copy the proof of
  \cite{Meyer-Roy-Woronowicz:Homomorphisms}*{Theorem 6.1}, replacing $\Delta_{R}$
  and $\Delta_{L}$ with \corr{$\Delta^{\red,\univ}$}{$\Delta^{\red,\univ}_A$} and
  \corr{\\ $\Delta^{\univ,\red}$}{$\Delta^{\univ,\red}_A$}, respectively. The relation
  \begin{align*}
   (\gamma \otimes \Id_{A})(\Id_{C} \otimes  \Lambda_{A})\gamma^{\univ} = (\Id_{C} \otimes \Delta_{A})\gamma = (\gamma \otimes \Id_{A})\gamma
  \end{align*}
and injectivity of $\gamma$ imply $(\Id_{C} \otimes \Lambda_{A})\gamma^{\univ}=\gamma$. Finally, if $\gamma=(\Id_{C} \otimes \Lambda_{A})\gamma'$ for some normal coaction $\gamma'$ of $(A^{\univ},\Delta^{\univ}_{A})$ on $C$, then
\begin{align*}
  (\gamma \otimes \Id_{A^{\univ}})\gamma' &= (\Id_{C} \otimes \Lambda_{A} \otimes \Id_{A^{\univ}})(\gamma' \otimes \Id_{A^{\univ}})\gamma'  \\ &= (\Id_{C} \otimes \Delta^{\red,\univ}_{A} \circ \Lambda_{A})\gamma' =(\Id_{C} \otimes \Delta^{\red,\univ}_{A})\gamma
\end{align*}
and hence $\gamma^{\univ}=\gamma'$.
\end{proof}

\section{Push-forward of  weakly continuous coactions along morphisms of $\Cst$-quantum groups}

In this section we consider the push-forward of coactions along morphisms of $\Cst$-quantum groups, but not for \emph{injective} coactions as in \cite{Meyer-Roy-Woronowicz:Homomorphisms}, but for \emph{weakly continuous} ones.

Let $\Qgrp{G}{A}$  and $\Qgrp{H}{B}$ be  $\Cst$-quantum groups with a morphism from $\G$ \corr{to a $\DuG[H]$}{to $\DuG[H]$}  in the form of a bicharacter $\chi \in \U(\hat  A \otimes \hat  B)$, and let $(C,\gamma)$ be \corr{an coaction}{a coaction} of $\G$.

If $\gamma$ is \emph{injective},  it was shown in  \cite{Meyer-Roy-Woronowicz:Homomorphisms}*{Theorem 69}
that there exists a unique injective continuous coaction $\chi_{*}\gamma$ of $\DuG[H]$ on $C$ 
 that makes the following diagram commute,
\begin{align} 
\label{eq:pushed-action-deltar}
  \xymatrix@C=40pt@R=20pt{
    C \ar[r]^{\gamma} \ar[d]_{\chi_{*}\gamma} & C\otimes A \ar[d]^{\Id_{C} \otimes \Delta_{R}} \\
    C \otimes \hat  B \ar[r]^(0.4){\gamma \otimes \Id_{\hat  B}} & C \otimes A \otimes \hat  B,
  }
\end{align}
where  $\Delta_{R} \colon A \to A\otimes \hat {B}$  denotes the
 right quantum group \corr{homomorphisms}{homomorphism} associated to $\chi$. 
\begin{definition}
  Let $(C,\gamma)$ be a weakly  continuous coaction of $\G$. We say that \emph{$\chi_{*}\gamma$ exists} if there exists a morphism
$\chi_{*}\gamma \in \Mor(C,C\otimes \hat  B)$
  that makes the following  diagram commute:
\begin{align} 
 \label{eq:pushed-action-deltal}
  \xymatrix@C=40pt@R=20pt{
    C \ar[r]^{\gamma} \ar[d]_{\gamma} & C\otimes A \ar[d]^{\Id_{C} \otimes \Delta_{L}} \\
    C \otimes A \ar[r]^(0.4){{\chi_{*}\gamma} \otimes \Id_{A}} & C \otimes \hat  B\otimes A  .
  }
\end{align}

\end{definition}
\begin{example}
  \begin{enumerate}
  \item If $\gamma$ is injective, then  the 
action $\chi_{*}\gamma$ 
 defined in   \cite{Meyer-Roy-Woronowicz:Homomorphisms}  makes diagram \eqref{eq:pushed-action-deltal} commute; see the proof of   \cite{Meyer-Roy-Woronowicz:Homomorphisms}*{Theorem 69}.
  \item If $\gamma$ lifts to \corr{an coaction}{a coaction} $\gamma^{\univ}$ of $(A^{\univ},\Comult[A]^{\univ})$ such that $\gamma=(\Id_{C} \otimes \Lambda_{A})\gamma^{\univ}$, then $\chi_{*}\gamma$ exists and is equal to  $(\Id_{C} \otimes \hat \Lambda_{B} f)\gamma^{\univ}$, where $\hat \Lambda_{B} \colon \hat  B^{\univ} \to \hat  B $ denotes the reducing morphism and $f\colon A^{\univ} \to \hat  B^{\univ}$ denotes the morphism of $\Cst$\nb-bialgebras corresponding to $\chi$, because 
    \begin{align*}
      (\Id_{C} \otimes \Delta_{L})\gamma &= (\Id_{C} \otimes \Delta_{L}\Lambda_{A})\gamma^{\univ} \\
      &= (\Id_{C} \otimes (\hat \Lambda_{B}^{\univ}f \otimes \Lambda_{A})\Delta^{\univ}_{A})\gamma^{\univ} \\
      &= (\Id_{C} \otimes \hat  \Lambda_{B}f \otimes \Lambda_{A})(\gamma^{\univ} \otimes \Id_{A^{\univ}})\gamma^{\univ} \\
      &= ((\Id_{C} \otimes \hat  \Lambda_{B}f)\gamma^{\univ} \otimes \Id_{A}) \gamma.
    \end{align*}
    For example, the coaction $\gamma:=(\Id_{A^{\univ}} \otimes \Lambda)\Delta^{\univ}_{A}$ of $\G$ on $\hat  A^{\univ}$ has such a lift $\gamma^{\univ}=\Delta^{\univ}_{A}$, and $\gamma$ is injective if and only if the reducing morphism $\Lambda$ is injective.  For a comparison of  coactions of $(A^{\univ},\Delta^{\univ}_{A})$ and of $(A,\Delta_{A})$, see also \cite{Fischer:Quantum_group_and_equivariant_kk-theory}, but note that only  injective ones are considered there.
  \end{enumerate}
\end{example}

\begin{proposition}
  Let $(C,\gamma)$ be a weakly continuous coaction of $\G$. If $\chi_{*}\gamma$ exists, then this morphism is uniquely
    determined, a weakly continuous coaction of $\DuG[H]$, and diagram
    \eqref{eq:pushed-action-deltar} commutes. If $\gamma$ is continuous,
    \corr{the so}{then so} is $\chi_{*}\gamma$.
\end{proposition}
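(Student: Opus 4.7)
The plan is to prove the four assertions---uniqueness, the coaction property of $\chi_*\gamma$, commutativity of \eqref{eq:pushed-action-deltar}, and preservation of (strong) continuity---by repeatedly combining the defining relation \eqref{eq:pushed-action-deltal} with a slicing argument based on weak continuity of $\gamma$. Concretely, since $\{(\Id_C \otimes \mu)\gamma(c) : c\in C,\,\mu \in A'\}^{\CLS} = C$, any identity of the form $(\Phi \otimes \Id_A)\circ\gamma = (\Psi \otimes \Id_A)\circ\gamma$ forces $\Phi = \Psi$, and any morphism out of $C$ is determined by its values on the dense subset $\{(\Id_C \otimes \mu)\gamma(c)\}$.

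For uniqueness, applying $\Id_C \otimes \Id_{\hat B} \otimes \mu$, $\mu \in A'$, to \eqref{eq:pushed-action-deltal} yields
$\chi_*\gamma((\Id_C \otimes \mu)\gamma(c)) = (\Id_C \otimes (\Id_{\hat B} \otimes \mu)\Delta_L)\gamma(c)$,
which pins down $\chi_*\gamma$ on a dense subspace. For the coaction property, I would apply $\cdot \otimes \Id_A$ to both $(\chi_*\gamma \otimes \Id_{\hat B})\circ\chi_*\gamma$ and $(\Id_C \otimes \hat \Delta_B)\circ\chi_*\gamma$; using \eqref{eq:pushed-action-deltal} together with the coaction identity for $\gamma$, both reduce to $(\Id_C \otimes \Xi)\gamma$ with $\Xi$ equal respectively to $(\Id_{\hat B} \otimes \Delta_L)\Delta_L$ and $(\hat \Delta_B \otimes \Id_A)\Delta_L$, and these agree by the left quantum-group-homomorphism axiom \eqref{eq:left_homomorphism}. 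Weak continuity of $\gamma$ then cancels the outer $\gamma$ factor.

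To verify the diagram \eqref{eq:pushed-action-deltar}, the same strategy shows that after composition with $\cdot \otimes \Id_A$, the two sides become $(\Id_C \otimes \Psi)\gamma$ with $\Psi$ equal to $(\Id_A \otimes \Delta_L)\Delta_A$ and $(\Delta_R \otimes \Id_A)\Delta_A$, respectively. The key identity $(\Id_A \otimes \Delta_L)\Delta_A = (\Delta_R \otimes \Id_A)\Delta_A$ I plan to check by composing each side with $\Id_{\hat A} \otimes \cdot$ applied to $\multunit[A]$: expanding $(\Id_{\hat A} \otimes \Delta_A)\multunit[A] = \multunit[A]_{12}\multunit[A]_{13}$ and then using the two halves of \eqref{eq:left-right-homomorphisms}, both sides yield $\multunit[A]_{12}\chi_{13}\multunit[A]_{14}$; slicing the $\hat A$-leg of $\multunit[A]$ then recovers the identity as morphisms $A \to A \otimes \hat B \otimes A$.

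Finally, weak continuity of $\chi_*\gamma$ follows from the formula $(\Id_C \otimes \omega)\chi_*\gamma((\Id_C \otimes \mu)\gamma(c)) = (\Id_C \otimes (\omega \otimes \mu)\Delta_L)\gamma(c)$, provided the functionals $(\omega \otimes \mu)\Delta_L$ span a weak-$*$ dense subspace of $A'$---a standard Podle\'s-type property of $\Delta_L$. Strong continuity of $\chi_*\gamma$ given strong continuity of $\gamma$ will be obtained by multiplying \eqref{eq:pushed-action-deltal} on the right by $1_C \otimes 1_{\hat B} \otimes A$ and simplifying both sides using the Podle\'s condition $\gamma(C)(1_C \otimes A) = C \otimes A$ together with the density $\Delta_L(A)(\hat B \otimes 1_A) = \hat B \otimes A$. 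The main obstacle is keeping track of these various density properties of $\Delta_L$ (and their counterparts for $\Delta_R$) which do not appear in the statement but are implicit in the theory of quantum-group homomorphisms; once they are invoked, the proof proceeds mechanically from the slicing scheme above.
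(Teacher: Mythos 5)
Your proposal is correct and follows essentially the same route as the paper: every assertion is extracted from the defining relation \eqref{eq:pushed-action-deltal} by slicing the final $A$-leg via weak continuity of $\gamma$, using the coassociativity axiom for $\Delta_{L}$ for the coaction property, the identity $(\Id_{A}\otimes\Delta_{L})\Delta_{A}=(\Delta_{R}\otimes\Id_{A})\Delta_{A}$ for the diagram, and the Podle\'s/cancellation properties of $\gamma$ and $\Delta_{L}$ for continuity. The only cosmetic differences are that the paper cites the compatibility of $\Delta_{L}$ and $\Delta_{R}$ from Meyer--Roy--Woronowicz rather than re-deriving it on $\multunit[A]$, and obtains the weak-$*$ density of the functionals $(\omega\otimes\mu)\Delta_{L}$ in $A'$ from injectivity of $\Delta_{L}$ (via Hahn--Banach) rather than from a Podle\'s-type condition.
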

\begin{proof}
Since $\gamma$ is weakly continuous, the map $\chi_{*}\gamma$ is uniquely
determined by \eqref{eq:pushed-action-deltal}. To see that it is \corr{an
  coaction}{a coaction}, note that
  \begin{align*}
    (\chi_{*}\gamma \otimes \Id_{\hat  B} \otimes \Id_{A})(\chi_{*}\gamma \otimes \Id_{A})\gamma &=
       (\chi_{*}\gamma \otimes  \Delta_{L})\gamma \\
       &= (\Id_{C} \otimes \Id_{\hat  B} \otimes \Delta_{L})(\Id_{C} \otimes \Delta_{L})\gamma \\
&= (\Id_{C} \otimes \DuComult[B] \otimes \Id_{A})(\Id_{C} \otimes \Delta_{L})\gamma  \\
&= (\Id_{C} \otimes \DuComult[B] \otimes \Id_{A})(\chi_{*}\gamma \otimes \Id_{A})\gamma.
\end{align*}
Slicing the third tensor factor, we find that $\chi_{*}\gamma$ indeed is \corr{an
  coaction}{a coaction}.

The following computation shows that~$\chi_{*}\gamma$ 
makes diagram \eqref{eq:pushed-action-deltar} commute, and  uses the relation
$(\Id_{A}\otimes\Delta_{L})\Comult[A]=(\Delta_{R}\otimes\Id_{A})\Comult$ 
\cite{Meyer-Roy-Woronowicz:Homomorphisms}*{Lemma 5.7 (36)}:
  \begin{align*}
    (\gamma \otimes \Id_{\hat  B} \otimes \Id_{A})(\chi_{*}\gamma \otimes \Id_{A})\gamma &=
    (\gamma \otimes \Delta_{L})\gamma \\
    &= (\Id_{C} \otimes \Id_{A} \otimes \Delta_{L})(\Id_{C} \otimes \Comult[A])\gamma \\
&= (\Id_{C} \otimes \Delta_{R} \otimes \Id_{A})(\Id_{C} \otimes \Comult[A])\gamma   \\
&= (\Id_{C} \otimes \Delta_{R} \otimes \Id_{A})(\gamma \otimes \Id_{A})\gamma.
  \end{align*}

  To see that $\chi_{*}\gamma$ is weakly continous,  note that
  \begin{align*}
    \{ (\Id_{C} \otimes \omega)\chi_{*}\gamma(c) : \omega \in \hat B', c\in C\} &\subseteq
    \{ (\Id_{C} \otimes \omega \otimes \omega')(\chi_{*}\gamma \otimes \Id_{A})\gamma(c) : \omega \in \hat B', \omega' \in A'\} \\
&=    \{ (\Id_{C} \otimes (\omega \otimes \omega')\Delta_{L})\gamma(c) : \omega \in \hat B', \omega' \in A'\}.
  \end{align*}
  Since $\Delta_{L}$ is injective, functionals of the form $(\omega \otimes \omega')\Delta_{L}$ above are linearly dense in $A'$. Since $\gamma$ is weakly continuous, we can conclude that $\chi_{*}\gamma$ is weakly continuous as well.

Finally,
suppose that $\chi_{*}\gamma$ is continuous. Then the Podle\'s condition for $\gamma$ and $\Delta_{L}$ implies
  \begin{align*}
    (\chi_{*}\gamma \otimes \Id_{A})\gamma(C) \cdot (1 \otimes \hat  B \otimes A) &=
    (\Id_{C} \otimes \Delta_{L})\gamma(C) \cdot (1 \otimes \Delta_{L}(A)(\hat  B\otimes A)) \\
    &= (\Id_{C} \otimes \Delta_{L})(\gamma(C) (1 \otimes A)) \cdot (1 \otimes \hat  B \otimes A) \\
    &=C \otimes \Delta_{L}(A) (\hat  B\otimes A) \\
    &= C\otimes \hat  B\otimes A.
  \end{align*}
  Slicing on the third tensor factor, we find that $\chi_{*}\gamma(C)(1\otimes \hat  B)=C\otimes \hat  B$.
\end{proof}
Let us now consider the iteration.
\begin{proposition} 
\label{proposition:push-action-iterate}
  Let $(C,\gamma)$ be a weakly continuous coaction of $\G$  such that $\chi_{*}\gamma$ exists.
 Suppose that $\mathbb{I}=(D,\Delta_{D})$ is a $\Cst$\nb-quantum group with a bicharacter $\chi' \in \U(B \otimes \hat  D)$. Then $\chi'_{*}(\chi_{*}\gamma)$ exists if and only if $(\chi'\ast \chi)_{*}\gamma$ exist, and in that case, both coincide.
\end{proposition}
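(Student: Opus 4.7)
The plan is to reduce both implications to the defining relation \eqref{eq:pushed-action-deltal} together with the following chain-rule identity for left quantum group homomorphisms:
\begin{align} \label{eq:chain-plan}
(\Delta_L^{\chi'}\otimes \Id_A)\Delta_L^\chi = (\Id_{\hat D}\otimes \Delta_L^\chi)\Delta_L^{\chi'\ast\chi}
\end{align}
as morphisms $A \to \hat D\otimes \hat B\otimes A$, where $\Delta_L^\chi$, $\Delta_L^{\chi'}$ and $\Delta_L^{\chi'\ast\chi}$ denote the left quantum group homomorphisms associated to $\chi$, $\chi'$ and $\chi'\ast\chi$, respectively.

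First I would verify \eqref{eq:chain-plan} by passing to the universal level. Let $f\in \Mor(A^\univ, \hat B^\univ)$ and $g\in \Mor(\hat B^\univ, \hat D^\univ)$ be the morphisms of universal $\Cst$-bialgebras corresponding to $\chi$ and $\chi'$; then $g\circ f$ corresponds to $\chi'\ast\chi$.  Diagram \eqref{eq:morphism-deltal-deltar-universal} yields
$\Delta_L^\chi \circ \Lambda_A = (\hat\Lambda_B \circ f \otimes \Lambda_A)\Delta_A^\univ$,
together with analogous formulas for $\Delta_L^{\chi'}\circ \hat\Lambda_B$ and $\Delta_L^{\chi'\ast\chi}\circ \Lambda_A$.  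Substituting these, using $\hat\Delta_B^\univ \circ f = (f\otimes f)\Delta_A^\univ$ (bialgebra-morphism axiom) and coassociativity of $\Delta_A^\univ$, both sides of \eqref{eq:chain-plan}$\,\circ\Lambda_A$ reduce to $(\hat\Lambda_D \circ g\circ f \otimes \hat\Lambda_B \circ f \otimes \Lambda_A)(\Id_{A^\univ}\otimes \Delta_A^\univ)\Delta_A^\univ$, and surjectivity of $\Lambda_A$ then gives \eqref{eq:chain-plan}.

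Writing $\tilde\gamma := \chi_*\gamma$, my next step is to derive, for an arbitrary morphism $\eta\in \Mor(C, C\otimes \hat D)$, the key identity
\begin{align} \label{eq:two-sides-plan}
\bigl(\bigl((\eta\otimes \Id_{\hat B})\tilde\gamma - (\Id_C \otimes \Delta_L^{\chi'})\tilde\gamma\bigr) \otimes \Id_A\bigr)\gamma = (\Id_C \otimes \Id_{\hat D}\otimes \Delta_L^\chi)\bigl((\eta \otimes \Id_A)\gamma - (\Id_C\otimes \Delta_L^{\chi'\ast\chi})\gamma\bigr),
\end{align}
obtained by expanding $(\eta\otimes \Delta_L^\chi)\gamma$ and $(\Id_C\otimes \Delta_L^{\chi'}\otimes \Id_A)(\Id_C\otimes \Delta_L^\chi)\gamma$ two different ways, using \eqref{eq:pushed-action-deltal} for $\tilde\gamma$ and the chain rule \eqref{eq:chain-plan}.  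The left-hand side of \eqref{eq:two-sides-plan} vanishes iff $\eta = \chi'_*\tilde\gamma$, by weak continuity of $\gamma$ (slice the last leg by $\omega\in A'$ and use that $\{(\Id_C\otimes \omega)\gamma(c)\}^\CLS = C$).  The right-hand side vanishes iff $\eta = (\chi'\ast\chi)_*\gamma$, because $\Delta_L^\chi$ is injective (as used already in the proof of the preceding proposition) and this injectivity lifts to $\Id_C\otimes \Id_{\hat D}\otimes \Delta_L^\chi$ on the relevant multiplier algebra by a standard slicing argument.  Since the two sides of \eqref{eq:two-sides-plan} coincide for every $\eta$, the existence of either push-forward forces the existence of the other, and they necessarily agree.

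The hard part will be the rigorous verification of \eqref{eq:chain-plan}: although it is the natural chain rule one expects for compositions of quantum group morphisms, it is cleanest to prove at the universal level via the morphisms $f$, $g$, and $g\circ f$ and then transferred back through surjectivity of $\Lambda_A$ as sketched above.  Once \eqref{eq:chain-plan} is in hand, the remainder of the argument is routine bookkeeping with \eqref{eq:pushed-action-deltal} and standard slice arguments.
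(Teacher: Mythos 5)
Your proof is correct and follows essentially the same route as the paper's: both arguments rest on the chain rule $(\Delta_{L}^{\chi'}\otimes\Id_{A})\Delta_{L}^{\chi} = (\Id_{\hat{D}}\otimes\Delta_{L}^{\chi})\Delta_{L}^{\chi'\ast\chi}$ (which the paper simply cites as a left-handed analogue of Meyer--Roy--Woronowicz, Proposition 6.3, whereas you rederive it at the universal level via $f$, $g$ and surjectivity of $\Lambda_{A}$), and both then compare the two expansions of the iterated push-forward, using injectivity of $\Delta_{L}^{\chi}$ for one implication and slicing the $A$-leg together with weak continuity of $\gamma$ for the other. Your packaging of the two directions into a single identity valid for an arbitrary morphism $\eta$ is only a cosmetic reorganization of the paper's two separate computations.
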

\begin{proof}
Let $\chi''=\chi' \ast \chi$ and denote by $\Delta_{L},\Delta'_{L},\Delta''_{L}$ the  left quantum group homomorphisms associated to  $\chi'$ and $\chi,\chi',y
\chi''$, respectively. Then a left-handed analogue of \cite{Meyer-Roy-Woronowicz:Homomorphisms}*{Proposition 6.3} shows that
  \begin{align*}
(\Delta'_{L} \otimes \Id_{A})\Delta_{L}&=    (\Id_{\hat  D} \otimes \Delta_{L}) \Delta''_{L}
  \end{align*}
and hence
\begin{align} 
 \label{eq:push-action}
    (\Id_{C} \otimes \Delta'_{L} \otimes \Id_{A})(\chi_{*}\gamma \otimes \Id_{A})\gamma   
&= (\Id_{C} \otimes  \Id_{D} \otimes \Delta_{L})(\Id_{C} \otimes \Delta''_{L})\gamma.
\end{align}

Suppose that $\chi'_{*}(\chi_{*}\gamma)$ exists. Then the left hand side above is equal to
\begin{align*}
  (\chi'_{*}(\chi_{*}\gamma)  \otimes \Id_{\hat  B}\otimes \Id_{A})(\chi_{*}\gamma \otimes \Id_{A})\gamma  
&=
(\chi'_{*}(\chi_{*}\gamma) \otimes \Delta_{L})\gamma
\end{align*}
Since $\Delta_{L}$ is injective, we can conclude that $(\chi'_{*}(\chi_{*}\gamma) \otimes \Id_{A})\gamma=(\Id_{C} \otimes \Delta''_{L})\gamma$, whence $\chi''_{*}\gamma$ exists and equals $\chi'_{*}(\chi_{*}\gamma)$.

Conversely, if $\chi''_{*}\gamma$  exists, then the right hand side in \eqref{eq:push-action} is equal to
  \begin{align*}
    (\Id_{C} \otimes  \Id_{D} \otimes \Delta_{L})(\chi''_{*}\gamma \otimes \Id_{A})\gamma 
&= (\chi''_{*}\gamma \otimes \Id_{A})(\Id_{C} \otimes \Delta_{L}) \gamma   \\
&= (\chi''_{*}\gamma \otimes \Id_{\hat  B} \otimes \Id_{A})(\chi_{*}\gamma \otimes \Id_{A}) \gamma.
  \end{align*}
Slicing the third tensor factor, we conclude that $(\Id_{C} \otimes \Delta'_{L})\chi_{*}\gamma  = (\chi''_{*}\gamma \otimes \Id_{\hat  B})\chi_{*}\gamma$ so that
$\chi'_{*}(\chi_{*}\gamma)$ exists and equals $\chi''_{*}\gamma$.
\end{proof}

\proof[Acknowledgements] We would like to thank the referee for his thorough proof-reading and many helpful comments that lead to \corr{an improvement}{improvements} in the presentation.

\begin{bibdiv}
  
\begin{biblist}
\bib{Baaj-Skandalis:Unitaires}{article}{
  author={Baaj, Saad},
  author={Skandalis, Georges},
  title={Unitaires multiplicatifs et dualit\'e pour les produits crois\'es de $C^*$\nobreakdash -alg\`ebres},
  journal={Ann. Sci. \'Ecole Norm. Sup. (4)},
  volume={26},
  date={1993},
  number={4},
  pages={425--488},
  issn={0012-9593},
  review={\MRref {1235438}{94e:46127}},
  eprint={http://www.numdam.org/item?id=ASENS_1993_4_26_4_425_0},
}

\bib{Baaj-Skandalis-Vaes:Non-semi-regular}{article}{
  author={Baaj, Saad},
  author={Skandalis, Georges},
  author={Vaes, Stefaan},
  title={Non-semi-regular quantum groups coming from number theory},
  journal={Comm. Math. Phys.},
  volume={235},
  date={2003},
  number={1},
  pages={139--167},
  issn={0010-3616},
  review={\MRref {1969723}{2004g:46083}},
  doi={10.1007/s00220-002-0780-6},
}

\bib{Enock-Timmermann:yd}{article}{
  author={Enock, Michel},
  author={Timmermann, Thomas},
  title={Measured quantum transformation groupoids},
  date={2016},
  journal={J. Noncommut. Geometry},
  volume={10},
  number={3},
  pages={1134-1214},
  issn={1661-6952},
  review={\MRref {3554845}{}},
  doi={10.4171/JNCG/257},
}

\bib{Fischer:Quantum_group_and_equivariant_kk-theory}{thesis}{
  author={Fischer, Robert},
  title={Volle verschr\"ankte Produkte f\"ur Quantengruppen und \"aquivariante KK-Theorie},
  institution={Westf. Wilhelms-Universit\"at M\"unster},
  type={phdthesis},
  date={2003},
  eprint={http://nbn-resolving.de/urn:nbn:de:hbz:6-85659526538},
}

\bib{Gordon-Power-Street:Tricategory}{article}{
  author={Gordon, Robert},
  author={Power, John},
  author={Street, Ross},
  title={Coherence for tricategories},
  journal={Mem. Amer. Math. Soc.},
  volume={117},
  date={1995},
  number={558},
  pages={vi+81},
  issn={0065-9266},
  review={\MRref {1261589}{}},
  doi={10.1090/memo/0558},
}

\bib{Gray:lns}{book}{
  author={Gray, John W.},
  title={Formal category theory: adjointness for {$2$}-categories},
  series={Lecture Notes in Mathematics, Vol. 391},
  publisher={Springer-Verlag, Berlin-New York},
  date={1974},
  page={xii+282},
  review={\MRref {0371990}{}},
}

\bib{gurski:dissertation}{thesis}{
  author={Gurski, Michael Nicholas},
  title={An algebraic theory of tricategories},
  type={phdthesis},
  institution={The University of Chicago},
  publisher={ProQuest LLC, Ann Arbor, MI},
  date={2006},
  pages={209},
  isbn={978-0542-71041-4},
  review={\MRref {2717302}{}},
  eprint={http://search.proquest.com/docview/304955293},
}

\bib{Kasprzak:Rieffel_deformation}{article}{
  author={Kasprzak, Pawe\l },
  title={Rieffel deformation via crossed products},
  journal={J. Funct. Anal.},
  volume={257},
  date={2009},
  number={5},
  pages={1288--1332},
  issn={0022-1236},
  review={\MRref {2541270}{2010e:46073}},
  doi={10.1016/j.jfa.2009.05.013},
}

\bib{Kasprzak:Rieffel_coaction}{article}{
  author={Kasprzak, Pawe\l },
  title={Rieffel deformation of group coactions},
  journal={Comm. Math. Phys.},
  volume={300},
  date={2010},
  number={3},
  pages={741--763},
  issn={0010-3616},
  review={\MRref {2736961}{2012e:46150}},
  doi={10.1007/s00220-010-1093-9},
}

\bib{Kustermans:LCQG_universal}{article}{
  author={Kustermans, Johan},
  title={Locally compact quantum groups in the universal setting},
  journal={Internat. J. Math.},
  volume={12},
  date={2001},
  number={3},
  pages={289--338},
  issn={0129-167X},
  review={\MRref {1841517}{2002m:46108}},
  doi={10.1142/S0129167X01000757},
}

\bib{Kustermans-Vaes:LCQG}{article}{
  author={Kustermans, Johan},
  author={Vaes, Stefaan},
  title={Locally compact quantum groups},
  journal={Ann. Sci. \'Ecole Norm. Sup. (4)},
  volume={33},
  date={2000},
  number={6},
  pages={837--934},
  issn={0012-9593},
  review={\MRref {1832993}{2002f:46108}},
  doi={10.1016/S0012-9593(00)01055-7},
}

\bib{Leinster:Higher-Operads}{book}{
  title={Higher operads, higher categories},
  author={Leinster, Tom},
  series={London Mathematical Society Lecture Note Series},
  volume={298},
  publisher={Cambridge University Press},
  place={Cambridge},
  date={2004},
  pages={xiv+433},
  isbn={0-521-53215-9},
  review={\MRref {2094071}{2005h:18030}},
  note={DOI 10.1017/CBO9780511525896},
}

\bib{Majid:Quantum_grp}{book}{
  author={Majid, Shahn},
  title={Foundations of quantum group theory},
  publisher={Cambridge University Press},
  place={Cambridge},
  date={1995},
  pages={x+607},
  isbn={0-521-46032-8},
  review={\MRref {1381692}{97g:17016}},
  note={DOI 10.1017/CBO9780511613104},
}

\bib{Masuda-Nakagami-Woronowicz:C_star_alg_qgrp}{article}{
  author={Masuda, Tetsuya},
  author={Nakagami, Y.},
  author={Woronowicz, Stanis\l aw Lech},
  title={A $C^*$\nobreakdash -algebraic framework for quantum groups},
  journal={Internat. J. Math},
  volume={14},
  date={2003},
  number={9},
  pages={903--1001},
  issn={0129-167X},
  review={\MRref {2020804}{2004j:46100}},
  doi={10.1142/S0129167X03002071},
}

\bib{Meyer-Roy-Woronowicz:Homomorphisms}{article}{
  author={Meyer, Ralf},
  author={Roy, Sutanu},
  author={Woronowicz, Stanis\l aw Lech},
  title={Homomorphisms of quantum groups},
  journal={M\"unster J. Math.},
  volume={5},
  date={2012},
  pages={1--24},
  issn={1867-5778},
  review={\MRref {3047623}{}},
  eprint={http://nbn-resolving.de/urn:nbn:de:hbz:6-88399662599},
}

\bib{Meyer-Roy-Woronowicz:Twisted_tensor}{article}{
  author={Meyer, Ralf},
  author={Roy, Sutanu},
  author={Woronowicz, Stanis\l aw Lech},
  title={Quantum group-twisted tensor products of \(\textup C^*\)\nobreakdash -algebras},
  journal={Internat. J. Math.},
  volume={25},
  date={2014},
  number={2},
  pages={1450019, 37},
  issn={0129-167X},
  review={\MRref {3189775}{}},
  doi={10.1142/S0129167X14500190},
}

\bib{Meyer-Roy-Woronowicz:Twisted_tensor_2}{article}{
  author={Meyer, Ralf},
  author={Roy, Sutanu},
  author={Woronowicz, Stanis\l aw Lech},
  title={Quantum group-twisted tensor products of \(\textup {C}^*\)\nobreakdash -algebras II},
  journal={J. Noncommut. Geom.},
  volume={10},
  date={2016},
  number={3},
  pages={859--888},
  issn={1661-6952},
  review={\MRref {3554838}{}},
  doi={10.4171/JNCG/250},
}

\bib{Nest-Voigt:Poincare}{article}{
  author={Nest, Ryszard},
  author={Voigt, {Ch}ristian},
  title={Equivariant Poincar\'e duality for quantum group actions},
  journal={J. Funct. Anal.},
  volume={258},
  date={2010},
  number={5},
  pages={1466--1503},
  issn={0022-1236},
  review={\MRref {2566309}{2011d:46143}},
  doi={10.1016/j.jfa.2009.10.015},
}

\bib{Roy:Codoubles}{article}{
  author={Roy, Sutanu},
  title={The Drinfeld double for $C^*$\nobreakdash -algebraic quantum groups},
  journal={J. Operator Theory},
  volume={74},
  date={2015},
  number={2},
  pages={485–515},
  issn={0379-4024},
  review={\MRref {3431941}{}},
  doi={10.7900/jot.2014sep04.2053},
}

\bib{Soltan-Woronowicz:Remark_manageable}{article}{
  author={So\l tan, Piotr Miko\l aj},
  author={Woronowicz, Stanis\l aw Lech},
  title={A remark on manageable multiplicative unitaries},
  journal={Lett. Math. Phys.},
  volume={57},
  date={2001},
  number={3},
  pages={239--252},
  issn={0377-9017},
  review={\MRref {1862455}{2002i:46072}},
  doi={10.1023/A:1012230629865},
}

\bib{Soltan-Woronowicz:Multiplicative_unitaries}{article}{
  author={So\l tan, Piotr Miko\l aj},
  author={Woronowicz, Stanis\l aw Lech},
  title={From multiplicative unitaries to quantum groups. II},
  journal={J. Funct. Anal.},
  volume={252},
  date={2007},
  number={1},
  pages={42--67},
  issn={0022-1236},
  review={\MRref {2357350}{2008k:46170}},
  doi={10.1016/j.jfa.2007.07.006},
}

\bib{Vaes:Induction_Imprimitivity}{article}{
  author={Vaes, Stefaan},
  title={A new approach to induction and imprimitivity results},
  journal={J. Funct. Anal.},
  volume={229},
  date={2005},
  number={2},
  pages={317--374},
  issn={0022-1236},
  review={\MRref {2182592}{2007f:46065}},
  doi={10.1016/j.jfa.2004.11.016},
}

\bib{Woronowicz:Mult_unit_to_Qgrp}{article}{
  author={Woronowicz, Stanis\l aw Lech},
  title={From multiplicative unitaries to quantum groups},
  journal={Internat. J. Math.},
  volume={7},
  date={1996},
  number={1},
  pages={127--149},
  issn={0129-167X},
  review={\MRref {1369908}{96k:46136}},
  doi={10.1142/S0129167X96000086},
}

  \end{biblist}
\end{bibdiv}

%\bibliographystyle{abbrv}
%\bibliography{derefs}

\end{document}